\newtheoremstyle{custom}% name
  {3pt}%      Space above
  {3pt}%      Space below
  {\slshape}%         Body font
  {}%         Indent amount (empty = no indent, \parindent = para indent)
  {\bfseries}% Thm head font
  {.}%        Punctuation after thm head
  { }%     Space after thm head: " " = normal interword space;
   {}%         Thm head spec (can be left empty, meaning `normal')
\theoremstyle{custom}
\newtheorem{theorem}{Theorem}[section]
\newtheorem{proposition}[theorem]{Proposition}
\newtheorem{proposition/definition}[theorem]{Proposition/Definition}
\newtheorem{lemma}[theorem]{Lemma}
\newtheorem{corollary}[theorem]{Corollary}
\newtheorem{conjecture}[theorem]{Conjecture}
\newtheorem{prop}[theorem]{Proposition}
\theoremstyle{definition}
\newtheorem{definition}[theorem]{Definition}
\newtheorem{example}[theorem]{Example}
\newtheorem{problem}[theorem]{Problem}
\theoremstyle{remark}
\newtheorem{remark}[theorem]{Remark}
\newtheoremstyle{exercise}% name
  {3pt}%      Space above
  {6pt}%      Space below
  {}%         Body font
  {}%         Indent amount (empty = no indent, \parindent = para indent)
  {\bfseries}% Thm head font
  {:}%        Punctuation after thm head
  { }%     Space after thm head: " " = normal interword space;
   {}%         Thm head spec (can be left empty, meaning `normal')
\theoremstyle{exercise}
\newtheorem{exercise}[theorem]{Exercise}
\newtheoremstyle{exercises}% name
  {3pt}%      Space above
  {6pt}%      Space below
  {}%         Body font
  {}%         Indent amount (empty = no indent, \parindent = para indent)
  {\bfseries}% Thm head font
  {:}%        Punctuation after thm head
  {\newline}%     Space after thm head: " " = normal interword space;
   {}%         Thm head spec (can be left empty, meaning `normal')
\theoremstyle{exercise}
\newtheorem{exercises}[theorem]{Exercises}
\newcommand{\exerone}[2][{}]{\begin{exercise}{#1}{#2}\end{exercise}}
\def\boxit#1{\vbox{\hrule height1pt\hbox{\vrule width1pt\kern3pt
  \vbox{\kern3pt#1\kern3pt}\kern3pt\vrule width1pt}\hrule height1pt}}
\def\trank{\text{rank}}
\def\bv{\bold v}
\def\z{\zeta}
\def\BC{\mathbb C}
\def\BA{\mathbb A}\def\BR{\mathbb R}
\def\BP{\mathbb P}
\def\pp#1{\mathbb P^{#1}}
\def\pp#1{{\mathbb P}^{#1}}
\def\tdim{{\rm dim}}
\def\hd{,...,}
\def\ww{\wedge}
\def\inv{{}^{-1}}
\def\cP{{\mathcal P}}
\def\cO{{\mathcal O}}
\def\CC{\mathbb C}
\def\11{\mathbf 1}
\def\PP{\mathbb P}
\def\fh{{\mathfrak h}}
\def\fsl{{\mathfrak {sl}}}
\def\fm{{\mathfrak m}}
\def\fg{{\mathfrak g}}
\def\l{\lambda}
\def\a{\alpha}
\def\o{\omega}
\def\b{\beta}
\def\s{\sigma}
\def\d{\delta}
\def\ot{{\mathord{ \otimes } }}
\def\op{{\mathord{\,\oplus }\,}}
\def\otc{{\mathord{\otimes\cdots\otimes}\;}}
\def\ra{{\mathord{\;\rightarrow\;}}}
\def\dim{{\rm dim}\;}
\def\La#1{\Lambda^{#1}}
\def\frak{\mathfrak}
\def\fsl{\frak s\frak l}
\def\op{\oplus}
\def\BA{\Bbb A}\def\BZ{\Bbb Z}
\def\ep{\epsilon}
\def\op{\oplus}
\def\ul{\underline}
\def\s{\sigma}
\def\t{\tau}
\def\a{\alpha}
\def\b{\beta}
\def\l{\lambda}
\def\FS{\mathfrak  S}
\def\ol{\overline}
\def\BP{\mathbb  P}
\def\BC{\mathbb  C}
\def\pp#1{\mathbb  P^{#1}}
\def\cC{\mathcal  C}
\def\BR{\mathbb  R}
\def\ep{\epsilon}
\def\fg{\mathfrak  g}
\def\hd{, \hdots ,}
\def\inv{{}^{-1}}
\def\La#1{\Lambda^{#1}}
\def\pp#1{\mathbb  P^{#1}}
\def\ur{\underline {\bold R}}
\def\ra{\rightarrow}
\def\tdeg{\operatorname{deg}}
\def\tdet{\operatorname{det}}
\def\tperm{\operatorname{perm}}
\def\ttrace{\operatorname{trace}}
\def\tend{\operatorname{End}}
\def\tim{\operatorname{Image}}
\def\tdim{\operatorname{dim}}
\def\tker{\operatorname{ker}}
\def\tlim{\lim}
\def\tmod{\operatorname{mod}}
\def\tmin{\operatorname{min}}
\def\thom{\operatorname{Hom}}
\def\trank{\operatorname{rank}}
\def\ww{\wedge}
\def\ctimes{\times \cdots\times}
\def\be{\begin{equation}}
\def\ene{\end{equation}}
\def\tsgn{{\rm{sgn}}}
\DeclareMathOperator{\tlog}{log}
\def\dual{{^\vee}}
\def\vp{{\bold V\bold P}}\def\vqp{{\bold V\bold Q\bold P}}
\def\vnp{{\bold V\bold N\bold P}}\def\p{{\bold P}}
\def\np{{\bold N\bold P}}
\def\G{\Gamma}
\newcommand{\Hom}{\operatorname{Hom}}
\newcommand{\tEnd}{\operatorname{End}}
\def\trank{{\mathrm {rank}}}
\def\tmult{{\rm mult}}
\def\Dual{{\mathcal Dual}}
\def\Om{\Omega}
\def\bv{\bold v}\def\bw{\bold w}
\def\tchow{{\rm chow}}\def\tfermat{{\rm fermat}}
\def\Det{{\mathcal Det}}\def\Perm{{\mathcal Perm}}\def\Pasdet{{\mathcal Pasdet}}
\def\PD{\operatorname{Pasdet}}
\begin{document}

\title{Geometric Complexity Theory: an introduction for  geometers}
\author{J.M. Landsberg}
%\date{April 2010}
 \begin{abstract}  
This article is survey of recent developments in, and a tutorial on,
the approach to $\p$ v. $\np$ and related questions called Geometric Complexity Theory (GCT).  It
  is written to be  accessible to graduate students. 
Numerous open questions in algebraic geometry and representation theory
relevant for GCT are presented.
\end{abstract}
\thanks{Landsberg supported by NSF grant  DMS-1006353}
\email{jml@math.tamu.edu}
\keywords{Geometric Complexity Theory, determinant, permanent, secant variety, dual variety, Foulkes-Howe conjecture, depth 3 circuit, MSC 68Q17}
\maketitle

\section{Introduction}

This is a survey of problems dealing with the separation of complexity classes that translate
to questions in algebraic geometry and representation theory. I will refer to these translations as {\it geometric complexity
theory} (GCT), although this term has been used both more broadly and more narrowly.
I do not cover topics such as the complexity of matrix multiplication (see \cite{MR2865915} for
an overview and \cite{LOsecbnd,MR3034546} for the state of the art),  matrix rigidity (see 
\cite{MR2870721,GHILrigid}), or the GCT approach to the complexity of tensors
(see \cite{MR2932001}),  although   these topics in complexity theory  have interesting algebraic geometry
and representation theory
associated to them. 
\smallskip

The basic problem (notation is explained in \S\ref{notsect} below): Let $V$ be a   vector space, let  $G\subset GL(V)$ be a reductive
group, and let $v,w\in V$. Consider the orbit closures $\ol{G\cdot [v]}, \ol{G\cdot [w]}\subset \BP V$.
Determine if $ \ol{G\cdot [v]}\subset \ol{G\cdot [w]}$.

In more detail:  First, for computer science one
is interested in asymptotic geometry, so one has a sequence of vector spaces $V_n$, and
sequences of vectors  and groups, and one wants to know if the inclusion fails for infinitely many (or even all)  $n$ greater
than some $n_0$.  Second, usually  $G_n=GL(W_n)$, where
$W_n=\BC^{f(n)}$ for some function $f(n)$ (usually    $f(n)=n^2$) and $V_n=S^nW_n$ is a space of polynomials on $ W^*_n$.  
Third, the points $v,w$ will be of a very special nature - they will (usually) be {\it characterized
by their stabilizers} (see Definition \ref{stabdef}).

The most important example will be $V=S^n\BC^{n^2}$, $G=GL_{n^2}$,  $w=\tdet_n$, the determinant,
and $v=\ell^{n-m}\tperm_m$, the {\it padded permanent} (see \S\ref{flagconj} for the definition).

 \smallskip

  The  article is part a survey of recent developments and part tutorial directed at graduate
students. The level of difficulty of the sections varies considerably  and is not monotone (for example
\S\ref{linalgdetsect} is elementary). I have placed the most emphasis on areas where there are open questions that appear
to be both tractable and interesting. The numerous open questions scattered throughout the article  are labeled by
\lq\lq {\bf Problem}\rq\rq . 
Most of the sections can be read independently of the others.

\subsection{Overview}
Section  \S\ref{gcta} serves as a detailed introduction to the rest of the paper. In it  I describe the flagship conjecture
on determinant versus permanent  and related conjectures, introduce   relevant algebraic varieties,
and establish basic information about GCT.  In \S\ref{repthsect}, I cover background from  representation
theory.
GCT has deep connections to classical algebraic geometry - a beautiful illustration of this
is how solving an old question regarding dual varieties led to
lower bounds for the flagship conjecture, which is 
  discussed in \S\ref{lowgeomsect}, along with a use of differential geometry to get
lower bounds for a conjecture of Valiant. The boundary of the   variety $\Det_n:=\ol{GL_{n^2}\cdot \tdet_n}$ is discussed in \S\ref{bndrysect}.
The classical problem of determining the symmetries of a polynomial and how it relates to the GCT program is
discussed in \S\ref{symsect}, including geometric computations of the stabilizers of the determinant and permanent
polynomials. I believe the {\it Chow variety} of polynomials that decompose into a product of linear factors
will play a central role in advancing GCT, so I discuss it in detail in \S\ref{chowsect}, including: unpublished
results of 
 Ikenmeyer and Mkrtchyan on the kernel of the Hermite-Hadamard-Howe map,    a history 
of what is called the {\it Foulkes-Howe Conjecture} (essentially due to Hadamard),    recent work
with S. Kumar related to the {\it Alon-Tarsi Conjecture},  a longstanding conjecture in combinatorics, and an exposition of Brion's proof
of an asymptotic version of the Foulkes-Howe Conjecture.  
In \S\ref{depth3sect} I translate recent results in computer science \cite{DBLP:journals/eccc/GuptaKKS13} to geometric
language - they allow for two new, completely different formulations of Valiant's conjecture $\vp\neq\vnp$,
one involving  secant varieties of the Chow variety, and another involving  secant varieties of Veronese re-embeddings
of secant varieties of Veronese varieties. 
An exposition  of
S. Kumar's results on the non-normality of $\Det_n$ and $\ol{GL_{n^2}\cdot \ell^{n-m}\tperm_n}$ is given in \S\ref{kumarpfsect}.
In \S\ref{zhanglee}, I present
unpublished results of Li and Zhang, using work of  Maulik and Pandharipande \cite{MaulPand},
 that the degree of the hypersurface of determinantal quartic surfaces
is   $640,224$. My feeling is that any near-term lower bounds for the flagship conjecture \ref{msmainconj}
will come from classical geometry and linear algebra. I discuss this perspective in  \S\ref{linalgdetsect} 
 which consists of unpublished joint work with L. Manivel and N. Ressayre. Finally \S\ref{complexapp} is an appendix
of very basic complexity theory:   the origin of $\p$ v. $\np$,   definitions regarding circuits, and Valiant's conjectures.

\subsection{Notation} \label{notsect} Throughout $V,W$ are complex vector spaces of dimensions $\bv,\bw$.  The group of invertible linear maps $W\ra W$ is denoted $GL(W)$,  and $SL(W)$ denotes
the maps with determinant one.
Since we are dealing with $GL(W)$-varieties, their ideals and coordinate
rings will be $GL(W)$-modules. The $GL(W)$-modules appearing in the tensor algebra of $W$ are indexed by partitions,
$\pi=(p_1\hd p_q)$, where if $\pi$ is a partition of $d$, 
i.e., $p_1+\cdots +p_q=d$ and $p_1\geq p_2\geq \cdots \geq p_q\geq 0$, the module $S_{\pi}W$ appears in $W^{\ot d}$ and in no other degree.
In particular the $d$-th symmetric power is $S^dV=S_{(d)}V$ and the $d$-th exterior power is $\La d V=S_{(1\hd 1)}V=:S_{(1)^d}V$.
Write $|\pi|=d$ and $\ell(\pi)=q$.
The symmetric algebra is denoted $Sym(V):=\oplus_d S^dV$.
 For a group $G$ and a $G$-module $V$ and $v\in V$, $G_v:=\{ g\in G\mid gv=v\}\subset G$ denotes its stabilizer, and
 for a subgroup $H\subset G$, $V^H:=\{ v\in V\mid hv=v\}\subset V$ denotes the $H$-invariants in $V$. 
The irreducible representations of the permutation group on $n$ elements  $\FS_n$ are also indexed by partitions, and   $[\pi]$ denotes the $\FS_n$-module
associated to $\pi$.  Repeated numbers in partitions are sometimes expressed as exponents when there is no danger of
confusion, e.g. $(3,3,1,1,1,1)=(3^2,1^4)$.

Projective space  is $\BP V= (V\backslash 0)/\BC^*$. For $v\in V$, $[v]\in \BP V$ denotes the corresponding point in projective
pace and for any subset $Z\subset \BP V$, $\hat Z\subset V$ is the corresponding cone in $V$. 
  For a variety $X\subset \BP V$,   $I(X)\subset Sym(V^*)$ denotes its ideal, $\BC[\hat X]=Sym(V^*)/I(X)$
is the  ring of regular functions on $\hat X$, which is also $\BC[X]$, the homogeneous coordinate
ring of $X$. The singular locus of $X$ is denoted  $X_{sing}$   and $X_{smooth}$ denotes its
smooth points.  More generally for an affine variety 
$Z$, $\BC[Z]$ denotes its ring of regular functions.   For $x\in X_{smooth}$, $\hat T_xX\subset V$ denotes its affine tangent space.
For a subset $Z\subset V$ or $Z\subset \BP V$, its Zariski closure is denoted $\ol{Z}$. 

%In our situations, this coincides
%with the Euclidean closure  (see \cite[Thm 2.33]{MR1344216}).  

For $P\in S^dV$, and $1\leq k\leq \lfloor \frac \bv 2\rfloor$,    the linear map $P_{k,d-k}: S^kV^*\ra S^{d-k}V$ 
is called  the {\it polarization} of $P$, where $P_{k,d-k}\in S^kV\ot S^{d-k}V$ is $P$ considered
as a bilinear form, see \S\ref{firsteqnssect} for more details. I write $\ol{P}$ for the complete polarization of $P$, i.e. considering $P$   as a multilinear form and $Z(P)\subset \BP V^*$ for the zero set. 

Repeated indices appearing up and down are to be summed over.

Let $T_V\subset SL(V)$ denote a torus (diagonal matrices), and I write $T=T_V$ if $V$
is understood. When $\tdim V=n$, let  $\G_n:=T\rtimes \FS_n=\{ g\in SL(V)\mid
ghg\inv \in T \forall h\in T\}$ denote  its normalizer in $SL(V)$, where
$\FS_n$ acts as permutation matrices.

For a reductive group $G$, $\Lambda^+_G$ denotes the set of finite dimensional
irreducible $G$-modules. Since I work exclusively over $\BC$,  a group is reductive
if and only if every $G$-module admits a decomposition into a direct sum of
irreducible $G$-modules. 

The set $\{ 1\hd m\}$ will be denoted $[m]$. $\tlog$ denotes $\tlog_2$.

Let $f,g: \BR \ra \BR$ be functions. Write $f= \Om(g)$ (resp. $f= O(g)$) if and only if there exists $C>0$ and $x_0$ such that
$|f(x)|\geq C|g(x)|$ (resp. $|f(x)|\leq C|g(x)|$)  for all $x\geq x_0$.   Write $f=\o(g)$  (resp. $f=o(g)$)
if and only if for all  $C>0$ there exists   $x_0$ such  that 
$|f(x)|\geq C|g(x)|$ (resp. $|f(x)|\leq C|g(x)|$)  for all $x\geq x_0$.    These definitions are used for any ordered range and domain, in particular $\BZ$.
In particular, for a function $f(n)$,   $f=\o(1)$ means $f$  goes to infinity as $n\ra \infty$.

\exerone{Show that asymptotically, for any constants $C,D,E>1$,
$$
n^C<n^{\sqrt{n}}=2^{\sqrt{n}\tlog n}< D^n < n^n<E^{n^2}.
$$
}

\subsection{Acknowledgments} 
 First I thank Massimilano Mella for inviting me to write this survey. 
I thank: my co-authors S. Kumar, L. Manivel and N. Ressayre for their permission to include our
unpublished joint work,   B. Hasset, Z. Li and L. Zhang for permission to include  their unpublished work,
G. Malod for his wonderful picture,  K. Efremenko, J. Grochow, and C. Ikenmeyer for numerous comments, the students
at a 2012 Cortona summer course on GCT and  the students in my Spring 2013 complexity theory course at Texas A\&M for their
feedback, A. Abedessalem, M. Brion and S. Kumar for help with \S 7, E. Briand for furnishing the proof
of Proposition \ref{tildehprop} and  K. Efremenko for help with \S 8.
I also thank M. Brion, P. B\"urgisser, and  R. Tange  for many useful comments on an earlier version of this article.
Finally I thank the anonymous referee for many useful comments and suggestions.

\section{Geometric Complexity Theory}\label{gcta}
\subsection{The flagship conjecture}\label{flagconj}
Let $W=\BC^{n^2}$, and let $\tdet_n\in S^nW$ denote the determinant polynomial.
Let $n>m$ and let $\tperm_m\in  S^m\BC^{m^2}$ denote the permanent. In coordinates, 
\begin{align*}
\tdet_n&=\sum_{\s\in \FS_n}\tsgn(\s) x^1_{\s(1)}\cdots x^n_{\s(n)}\\
\tperm_m&=\sum_{\s\in \FS_m} y^1_{\s(1)}\cdots y^m_{\s(m)}.
\end{align*}
Let $\ell$ be a linear coordinate on $\BC^1$ and consider any linear inclusion
$\BC^1\oplus \BC^{m^2}\ra W$, so in particular $\ell^{n-m}\tperm_m\in S^nW$.
Let 
$$\Det_n:=\ol{GL(W)\cdot [\tdet_n]}
$$ 
and let 
$$\Perm^m_n:=\ol{GL(W)\cdot [\ell^{n-m}\tperm_m]}.
$$
\begin{conjecture}\label{msmainconj}(Mulmuley-Sohoni \cite{MS1}) Let $n=m^c$ for any constant
$c$. Then for all sufficiently large $n$, 
$$
\Perm^m_n\not\subset \Det_n.
$$
\end{conjecture}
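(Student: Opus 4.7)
The plan is to produce a representation-theoretic obstruction: an irreducible $GL(W)$-module $S_\pi W$ whose multiplicity in the coordinate ring $\BC[\Perm^m_n]$ strictly exceeds its multiplicity in $\BC[\Det_n]$, for infinitely many $n$ of the form $n=m^c$. Because any equivariant inclusion $\Perm^m_n\subset\Det_n$ induces a $GL(W)$-equivariant surjection $\BC[\Det_n]\twoheadrightarrow\BC[\Perm^m_n]$ respecting isotypic decompositions, a single such $\pi=\pi_n$ per $n$ refutes the inclusion.

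First, I would make the representation theory of both coordinate rings as explicit as possible. The algebraic Peter-Weyl theorem applied to the orbits gives
$$
\BC[GL(W)\cdot \tdet_n]\;=\;\bigoplus_\pi S_\pi W\otimes (S_\pi W^*)^{G_{\tdet_n}},
$$
and similarly for $\ell^{n-m}\tperm_m$, reducing the orbit case to counting invariant vectors for the stabilizers computed in \S\ref{symsect}. Passing to orbit closures is the delicate step: for $\Det_n$ one needs an upper bound on multiplicities coming from the boundary analysis of \S\ref{bndrysect} together with a Kempf-Weyman style description; for $\Perm^m_n$ one needs a lower bound exploiting both the large stabilizer of the padded permanent and the fact, visible from \S\ref{chowsect}, that the padding factor $\ell^{n-m}$ forces the orbit closure to sit close to the Chow variety of products of linear forms.

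Second, I would hunt for explicit obstructing partitions $\pi_n$. Candidate sources are: the dual-variety and differential-geometric lower bounds of \S\ref{lowgeomsect}, which already produce obstructions in low-degree cases; partitions singled out by the Alon-Tarsi and Foulkes-Howe circle of ideas in \S\ref{chowsect}; and partitions emerging from the asymptotics of plethysm coefficients $\tmult(S_\pi W, S^d(S^nW))$. The target is a family $\pi_n$ whose lower-bound multiplicity on the permanent side strictly exceeds the upper bound on the determinant side when $n=m^c$.

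The main obstacle, and the reason this is a research program rather than a proof, is the extremely poor current understanding of plethysm and Kronecker coefficients in the relevant asymptotic regime. Two recent developments emphasized in this survey sharpen the difficulty: the non-normality of $\Det_n$ (\S\ref{kumarpfsect}) means one cannot compute $\BC[\Det_n]$ by restricting to the smooth locus of the orbit closure, and mere \emph{occurrence} obstructions (multiplicity zero on one side, nonzero on the other) are now known to be insufficient to separate $\Perm^m_n$ from $\Det_n$. Any proof must therefore control genuine multiplicity gaps, a task that currently outstrips the available representation-theoretic technology.
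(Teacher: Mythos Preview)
The statement you were asked to prove is labeled \textbf{Conjecture} \ref{msmainconj} in the paper, and the paper does not prove it; indeed the paper explicitly says ``this flagship conjecture appears to be out of reach.'' So there is no paper proof to compare against. Your proposal is not a proof either, and to your credit you say so outright: what you have written is a summary of the Mulmuley--Sohoni obstruction strategy (essentially the content of \S\ref{gcta}, especially the program described around Corollary \ref{orbitcor} and Proposition \ref{peterrefx}), together with an honest accounting of why that strategy is currently stuck.

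Two small corrections to your summary. First, at the time of this survey the insufficiency of occurrence obstructions was not ``known''; the paper only reports Ikenmeyer's numerical evidence and conjecture to that effect. Second, your claim that the padding $\ell^{n-m}$ forces $\Perm^m_n$ to sit ``close to the Chow variety'' is vague and not something established in the paper; the relevant containments go the other way ($Ch_n(W)\subset\Det_n$), and what the paper actually extracts about padding is the necessary condition $p_1\geq d(n-m)$ on GCT-useful partitions. None of this changes the bottom line: the statement is open, the paper offers no proof, and your write-up is a research outline rather than an argument.
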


 While this flagship conjecture   appears to be out of reach, I hope
to convince the reader that there are many interesting intermediate problems that are tractable and that these questions   have deep connections to
geometry, representation theory, combinatorics,  and other areas of mathematics. 

\smallskip

It is convenient to introduce the following notation:
For a homogeneous polynomial $P$ of degree $m$, write $\ol{dc}(P)$ for the smallest $n$ such that $[\ell^{n-m}P]\in \Det_n$, called
the {\it border determinental complexity} of $P$.
Define $dc(P)$ to be the smallest $n$ such that $\ell^{n-m}P\in \tend(W)\cdot \tdet_n$, so $\ol{dc}(P)\leq dc(P)$.
Conjecture \ref{msmainconj} can be restated that $\ol{dc}(\tperm_m)$ grows faster than any polynomial in $m$. 
For example, $\ol{dc}(\tperm_2)=2$, and it is known (respectively \cite{MR3048194} 
and   \cite{Gre11})  that $5\leq \ol{dc}(\tperm_3)\leq dc(\tperm_3) \leq 7$.
The known general lower bound is
\begin{theorem}\cite{MR3048194}\label{LMRbnd} $\ol{dc}(\tperm_m)\geq \frac{m^2}2$.
\end{theorem}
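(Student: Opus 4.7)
The plan is a dual-variety argument. For a hypersurface $Z = Z(P) \subset \BP W$, let $Z^* \subset \BP W^*$ denote its projective dual variety, the closure of the Gauss image $\{[\nabla P(v)] : v \in Z_{smooth}\}$. I will show $\dim Z(\ell^{n-m}\tperm_m)^* \geq m^2 - 2$ and $\dim Z(P)^* \leq 2n - 2$ for all $P \in \Det_n$, which together yield $n \geq m^2/2$.

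The upper bound is anchored at the determinant: since $\nabla\tdet_n(A) = \mathrm{adj}(A)$ is rank-one for every rank-$(n-1)$ matrix $A$, the Gauss image of $Z(\tdet_n)$ is the Segre variety $\mathrm{Seg}(\BP^{n-1}\times\BP^{n-1}) \subset \BP W^*$, of dimension $2n-2$. Every polynomial in the orbit $GL(W)\cdot\tdet_n$ has zero locus with dual of the same dimension $2n-2$, and this bound extends to the orbit closure $\Det_n$ by the semi-continuity of the dimension of the dual (the locus $\{P : \dim Z(P)^* \leq k\}$ is Zariski closed).

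For the lower bound, decompose $W = \BC\ell \oplus \BC^{m^2}\oplus U$. At a point $v = \lambda\ell + y + u$ with $\lambda \neq 0$ and $y$ a smooth point of the permanent hypersurface $\{\tperm_m = 0\}$, a direct chain-rule calculation gives
$$\nabla(\ell^{n-m}\tperm_m)(v) = \bigl(0,\, \lambda^{n-m}\nabla\tperm_m(y),\, 0\bigr) \in (\BC\ell)^* \oplus (\BC^{m^2})^* \oplus U^*;$$
the $\ell$- and $U$-components vanish because $\tperm_m(y) = 0$. Consequently $Z(\ell^{n-m}\tperm_m)^*$ contains the image of the dual $Z(\tperm_m)^* \subset \BP((\BC^{m^2})^*)$ under the canonical inclusion into $\BP W^*$, so $\dim Z(\ell^{n-m}\tperm_m)^* \geq \dim Z(\tperm_m)^*$. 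Granted that $\dim Z(\tperm_m)^* = m^2 - 2$, combining with the upper bound forces $m^2 - 2 \leq 2n - 2$, i.e.\ $n \geq m^2/2$.

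The main obstacle is establishing that the permanent is \emph{not} dual defective: one must prove $\dim Z(\tperm_m)^* = m^2 - 2$, equivalently that the Hessian of $\tperm_m$ achieves maximal possible rank $m^2 - 1$ at some smooth point of $\{\tperm_m = 0\}$, equivalently that no line in this hypersurface has a constant tangent hyperplane through a generic point. A priori, the large symmetry group of the permanent could force dual defectiveness, so one is reduced to a concrete rank calculation. A convenient choice is a sparse matrix $y$ with a small number of nonzero entries arranged so that $\tperm_m(y) = 0$ is forced while $\nabla\tperm_m(y)\neq 0$, and for which the Hessian can be read off directly.
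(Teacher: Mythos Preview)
Your approach is the same as the paper's: use the dimension of the dual variety as a $GL(W)$-invariant that is small ($2n-2$) for the determinant and large ($m^2-2$) for the padded permanent. Your gradient computation showing $Z(\ell^{n-m}\tperm_m)^*$ contains an embedded copy of $Z(\tperm_m)^*$ is correct and plays the same role as the paper's Lemma~\ref{linfaclem}.

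There is one real gap. You assert that ``the locus $\{P : \dim Z(P)^* \leq k\}$ is Zariski closed'' and use this to pass from the orbit to its closure. This semi-continuity is \emph{not} a standard citable fact, and it is exactly the technical content the paper supplies. The difficulty is that $\ell^{n-m}\tperm_m$ is neither reduced nor irreducible, so Segre's Hessian-rank formula does not directly relate $\dim Z(P)^*$ to a closed condition on $P$; the paper explicitly warns that ``if $P$ is not reduced, then these equations can vanish even if the dual of the reduced polynomial with the same zero set as $P$ is non-degenerate.'' The paper resolves this by constructing an explicit $GL(W)$-module of equations for $\Dual_{k,d,N}$ (Theorem~\ref{degdualeqns}): since these are polynomial equations vanishing on $[\tdet_n]$, they vanish on all of $\Det_n$; Lemma~\ref{linfaclem} then shows they do \emph{not} vanish on $[\ell^{n-m}\tperm_m]$ when $k<m^2-2$. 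One \emph{can} prove your semi-continuity directly by a specialization argument (take a curve $P_t\to P_0$ in the orbit, form the closure $\mathcal{D}\subset T\times\BP W^*$ of $\bigcup_{t\neq 0}\{t\}\times Z(P_t)^*$, note $\mathcal D$ is irreducible of dimension $2n-1$ so its fiber over $0$ has dimension at most $2n-2$, and check $Z(P_0)^*\subset\mathcal{D}_0$), but this needs to be written out. The paper's route via explicit equations also yields strictly more: an explicit $GL_{n^2}$-module $S_{\pi(2n-2,n)}\BC^{n^2}$ in the ideal of $\Det_n$.

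Finally, you correctly flag the remaining obstacle (showing $Z(\tperm_m)^*$ is a hypersurface) but do not carry it out. The paper verifies this at the explicit matrix all of whose entries are $1$ except the $(1,1)$-entry, which is $1-m$.
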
 
See \S\ref{MR3048194sect} for
a discussion.

Conjecture \ref{msmainconj} is a stronger version of a conjecture of L. Valiant \cite{vali:79-3} that
$dc(\tperm_m)$ grows faster than any polynomial in $m$.  The best lower bound for $dc(\tperm_m)$ is 
\begin{theorem}\cite{MR2126826}  $dc(\tperm_m)\geq \frac {m^2}2$.
\end{theorem}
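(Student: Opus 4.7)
The plan is to bound the rank of the Hessian of the padded permanent from below and that of $\tdet_n$ at singular matrices from above, then compare via the chain rule. Suppose $dc(\tperm_m)\le n$, so that $P:=\ell^{n-m}\tperm_m = \tdet_n\circ L$ for some linear map $L\colon \BC^{1+m^2}\to \BC^{n^2}$. For any $y$, the chain rule gives
\[ \mathrm{Hess}(P)(y) = L^T\,\mathrm{Hess}(\tdet_n)(L(y))\,L, \]
and hence $\trank \mathrm{Hess}(P)(y)\le \trank \mathrm{Hess}(\tdet_n)(L(y))$.

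The first main ingredient is a sharp upper bound: $\trank\mathrm{Hess}(\tdet_n)(M)\le 2n$ whenever $\trank M\le n-1$. The $((i,j),(k,l))$-entry is (up to sign) the $(n-2)\times(n-2)$ minor of $M$ obtained by deleting rows $i,k$ and columns $j,l$. These all vanish when $\trank M\le n-2$. For $\trank M=n-1$ one may, using $GL_n\times GL_n$-invariance of the Hessian rank, assume $M=\mathrm{diag}(1,\ldots,1,0)$, in which case a direct enumeration shows the only nonzero entries are $\pm 1$ at positions $((a,a),(n,n))$ and $((a,n),(n,a))$ for $a<n$. These assemble into a rank-$2$ ``cross'' on the diagonal indices plus $n-1$ disjoint $2\times 2$ anti-diagonal blocks, giving total rank $2n$.

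The second ingredient is a matching lower bound at a well-chosen point. Select $y_0=(1,y_0')\in \BC\oplus \BC^{m^2}$ with $\tperm_m(y_0')=0$ and $\mathrm{Hess}(\tperm_m)(y_0')$ of full rank $m^2$; such a $y_0'$ exists because the degeneracy locus $\{\det\mathrm{Hess}(\tperm_m)=0\}$ is a proper subvariety which can be checked not to contain $Z(\tperm_m)$ (by exhibiting a concrete smooth zero). Since $\ell$ is linear and $\tperm_m(y_0')=0$, the Leibniz rule yields the block form
\[ \mathrm{Hess}(P)(y_0) = \begin{pmatrix} 0 & (n-m)\,\nabla\tperm_m(y_0')^T \\ (n-m)\,\nabla\tperm_m(y_0') & \mathrm{Hess}(\tperm_m)(y_0') \end{pmatrix}. \]
Euler's identity gives $\mathrm{Hess}(\tperm_m)(y_0')\cdot y_0'=(m-1)\nabla\tperm_m(y_0')$ (so the off-diagonal block lies in the image of the bottom-right block) and $\nabla\tperm_m(y_0')^T y_0' = m\tperm_m(y_0')=0$; a Schur-complement computation then shows that this block matrix has rank exactly $\trank\mathrm{Hess}(\tperm_m)(y_0')=m^2$.

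Combining: $\mathrm{Hess}(P)(y_0)\neq 0$, so $\mathrm{Hess}(\tdet_n)(L(y_0))\neq 0$, which by the upper-bound step forces $\trank L(y_0)\ge n-1$; but $P(y_0)=\tdet_n(L(y_0))=0$ rules out $\trank L(y_0)=n$, so $\trank L(y_0)=n-1$. The upper bound then gives $\trank \mathrm{Hess}(\tdet_n)(L(y_0))=2n$, whence $m^2 = \trank\mathrm{Hess}(P)(y_0) \le 2n$, i.e., $n \geq m^2/2$. The principal obstacle is the existence claim in the lower-bound step, namely producing a smooth zero of $\tperm_m$ at which its Hessian has full rank; this reduces to a concrete verification. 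All remaining steps are linear-algebra computations together with Euler's identity.
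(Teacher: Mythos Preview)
Your approach is essentially the same as the paper's (and Mignon--Ressayre's original): compare Hessian ranks under the chain rule. The paper phrases this via the second fundamental form of the hypersurfaces (whose rank is exactly $\trank H_P - 2$ at smooth points), getting $2n-2$ for $\{\tdet_n=0\}$ and $m^2-2$ for $\{\tperm_m=0\}$; you work with the full Hessian and get $2n$ and $m^2$. The paper handles padding by passing to the affine-linear map $y\mapsto L(1,y)$, whereas you keep $\ell$ and use the Schur complement; these are equivalent.

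Two small points. First, your claim that the $(n-2)\times(n-2)$ minors of $M$ all vanish when $\trank M\le n-2$ is false: they vanish only for $\trank M\le n-3$ (e.g.\ at $M=\mathrm{diag}(1,\dots,1,0,0)$ the Hessian has rank~$4$). So your deduction ``$\mathrm{Hess}(\tdet_n)(L(y_0))\neq 0\Rightarrow \trank L(y_0)\ge n-1$'' is not justified. This is harmless: you already correctly stated $\trank\mathrm{Hess}(\tdet_n)(M)\le 2n$ for all $M$ with $\tdet_n(M)=0$, and that alone gives $m^2\le 2n$ directly, without any case analysis on $\trank L(y_0)$.

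Second, for the ``principal obstacle'' you flag, the paper supplies the explicit point: the $m\times m$ matrix with all entries $1$ except the $(1,1)$-entry equal to $1-m$ lies on $Z(\tperm_m)$ and has $\mathrm{Hess}(\tperm_m)$ of full rank $m^2$ there.
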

See \S\ref{MRsect} for a discussion.

\begin{problem}Determine $\ol{dc}(\tperm_3)$.
\end{problem}

\smallskip

  If you were to
prove either Valiant's conjecture or Conjecture \ref{msmainconj},
  it would be by far the most significant result since the dawn of complexity theory.  Proving Conjecture \ref{msmainconj} for $c=3$
 would already be a huge accomplishment.
 If you disprove Valiant's  conjecture plus 
(1) the
projections from $\tdet_n$  to  $\tperm_m$ use only rational constants of polynomial
bit-length, and (2) the projection  (for some
$n=m^k$) is computable by a polynomial time algorithm, then 
 you can   claim the Clay prize for showing $\p= \np$.

\smallskip

A geometer's first reaction to Conjecture \ref{msmainconj} might be: \lq\lq well, the determinant is wonderful,
it has a nice geometric description, but what about this permanent? 
It is not so wonderful at first sight \rq\rq .

In fact that was my first reaction. 
If you had this reaction,
you probably think of the determinant,  not in terms of its formula, but, letting $A,B=\BC^n$,   as the unique point in $\BP S^n(A\ot B)$ invariant
under $SL(A)\times SL(B)$, i.e., a point in the trivial $SL(A)\times SL(B)$-module
$\La n A\ot \La n B\subset S^n(A\ot B)$.
If you think this way, then   consider, instead of the permanent,
the four factor  {\it Pascal Determinant} (also called the {\it combinatorial determinant}):   
and let  $A_j=\BC^m$ for $j=1\hd 4$. The $4$-factor  Pascal determinant $\PD_{4,m}$ spans 
  the unique  trivial $SL(A_1)\ctimes SL(A_4)$-module in $S^m(A_1\otc A_4)$,
 namely $\La m A_1\otc \La mA_4$. 
 Assume $n>m^4$,  choose a linear embedding $\BC\op A_1\otc A_4\subset W$,  and
 define  $\Pasdet^m_n:=\ol{GL(W)\cdot [\ell^{n-m}\PD_{4,m}]}$. Then, a consequence of
 an observation of  
 Gurvits \cite{Gurvits} is that 
 Conjecture \ref{msmainconj} is equivalent to:

\begin{conjecture}   Let  $n=m^c$ for some constant
$c$. Then for all sufficiently large $n$, 
$$
\Pasdet^m_n\not\subset \Det_n.
$$
\end{conjecture}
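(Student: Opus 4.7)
The plan is to prove the stated non-containment by reducing it, via Gurvits's observation, to the flagship Conjecture \ref{msmainconj}. Since the paper asserts the two conjectures are equivalent and Conjecture \ref{msmainconj} is wide open, the honest target of a proof is the geometric reduction itself; the non-containment $\Pasdet^m_n\not\subset\Det_n$ then follows from Conjecture \ref{msmainconj} with the same constant $c$.

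First I would make the geometric content of Gurvits's identity precise. Because $\PD_{4,m}$ is, up to scalar, the unique $SL(A_1)\times\cdots\times SL(A_4)$-invariant in $S^m(A_1\ot\cdots\ot A_4)$, it has an explicit expansion as an iterated alternating sum over four permutations. The key observation is that contracting two of the four tensor slots against an identity tensor in $A_3^*\ot A_4^*$ (equivalently, specializing those slots to a standard basis) collapses two of the four signs and converts $\PD_{4,m}$ into $\tperm_m$ up to an overall scalar. This contraction is realized by a linear surjection $\pi\colon W=\BC^{n^2}\to\BC^{m^2}$, extended to a map $S^nW\to S^n(\BC^{m^2})$ that is equivariant under an appropriate Levi subgroup of $GL(W)$ stabilizing the factorization.

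Next I would use this to transfer the containment. Given $\Pasdet^m_n\subset\Det_n$, a one-parameter family $g_t\cdot[\tdet_n]\to[\ell^{n-m}\PD_{4,m}]$ in $\ol{GL(W)\cdot[\tdet_n]}$ pushes forward under $\pi$ to a family inside $\Det_n$ --- since linear substitution sends the $GL$-orbit closure of $\tdet_n$ into itself --- converging to $[\ell^{n-m}\tperm_m]$. Hence $\Pasdet^m_n\subset\Det_n$ implies $\Perm^m_n\subset\Det_n$, and the contrapositive combined with Conjecture \ref{msmainconj} yields the stated non-containment for the same exponent $c$ (perhaps enlarged by a fixed constant to absorb the $m\rightsquigarrow m^4$ ambient-dimension blow-up).

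The main obstacle is, of course, Conjecture \ref{msmainconj} itself. An unconditional proof of the Pascal-determinant version would need to produce a genuine obstruction --- for instance, a $GL(W)$-module $S_\pi W^*$ appearing with strictly greater multiplicity in $\BC[\Pasdet^m_n]$ than in $\BC[\Det_n]$. The additional difficulty special to this formulation is that the large stabilizer $SL(A_1)\times\cdots\times SL(A_4)\rtimes\FS_4$ of $\PD_{4,m}$ forces any such obstruction module to be invariant under that group, sharply restricting the admissible $\pi$ and making the multiplicity calculation at least as delicate as in the permanent case.
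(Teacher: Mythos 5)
This statement is a conjecture, not a theorem, and the paper gives no proof of it: Landsberg only asserts, citing an observation of Gurvits, that it is equivalent to the flagship Conjecture~\ref{msmainconj}, which is itself wide open. So there is no proof in the paper for your attempt to be compared against. You correctly recognize this, and you correctly identify that the only thing one can actually \emph{prove} here is the reduction. Your account of the reduction is in substance right: Gurvits's identity exhibits $\tperm_m$ as a linear substitution of $\PD_{4,m}$ (e.g.\ $T_{ijkl}\mapsto M_{ij}\delta_{ik}\delta_{jl}$, after which only $\tau=\mathrm{id}$ and $\pi=\sigma$ survive and the signs cancel), and from this one deduces $\Perm^m_n\subset\Pasdet^m_n$, whence $\Pasdet^m_n\subset\Det_n$ would force $\Perm^m_n\subset\Det_n$. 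You only establish this one direction (flagship $\Rightarrow$ Pascal), not the full equivalence, but this is exactly the direction needed to derive the stated conjecture conditionally.

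One technical point is phrased imprecisely and would not survive scrutiny as written. You say the contraction is realized by a surjection $\pi\colon W\to\BC^{m^2}$, and that the one-parameter family $g_t\cdot[\tdet_n]$ ``pushes forward under $\pi$ to a family inside $\Det_n$.'' But $S^n\pi$ lands in $\BP S^n\BC^{m^2}$, not $\BP S^nW$, so the pushed-forward family is not literally inside $\Det_n\subset\BP S^nW$. What you want is an endomorphism $A\in\tend(W)$ that factors through the subspace $\BC\op A_1\ot A_2\subset W$: then $A\cdot g_t\cdot[\tdet_n]\in\tend(W)\cdot[\tdet_n]\subset\Det_n$, and the limit is $[\ell^{n-m}\tperm_m]$. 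Even cleaner and entirely avoiding limits: once $\ell^{n-m}\tperm_m=A\cdot\ell^{n-m}\PD_{4,m}$ for $A\in\tend(W)$, one gets $\ell^{n-m}\tperm_m\in\ol{GL(W)\cdot\ell^{n-m}\PD_{4,m}}$ directly, hence $\Perm^m_n\subset\Pasdet^m_n$ as $GL(W)$-stable closed subvarieties of $\BP S^nW$, and the implication is immediate by transitivity of inclusion. With that correction your reduction is sound; the conjecture itself, of course, remains open.
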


That being said, I have since changed my perspective and have come around to admiring the beauty
of the permanent as well. In Remark \ref{permbeauty} we will see it is the \lq\lq next best\rq\rq\ polynomial in
$S^n(\BC^n\ot \BC^n)$ after the determinant.   

There are many similarities between the permanent and
the $4$-factor Pascal determinant. Two examples:  for both the dimension
 of the ambient space is roughly the dimension of the symmetry group $G_P$
raised to the fourth power (in contrast to the determinant where the dimension is the square
of the dimension of the symmetry group), and in both cases the tangent space $T_P(GL(W)\cdot P)$ is a reducible
  module (for the determinant it is irreducible).
  
 \begin{remark} For all  even $k$ one can define the $k$-factor Pascal determinant
as a point in  the unique    trivial $SL(A_1)\ctimes SL(A_k)$-module in $S^m(A_1\otc A_k)$,
 namely $\La m A_1\otc \La mA_k$. When $k=2$ this is just the usual determinant.
 \end{remark}

\exerone{For $P=\tdet_n$, $\tperm_m$, and $\PD_{4,m}$, determine the structure of 
$T_P(GL(W)\cdot P)$ as a $G_P$-module. Hint: for any orbit, $G\cdot v=G/H$, one has
$T_vG/H\simeq \fg/\fh$ as an $\fh$-module.}

\subsection{Relevant algebraic varieties}\label{relevantsect}
Two important varieties for our study will be the {\it Veronese variety} $v_n(\BP W)\subset \BP S^nW$ and a certain 
{\it Chow variety} $Ch_n(W)\subset \BP S^nW$. These are defined as
\begin{align}
v_n(\BP W)&=\{ [z]\in  \BP S^nW \mid z=w^n {\rm \ for \ some\ } w\in W\}\\
Ch_n(W)&=\{ [z]\in  \BP S^nW \mid z=w_1\cdots  w_n {\rm \ for \ some\ } w_j\in W\}.
\end{align}
Note that the first variety is a subvariety of the second, and if we consider the Segre variety
$$
Seg(\BP W \ctimes \BP W ):=\{ [T]\in \BP (W^{\ot n})\mid T=w_1\otc w_n {\rm\  for \ some \ } w_j\in W\} \subset \BP(W^{\ot n}),
$$
 then 
$v_n(\BP W)=Seg(\BP W \ctimes \BP W )\cap \BP(S^nW)$ and
$Ch_n(W)=proj_{\BP S^dW^c}(Seg(\BP W \ctimes \BP W ))$, where
$S^dW^c\subset W^{\ot n}$ is the $GL(W)$-complement to $S^dW$, and $proj_L$ denotes
 linear projection from the linear space $L$. (Here I am respectively considering $S^nW$
 as a subspace and as a quotient of $W^{\ot n}$.)  The Veronese is homogeneous, so in particular
its ideal and coordinate ring are well understood. The Chow variety is
an orbit closure (when $n\leq\bw$).  Determining information about its ideal is a topic of current research, and has surprising
connections to different areas of mathematics, including a longstanding conjecture in combinatorics, see \S\ref{chowcombin}. There is
a natural map
$h_{d,n}:S^d(S^nW^*)\ra S^n(S^dW^*)$, dating back to Hermite and Hadamard,  such that $I_d(Ch_n(W))=\tker (h_{d,n})$,
see \S\ref{chowsect}.

The Chow variety is a good testing ground  for GCT, so it is discussed in detail in 
\S\ref{chowsect}. In particular, the coordinate rings of the Chow variety, its normalization,
and the orbit $GL(W)\cdot (x_1\hd x_n)$ are compared. Since $Ch_n(W)\subset \Det_n$, we can get
some information about the coordinate ring of $\Det_n$ from the coordinate ring of $Ch_n(W)$.

\medskip

We will often construct auxiliary varieties from our original varieties.
Let $X\subset \BP V$ be a variety, which we assume to be irreducible and reduced. 

Define the {\it dual variety}    of $X$:
$$
X\dual : =\ol{\{ H\in \BP V^*\mid \exists x\in X_{smooth}, \ \BP \hat T_xX\subseteq H\} }\subset \BP V^*.
$$
In the special case $V=S^nW^*$ and $X=v_n(\BP W^*)$ is the Veronese variety, then
the hypersurface 
$v_n(\BP W^*)\dual\subset \BP S^nW$ may be identified with  the variety  of hypersurfaces of degree $n$ in $\BP W^*$ that are singular.
To see this, for a hypersurface $Z(P)\subset \BP W^*$ (the zero set of the polynomial $P$), $[x]\in Z(P)_{sing}$ if and only if $\ol{P}(x^{n-1}y)=0$ for all $y\in W^*$.
But $\hat T_{[x^n]}v_n(\BP W^*)=\{ x^{n-1}y\mid y\in W^*\}$. See \cite[\S 8.2.1]{MR2865915} for more details.

The set 
$$
\s_r^0(X):=  \bigcup_{x_1\hd x_r\in X} \langle x_1\hd x_r\rangle  \subset \BP V,
$$
where $\langle x_1\hd x_r\rangle$ denotes the (projective) linear span of the points $x_1\hd x_r$, 
is called the set of points of {\it $X$-rank} at most $r$. The variety $\s_r(X):=\ol{\s_r^0(X)}$ is called the
 $r$-th  {\it secant variety} of $X$ (or   {\it the variety of secant $\pp{r-1}$'s to $X$}).
Assume  $X$ is not contained in a hyperplane.   Given $z\in \BP V$, define the {\it $X$-border rank} of $z$ to be the smallest $r$ such that $z\in \s_r(X)$,
and one writes $\ur_X(z)=r$. Similarly, if $z$ has $X$-rank $r$, one writes $\bold R_X(z)=r$. 

When $X=v_n(\BP W)$, the $v_n(\BP W)$-rank is called the {\it Waring rank} (or {\it symmetric tensor rank})
 and the Waring rank and border rank of a polynomial
are first measures of its complexity. One writes  $\bold R_S=\bold R_{v_n(\BP W)}$ and $\ur_S =\ur_{v_n(\BP W)}$. 
We call the $Ch_n(W)$-rank the {\it Chow rank}. The Chow rank is an  important measure
of complexity, it is related  to the size of  the smallest {\it homogeneous depth $3$ circuit} (sometimes called a
homogeneous  $\Sigma\Pi\Sigma$ circuit)
 that can compute a   polynomial,  and even more importantly,  as the smallest depth $3$ circuit that
 can compute a padded polynomial, see \S\ref{depth3sect}.

 \subsection{First equations\label{firsteqnssect}}
Equations for the secant varieties of Chow varieties are mostly unknown, and even for the Veronese  very little is known.
One class of equations is  obtained from the so-called {\it flattenings} or {\it catalecticants}, which 
date back to Sylvester:
for $P\in S^dV$, and $1\leq k\leq \lfloor \frac \bv 2\rfloor$,  consider the linear map $P_{k,d-k}: S^kV^*\ra S^{d-k}V$,
obtained from the {\it polarization} of $P$, where, from a tensorial point of view,  $P_{k,d-k}\in S^kV\ot S^{d-k}V$ is $P$ considered
as a bilinear form on $S^kV^*\times S^{d-k}V^*$. The image of $P_{k,d-k}$,
considered as a map $S^kV^*\ra S^{d-k}V$,  is the space of all $k$-th order partial derivatives of $P$, and
is studied frequently in the computer science literature under the name the {\it method
of partial derivatives} (see, e.g., \cite{MR2901512} and the references therein). To see this description of the image, note
that $S^kV^*$ may be identified with the space of $k$-th order constant coefficient homogeneous
degree $k$ differential operators on $S^nV$. In bases, if $x^1\hd x^{\bv}$ is a basis of $V$,
then $\frac{\partial}{\partial x^1}\hd \frac{\partial}{\partial x^{\bv}}$ is a basis of $V^*$.
The kernel and image of $P_{k,n-k}$ is
often easy to compute, 
  or at least estimate.

If $[P]\in v_d(\BP V)$, the rank
of $P_{k,d-k}$ is one, so the size $(r+1)$-minors of $P_{k,d-k}$ furnish some equations in $I_{r+1}(\s_r(v_d(\BP V)))$.
The only other equations I am aware of come from {\it Young flattenings}, see \cite{MR3081636,ELS} for a discussion of the
Young flattenings and the state of the art. 
If $P\in Ch_d(V)$, then the rank of $P_{k,d-k}$ is $\binom dk$, so the size $r\binom dk +1$ minors furnish some
equations for $\s_r(Ch_d(V))$. 

 For $P\in S^dV$, the Young flattening, $P_{k,d-k[\ell]}: S^kV^*\ot S^{\ell}V\ra S^{d-k+\ell}V$ obtained
 by tensoring $P_{k,d-k}$ with the identity map $Id_{S^{\ell}V}: S^{\ell}V\ra S^{\ell}V$, and projecting (symmetrizing)
 the image in $S^{d-k}V\ot S^{\ell}V$ to $S^{d-k+\ell}V$, goes under the name \lq\lq method of shifted partial
 derivatives\rq\rq\ in the computer science literature. It is the main tool for proving the results discussed in \S\ref{depthred}.
 It's skew cousin led to the current best lower bound for the border rank of matrix multiplication in \cite{LOsecbnd}.

\subsection{Problems regarding secant varieties related to  Valiant's conjectures}\label{valprobs}

\begin{problem} Find equations in the ideal of $\s_r(Ch_n(W))$. This would enable one to prove
lower complexity bounds for depth $3$ circuits.
\end{problem}

The motivation comes from: 

\begin{conjecture}\label{chowvnp}  For all but a finite number of $m$, for all $r,n$ with $rn=2^{\sqrt{m}\tlog(m) \o(1)}$,
\be\label{chea}
[\ell^{n-m}\tperm_m]\not\in \s_r(Ch_{n}(\BC^{m^2+1})).
\ene    
\end{conjecture}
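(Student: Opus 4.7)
The plan is to adapt the shifted partial derivatives technique (the Young flattenings $P_{k,d-k[\ell]}$ described in \S\ref{firsteqnssect}) to produce explicit polynomial equations in $I(\s_r(Ch_n(W)))$, and then to verify that the padded permanent fails to satisfy them in the stated regime. The overall structure mirrors the argument of Gupta-Kamath-Kayal-Saptharishi \cite{DBLP:journals/eccc/GuptaKKS13}, translated into the geometric language of rank functions on a $GL(W)$-module, so that upper-semicontinuity of rank lets us pass from the open cone $\s_r^0(Ch_n(W))$ to its Zariski closure automatically.

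First I would fix integers $k,\ell$ (to be optimized, roughly $k\sim\sqrt{m}$ following GKKS, with $W=\BC^{m^2+1}$) and consider the linear map $P_{k,n-k[\ell]}:S^kW^*\ot S^\ell W\to S^{n-k+\ell}W$. For $P=x_1\cdots x_n\in Ch_n(W)$ a product of $n$ linear forms, every $k$-th partial derivative of $P$ is, up to scalar, a product $x_{i_1}\cdots x_{i_{n-k}}$, and hence the image of $P_{k,n-k[\ell]}$ lies in the span of the $\binom{n}{k}$ shifted ideals $(x_{i_1}\cdots x_{i_{n-k}})\cdot S^\ell W$. A direct count gives an upper bound of the form $N_{k,\ell,n}=O\bigl(\binom{n}{k}\binom{\ell+\bw-1}{\bw-1}\bigr)$ on the rank. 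By subadditivity of rank, for $[P]\in\s_r^0(Ch_n(W))$ we get $\trank(P_{k,n-k[\ell]})\leq r\cdot N_{k,\ell,n}$, a condition cut out by the vanishing of $(rN_{k,\ell,n}+1)$-minors, hence defining a closed subvariety of $\BP S^nW$ containing $\s_r(Ch_n(W))$.

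The next step is to lower-bound the rank of $(\ell^{n-m}\tperm_m)_{k,n-k[\ell]}$. Here one exploits the very special structure of the permanent: its $k$-th partial derivatives in the $y^i_j$ variables are, up to a power of $\ell$, permanents of $(m-k)\times(m-k)$ submatrices, and there are $\binom{m}{k}^2$ such, which one must show are in sufficiently generic position after multiplication by degree $\ell$ monomials in $W$. The heart of GKKS is to show that this yields $\Omega\bigl(\binom{m}{k}^2\cdot\binom{\ell+\bw'-1}{\bw'-1}\bigr)$ linearly independent elements in $S^{n-k+\ell}W$ for appropriate $\bw'\leq\bw$. Optimizing $k\sim\sqrt{m}$ (and $\ell$ of polynomial size in $n,m$) then produces a gap of the order $2^{\sqrt{m}\tlog(m)}$ between the permanent's flattening rank and $N_{k,\ell,n}$, exactly matching the regime $rn=2^{\sqrt{m}\tlog(m)\o(1)}$ in the statement.

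The main obstacle is the lower bound step: verifying that, for the padded permanent, the shifted partials really do span a space of essentially maximal dimension. This is delicate because the padding factor $\ell^{n-m}$ interacts nontrivially with the symmetrization $S^{n-k}W\ot S^\ell W\to S^{n-k+\ell}W$, and one must rule out cancellations among shifts of sub-permanents coming from different monomials of $\tperm_m$; this is where the combinatorial identities of \cite{DBLP:journals/eccc/GuptaKKS13} enter and where translating them into a coordinate-free, $GL(W)$-equivariant statement is genuinely nontrivial. A secondary obstacle worth flagging is the passage to border rank: because the equations above are ranks of linear maps whose entries are polynomial in the coefficients of $P$, they descend automatically to the closure $\s_r(Ch_n(W))$, but one must make sure the lower bound argument for the permanent uses no generic-position hypothesis on $P$—only the explicit form of its partial derivatives—so that the bound is actually an equation rather than a Zariski-open witness.
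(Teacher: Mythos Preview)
The statement you are attempting to prove is labeled in the paper as a \emph{Conjecture}, not a theorem; the paper offers no proof of it and, on the contrary, explains (see \S\ref{depthred}) that proving it would imply Valiant's conjecture $\vp\neq\vnp$. So there is no ``paper's own proof'' to compare against, and you are in effect proposing a resolution of one of the central open problems in algebraic complexity.

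Beyond that, the specific method you propose provably cannot succeed. The shifted partial derivative map $P_{k,n-k[\ell]}$ does not distinguish the permanent from the determinant: the $k$-th order partial derivatives of $\tdet_m$ in the matrix variables are exactly the $\binom{m}{k}^2$ size $(m-k)$ minors, just as for $\tperm_m$ they are the $\binom{m}{k}^2$ size $(m-k)$ sub-permanents, and the dimension count you sketch for the image goes through verbatim with ``minor'' replacing ``sub-permanent''. Hence whatever lower bound you extract for $[\ell^{n-m}\tperm_m]$ applies equally to $[\ell^{n-m}\tdet_m]$. But the paper records (Corollary in \S\ref{depthred}, following \cite{DBLP:journals/eccc/GuptaKKS13}) that $[\ell^{n-m}\tdet_m]\in\s_r(Ch_n(\BC^{m^2+1}))$ already for $rn=2^{O(\sqrt{m}\log m)}$. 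Therefore the rank of the shifted partial flattening is bounded above by $r\cdot N_{k,\ell,n}$ with $rn$ of that size, and no choice of $k,\ell$ can push the lower bound past $2^{O(\sqrt{m}\log m)}$, which is exactly the threshold the conjecture asks you to exceed (the $\omega(1)$ factor in the exponent). In short, shifted partials are inherently blind to the permanent/determinant distinction at this scale, so the ``main obstacle'' you identify is not merely delicate but insurmountable for this technique.
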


As explained in \S\ref{depthred},  Conjecture \ref{chowvnp} would imply Valiant's conjecture
that $\vp\neq\vnp$. (Valiant's conjecture is explained in  \S\ref{complexapp}.) 

\smallskip

Another variety of interest is $\s_{\rho}(v_{\d}(\s_r(v_n(\BP W))))\subset \BP S^{\d n}W$.
If $\tdim W=r\rho$, and $W$ has basis $x_{is}$, $1\leq i\leq r$, $1\leq s\leq\rho$,
 this variety is the $GL(W)$-orbit closure of the polynomial
 $\sum_{s=1}^{\rho}(x_{1s}^n+\cdots + x_{rs}^n)^{\d}$.

\begin{problem} Find equations in the ideal of $\s_{\rho}(v_{\d}(\s_r(v_n(\BP W))))$. 
\end{problem}

Such equations would  enable one to prove
lower complexity bounds for the  $\Sigma\Lambda\Sigma\Lambda\Sigma$ circuits defined in \S\ref{depth3sect}.
The motivation comes from: 

\begin{conjecture}\label{slsvnp} For all but a finite number of $m$, 
for all   $\d\simeq \sqrt{m}$ and all $r,\rho$ with  $r\rho=2^{\sqrt{m} log(m) \omega(1)}$,   
\be\label{cheb}
[\tperm_m]\not\in \s_{\rho}(v_{\d}(\s_r(v_{\frac m\d}(\pp{m^2-1})))).
\ene
\end{conjecture}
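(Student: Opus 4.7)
The plan is to attack Conjecture \ref{slsvnp} via equations coming from the method of shifted partial derivatives (the Young flattenings of \S\ref{firsteqnssect}). The variety $\s_{\rho}(v_{\d}(\s_r(v_{m/\d}(\BP W))))$ is essentially the $GL(W)$-orbit closure of $F_{r,\rho,\d}:=\sum_{s=1}^{\rho}\bigl(\sum_{i=1}^{r}x_{is}^{m/\d}\bigr)^{\d}$, so the first step is to bound, for such an $F$ and carefully chosen $k$ on the order of $\d$ and shift parameter $t$ comparable to a fixed fraction of $m^2$, the rank of the shifted-partials map $F_{k,m-k[t]}\colon S^kW^*\otimes S^t W\to S^{m-k+t}W$. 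Expanding a $k$-th partial of $F_{r,\rho,\d}$ via the chain rule yields a sum supported on products of the inner forms $g_s:=\sum_i x_{is}^{m/\d}$ and a small number of their own derivatives, so the image of $F_{k,m-k[t]}$ sits inside an explicit subspace of dimension bounded by roughly $\rho\cdot\binom{r+k-1}{k}\cdot\binom{\bw+t-1}{t}$, possibly multiplied by a small combinatorial factor depending on $k,\d$.

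Next I would establish a matching lower bound on the dimension of the shifted partials of $\tperm_m$ by adapting the combinatorial arguments of Gupta--Kayal--Kamath--Saptharishi \cite{DBLP:journals/eccc/GuptaKKS13}, whose depth-reduction theorems supply the motivation for Conjecture \ref{slsvnp}. The $k$-th partials of $\tperm_m$ are sub-permanents of $(m-k)\times(m-k)$ submatrices of the generic $m\times m$ matrix, and multiplying by $S^t W$ yields a rich family whose dimension one hopes to bound below by a quantity exceeding the upper bound above precisely when $r\rho=2^{\sqrt m\log m\cdot \o(1)}$. Any such strict dimension inequality translates, via suitable maximal minors of the flattening matrix, into explicit polynomial equations in the ideal of $\s_{\rho}(v_{\d}(\s_r(v_{m/\d}(\BP W))))$ that do not vanish on $\tperm_m$, yielding the desired non-containment.

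The principal obstacle is the tightness of these shifted-partials estimates at the critical threshold $\d\simeq\sqrt m$. This is exactly the regime where the depth-reduction results of \cite{DBLP:journals/eccc/GuptaKKS13} become interesting, and it is also where the shifted-partials method is most delicate: the known unconditional lower bounds for such spaces work well for generic or specially-constructed polynomials but have so far stopped short of producing the correct asymptotic for $\tperm_m$ itself. A secondary technical difficulty is that the image of $F_{k,m-k[t]}$ is not a plain tensor product but a symmetrization $S^{m-k}W\otimes S^t W\to S^{m-k+t}W$, so the upper bound must account for linear relations among shifted derivatives of different outer summands; these cross-relations are subtler for the iterated secant-Veronese-secant-Veronese structure than for plain secants of Veronese varieties. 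Overcoming both obstacles is effectively equivalent to the conjecture, so they constitute the heart of the problem rather than peripheral issues.
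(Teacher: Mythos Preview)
This statement is a \emph{conjecture}, not a theorem, and the paper does not prove it. As the paper explains in \S\ref{depthred}, Conjecture~\ref{slsvnp} would imply Valiant's conjecture $\vp\neq\vnp$, so no proof is expected or offered; the conjecture is presented precisely as an open problem whose resolution would be a breakthrough.

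Your proposal is therefore not to be compared against any proof in the paper, but should be read as a research plan. As such it is reasonable and in fact tracks the approach the paper itself suggests: the method of shifted partial derivatives (the Young flattenings $P_{k,d-k[\ell]}$ of \S\ref{firsteqnssect}) is exactly the tool the paper identifies as ``the main tool for proving the results discussed in \S\ref{depthred}.'' Your upper bound on $\trank F_{k,m-k[t]}$ for $F=F_{r,\rho,\d}$ and your lower bound via sub-permanents are both standard moves in this literature.

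However, you should be clear that what you have written is not a proof but an outline whose completion is, as you yourself concede in the last sentence, ``effectively equivalent to the conjecture.'' The shifted-partials method has produced strong lower bounds for depth-four and $\Sigma\Lambda\Sigma\Lambda\Sigma$ circuits (see the references to \cite{DBLP:journals/eccc/GuptaKKS13,tavenas,koirand4} in the paper), but those bounds fall short of the $2^{\sqrt{m}\log(m)\,\o(1)}$ threshold required here; the gap between the known lower bounds and the depth-reduction upper bounds is precisely what makes this a conjecture rather than a theorem. So the genuine gap in your proposal is not a technical slip but the central open problem itself: no one currently knows how to push the shifted-partials dimension count for $\tperm_m$ past the threshold set by depth reduction, and your outline does not supply a new idea for doing so.
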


As explained in \S\ref{depthred} Conjecture \ref{slsvnp} would also imply Valiant's conjecture
that $\vp\neq\vnp$.
%Remarkably it is known  that

%\begin{theorem} \cite{gupta4}  For all $d,\d$ with $d\d=m$ and all $r\rho(d+1)<2^{\sqrt{m}}$, for all   $m$ sufficiently large,   
%$[\tperm_m]\not\in \s_{\rho}(v_{\d}(\s_r(v_d(\pp{m^2-1}))))$.      Moreover,  $[ \tdet_m]\not\in \s_{\rho}(v_{\d}(\s_r(v_d(\pp{m^2-1}))))$.
%\end{theorem}

 Note that   although the variety appearing in  \eqref{cheb} is more
complicated than the one appearing in  \eqref{chea}, we do not have to deal with  {\it cones} and  {\it padding}, which I discuss next.

\subsection{Cones and padding}  
The inclusion $\BC^{m^2+1}\subset \BC^{n^2}$, indicates we should   consider the variety of cones, or {\it subspace variety}
$$
Sub_k(S^nW):=\{ [P]\in \BP S^nW \mid \exists U^k\subset W, P\in S^nU\}, 
$$
and   the $\ell^{n-m}$ factor in both \eqref{chea} and Conjecture \ref{msmainconj}  indicates   we should   consider the {\it variety of padded polynomials}
$$
Pad_t(S^nW):= \{ [P]\in \BP S^nW\mid P=\ell^tQ {\rm \ for \ some \ }\ell\in W, Q\in S^{n-t}W\}.
$$

The ideal of $Sub_k(S^nW)$ in degree $d$ consists of the isotypic components of all $S_{\pi}W^*$ with
$\ell(\pi)>k$, see, e.g. \cite[\S 7.1]{MR2865915}. The ideal is generated in degree $k+1$ by
the   minors of flattenings \cite{LWsecseg}. The ideal of $Pad_t(S^nW)$ is not known completely. We do know: 

\begin{theorem} \cite{KLpoly}  For all $d$,  
$I_d(Pad_{t}(S^nW^*))$ contains the isotypic component of $S_{\pi}W$ in $S^d(S^nW)$ for all
$\pi=(p_1\hd p_d)$ (so $|\pi|=nd$)  with $p_1< dt$.  It does not contain a  copy of any $S_{\pi}W$ where $p_1\geq \tmin\{d(n-1),dn-(n-t)\}$.
\end{theorem}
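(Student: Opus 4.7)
The plan is to realize $I_d(Pad_t(S^nW^*))$ as the kernel of the $GL(W)$-equivariant pullback
\[
\Psi^*\colon S^d(S^nW)\longrightarrow S^{dt}W\otimes S^d(S^{n-t}W),\qquad \Psi^*(f)(\ell,Q)=f(\ell^tQ),
\]
associated to the parameterization $\Psi\colon W^*\times S^{n-t}W^*\to S^nW^*$, $(\ell,Q)\mapsto \ell^tQ$, whose image is $\widehat{Pad_t(S^nW^*)}$. By construction $I_d(Pad_t)=\ker\Psi^*$, so the two assertions reduce respectively to (i) showing the full $S_\pi W$-isotypic of the source is killed by $\Psi^*$ when $p_1<dt$, and (ii) showing $\Psi^*|_{M_\pi^{\mathrm{LHS}}}$ is injective when $p_1\geq\min\{d(n-1),(d-1)n+t\}$.

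For (i), I would decompose the target by plethysm and Pieri. Any partition $\lambda$ with $S_\lambda W\subset S^d(S^{n-t}W)$ satisfies $\ell(\lambda)\leq d$, since iterating Pieri from the empty partition over the $d$ symmetric factors adds at most one new row per step. Pieri's rule further forces $\pi/\lambda$ to be a horizontal strip of size $dt$ whenever $S_\pi W\subset S^{dt}W\otimes S_\lambda W$, and the horizontal-strip condition $\lambda_{i-1}\geq \pi_i$ telescopes to $\sum_{i\geq 2}(\pi_i-\lambda_i)\leq\lambda_1$, giving $\pi_1-\lambda_1\geq dt-\lambda_1$ and hence $p_1\geq dt$. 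Contrapositively, $p_1<dt$ forces $m_\pi^{\mathrm{RHS}}=0$, so $M_\pi^{\mathrm{LHS}}\subset\ker\Psi^*=I_d(Pad_t)$.

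For (ii), I would split according to which bound in the minimum is active. When $p_1\geq (d-1)n+t$, set $s:=dn-p_1\leq n-t$; every weight-$\pi$ monomial $e^{\alpha_1}\cdots e^{\alpha_d}$ in $S^d(S^nW)$ automatically satisfies $(\alpha_i)_1\geq n-s\geq t$, since $\sum_i(\alpha_i)_1=dn-s$ with each $(\alpha_i)_1\leq n$. Specializing $\ell$ to a coordinate linear form $\ell_0\in W^*$, the restriction $v\mapsto v(\ell_0^tQ)\in S^d(S^{n-t}W)$ acts as the bijective relabeling $\alpha_i\mapsto \alpha_i-t\epsilon_1$ on coefficients, is therefore injective on the whole weight-$\pi$ subspace and a fortiori on its highest weight vectors, forcing $\Psi^*|_{M_\pi^{\mathrm{LHS}}}$ to be injective. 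When instead $p_1\geq d(n-1)$, so $s\leq d$, the $\ell_0$-specialization misses the monomials with some $(\alpha_i)_1<t$ and the argument must be strengthened. Here I would expand $\Psi^*(v)(\ell,Q)$ as a polynomial in $\ell$ of degree $dt$, obtaining linear relations on the $C_{\vec\alpha}$ indexed by multi-indices $\vec\mu$ with $|\mu_i|=t$, and combine these with the highest-weight relations $E_{ij}\cdot v=0$ to force $v=0$; equivalently, one exhibits a combinatorial bijection matching the plethysm coefficient $a_\pi^{d,n}$ with $\sum_\lambda a_\lambda^{d,n-t}\,[\pi/\lambda\text{ is an hstrip of size }dt]$, which becomes tractable precisely because $|\tilde\pi|\leq d$ constrains the admissible $\lambda$ sharply.

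The main obstacle will be this second subcase $p_1\geq d(n-1)$: the clean $\ell_0$-substitution no longer injects the weight-$\pi$ space, and one must either identify an explicit non-vanishing specialization $(\ell,Q)$ for each highest weight vector or invoke plethysm stability in the long-first-row regime to match the LHS and RHS multiplicities exactly.
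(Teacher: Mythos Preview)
The paper does not prove this theorem; it is quoted from \cite{KLpoly} (Kadish--Landsberg) without argument, so there is no in-paper proof to compare against. I will therefore assess your proposal on its own terms.

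Your setup via the parameterization $\Psi(\ell,Q)=\ell^tQ$ and the identification $I_d(Pad_t)=\ker\Psi^*$ is the right framework, and your proof of (i) is correct: for any $S_\lambda W\subset S^d(S^{n-t}W)$ one has $|\lambda|=d(n-t)$, and the Pieri horizontal-strip condition $\pi_i\le\lambda_{i-1}$ for $i\ge2$ gives $\sum_{i\ge2}\pi_i\le|\lambda|$, hence $p_1=|\pi|-\sum_{i\ge2}\pi_i\ge dn-d(n-t)=dt$. (The observation $\ell(\lambda)\le d$ is true but not actually needed for this inequality.) Your Case~A of (ii), namely $p_1\ge(d-1)n+t$, is also correct: the pigeonhole bound $(\alpha_i)_1\ge n-s\ge t$ on every factor makes the specialization $\ell=\ell_0$ a monomial bijection on the entire weight-$\pi$ space, so $\iota^*$ and hence $\Psi^*$ is injective there, and no copy of $S_\pi W$ lies in the kernel.

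The genuine gap is exactly where you locate it: Case~B, i.e.\ $d(n-1)\le p_1<(d-1)n+t$ (which is nonempty precisely when $n-t<d$). Here the $\ell_0$-specialization kills some weight-$\pi$ monomials, so injectivity on the weight space fails and you must work with highest weight vectors specifically. Your two suggested routes---expanding $\Psi^*(v)$ in $\ell$ and combining with the raising-operator relations, or matching plethysm multiplicities combinatorially---are plausible directions, but neither is carried out, and neither is routine: the second in particular amounts to a plethysm stability statement in the long-first-row regime (the multiplicity of $S_{(p_1,\tilde\pi)}W$ in $S^d(S^nW)$ is independent of $n$ once $p_1\ge d(n-1)$), which is exactly the substantive content one needs from \cite{KLpoly}. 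So your outline is accurate about what remains, but Case~B as written is a sketch of a strategy, not a proof.
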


Although we know for dimension reasons that $Pad_{n-m}(Sub_{m^2+1}(S^m\BC^{n^2}))\not\subset \Det_n$ 
asymptotically when $n=m^c$ by counting dimensions, it would be useful to have a proof using equations.

\subsection{GCT useful modules}
One could break down the problem of  separating the determinant from the padded permanent into three steps: 
separating the determinant from a generic cone, separating the determinant from a cone over a padded polynomial,
and finally separating the determinant from the cone over the padded permanent.
That is, to separate $\Det_n$ from $\Perm^m_n$,  we should not just look for modules in the ideal of $\Det_n$,  but modules in the ideal that are not
in the ideal of  $Sub_k(S^nW)$ or $Pad_{n-m}(S^n\BC^{m^2+1})$. 

\begin{definition} A $GL(W)$-module module $M$ such that $M\subset I(\Det_n)$ and  
  $M\not\subset I(Sub_k(S^nW))$ and not  known to be in the ideal of  $Pad_{n-m}(S^n\BC^{m^2+1})$, 
is called {\it $(n,m)$-GCT useful}.
\end{definition}

More precisely, one should speak of modules that are, e.g. \lq\lq April 2013 GCT useful\rq\rq , since what is known
will change over time, but I ignore this in the notation. 
To summarize:

\begin{theorem}\cite{KLpoly}
Necessary conditions for a module  $S_{\pi}W$ with $|\pi|=dn$  to be $(n,m)$-GCT useful are 
\begin{enumerate}
\item   $\ell(\pi)\leq m+1$ and  
\item  $p_1\geq d(n-m)$.
\end{enumerate}
\end{theorem}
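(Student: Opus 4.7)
My plan is to establish the two necessary conditions separately, each by showing that a violation of the condition forces $S_\pi W$ into the ideal $I(Pad_{n-m}(S^n\BC^{m^2+1}))$, contradicting usefulness.

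For condition (2), $p_1 \geq d(n-m)$, the argument is immediate from the first clause of the preceding KLpoly theorem. If $p_1 < d(n-m)$, that theorem states $S_\pi W \subseteq I_d(Pad_{n-m}(S^n W^*))$. Any inclusion $\BC^{m^2+1}\hookrightarrow W$ induces $Pad_{n-m}(S^n\BC^{m^2+1})\subseteq Pad_{n-m}(S^nW)$, which reverses to give the ideal containment $I_d(Pad_{n-m}(S^nW^*))\subseteq I_d(Pad_{n-m}(S^n\BC^{m^2+1}))$. Thus $S_\pi W$ is known to lie in the ideal of the padded variety, so it fails to be $(n,m)$-GCT useful. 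The contrapositive is condition (2).

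For condition (1), $\ell(\pi) \leq m+1$, I would show that if $\ell(\pi)>m+1$ and $|\pi|=dn$, then $S_\pi W \subseteq I_d(Pad_{n-m}(S^n\BC^{m^2+1}))$ by an argument that refines KLpoly's polarization approach. A generic point of $Pad_{n-m}(S^n\BC^{m^2+1})$ has the form $P=\ell^{n-m}Q$ with $\ell\in\BC^{m^2+1}$ and $Q\in S^m\BC^{m^2+1}$. Polarizing, $P^{\otimes d}\in W^{\otimes dn}$ is equivalent (up to symmetrization within each $n$-block and across the $d$ blocks) to $\ell^{\otimes d(n-m)}\otimes \bar Q^{\otimes d}$, where $\bar Q$ lives in $(\BC^{m^2+1})^{\otimes m}$ with full $\FS_m$-symmetry. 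Evaluating a Young symmetrizer of shape $\pi$, column anti-symmetrization forces at most one $\ell$-factor per column (yielding $p_1\geq d(n-m)$ again), and constrains the remaining cells to be filled compatibly with the tensor-symmetric structure of $\bar Q^{\otimes d}$. The hard part is converting this into the sharper length bound $m+1$ rather than the naive $m^2+1$ one obtains from the subspace dimension alone.

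The technical heart will be a plethystic/branching step. After extracting the $d(n-m)$ cells occupied by $\ell$, the remaining skew shape must embed into the plethysm $S^d(S^m\BC^{m^2+1})$ via the $\bar Q^{\otimes d}$ structure. The wreath-product symmetry $\FS_m\wr\FS_d$ acting on the remaining $dm$ tensor positions, combined with the dominance-type constraints governing which partitions appear in such plethysms, should force the conjugate of the remaining shape to have length at most $m$, i.e.\ the remaining shape itself to have at most $m$ rows. Combined with the single row occupied by the $\ell$-factors, this gives $\ell(\pi)\leq m+1$. I expect this plethysm/dominance step --- essentially analyzing which $S_\mu \BC^{m^2+1}$ appear in $S^d(S^m\BC^{m^2+1})$ and pulling the constraint back through the skew branching --- to be the main obstacle, as the bound $m+1$ (versus $m^2+1$) hinges on the symmetric-group structure of $\bar Q$ being used non-trivially rather than just its ambient dimension.
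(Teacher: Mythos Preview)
Your treatment of condition (2) is correct and matches the paper: it is the contrapositive of the first clause of the quoted $Pad$ theorem with $t=n-m$.

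For condition (1) you depart from the paper. The paper's argument is the phrase ``To summarize:'': the length bound is read off from the \emph{subspace-variety} clause in the definition of $(n,m)$-GCT useful (the requirement $M\not\subset I(Sub_k(S^nW))$) together with the stated fact that $I_d(Sub_k(S^nW))$ is exactly the span of the $S_\pi W^*$ with $\ell(\pi)>k$. You instead try to extract the length bound from the $Pad$ clause alone, via a polarization/plethysm analysis of $(\ell^{n-m}Q)^{\otimes d}$. That plethysm step contains a genuine gap. You assert that the $\FS_m\wr\FS_d$ symmetry on $\bar Q^{\otimes d}$, with $Q\in S^mV$, forces the residual shape (after stripping the $\ell$-cells) to have at most $m$ rows. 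This is false: the constituents $S_\mu V$ of $S^d(S^mV)$ are not limited to $\ell(\mu)\leq m$. Already $S^3(S^2\BC^3)$ contains the $SL_3$-invariant $S_{(2,2,2)}\BC^3$, with three rows while $m=2$; more generally $S^{\bv}(S^sV)$ contains the $\bv$-row rectangle $(s^{\bv})$ whenever $s$ is even. So the inner degree $m$ of $Q$ does not bound the number of rows, and the mechanism you propose cannot deliver $\ell(\pi)\leq m+1$. You correctly anticipated that this step would be ``the main obstacle''; it is in fact a false assertion, not merely a hard one.
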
 

\begin{problem} Find a $(5,3)$-GCT useful module. 
\end{problem}

\subsection{The   program of \cite{MS1}}
The algebraic Peter-Weyl theorem (see \S\ref{algPW}) implies  that 
for  a reductive algebraic group $G$ and  a subgroup $H$, that the   ring of regular functions on $G/H$, denoted $\BC[G/H]$,   as $G$-module
is simply
$$
\BC[G/H]=
\bigoplus_{\l\in \Lambda^+_G} V_{\l}\ot (V_{\l}^*)^H.
$$
Here $\Lambda^+_G$ indexes the irreducible $G$-modules, $V_{\l}$ is the irreducible module associated to $\l$,
and for a $G$-module $W$, $W^H:=\{ w\in W\mid h\cdot w=w\forall h\in H\}$ denotes the subspace of $H$-invariants. Here $G$ acts on the $V_{\l}$ and
$(V_{\l}^*)^H$ is just a vector space whose dimension records the multiplicity of $V_{\l}$ in $\BC[G/H]$. 

Let $v\in V$ and consider the homogeneous space $G\cdot v=G/G_v \subset V$. Then  
 there is an injection $ \BC[\ol{G\cdot v}] \ra \BC[G/G_v]$ by restriction of functions. Thus  if we can find a module $V_{\l}$  that occurs in $Sym(V^*)$ that does
not occur in $\BC[G/G_v]$, the isotypic component of $V_{\l}$ in $Sym(V^*)$ must be in the ideal
of $\ol{G\cdot v}$. More generally, if the multiplicity of $V_{\l}$ in $Sym(V^*)$ is higher than
its multiplicity in $\BC[G/G_v]$, at least some copy of it must occur in $I(\ol{G\cdot v})$.

\begin{definition} Let $v\in V$ as above.  An irreducible $G$-module $V_{\l}$ is an {\it orbit occurrence obstruction} for
$\ol{G\cdot v}$ if 
$V_{\l}\subset Sym(V^*)$ and $(V_{\l})^{*G_v}=0$. The module $V_{\l}$ is an  {\it orbit representation-theoretic obstruction}
if $\tmult(V_{\l},Sym(V^*))>\tdim (V_{\l})^{*G_v}$. 
More generally, an irreducible $G$-module $V_{\l}$ is an {\it   occurrence obstruction} if 
$V_{\l}\subset Sym(V^*)$ and $ V_{\l}\not\in \BC[\ol{G\cdot v}]$. The module $V_{\l}$ is a   {\it  representation-theoretic obstruction}
if $\tmult(V_{\l},Sym(V^*))>\tmult  (V_{\l}\,\BC[\ol{G\cdot v}])$. 
\end{definition}

Note   the implications:
$M$ is an orbit occurrence obstruction implies $M$ is an orbit representation-theoretic obstruction implies
$M$ is a representation-theoretic obstruction and $M$ is an occurrence obstruction implies $M$ is a representation-theoretic
obstruction.

To summarize: The isotypic component of an occurrence obstruction in $Sym(V^*)$
is in the ideal of $\ol{G\cdot v}$, and at least some copy of a representation-theoretic obstruction
must be in the ideal of $\ol{G\cdot v}$.

 The program initiated in \cite{MS1} and continued in \cite{MS2,MR2927658} and other preprints, was to find such obstructions
 via representation theory, perhaps using   canonical bases for nonstandard quantum groups, see especially \cite{gctVII,gctVIII}. 

\begin{definition}\label{stabdef} $P\in S^dV$ is {\it characterized by}  $G_P$ if any $Q\in S^dV$ with $G_Q\supseteq G_P$ is of the
form $Q=cP$ for some constant $c$.
\end{definition}
  In our situations (where $G_P$ is reductive), the orbit closure 
of a polynomial characterized by its symmetry group  is essentially
determined by multiplicity data, which makes one more optimistic for representation-theoretic, or even occurrence obstructions.

 In the negative direction, 
C. Ikenmeyer \cite[Conj. 8.1.2]{ikenthesis} made numerous computations that lead him to 
 conjecture  that all   modules that occur in $Sym(S^nW^*)$   when $n$ and the partitions
are both even, also occur in $\BC[GL(W)\cdot \tdet_n]$.

\subsection{The boundary of $\Det_n$}\label{bndrysect}
When Conjecture \ref{msmainconj}  was first proposed, it was not known if   the inclusion  $End(W)\cdot det_n\subset  \Det_n$
was proper. In \S\ref{bndrysect},
I describe an explicit component of $\partial\Det_n$ (found in \cite{MR3048194})  that is not contained in $End(W)\cdot det_n$. 
Determining the components of the boundary should be very useful for GCT. It also relates to
a classical question in linear algebra: determine the unextendable linear spaces on $\{ \tdet_n=0\}$.

\subsection{Bad news}

Hartog's theorem states that a holomorphic function defined off of a codimension two subset of a complex manifold
extends to be defined on the complex manifold. Its analog in algebraic geometry, for say affine varieties,  is true 
in the sense that a function defined off of a codimension two subvariety of an affine variety $Z$ extends to be defined
on all of $Z$  as long as 
the affine  variety $Z$ is
{\it normal} (see \S\ref{normalizationsect} for the definition of normal). When studying a normal orbit closure, the only difference between
$\BC[G\cdot v]$ and $\BC[\ol{G\cdot v}]$ comes from functions having poles along a component of the boundary. With non-normal
varieties the situation is far subtler. The following theorem and its proof are discussed in \S\ref{kumarpfsect}.

\begin{theorem}[Kumar \cite{MR3093509}]
$\Det_n$ is not normal for $n\geq 3$.
$\Perm^m_n$ is not normal for   $n>2m$.
\end{theorem}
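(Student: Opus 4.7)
My plan is to use the algebraic Peter--Weyl theorem of \S\ref{algPW} to exhibit a function on the open orbit that lies in the normalization but not in the coordinate ring of the orbit closure. Let $G=GL(W)$, let $P$ stand for either $\tdet_n$ or $\ell^{n-m}\tperm_m$, let $H=G_P$ be the stabilizer (computed in \S\ref{symsect}), and let $\hat X\subset S^nW$ denote the corresponding affine cone. One has $G$-stable inclusions
$$
\BC[\hat X]\;\subset\;\BC[\widetilde{\hat X}]\;\subset\;\BC[G\cdot P]\;=\;\bigoplus_{\lambda\in \Lambda^+_G}V_\lambda\otimes(V_\lambda^*)^H,
$$
and normality of $\hat X$ (equivalently of $\Det_n$ or $\Perm^m_n$) is the assertion that the first inclusion is an equality. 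It therefore suffices to exhibit an element in the middle ring missing from $\BC[\hat X]$.

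The decisive constraint is that $\BC[\hat X]$ carries a strong grading that $\BC[\widetilde{\hat X}]$ partially forgets. Because the center $\BC^*\cdot I_W\subset G$ acts on $P$ by the character $c\mapsto c^n$, every $V_\lambda\subset\BC[\hat X]$ satisfies $n\mid|\lambda|$ and, more importantly, must appear in graded degree $d=|\lambda|/n$, hence as a submodule of the plethysm $S^d(S^nW^*)$. By contrast, the multiplicity of $V_\lambda$ in $\BC[\widetilde{\hat X}]$ is controlled by $\dim(V_\lambda^*)^H$ via Peter--Weyl, with no plethystic obligation. I would search for a partition $\lambda$ and a copy of $V_\lambda$ inside $\BC[G\cdot P]$ that extends regularly across every codimension-one component of $\partial\hat X$ --- hence is integral over $\BC[\hat X]$, hence lies in $\BC[\widetilde{\hat X}]$ --- yet whose would-be slot in $S^d(S^nW^*)$ is already fully occupied by $\BC[\hat X]$, so that no further copy can fit in the left-hand ring without violating the plethysm bound.

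The main obstacle is making this discrepancy concrete. Integrality of a candidate function can be produced by realizing it as the pullback along a $G$-equivariant finite morphism $Y\to \hat X$ with $Y$ normal (for instance a smooth $G$-variety such as a homogeneous bundle or a partial desingularization of $\hat X$), so that $\BC[Y]\subset\BC[\widetilde{\hat X}]$ automatically. Verifying non-membership in $\BC[\hat X]$ is where the precise structure of $H$ matters. For the determinant, $G_{\tdet_n}$ contains the transpose involution on top of its identity component $(SL(W)\times SL(W))/\mu_n$; comparing invariants under the connected stabilizer with those under the full stabilizer furnishes natural candidates for functions lying in $\BC[\widetilde{\hat X}]\smallsetminus\BC[\hat X]$, and the hypothesis $n\geq 3$ should be precisely what prevents the ambient plethysm $S^d(S^nW^*)$ from accommodating them (the $n=2$ case being excluded since $\Det_2$ is all of $S^2\BC^4$, which is smooth). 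For the padded permanent, the factor $\ell^{n-m}$ enlarges $H$ by a large unipotent subgroup and shifts the interplay of degrees; the inequality $n>2m$ should be exactly what opens a gap between $\dim(V_\lambda^*)^H$ and the corresponding plethysm multiplicity, allowing the construction to proceed. The hardest step in both cases is identifying the explicit $\lambda$ and writing down the explicit function realizing the gap; the representation-theoretic framework above reduces the problem to a concrete, if delicate, plethystic computation.
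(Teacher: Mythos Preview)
Your framework is correct in outline --- compare $\BC[\hat X]$, $\BC[\widetilde{\hat X}]$, and $\BC[G\cdot P]$ via Peter--Weyl and the plethystic constraint on $\BC[\hat X]$ --- but what you have written is a research plan, not a proof. You never identify the witness $\lambda$, your mechanism for integrality (pullback along an unspecified finite $G$-morphism from some normal $Y$) is never constructed, and you explicitly defer the ``hardest step.'' Your concrete suggestion for $\Det_n$, to compare invariants under the connected stabilizer with those under the full stabilizer, is a red herring: a module with an $SL(E)\times SL(F)$-invariant that is killed by the transpose $\BZ_2$ simply fails to appear in $\BC[G\cdot P]$ at all, so it cannot furnish a function on the orbit, let alone one in the normalization.

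The decisive simplification you are missing is to restrict to $SL(W)$-invariants throughout. Since the only $SL(W)$-trivial $GL(W)$-modules are powers of the determinant character, $\BC[\cP^0]^{SL(W)}\simeq\BC[z,z^{-1}]$ with $z$ of smallest positive $GL$-degree $d$. The substantive lemma (using that every closed $SL(W)$-orbit in $\partial\cP$ contains $0$ in its closure) is that $\BC[\cP]^{SL(W)}_{>0}\neq 0$, so some $z^k$ with $k\geq 1$ already lies in $\BC[\cP]$. Integrality of $z$ is then free: $z$ satisfies $T^k-z^k=0$ over $\BC[\cP]$, and no auxiliary variety is needed. Were $\cP$ normal, $z$ itself would lie in $\BC[\cP]$ and hence come from $Sym(S^nW^*)$; but the first $SL(W)$-invariant there has $GL$-degree at least $n\bw$ (or $2n\bw$ for $n$ odd), so one only needs $d<n\bw$. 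For $\Det_n$ one computes the smallest $s$ with $((\La^{n^2}W)^{\ot s})^{G_{\tdet_n}}\neq 0$: the unique $SL(E)\times SL(F)$-invariant line carries the $\BZ_2$-sign $(-1)^{s\binom n2}$, so $s\in\{1,2\}$, and $d\leq 2n^2<n^3=n\bw$ for $n\geq 3$. Your remark that the unipotent part of $G_{\ell^{n-m}\tperm_m}$ complicates the padded case is correct; the argument above requires a reductive stabilizer, and the paper in fact treats $\Perm^m_n$ for $n>2m$ only by reference to Kumar's original article.
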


\begin{remark} In \cite{MR2590845} an algorithm is described that in principle can distinguish when one orbit closure
is contained in another.
\end{remark}

\section{Representation theory}\label{repthsect}

\subsection{The algebraic Peter-Weyl theorem}\label{algPW}  Let $G$ be a
reductive  algebraic group and $V$ a $G$-module. Given $\a\in V^*$ and
$v\in V$ we get an algebraic function
\begin{align*}f_{\a\ot v}: G&\ra \BC\\
g&\mapsto \a(gv). 
\end{align*}
  Note this is linear in $V$ and $V^*$ (e.g. $f_{(\a_1 + \a_2)\ot v}=f_{\a_1\ot v}+f_{\a_2\ot v}$ etc..), 
  and commutes with the action of $G$, 
so we obtain an injective  $G$-module map  $V^*\ot V\ra \BC[G]$.

\exerone{Show the map $V^*\ot V\ra \BC[G]$ is indeed injective.}

The linearity shows that it is sufficient to consider irreducible modules to avoid redundancies.
We have shown: $\BC[G]\supseteq \oplus_{\l\in \Lambda^+_G}V_{\l}^*\ot V_{\l}$.

\begin{theorem} [Algebraic Peter-Weyl]  (see, e.g \cite[Ch. 7, \S3.1.1]{MR2265844})  As a left-right $G\times G$ module,  $\BC[G]= \oplus_{\l\in \Lambda^+_G}V_{\l}^*\ot V_{\l}$.
\end{theorem}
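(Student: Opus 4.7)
The plan is to combine two ingredients: local finiteness of the $G\times G$-action on $\BC[G]$, which reduces the problem to irreducible subquotients, together with Schur's lemma applied to the matrix coefficient maps $\Phi_{V_\lambda}\colon V_\lambda^*\otimes V_\lambda \to \BC[G]$ constructed in the paragraph preceding the theorem.  Each $\Phi_{V_\lambda}$ is $G\times G$-equivariant (the left factor acts on $V_\lambda^*$ via the contragredient representation) and nonzero (evaluation at $g=e$ recovers the canonical pairing on $V_\lambda^*\otimes V_\lambda$), and $V_\lambda^*\otimes V_\lambda$ is irreducible as a $G\times G$-module, so each $\Phi_{V_\lambda}$ is an injection. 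Moreover the submodules $V_\lambda^*\otimes V_\lambda$ for distinct $\lambda$ are pairwise non-isomorphic as $G\times G$-modules (the left-$G$ isotype already determines $\lambda$), so their images in $\BC[G]$ have pairwise trivial intersection by Schur's lemma; this shows the sum $\bigoplus_\lambda V_\lambda^*\otimes V_\lambda\hookrightarrow \BC[G]$ is direct, and the only substantive task is surjectivity.

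First I would establish local finiteness: for any $f\in\BC[G]$ the right translates $\{R_g f\}_{g\in G}$, defined by $(R_g f)(x)=f(xg)$, span a finite-dimensional subspace. Writing the comultiplication $m^*\colon\BC[G]\to \BC[G]\otimes\BC[G]$ (coming from group multiplication $m\colon G\times G\to G$) as a finite sum $m^*(f)=\sum_{i=1}^N f_i\otimes h_i$, one immediately reads off $R_g f=\sum_i h_i(g)f_i$, so every right translate of $f$ lies in $\mathrm{span}\{f_1,\ldots, f_N\}$.

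Now given $f\in\BC[G]$, let $W\subset\BC[G]$ denote the finite-dimensional right-$G$-submodule generated by $f$. By reductivity of $G$, $W$ decomposes into a direct sum of irreducibles $W=\bigoplus_i W_i$ with $W_i\cong V_{\lambda_i}$, and $f$ is a finite sum of elements lying in the various $W_i$. The key observation is that evaluation at the identity $\mathrm{ev}_e\in W_i^*$ recovers every element of $W_i$ as a matrix coefficient: for $w\in W_i$ and $g\in G$,
\[
w(g)=(R_g w)(e)=\mathrm{ev}_e(R_g w)=\Phi_{W_i}(\mathrm{ev}_e\otimes w)(g).
\]
Thus $W_i\subset \Phi_{W_i}(V_{\lambda_i}^*\otimes V_{\lambda_i})$, and in particular $f\in W\subset\sum_\lambda V_\lambda^*\otimes V_\lambda$, which combined with the direct-sum decomposition above gives the claimed equality as $G\times G$-modules.

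The main obstacle, such as it is, is the local finiteness step; once that is in hand everything else is a routine deployment of Schur's lemma and complete reducibility for the reductive group $G\times G$. I note that the argument uses reductivity only to decompose the single explicit finite-dimensional module $W$, not $\BC[G]$ at once, which is the correct order of operations since we have no a priori handle on $\BC[G]$ as a $G\times G$-module until we have exhibited its constituents.
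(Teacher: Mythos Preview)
Your argument is correct. The paper itself only establishes the inclusion $\bigoplus_{\lambda}V_\lambda^*\otimes V_\lambda\hookrightarrow\BC[G]$ (the matrix-coefficient construction together with the injectivity exercise) and defers the equality to \cite[p.~160]{MR2265844}; you have supplied exactly the standard argument one finds there, namely local finiteness of right translation via the comultiplication, followed by complete reducibility of the finite-dimensional cyclic submodule and the observation that evaluation at $e$ exhibits each irreducible constituent as a space of matrix coefficients. There is nothing to add.
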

The $G\times G$ module structure is given by $(g_1,g_2)f(g)=f(g_1gg_2)$. For the proof of the equality (which is not difficult), see \cite[p 160]{MR2265844}.

We will need the following Corollary: 
\begin{corollary} \label{orbitcor} Let $H\subset G$ be a   closed  subgroup. Then, as a $G$-module, 
$$
\BC[G/H]=\BC[G]^H=\oplus_{\l \in \Lambda_G^+} V_{\l}\ot (V_{\l}^*)^H = \oplus_{\l \in \Lambda_G^+} V_{\l}^{\oplus \tdim (V_{\l}^*)^H}.
$$
\end{corollary}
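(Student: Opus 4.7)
The plan is to reduce the statement to the algebraic Peter-Weyl theorem by first identifying $\BC[G/H]$ with the right-$H$-invariants in $\BC[G]$ and then tracking how the right $H$-action sits inside the $G\times G$-decomposition.

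First, I would handle the identification $\BC[G/H]=\BC[G]^H$. Since $H\subset G$ is closed, the quotient $G/H$ exists as a quasi-projective variety and the projection $\pi:G\to G/H$ is a faithfully flat (in particular, surjective) morphism whose fibers are the $H$-cosets. Pullback by $\pi$ gives an injection $\pi^*:\BC[G/H]\hookrightarrow\BC[G]$, and a regular function $f$ on $G$ descends through $\pi$ iff $f(gh)=f(g)$ for every $g\in G$, $h\in H$, i.e.\ iff $f$ is invariant under the right-translation action of $H$. Surjectivity onto $\BC[G]^H$ follows from the fact that an $H$-invariant function on $G$ is, by definition, a function on the set $G/H$, and it is regular because $\pi$ is a quotient morphism (alternatively one can argue via descent, or use that $G\to G/H$ is a principal $H$-bundle in the appropriate sense).

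Next, I would recall the $G\times G$-decomposition $\BC[G]=\bigoplus_{\l\in\Lambda^+_G}V_\l^*\ot V_\l$ given by $\a\ot v\mapsto f_{\a\ot v}$ where $f_{\a\ot v}(g)=\a(gv)$. Under the convention $(g_1,g_2)\cdot f(g)=f(g_1gg_2)$, the right-translation action of $H$ corresponds to the $\{e\}\times H$-action, and under the isomorphism $V_\l^*\ot V_\l\cong$ (its image) this transports to the natural $H$-action on the second factor, because
\[
(e,h)\cdot f_{\a\ot v}(g)=\a(ghv)=f_{\a\ot(hv)}(g).
\]
In particular the $H$-invariants in each summand are $V_\l^*\ot V_\l^H$. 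Taking invariants commutes with direct sums, so
\[
\BC[G]^H=\bigoplus_{\l\in\Lambda^+_G}V_\l^*\ot V_\l^H.
\]

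The final step is cosmetic re-indexing. Because $\l\mapsto\l^*$ (where $V_{\l^*}:=V_\l^*$) is an involution of $\Lambda^+_G$, I can relabel the sum by $\mu=\l^*$ to rewrite it as $\bigoplus_\mu V_\mu\ot(V_\mu^*)^H$, giving the stated form. The second equality $\bigoplus V_\l\ot(V_\l^*)^H=\bigoplus V_\l^{\oplus \tdim(V_\l^*)^H}$ is then immediate: $(V_\l^*)^H$ is just a vector space (it carries no residual $G$-action), so tensoring $V_\l$ with it produces a direct sum of $\tdim(V_\l^*)^H$ copies of $V_\l$, and this records the multiplicity of $V_\l$ in $\BC[G/H]$.

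The only step requiring care is the identification $\BC[G/H]=\BC[G]^H$ when $G/H$ is not affine: one must know that global regular functions on the quasi-projective variety $G/H$ correspond exactly to $H$-invariant global regular functions on $G$. Everything else is formal manipulation of the Peter-Weyl decomposition.
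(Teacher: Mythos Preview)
Your argument is correct and is exactly the standard unpacking the paper has in mind: the corollary is stated without proof, as an immediate consequence of the algebraic Peter--Weyl decomposition, and you have supplied the details (identifying right-$H$-invariants, computing them summand by summand, and re-indexing via $\lambda\mapsto\lambda^*$). The only subtlety you flag---that $\BC[G/H]=\BC[G]^H$ needs a word of justification when $G/H$ is merely quasi-projective---is real but standard, and the paper silently assumes it.
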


\subsection{Representations of $GL(V)$}\label{glwreps}  
The irreducible representations of $GL(V)$ are indexed by sequences $\pi=(p_1\hd p_l)$ of non-increasing integers with $l\leq \tdim V$. 
Those that occur in $V^{\ot d}$ are partitions of $d$, and we write $|\pi|=d$ and   $S_{\pi}V$ for the module.
$V^{\ot d}$ is also an $\FS_d$ module, and  the groups $GL(V)$ and $\FS_d$ are the commutants of each other in $V^{\ot d}$ which implies
the famous Schur-Weyl duality that $V^{\ot d}=\oplus_{|\pi|=d,\ell(\pi)\leq \bv} [\pi]\ot S_{\pi}V$ as a
$(  \FS_d \times  GL(V))$-module, where $[\pi]$ is the irreducible
$\FS_d$-module associated to $\pi$. Repeated numbers in partitions are sometimes expressed as exponents when there is no danger of
confusion, e.g. $(3,3,1,1,1,1)=(3^2,1^4)$.
For example, $S_{(d)}V=S^dV$ and $S_{(1^d)}V=\La d V$. The modules  $S_{s^{\bv}}V=(\La {\bv}V)^{\ot s}$ 
  are exactly the   $SL(V)$-trivial modules.
The module $S_{(22)}V$ is the home of the Riemann curvature tensor in Riemannian geometry.
 See any of \cite[Chap. 6]{MR2865915}, \cite[Chap 6]{FH} or \cite[Chap. 9]{MR2265844} for more details on the
representations of $GL(V)$, Schur-Weyl duality,  and what follows.

Assuming $\bv,\bw$ are sufficiently large, we may write:
\begin{align}
S_{\pi}(V\op W)&= \bigoplus_{|\mu|+|\nu|=|\pi|}(S_{\mu}V\ot S_{\nu}W)^{\op c^{\pi}_{\mu\nu}}\\
S_{\pi}(V\ot W)&=\bigoplus_{|\mu|=|\nu|=|\pi|}(S_{\mu}V\ot S_{\nu}W)^{\op k_{\pi \mu\nu}}
\end{align}
for some non-negative integers $ c^{\pi}_{\mu\nu}, k_{\pi \mu\nu}$.
On the left  hand side one respectively has $GL(V\op W)$ and $GL(V\ot W)$ modules and on the
right  hand side $GL(V)\times GL(W)$-modules. The constants $c^{\pi}_{\mu\nu}$ are called {\it Littlewood-Richardson
coefficients} and the $k_{\pi,\mu,\nu}$ are called {\it Kronecker coefficients}. They are independent of the dimensions
of the vector spaces as long as $\bv,\bw$ are larger than the lengths of the partitions.
They also (via Schur-Weyl duality) admit descriptions in terms of the symmetric group:
\begin{align}
c^{\pi}_{\mu\nu}&=\tdim(\thom_{\FS_{|\mu|}\times \FS_{|\nu|}}([\pi],[\mu]\ot [\nu])\\
k_{\pi\mu\nu}&=\tdim([\pi]\ot[\mu]\ot [\nu])^{\FS_d}
\end{align}
where in the first line $|\pi|=|\mu|+|\nu|$ and in the second line $|\pi|=|\mu|=|\nu|=d$, so
in particular Kronecker coefficients are symmetric in their three indices.

Often one writes partitions in terms of {\it Young diagrams}, where $\pi=(p_1\hd p_t)$
is represented by a collection of boxes, left justified,  with $p_j$ boxes in the $j$-th row.
There is a nice pictorial recipe for computing Littlewood-Richardson coefficients
in terms of Young diagrams (see, e.g., \cite[Chap. 5]{MR1464693}).

\begin{figure}[htb]
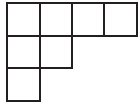

\[
\yng(4,2,1)
\]
\caption{Young diagram for $\pi=(4,2,1)$} \label{fig:Youngdiagram}
\end{figure}

A useful special case of the Littlewood Richardson coefficients  is the
  {\it Pieri formula }
\be\label{pieri}
 c^{\nu}_{\l, (d)}=\left\{\begin{matrix}
1 &{\rm if}\ \nu\ {\rm is\ obtained\ from\ }\l{\rm\ by\ adding\ }d{\rm \ boxes\  to }\\
&{\rm  the\ rows\ of\ } \l {\rm \ with\ no\ two\ in\ the\ same\ column;}\\
0&{\rm otherwise.}
\end{matrix}\right .
\ene
 
\exerone{\label{SLexer} Show that $(S^nV)^{\ot d}$ does not contain any $SL(V)$-invariants for $d<\bv$.}

\exerone{ Show that $k_{\pi\mu\nu}=\tdim (\thom_{\FS_d}([\pi],[\mu]\ot [\nu]))$.}

\subsection{A duality theorem for weight zero spaces and plethysms}
For any $S_{\pi}V$, the Weyl group  $\FS_{\bv}$ acts on the $\fsl$-{\it weight zero space}, which I will denote $(S_{\pi}V)_0$.
This is by definition the subspace of $S_{\pi}V$ on which the torus  $T_{V}$ acts trivially.  
Recall that $\FS_d$ acts on $V^{\ot d}$ and is the commutator of $GL(V)$. 

\exerone{Show that the weight zero space of $V^{\ot d}$ is 
  zero unless
${\bv}$ divides $d$, in which case we write $d={\bv}s$.}

Note that $S_{\mu}(S^sV)\subset V^{\ot s|\mu|}$. We have the following   duality theorem: 
\begin{theorem}\cite{MR0414794}\label{gaythm} For $|\pi|=d={\bv}s$ and $|\mu|={\bv}$, 
$$
\tmult_{\FS_{\bv}}([\mu], (S_{\pi}V)_0)= \tmult_{GL(V)}(S_{\pi}V, S_{\mu}(S^sV)).
$$
\end{theorem}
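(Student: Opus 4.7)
Set $d = \bv s$ and let $\FS_s \wr \FS_{\bv} \subset \FS_d$ denote the subgroup permuting the $\bv$ consecutive blocks $\{1,\ldots,s\}, \{s+1,\ldots,2s\},\ldots$ with free internal rearrangement; write $[\pi]|_{\wr}$ for the restriction of the Specht module to this subgroup, and $\widetilde{[\mu]}$ for the inflation of $[\mu]$ along the quotient $\FS_s \wr \FS_{\bv} \twoheadrightarrow \FS_{\bv}$. My plan is to show that both sides of the claimed identity equal $\tdim\,\thom_{\FS_s \wr \FS_{\bv}}([\pi]|_{\wr}, \widetilde{[\mu]})$, and then conclude.

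For the right-hand side, the $\FS_{\bv}$-Schur-Weyl decomposition $(S^s V)^{\otimes \bv} = \bigoplus_{|\mu|=\bv} [\mu] \otimes S_\mu(S^s V)$ identifies $S_\mu(S^s V)$ with $\thom_{\FS_{\bv}}([\mu], (S^s V)^{\otimes \bv})$ as a $GL(V)$-module. Since $(S^s V)^{\otimes \bv} = (V^{\otimes d})^{(\FS_s)^{\bv}}$ and the block-permutation $\FS_{\bv}$-action is the quotient of the $\FS_s \wr \FS_{\bv}$-action, the inflation--invariants adjunction rewrites this as $\thom_{\FS_s \wr \FS_{\bv}}(\widetilde{[\mu]}, V^{\otimes d})$. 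Applying $\thom_{GL(V)}(S_\pi V, -)$, promoting the wreath subgroup to $\FS_d$ by Frobenius reciprocity, and invoking classical Schur-Weyl on $V^{\otimes d}$ collapses everything to $\tmult(S_\pi V, S_\mu(S^s V)) = \tdim\,\thom_{\FS_s \wr \FS_{\bv}}(\widetilde{[\mu]}, [\pi]|_{\wr})$.

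For the left-hand side, regard $(V^{\otimes d})_0$ as an $\FS_d \times \FS_{\bv}$-module with the first factor permuting tensor positions and the second (the Weyl group) permuting the basis of $V$. Its natural basis consists of words $(i_1,\ldots,i_d)$ in which each value of $[\bv]$ occurs exactly $s$ times; this is a single $\FS_d \times \FS_{\bv}$-orbit, and a short computation shows the stabilizer of the standard word $1^s 2^s \cdots \bv^s$ is the diagonally embedded $\FS_s \wr \FS_{\bv}$, included on the first coordinate as the wreath subgroup of $\FS_d$ and on the second via the block-permutation quotient. Hence $(V^{\otimes d})_0 \cong \mathrm{Ind}_{\Delta(\FS_s \wr \FS_{\bv})}^{\FS_d \times \FS_{\bv}} \mathbf{1}$. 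Combined with the Schur-Weyl decomposition $(V^{\otimes d})_0 = \bigoplus_\pi [\pi] \otimes (S_\pi V)_0$, Frobenius reciprocity together with the self-duality of Specht modules yields $\tmult([\mu], (S_\pi V)_0) = \tdim\,\thom_{\FS_s \wr \FS_{\bv}}([\pi]|_{\wr}, \widetilde{[\mu]})$, matching the right-hand side.

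The main technical point is the stabilizer computation: I need to verify that the condition $\tau(i_0(k)) = i_0(\sigma(k))$ on $(\sigma,\tau) \in \FS_d \times \FS_{\bv}$ forces $\sigma$ into the wreath subgroup with block-permutation component precisely $\tau$, so that one obtains the \emph{diagonal} rather than direct-product copy of $\FS_s \wr \FS_{\bv}$. With that in hand, the rest is careful bookkeeping with Frobenius reciprocity and Schur-Weyl duality in its two incarnations (for $\FS_d$ acting on $V^{\otimes d}$ and for $\FS_{\bv}$ acting on $(S^sV)^{\otimes \bv}$).
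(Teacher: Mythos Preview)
Your proposal is correct and follows essentially the same route as the paper's sketch. Both arguments hinge on identifying $(V^{\otimes d})_0$ as a permutation module for $\FS_d\times\FS_{\bv}$ via the stabilizer of the standard word $e_1^{\otimes s}\otimes\cdots\otimes e_{\bv}^{\otimes s}$, recognizing that stabilizer as (the diagonal copy of) the wreath product $\FS_s\wr\FS_{\bv}$, and then passing between the two sides by Frobenius reciprocity together with Schur--Weyl duality applied once to $V^{\otimes d}$ and once to $(S^sV)^{\otimes\bv}$; the paper phrases the wreath product as the subgroup $H=\FS_s^{\times\bv}\subset\FS_d$ with $Nor(H)/H=\FS_{\bv}$, but this is the same structure you use.
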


In particular, 
\begin{corollary}\label{gaycor} Let $|\pi|=d$.

\begin{enumerate}
\item When $d={\bv}$, $(S_{\pi}V)_0=[\pi]$. 

\item For any $d={\bv}s$, $\tdim [(S_{\pi}V)_0]^{\FS_d}=\tmult(S_{\pi}V,S^{\bv}(S^sV))$.
\end{enumerate}
\end{corollary}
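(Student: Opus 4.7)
The plan is to derive both parts of the corollary as immediate specializations of Theorem \ref{gaythm}, so there is no real obstacle beyond correctly plugging in the right $\mu$ and using standard facts about $GL(V)$-modules and $\FS_{\bv}$-representations.

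For part (1), I would set $s=1$, so that $d=\bv$ and the space $S^s V$ in Theorem \ref{gaythm} is just $V$ itself. Then $S_\mu(S^1 V) = S_\mu V$, and since irreducible $GL(V)$-modules are classified by their highest weight, the multiplicity of $S_\pi V$ inside $S_\mu V$ is simply $\delta_{\pi,\mu}$ (noting that $|\pi|=|\mu|=\bv$). Hence Theorem \ref{gaythm} gives
\[
\tmult_{\FS_{\bv}}([\mu],(S_\pi V)_0) \;=\; \delta_{\pi,\mu}
\]
for every partition $\mu$ of $\bv$. Running $\mu$ over all such partitions, the decomposition of the $\FS_{\bv}$-module $(S_\pi V)_0$ into isotypic components picks up exactly one copy of $[\pi]$ and nothing else. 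Therefore $(S_\pi V)_0 \cong [\pi]$ as an $\FS_{\bv}$-module, which is the content of (1).

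For part (2), I would specialize the other way: take $\mu = (\bv)$, the one-row partition of $\bv$. Then $[\mu] = [(\bv)]$ is the trivial $\FS_{\bv}$-representation, so the multiplicity of $[\mu]$ in any $\FS_{\bv}$-module equals the dimension of its invariant subspace. In particular,
\[
\tmult_{\FS_{\bv}}\bigl([(\bv)],(S_\pi V)_0\bigr) \;=\; \tdim (S_\pi V)_0^{\FS_{\bv}}.
\]
On the other hand, $S_{(\bv)}(S^s V) = S^{\bv}(S^s V)$ by definition of the one-row Schur functor. Theorem \ref{gaythm} then yields
\[
\tdim (S_\pi V)_0^{\FS_{\bv}} \;=\; \tmult_{GL(V)}\bigl(S_\pi V, S^{\bv}(S^s V)\bigr),
\]
which is exactly the statement of (2). (I am reading the $\FS_d$ in the corollary statement as $\FS_{\bv}$, since that is the group acting on $(S_\pi V)_0$ according to the preceding discussion.)

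No step is really difficult: both are one-line consequences of Theorem \ref{gaythm} once the correct $\mu$ is chosen and one recognizes $S_{(\bv)}=S^{\bv}$ and $S_\mu(V)=S_\mu V$. The only conceptual point worth highlighting is that the ``symmetrization side'' of the duality is what gives the familiar plethysm interpretation, while the ``weight zero side'' converts plethysm multiplicities into purely symmetric group invariant-theoretic data, a reformulation that will be useful in the later discussion of the Chow variety and Foulkes--Howe.
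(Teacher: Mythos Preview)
Your proposal is correct and matches the paper's approach: the corollary is stated immediately after Theorem \ref{gaythm} with no separate proof, since both parts follow by specializing $\mu$ exactly as you describe (taking $s=1$ for part (1) and $\mu=(\bv)$ for part (2)). Your observation that $\FS_d$ in the statement of (2) should be read as $\FS_{\bv}$ is also correct.
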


To get an idea of the proof, note that
$S_{\pi}V=\thom_{\FS_d}([\pi], V^{\ot d})$ and thus  $(S_{\pi}V)_0=\thom_{\FS_d}([\pi], (V^{\ot d})_0)$, so the left hand
side is $\tmult_{\FS_{\bv}}([\mu], \thom_{\FS_d}([\pi], (V^{\ot d})_0))$. On the other hand,
$S^sV=(V^{\ot s})^{\FS_s}$, and $(S^sV)^{\ot {\bv}}=(V^{\ot {\bv}s})^{\FS_s\ctimes \FS_s}$, where we have ${\bv}$ copies of $\FS_s$.
So the right hand side is $\tmult_{\FS_d}([\pi], \thom_{\FS_{\bv}}([\mu], (V^{\ot {\bv}s})^{\FS_s\ctimes \FS_s})$. 
Now   $(V^{\ot d})_0$  is an $\FS_d$ and an $\FS_{\bv}$-module and   has a basis
$e_{i_1}\otc e_{i_d}$ with $\{ i_1\hd i_d\}=[{\bv}]^s$ where $d={\bv}s$.
Moreover  the $\FS_d$ and $\FS_{\bv}$ actions
commute, and the $\FS_d$ action is transitive on the set of basis elements. 
Letting $H=\FS_s^{\times {\bv}}\subset \FS_d$, D. Gay shows the normalizer
of $H$ divided by $H$ is $Nor(H)/H=\FS_{\bv}$ and the centralizer of $\FS_d$ in $\FS_{\tdim (V^{\ot d})_0}$ is $\FS_{\bv}$. 
The result follows by applying a combination of Frobenius reciprocity and
Schur-Weyl duality to go from $\FS_d$-modules to  $GL(V)$-modules. 
A key point is noting that $H$ is also the stabilizer of
  the vector
$
x:=e_1\otc e_1\ot e_2\otc e_2\otc e_{\bv}\otc e_{\bv}=e_1^{\ot s}\otc e_{\bv}^{\ot s} 
$.
 
\exerone{\label{snssv} We may realize  $S_{(s^{\bv})}V$ as $\BC\{ (e_1\ww\cdots \ww e_{\bv})^{\ot s}\}$. Show that $\FS_{\bv}$ acts
on $S_{(s^{\bv})}V$ by the sign representation when $s$ is odd and acts trivially when $s$ is even. 
Conclude $S^{\bv}(S^sV)$ has a unique $SL(V)$-invariant when $s$ is even and none when $s$ is odd,
and that $[S^{2\bv}(S^sV)]^{SL(V)}\neq 0$ for all $s>1$.
This had been observed in \cite[Prop. 4.3a]{MR983608}.}

\exerone{Show that the $SL(V)$-invariant $P\in S^{\bv}(S^sV)$ from the previous problem has the following expression.
Let $z=(x^1_1\cdots x^1_{s})\cdots (x^{\bv}_1\cdots x^{\bv}_{s})$.
Then
\be\label{polyexpr}
\langle \ol{P},  z\rangle =\sum_{\s_1\hd \s_{\bv} \in \FS_s} 
\ol{\tdet_{\bv}} (x^1_{\s_1(1)}\hd x^{\bv}_{\s_{\bv}(1)})\cdots 
\ol{\tdet_{\bv}} (x^s_{\s_1(s)}\hd x^{\bv}_{\s_{\bv}(s)}).
\ene
To compute $P(u)$ for any $u\in S^sV^*$, consider $u^{\bv}$ and expand it out as a sum of terms of the form $z$. Then
$P(u)=\langle \ol{P}, u^{\bv}\rangle$.
}

\section{Lower bounds via geometry}\label{lowgeomsect}

\subsection{The second fundamental form and the $\frac{m^2}2$ bound for Valiant's conjecture}\label{MRsect}
For hypersurfaces in  affine space, one can attach a differential invariant, the {\it second fundamental form},  to
each point. This form  is essentially the quadratic term in an adapted Taylor series for the hypersurface graphed over its
tangent space  at that point. The rank of this
quadratic form gives an invariant that can only decrease on the image of  general points under affine linear projections.
It is straight-forward to compute that for smooth points of $\{ \tdet_n=0\}$ the rank of the quadratic
form is $2n-2$ whereas, if one chooses a judicious point of $\{\tperm_m=0\}$ one finds the rank
is the maximal $m^2-2$. Combining these two gives:
\begin{theorem} \cite{MR2126826} $dc(\tperm_m)\geq \frac{m^2}2$, i.e.,  if $\tperm_m\in\tend(\BC^{n^2}\cdot \tdet_n)$, then $n\geq\frac{m^2}2$.
\end{theorem}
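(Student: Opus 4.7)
The plan is to identify a second-order differential invariant of a hypersurface at a smooth point that is monotone under pullback by linear maps, and then to compare its values on the determinantal and permanental hypersurfaces. Suppose $dc(\tperm_m) \le n$, so that $\ell^{n-m}\tperm_m = A \cdot \tdet_n$ for some $A \in \tend(W)$ with $W = \BC^{n^2}$. Restricting to the affine chart $\{\ell = 1\}$ in $W^*$, this identity gives an affine linear map $\iota\colon \BC^{m^2} \to W^*$ with $\iota^*\tdet_n = \tperm_m$; in particular $\iota$ carries the permanent hypersurface $H_P := \{\tperm_m = 0\}$ into the determinant hypersurface $H_D := \{\tdet_n = 0\}$.

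The invariant I will use is the rank of the second fundamental form (SFF). For a smooth point $y$ of a hypersurface $\{F = 0\} \subset V$, define $\mathrm{II}_y$ as $d^2 F|_y$ restricted to $T_y := \ker(dF|_y)$, viewed modulo the normal direction. If $F = G \circ L$ for an affine linear $L$ and $L(x) = y$ is a smooth point of $\{G = 0\}$, then $\mathrm{II}_x = L^* \mathrm{II}_y$, so $\mathrm{rank}(\mathrm{II}_x) \le \mathrm{rank}(\mathrm{II}_y)$. When $F$ is homogeneous of degree $d \ge 2$, Euler's identity $dF|_y(y) = d\,F(y) = 0$ places $y \in T_y$, and differentiating Euler gives $d^2 F|_y(y,v) = (d-1)\,dF|_y(v) = 0$ for $v \in T_y$, so $y \in \ker(\mathrm{II}_y)$ and hence $\mathrm{rank}(\mathrm{II}_y) \le \tdim V - 2$.

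Two calculations then finish the argument. First, at a smooth point of $H_D$ --- a matrix of corank exactly one, which after row/column operations (which preserve the SFF) may be taken to be $\mathrm{diag}(1,\ldots,1,0)$ --- direct differentiation of the cofactor expansion shows that the nonzero entries of the Hessian are confined to positions involving the last row or column, yielding $\mathrm{rank}(\mathrm{II}) = 2n-2$. Second, one must exhibit a smooth point $Y \in H_P$ lying in $\{\ell = 1\}$ with $\mathrm{rank}(\mathrm{II}_Y) = m^2 - 2$ (the maximum allowed); the Hessian entries $\partial^2\tperm_m/\partial y^i_j\partial y^k_l$ are $\pm$ sub-permanents of $Y$ with rows $i,k$ and columns $j,l$ deleted (zero when $i=k$ or $j=l$). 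I expect this second computation to be the main obstacle: a generic $Y$ has all sub-permanents nonzero, making its Hessian ``as full as possible,'' but one must then move $Y$ onto the codimension-one variety $H_P$ while preserving this generic behavior. A perturbation argument from a generic starting matrix, together with the fact that maximality of the Hessian rank on $T_Y$ is a Zariski open condition, should suffice. Once such a $Y$ is produced with $\iota(Y)$ a smooth point of $H_D$ (also a generic condition), monotonicity gives $m^2 - 2 \le 2n - 2$, i.e., $n \ge m^2/2$.
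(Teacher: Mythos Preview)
Your approach is exactly that of the paper (and of \cite{MR2126826}): compare the rank of the second fundamental form on the two hypersurfaces and use its monotonicity under affine linear pullback. Two small remarks. First, what you flag as the main obstacle is handled in the literature by an explicit point rather than a genericity argument: the matrix all of whose entries are $1$ except the $(1,1)$ entry, which is $1-m$, lies on $\{\tperm_m=0\}$ and has Hessian of rank $m^2-1$, hence second fundamental form of rank $m^2-2$ (this is also mentioned in \S\ref{MR3048194sect} of the paper). Second, you need not arrange that $\iota(Y)$ be a smooth point of $H_D$: since $\tperm_m=\iota^*\tdet_n$, the chain rule gives $d\tperm_m|_Y=(d\iota)^*\,d\tdet_n|_{\iota(Y)}$, so $d\tdet_n|_{\iota(Y)}=0$ would force $d\tperm_m|_Y=0$, contradicting smoothness of $Y$ on $H_P$.
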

Valiant's conjecture \cite{vali:79-3} that motivated the work of Mulmuley and Sohoni is that $n$ must grow faster than
any polynomial in $m$ to have $\tperm_m\in\tend(\BC^{n^2}\cdot \tdet_n)$.

\subsection{Dual varieties and the $\frac{m^2}2$ lower bound for the Conjecture \ref{msmainconj}}\label{MR3048194sect}

Define $\Dual_{k,d,N}\subset \PP(S^dW^*)$ as the Zariski closure of the set of 
irreducible hypersurfaces of degree $d$ in $\PP W\simeq \PP^{N-1}$  whose dual variety has dimension at most $k$. 
(I identify a hypersurface (as a scheme) with its equation.)

It had been a classically studied problem to determine set-theoretic equations for $\Dual_{k,d,N}$. Motivated by GCT,
Manivel, Ressayre and I  were led to solve it. I follow \cite{MR3048194} in this subsection.

Let  $P\in S^dW^*$ be irreducible. The B. Segre  dimension formula \cite{MR0041481} states that for $[w]\in Z(P)_{general}$, 
$$\dim Z(P)\dual  = \trank (P_{d-2,1,1}(w^{d-2}))-2.
$$
The bilinear form  $P_{d-2,1,1}(w^{d-2})$ is called the {\it Hessian} of $P$ at $w$. 
Write $H_P$ for $P_{d-2,1,1}$; in bases it is  an $n\times n$ symmetric matrix whose entries are polynomials
of degree $d-2$. 

Thus  $\tdim (Z(P)\dual)\leq k$ if and only if,
for all $w\in \hat Z(P)$ and   $F\in G(k+3,W)$, 
$$
\tdet_{k+3} (H_P(w)|_F) =0.
 $$

Equivalently,    $P$ must divide   $\det_{k+3} (H_P|_F)$.
Note that $P\mapsto \det_{k+3} (H_P|_F)$ is a polynomial of degree $(k+3)(d-2)$ on $S^dW^*$.

By restricting first to a projective line $L\subset \BP W$, and then to an   affine line $\BA^1\subset   L$ within the projective
line, one can test divisibility by Euclidean division. The remainder will depend on our choice of coordinates on $\BA^1$, but
the  leading coefficient of the remainder only depends on the choice of point in $L$ that distinguishes the  affine line.

Set theoretically, the   equations obtained from the invariant
part of the remainder as one varies $\BA^1,L,F$  suffice to define $\Dual_{k,d,N}$   on the open 
subset parameterizing irreducible hypersurfaces, as  once the plane 
$\hat L$ is fixed, by varying the line $\BA^1$ one obtains a family of equations expressing
the condition that $P|_L$ divides $\det (H_{P}|_F)|_L$. A polynomial $P$ divides $Q$ if and only if    when restricted to each
plane $P$ divides $Q$, so the  conditions   imply that the dual variety of the irreducible hypersurface $Z(P)$ has dimension 
at most   $k$.

 By keeping track of weights along the flag
$\BA^1\subset \hat L^2\subset  F^{k+3}$ one concludes:

\begin{theorem}\label{degdualeqns}\cite{MR3048194}
The variety $\Dual_{k,d,N}\subset \PP(S^d(\BC^N)^*)$ has equations given by a copy of the $GL_N$-module 
 $S_{\pi(k,d)}\BC^N$,  where 
$$\pi(k,d) = ((k+2)(d^2-2d)+1,\, d(k+2)-2k-3 ,\, 2^{k+1}).   
$$
Since $|\pi|=d(k+2)(d-1)$, 
these equations have degree $(k+2)(d-1)$.
\end{theorem}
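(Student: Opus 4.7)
My plan has three stages: (i) use B.\ Segre's dimension formula to recast the defining condition of $\Dual_{k,d,N}$ as a divisibility statement; (ii) reduce divisibility in $\mathrm{Sym}(W^*)$ to a bivariate problem by restricting to 2-planes; (iii) identify the top coefficient of the resulting Euclidean remainder as a highest weight vector, and extract the partition $\pi(k,d)$ from its weight.

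Stage (i) is already carried out immediately before the theorem: for irreducible $P$, $\dim Z(P)^{\vee}\leq k$ is equivalent to $P\mid \det_{k+3}(H_P|_F)$ in $\mathrm{Sym}(W^*)$ for every $F\in G(k+3,W)$. For stage (ii) I use the elementary fact that, for $A$ irreducible, $A\mid B$ in $\mathrm{Sym}(W^*)$ if and only if $A|_{\hat L}\mid B|_{\hat L}$ in $\mathrm{Sym}(\hat L^*)$ for every 2-plane $\hat L\subset W$. Fix $\hat L=\langle e_1,e_2\rangle$ and expand $P|_{\hat L}=\sum_{a=0}^{d}P_a s^a t^{d-a}$ (so $P_d=c_{(d,0,\ldots,0)}$) and $\det_{k+3}(H_P|_F)|_{\hat L}=\sum_{A=0}^{D}\Delta_A s^A t^{D-A}$, with $D=(k+3)(d-2)$. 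Pseudo-division in $\mathbb{C}[s]$ after dehomogenizing $t=1$ and clearing denominators yields
\[ P_d^{\,D-d+1}\cdot\det_{k+3}(H_P|_F)|_{\hat L}\equiv R(s,t)\pmod{P|_{\hat L}}, \]
with $R=\sum_{i=0}^{d-1}r_i s^i t^{D-i}$ and each $r_i$ of degree $(k+3)+(D-d+1)=(k+2)(d-1)$ in the $c_a$'s. Simultaneous vanishing of the $r_i$ as $L$, $F$, and the affine chart $\mathbb{A}^1\subset L$ vary recovers divisibility, cutting out $\Dual_{k,d,N}$ set-theoretically on the irreducible locus.

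For stage (iii), note that $r_{d-1}$ is invariant under translations of the affine coordinate and transforms by a single character under rescalings, so $P_d^{\,D-d+1}r_{d-1}$ is a semi-invariant for the Borel of $GL(W)$ stabilizing the flag $\langle e_1\rangle\subset \hat L\subset F$. Hence it lies on a highest-weight line and generates an irreducible $S_{\pi}W\subset S^{(k+2)(d-1)}(S^dW^*)$ whose partition $\pi$ I read off directly. Under $T=\mathrm{diag}(t_1,\ldots,t_N)$, $c_a$ has weight $+a\cdot\epsilon$ and $s,t$ have weights $-\epsilon_1,-\epsilon_2$; a quick check shows the $(i,j)$-entry of $H_P(se_1+te_2)|_F$ is a weight vector of weight $\epsilon_i+\epsilon_j$ for $i,j\leq k+3$, so every term in the $(k+3)\times(k+3)$ determinant has weight $2(\epsilon_1+\cdots+\epsilon_{k+3})$, and comparing coefficients of $s^A t^{D-A}$ gives $\Delta_A$ the weight $(2+A)\epsilon_1+(2+D-A)\epsilon_2+2(\epsilon_3+\cdots+\epsilon_{k+3})$. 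Specializing $A=d-1$ and multiplying by $P_d^{\,D-d+1}$ (weight $d(D-d+1)\epsilon_1$), the identities $d(D-d+1)+(d+1)=(k+2)(d^2-2d)+1$ and $D-d+3=d(k+2)-2k-3$ yield precisely the weight
\[ ((k+2)(d^2-2d)+1)\epsilon_1+(d(k+2)-2k-3)\epsilon_2+2(\epsilon_3+\cdots+\epsilon_{k+3}), \]
i.e.\ $\pi(k,d)$.

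The main obstacle I anticipate is verifying that $P_d^{\,D-d+1}r_{d-1}$, despite being extracted from a basis-dependent division algorithm, really is $B$-semi-invariant on $S^dW^*$ rather than merely invariant under a smaller group; this is what lets one identify a highest weight vector inside $S^{(k+2)(d-1)}(S^dW^*)$. Once that is granted, the weight calculation forces a copy of $S_{\pi(k,d)}W$ to lie in the ideal of $\Dual_{k,d,N}$, of degree $|\pi(k,d)|/d=(k+2)(d-1)$, completing the theorem.
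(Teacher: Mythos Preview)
Your proposal follows essentially the same approach as the paper's: Segre's formula reduces to divisibility, restriction to a line and Euclidean division isolate the leading remainder coefficient, and a weight computation along the flag $\langle e_1\rangle\subset\hat L\subset F$ identifies $\pi(k,d)$. Your weight arithmetic is correct.

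Two small points. First, in the final paragraph you write ``$P_d^{\,D-d+1}r_{d-1}$'' for the semi-invariant, but $r_{d-1}$ itself already has degree $(k+2)(d-1)$ in the $c_a$'s (the factor $P_d^{\,D-d+1}$ is built into the pseudo-remainder), so the highest weight vector is $r_{d-1}$, not a further multiple of it; your weight computation in fact confirms this, since the weight you found is that of one monomial $P_d^{\,D-d+1}\Delta_{d-1}$ occurring in $r_{d-1}$. Second, the ``main obstacle'' you flag is genuine but resolves straightforwardly: the unipotent generators $E_{ij}$ with $j>k+3$ act trivially on the $c_a$'s you use; those with $i,j\leq k+3$ and $(i,j)\neq(1,2)$ fix $se_1+te_2$ and conjugate $H_P|_F$ by a unimodular matrix, leaving $Q$ and $P|_{\hat L}$ unchanged; and $E_{12}$ is exactly the affine translation under which you already noted the leading remainder coefficient is invariant. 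Together with your torus-weight check this gives $B$-semi-invariance. The paper's own account is at the same level of sketchiness here, so your argument matches it.
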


%One obtains equations even when  $L$ is not  contained in $F$. 
%This indicates that the module generated by these equations is larger than the single module $S_{\pi(k,d)}\BC^N$. 

 If $P$ is not reduced, then these equations can vanish even if the
dual of the reduced polynomial with the same zero set as $P$ is non-degenerate. For example, if $P=R^2$ where
$R$ is a quadratic polynomial of rank $2s$, then $\tdet(H_P)$ is a multiple of $R^{2s}$.
The polynomial $\ell^{n-m}\tperm_m$ is  neither reduced nor irreducible, but fortunately we have
the following lemma:

\begin{lemma}\label{linfaclem}\cite{MR3048194}  Let $U=\BC^M$ and $L=\BC$, let  $R\in S^m U^*$ be  irreducible,
let $\ell\in L^*$ be nonzero,  let $U^*\op L^*\subset W^*$ be a linear inclusion,
and let $P=\ell^{d-m}R\in S^dW^*$. 

If $[R]\in \Dual_{\kappa, m, M}$ and 
$[R]\not\in \Dual_{\kappa-1, m, M}$, then $[P]\in \Dual_{\kappa, d, N}$ and 
$[P]\not\in \Dual_{\kappa-1, d, N}$.
\end{lemma}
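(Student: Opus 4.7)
\textbf{Proof plan for Lemma \ref{linfaclem}.}
The plan is to work out the Hessian $H_P$ in block form relative to the decomposition $W = L \oplus U \oplus W^{\prime\prime}$ (where $W^{\prime\prime}$ is a complement to $L \oplus U$ in $W$), and then apply Theorem \ref{degdualeqns} twice: once to read the non-membership from $[R] \notin \Dual_{\kappa-1,m,M}$, and once to verify membership in $\Dual_{\kappa,d,N}$ by a controlled smoothing. Writing $w = (\ell_0, u, v)$, a direct computation gives
\[
H_P(w) \;=\; \begin{pmatrix}
(d-m)(d-m-1)\ell_0^{d-m-2}R(u) & (d-m)\ell_0^{d-m-1}\nabla R(u)^{\!T} & 0 \\[2pt]
(d-m)\ell_0^{d-m-1}\nabla R(u) & \ell_0^{d-m}H_R(u) & 0 \\[2pt]
0 & 0 & 0
\end{pmatrix},
\]
so the $W^{\prime\prime}$-rows/columns are identically zero. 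The key observation is that on the locus $\{R(u)=0,\ \ell_0 \neq 0\}$, Euler's identity $H_R(u)\cdot u = (m-1)\nabla R(u)$ shows that the upper-left vector $\nabla R(u)$ lies in the column span of $H_R(u)$, and the Euler relation $u^T H_R(u) u = m(m-1)R(u) = 0$ ensures that the augmented Schur complement collapses. Consequently $\mathrm{rank}(H_P(w)) = \mathrm{rank}(H_R(u))$ at every such point.

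For the non-membership assertion $[P] \notin \Dual_{\kappa-1,d,N}$, I would use that $[R] \notin \Dual_{\kappa-1,m,M}$ means the equations of Theorem \ref{degdualeqns} for $\Dual_{\kappa-1,m,M}$ do not all vanish on $R$; concretely, there exist $u_0 \in \hat Z(R)_{smooth}$ and $F_U \in G(\kappa+2, U)$ so that $\det_{\kappa+2}(H_R(u_0)|_{F_U})$ is not divisible by $R$. Taking $w_0 = (1, u_0, 0)$ (so $P(w_0)=0$) and $F = F_U \oplus 0 \subset W$, the block structure above gives
\[
\det_{\kappa+2}\bigl(H_P(w_0)|_F\bigr) \;=\; \det_{\kappa+2}\bigl(H_R(u_0)|_{F_U}\bigr),
\]
which is not divisible by $P = \ell^{d-m} R$ (one checks the powers of $\ell$ and $R$ on each side). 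Hence the corresponding equation from Theorem \ref{degdualeqns} for $\Dual_{\kappa-1,d,N}$ does not vanish at $[P]$, so $[P] \notin \Dual_{\kappa-1,d,N}$.

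For membership $[P] \in \Dual_{\kappa,d,N}$, the rank calculation above shows $\mathrm{rank}(H_P(w)) \leq \kappa+2$ for all $w \in Z(P)$ (the estimate on $\{\ell_0 = 0\}$ being even smaller because the blocks involve positive powers of $\ell_0$), so all $(\kappa+3)$-minors of $H_P|_F$ vanish identically on $Z(P)$, i.e.\ are divisible by $P$. This places $[P]$ in the vanishing locus of the equations of Theorem \ref{degdualeqns}. To upgrade this to actual membership in the closure of irreducibles, I would take a family $R_\epsilon$ of irreducibles in $S^m U^*$ with $R_0 = R$ and $\dim Z(R_\epsilon)^\vee \leq \kappa$ (given by $[R] \in \Dual_{\kappa,m,M}$), then build a two-parameter deformation $P_{\epsilon,\eta} = \ell^{d-m} R_\epsilon + \eta Q_\epsilon$, with $Q_\epsilon \in S^d(L \oplus U)^*$ chosen so that for generic small $(\epsilon,\eta)$ the polynomial $P_{\epsilon,\eta}$ is irreducible while the Hessian-rank bound $\kappa+2$ persists (e.g.\ by taking $Q_\epsilon$ that is a degree-$d$ polynomial lying in the Zariski closure of such low-dual-dimension polynomials on $L \oplus U$).

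The main technical obstacle is this last smoothing step: the space of irreducible degree-$d$ hypersurfaces with dual dimension exactly $\kappa$ can be small (often limited to cones and developables) when $\kappa \ll N-2$, so the choice of $Q_\epsilon$ must be made carefully enough that $P_{\epsilon,\eta}$ lies on a component of $\Dual_{\kappa,d,N}$ accumulating at $[P]$. The cleanest realization I envision is to choose $Q_\epsilon$ as a cone (in $L\oplus U$) over a degree-$d$ hypersurface in a $(\kappa+2)$-plane whose dual has dimension $\kappa$, and verify by the block Hessian computation that the sum inherits the rank bound, after which the general $GL_N$-orbit closure structure yields the desired limit.
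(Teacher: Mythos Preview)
The paper does not supply a proof of Lemma~\ref{linfaclem}; it is only stated with a citation to \cite{MR3048194}. So there is nothing in the survey to compare against, and your proposal has to stand on its own.

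Your block Hessian computation and the Euler--identity argument giving $\mathrm{rank}\,H_P(w)=\mathrm{rank}\,H_R(u)$ on $\{R=0,\ \ell_0\neq 0\}$ are correct and are the natural starting point. For the \emph{non-membership} direction $[P]\notin\Dual_{\kappa-1,d,N}$ (which is the direction actually used in the $m^2/2$ lower bound), your argument is essentially right, but two points need tightening. First, you repeatedly conflate the value of a determinant at a point with the determinant as a polynomial: what you need is that, as a polynomial in $w$, $\det_{\kappa+2}(H_P|_F)=\ell_0^{(d-m)(\kappa+2)}\det_{\kappa+2}(H_R|_{F_U})$, and then non-divisibility by $P=\ell_0^{d-m}R$ follows since $R$ is irreducible, $R\nmid \det_{\kappa+2}(H_R|_{F_U})$, and $R\nmid \ell_0$. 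Second, you should appeal to the full family of remainder equations described just before Theorem~\ref{degdualeqns}, not to the single module $S_{\pi(k,d)}\BC^N$ of that theorem: those remainder equations vanish on all of $\Dual_{\kappa-1,d,N}$, and for generic $L$ the Euclidean remainder of $\det_{\kappa+2}(H_P|_F)|_L$ by $P|_L$ is nonzero, so some equation in the family does not vanish at $[P]$.

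The genuine gap is in the \emph{membership} direction $[P]\in\Dual_{\kappa,d,N}$. You correctly observe that the set-theoretic equations from Theorem~\ref{degdualeqns} only define $\Dual_{\kappa,d,N}$ on the locus of irreducible polynomials, so vanishing of those equations at the reducible point $[P]$ is not enough; one must exhibit $[P]$ as a limit of irreducible degree-$d$ hypersurfaces with dual dimension at most $\kappa$. Your proposed smoothing $P_{\epsilon,\eta}=\ell^{d-m}R_\epsilon+\eta Q_\epsilon$ with $Q_\epsilon$ a cone over a low-dual-dimension hypersurface does not work as written: the zero set $Z(P_{\epsilon,\eta})$ is different from $Z(P)$, and there is no reason the Hessian rank bound $\kappa+2$ survives on this new zero set, since $H_{P_{\epsilon,\eta}}=H_P+\eta H_{Q_\epsilon}$ is being evaluated at points where neither summand individually has controlled rank. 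The phrase ``verify by the block Hessian computation that the sum inherits the rank bound'' is precisely the step that fails. You need a deformation that keeps the Hessian rank under control \emph{along the moving zero locus}, and your construction does not achieve that.
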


Checking that $\{\tperm_m=0\}\dual$  is indeed a hypersurface by computing the second fundamental form
of $\{\tperm_m=0\}$ is of full rank 
at the matrix all of whose entries are $1$ except, e.g.,  the $(1,1)$ slot which is $1-n$   (the kernel of the second fundamental
form has the same dimension as the kernel of the Hessian),  
it follows $\Perm^m_{\frac {m^2} 2}\not\subset \Det_{\frac {m^2}2}$
proving Theorem \ref{LMRbnd}.

The main theorem of \cite{MR3048194} is:

\begin{theorem}\label{smooth}\cite{MR3048194} 
The scheme $\Dual_{2n-2,n,n^2}$ is smooth at $[\det_n]$, and $\Det_n$  is an irreducible component of $\Dual_{2n-2,n,n^2}$. 
\end{theorem}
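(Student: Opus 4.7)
The plan is to reduce both assertions to the single tangent-space calculation
\[
\dim T_{[\tdet_n]} \Dual_{2n-2,n,n^2} \;=\; \dim \Det_n.
\]
Since $\Det_n \subset \Dual_{2n-2,n,n^2}$ is irreducible and contains $[\tdet_n]$, such an equality forces $\Det_n$ to be an irreducible component (no larger irreducible subvariety of $\Dual_{2n-2,n,n^2}$ through $[\tdet_n]$ can fit inside the tangent space), and simultaneously forces $\Dual_{2n-2,n,n^2}$ to be smooth at $[\tdet_n]$ as a scheme (local dimension equals tangent space dimension).

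Because $\tdet_n$ is characterized by its stabilizer $G_{\tdet_n}$ (Definition~\ref{stabdef}), both tangent spaces are $G_{\tdet_n}$-submodules of $S^n W^*/\langle \tdet_n\rangle$. The subspace $T_{[\tdet_n]} \Det_n$ is the image of the infinitesimal action $X\mapsto X\cdot \tdet_n\bmod\tdet_n$ for $X\in\mathfrak{gl}(W)$; it decomposes explicitly as a $G_{\tdet_n}$-module via $W=A\otimes B$ with $A=B=\BC^n$. It therefore suffices to show that every $G_{\tdet_n}$-isotype of $S^n W^*/\langle \tdet_n\rangle$ lying outside $T_{[\tdet_n]} \Det_n$ is moved out of $\Dual_{2n-2,n,n^2}$ to first order. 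By Schur's lemma, a single explicit witness in each such isotype is enough.

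Rather than linearize the high-degree $S_{\pi(2n-2,n)} W$ equations of Theorem~\ref{degdualeqns} directly, I would work with the underlying Segre rank condition: $[P]\in \Dual_{2n-2,n,n^2}$ iff at a general $w\in Z(P)_{smooth}$ the Hessian $H_P(w)$ has rank at most $2n$. At $w\in Z(\tdet_n)_{smooth}$ the Hessian $H_{\tdet_n}(w)$ has corank exactly one, with kernel spanned by an explicit vector built from the cofactors of $w$. A first-order perturbation $\tdet_n + \varepsilon Q$ lies in $\Dual_{2n-2,n,n^2}$ to first order precisely when, after tracking the motion of both the zero locus and the kernel direction, the perturbed Hessian continues to annihilate the perturbed kernel modulo $\varepsilon^2$. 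This yields a concrete $G_{\tdet_n}$-equivariant linear condition on the class of $Q$.

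The main obstacle will be the isotype-by-isotype verification, since $S^n(A\otimes B)$ decomposes (via the Cauchy formula together with the $\BZ_2$-swap inside $G_{\tdet_n}$) into a sizeable family of modules of the form $(S_\mu A\otimes S_\mu B)_{\BZ_2}$. For each component not already present in $T_{[\tdet_n]}\Det_n$, one must exhibit a highest weight vector $Q$ and a convenient test matrix $w\in Z(\tdet_n)_{smooth}$ at which the above linearized obstruction is visibly nonzero. The explicit form of $H_{\tdet_n}$ as the adjugate matrix, together with highest weight vectors given by products of minors of the coordinate matrix, should make each individual check a short calculation; organizing the full list of complementary isotypes and ruling out spurious cancellations across them is where the bulk of the work concentrates.
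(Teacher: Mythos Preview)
Your overall strategy---reduce to the tangent-space equality $\dim T_{[\tdet_n]}\Dual_{2n-2,n,n^2}=\dim\Det_n$, exploit the $G_{\tdet_n}$-module structure of $S^n W/\langle\tdet_n\rangle$, and test one representative per isotype---is exactly what \cite{MR3048194} does. The only substantive difference is the choice of test vectors: the paper uses immanants (see Remark~\ref{permbeauty}), one for each $S_\pi E\otimes S_\pi F$ in the Cauchy decomposition of $S^n(E\otimes F)$, rather than highest weight vectors built from products of minors. Immanants are well suited here because they sit in the torus-weight-zero subspace, so the Hessian calculation at a diagonal test matrix in $Z(\tdet_n)_{smooth}$ is transparent.

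There is, however, a genuine error in your description of the Hessian that would derail the computation as you have written it. At a smooth point $w\in Z(\tdet_n)$ (a rank-$(n{-}1)$ matrix), the $n^2\times n^2$ Hessian $H_{\tdet_n}(w)$ has rank exactly $2n$ by the Segre formula, hence corank $n^2-2n$, not corank one; the kernel is \emph{not} spanned by a single cofactor vector. Relatedly, the adjugate is the matrix of cofactors, i.e.\ of \emph{first} partial derivatives, not the Hessian. Consequently the first-order condition for $\tdet_n+\varepsilon Q$ to stay in $\Dual_{2n-2,n,n^2}$ is that the rank of the perturbed Hessian remain $\le 2n$ along the perturbed hypersurface, which is a rank-drop condition on an $(n^2-2n)$-dimensional kernel, not the single-vector annihilation you describe. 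Once this is corrected the isotype-by-isotype verification goes through, but the actual linear algebra is different from your sketch.
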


 For   polynomials in $N'<N$ variables, the maximum rank of the Hessian
is $N'$ so   the determinant of the Hessian will vanish on
any $F$ of dimension $N'+1$. Thus  $Sub_{k+2}(S^nW)\subset \Dual_{k,n,N}$. The     subspace variety  $Sub_{k+2}(S^d\BC^N)$, which  
 has dimension $\binom{k+d+1}d+(k+2)(N-k-2)-1$,  also forms an irreducible component of $\Dual_{k,n,N}$ (see \cite{MR3048194}), 
 so  $\Dual_{2n-2,n,n^2}$ is not   irreducible.

Theorem \ref{smooth} is proved by computing the Zariski tangent space to both varieties at $[\tdet_n]$. To carry
out the computation, one uses that the Zariski tangent spaces are $G_{\tdet_n}$-modules, so
one just needs to single out a vector in each $S_{\pi}E\ot S_{\pi}F$.  On then uses  immanants (see \S\ref{permbeauty}) to get a preferred vector in each module to test.

In particular, Theorem \ref{smooth} implies that the $GL(W)$-module of highest weight $ \pi(2n-2,n) $ given by 
Theorem \ref{degdualeqns}
gives  local equations at $[\det_n]$
of $\ol{GL_{n^2}\cdot [\tdet_n]}$, of degree $2n(n-1)$.

\section{The boundary of $\Det_n$}  \label{bndrysect}
It is expected that understanding 
the components of the boundary of $\Det_n$
will be useful for GCT. There is the
obvious component obtained by eliminating a variable, which is contained in $\tend(\BC^{n^2}\cdot \tdet_n)$, and is
related to Valiant's conjecture. To understand the difference between Valiant's conjecture and the Conjecture \ref{msmainconj},
one needs to examine the other components of the boundary.

Determining additional components of the boundary relates to yet another classical question: determine unextendable linear
spaces on the hypersurface $\{\tdet_n=0\}$. Roughly speaking, given one such, call it $L\subset \BC^{n^2}$, write
$\BC^{n^2}=L\op L^c$ where $L^c$ is some choice of complement to $L$. Then compose the determinant with a 
(suitably normalized) curve 
$f_t=Id_L+ tId_{L^c}\in GL_{n^2}$. In the limit as $t\ra 0$ one may arrive at a new component of the boundary.

For an explicit example, write $\BC^{n^2}=W=W_S\op W_{\Lambda}$, where we split up the $n\times n$ matrices into symmetric and skew-symmetric matrices.
When $n$ is odd, the curve
$$
g(t)=\frac 1t(Id_{W_{\Lambda}}+tId_{W_S})
$$
determines  a   polynomial $P_{\Lambda}:=\tlim_{t\ra 0}g(t)\cdot \tdet_n$.
To see 
$P_{\Lambda}$ explicitly, 
decompose a matrix $M$ into its symmetric and skew-symmetric parts 
$M_S$ and $M_{\Lambda}$. Then
$$
P_{\Lambda}(M)= \ol{\tdet_n}(M_{\Lambda},\ldots, M_{\Lambda},M_S). 
$$
More explicitly, $P_{\Lambda}$ can   be expressed as follows. 
Let $Pf_i(M_{\Lambda})$ denote the Pfaffian of the skew-symmetric matrix, of even size, 
obtained from $M_{\Lambda}$ by suppressing its $i$-th row and column. Then 
$$P_{\Lambda}(M)=\sum_{i,j}(M_S)_{ij}Pf_i(M_{\Lambda})Pf_j(M_{\Lambda}).$$

\begin{prop}\label{Lambda}\cite{MR3048194}
The orbit closure 
$\ol{GL(W)\cdot P_{\Lambda}}$ is an irreducible codimension one component of 
$\partial \Det_n$ that is  not contained in $\tEnd(W)\cdot [\tdet_n]$. In particular $\ol{dc}(P_{\Lambda,m})=m<dc(P_{\Lambda,m})$.
\end{prop}

Proposition \ref{Lambda} indicates  that   Conjecture \ref{msmainconj} could be  strictly stronger
than Valiant's conjecture.
To prove the   second assertion, one
computes the stabilizer $G_{P_{\Lambda}}$ explicitly and sees  it has dimension one less than 
the dimension of $G_{\tdet_n}$.

 The hypersurface   $Z(P_\Lambda)\subset\BP W$ has interesting properties, for example:  

\begin{prop}\label{Lambdaex}\cite{MR3048194}
$$Z(P_\Lambda)\dual =\ol{\PP \{v^2\oplus v\wedge w\in S^2\BC^n\op \La 2\BC^n, \; v,w\in\CC^n\}}
\subset \PP W^*.$$
\end{prop}

Note that  $Z(P_\Lambda)^\dual$ resembles      $Seg(\PP^{n-1}\times \PP^{n-1})$. 
It can be defined as the image of the projective bundle 
$\pi : \PP(E)\rightarrow\PP^{n-1},$
where $E={\mathcal O}(-1)\oplus Q$ is the sum of the
tautological and quotient bundles on $\PP^{n-1}$, by a sub-linear system of 
${\mathcal O}_E(1)\otimes \pi^*{\mathcal O}(1)$. This sub-linear system
contracts the divisor $\PP(Q)\subset \PP(E)$ to the Grassmannian 
$G(2,n)\subset \PP \La 2\BC^n$.

\medskip

The only other components of $\partial \Det_n$ that I am aware of were found by 
  J. Brown, N. Bushek, L. Oeding, D. Torrance and Y. Qi, as part of an AMS Mathematics Research Community in June
2012. They found two additional components of $\partial \Det_4$.

\begin{problem} Find additional components of $\partial \Det_n$.\end{problem}

\begin{problem} Determine all  components of $\partial \Det_3$.\end{problem}

\section{Symmetries of polynomials and coordinate rings of orbits}\label{symsect}
Throughout this section $G=GL(V)$ and $\tdim V=n$. 
Given $P\in S^dV$, let 
$$G_P:=\{ g\in GL(V)\mid g\cdot P=P\}=\{ g\in GL(V)\mid P(g\cdot x)=P(x)\forall x\in V^*\}
$$ 
denote
the symmetry group of $P$. We let $G_{[P]}:=\{ g\in GL(V)\mid g\cdot [P]=[P]\}$.
Determining the connected component of the identity $G_{P}^0$ is simply a matter of linear algebra,
as the computation of $\fg_P$ is a linear problem. However one can compute $G_P$ directly in only a few simple cases.

Throughout this section, let $V=\BC^n$ and use index ranges $1\leq i,j,k\leq n$.  
Examples \ref{eex1}, \ref{eex2}, \ref{fermatex}, and \ref{sumprodex} follow \cite{MR2901512}.

\subsection{Two easy examples}
\begin{example} \label{eex1} Let $P=x_1^d\in S^dV$. Let $g=(g^i_j)\in GL(V)$. Then $g\cdot (x_1^d)= (g^j_1x_j)^d$ so
if $g\cdot (x_1^d)=x_1^d$, then  $g^j_1=0$ for $j>1$ and $g^1_1$ must be a $d$-th root of unity. There are no other restrictions, thus
$$
G_P=\left\{ g\in GL(V)\mid  g=\begin{pmatrix} \o & * & \cdots & *\\ 0& *&\cdots &*\\ & \vdots & &\\ 0& *&\cdots &*\end{pmatrix}, \o^d=1
\right\}, \ 
G_{[P]}=\left\{ g\in GL(V)\mid  g=\begin{pmatrix} * & * & \cdots & *\\ 0& *&\cdots &*\\ & \vdots & &\\ 0& *&\cdots &*\end{pmatrix}
\right\}.
$$
\end{example}

The $GL(V)$ orbit of $[x_1^d]$ is closed and equal to the Veronese variety $v_d(\BP V)$.
 
\exerone{Use Corollary \ref{orbitcor} to determine $\BC[v_d(\BP V)]$ (even if you already know it by a different method).}

\begin{example}\label{eex2} Let $P=\tchow_n=x_1\cdots x_n\in S^nV$, which I will call the \lq\lq Chow polynomial\rq\rq .
It is clear $\G_n:=T_n^{SL}\rtimes \FS_n\subset G_{\tchow_n}$, we need to determine if the stabilizer is larger.
Again, we can work by brute force:
$g\cdot \tchow_n= (g^j_1x_j)\cdots (g^j_nx_j)$. In order that this be equal to $x_1\cdots x_n$, by unique
factorization of polynomials, there must be a permutation $\s\in \FS_n$
such that for each $k$, we   have $g^j_kx_j=\l_k x_{\s(k)}$ for some $\l_k\in \BC^*$. Composing with the
inverse of this permutation we have $g^j_k=\d^j_k\l_j$, and finally we see that
we must further have $\l_1\cdots \l_n=1$, which means it is an element of $T_n^{SL}$, so the
original $g$ is an element of $\G_n$.

The orbit closure of $\tchow_n$ is the Chow variety $Ch_n(V)\subset \BP S^nV$. The coordinate
ring of $GL(V)\cdot \tchow_n$ is discussed in \S\ref{chowsect}.
\end{example}

\subsection{Techniques}
We can usually guess a large part of $G_P$. We then
 form auxiliary objects from $P$ which   have a symmetry group $H$ that one can compute, and by construction
$H$  contains $G_P$. If $H=G_P$, we are done, and if not, we simply have to examine the
difference between the groups.

\begin{remark} The very recent preprint \cite{skipsym} describes further techniques for determining stabilizers of points.
\end{remark}

Consider the hypersurface $Z(P):=\{ [v]\in \BP V^*\mid P(v)=0\} \subset \BP V^*$. If all the irreducible components of
$P$ are reduced, then $G_{Z(P)}=G_{[P]}$, as a reduced polynomial may be recovered up to scale from
its zero set, and in general $G_{Z(P)}\supseteq G_{[P]}$.
  Moreover,  we can consider
its singular set $Z(P)_{sing}$, which may be described as the zero set of the image of $P_{1,d-1}$ (which is essentially
the exterior derivative $dP$). If $P=a_Ix^I$, where $a_{i_1\hd i_d}$ is symmetric in its lower indices,  then $Z(P)_{sing}=\{ [v]\in \BP V^*\mid
a_{i_1,i_2\hd i_d}x^{i_2}(v)\cdots x^{i_d}(v)=0\ \forall i_1\}$.
While we could consider the singular locus of the singular locus etc.., it turns out to be easier
to work with what I will call the {\it very singular} loci. 
For an arbitrary variety $X\subset \BP V$, define $X_{verysing}=X_{verysing,1}:=\{ x\in \BP V\mid dP_x=0 \forall P\in I(X)\}$.
If $X$ is a hypersurface, then $X_{sing}=X_{verysing}$ but in general they can be different.
Define $X_{verysing,k}:=(X_{verysing,k-1})_{verysing}$. Algebraically,
if $X=Z(P)$ for some $P\in S^dV$, then the ideal of  $Z(P)_{verysing,k}$ is generated by the image
of $P_{k,n-k}: S^kV^*\ra S^{n-k}V$. 
   The symmetry groups of these varieties all contain $G_P$.

\subsection{The Fermat}\label{fermatex}
Let $\tfermat_n^d:=x_1^d+\cdots +x_n^d$. The $GL(V)$-orbit closure of   $[\tfermat^d_n]$ is the $n$-th secant variety of the
Veronese variety $\s_n(v_d(\BP V))\subset \BP S^nV$, see \S\ref{relevantsect}.
It is clear $\FS_n \subset G_{\tfermat}$, as well as the diagonal matrices whose entries are $d$-th roots of unity. We need
to see if there is anything else. The first idea, to look at the singular locus, does not work, as the zero set is
smooth, so we consider
$\tfermat_{2,d-2}=x_1^2\ot x^{d-2}+\cdots +x_n^2\ot x^{d-2}$. Write the further
polarization  $P_{1,1,d-2}$ as a symmetric matrix whose
entries are homogeneous polynomials of degree $d-2$ (the Hessian matrix).
We get
$$
\begin{pmatrix} x_1^{d-2} & & \\ & \ddots & \\ & & x_n^{d-2}\end{pmatrix}.
$$
Were   the determinant of this matrix $GL(V)$-invariant, we could proceed as we did with $\tchow_n$, using unique factorization.
Although it is not, it is close enough as follows:
Recall that for a linear map $f: W\ra V$, where $\tdim W=\tdim V=n$,  we have $f^{\ww n}\in \La nW^*\ot \La n V$
and an element   $(h,g)\in GL(W)\times GL(V)$ acts on $f^{\ww n}$ by $(h,g)\cdot f^{\ww n}=(\tdet(h))\inv (\tdet(g)) f^{\ww n}$.
In our case $W=V^*$ so   $  P_{2,d-2}^{\ww n}(x)=\tdet(g)^2  P_{2,d-2}^{\ww n}(g\cdot x)$, and 
the polynomial obtained by the determinant of the Hessian matrix is invariant up to scale.

Arguing as above,   $(g^j_1x_j)^{d-2}\cdots (g^j_nx_j)^{d-2}= x_1^{d-2}\cdots x_n^{d-2}$ and we conclude again by
unique factorization that $g$ is in $\G_n$. Composing with a permutation matrix to make $g\in T$, we   see
that, by acting on the Fermat itself, that the entries on the diagonal are $d$-th roots of unity.

\exerone{Show that the Fermat is characterized by its symmetries.}

\subsection{The sum-product polynomial}\label{sumprodex}
The following polynomial, called the {\it sum-product polynomial},  will be important when studying depth-$3$ circuits. 
Its $GL(mn)$-orbit
closure is the $m$-th secant variety of the Chow variety $\s_m(Ch_n(\BC^{nm}))$:
$$
S^n_m:=\sum_{i=1}^m\Pi_{j=1}^n x_{ij}\in S^n(\BC^{nm}).
$$

\exerone{Determine $G_{S^n_m}$ and show that $S^n_m$ is characterized by its symmetries.}

\subsection{The determinant}
  I follow   \cite{MR0029360} in this section. Write $\BC^{n^2}=E\ot F$ with $E,F=\BC^n$. 

\begin{theorem}[Frobenius \cite{Frobdet}]\label{frobdet} Let $\phi\in \rho(Gl_{n^2})\subset GL(S^n\BC^{n^2})$ be such that $\phi(\tdet_n)=\tdet_n$.
Then, identifying  $\BC^{n^2}\simeq Mat_{n\times n}$, 
$$
\phi(X)=\left\{\begin{matrix} X\mapsto gXh     \\
X\mapsto gX^Th\end{matrix}   \right.
$$
where $g,h\in GL_n$,   and $\tdet_n(g)\tdet_n(h)=1$. Here $X^T$ denotes the transpose of $X$.
\end{theorem}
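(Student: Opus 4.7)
\medskip
\noindent\textbf{Proof plan.} The strategy is to show the underlying linear map preserves the rank stratification of matrices, extract from this a projective automorphism of the Segre variety of rank-one matrices, and reconstruct the map on all of $W$.

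Write $W = E\ot F$ with $E=F=\BC^n$, and let $\psi \in GL(W)$ be the element underlying $\phi$; the hypothesis $\phi(\tdet_n) = \tdet_n$ reads $\tdet_n\circ \psi\inv = \tdet_n$, so $\psi$ preserves the hypersurface $Z := Z(\tdet_n)\subset \BP W$ of singular matrices along with each of its very singular loci $Z_{verysing,k}$. By the description recalled just above the theorem, the ideal of $Z_{verysing,k}$ is generated by the image of the polarization $(\tdet_n)_{k,n-k}$, and a direct computation identifies this image with the span of the $(n-k)\times (n-k)$ minors of the generic matrix. Hence $Z_{verysing,k}$ coincides with the variety of matrices of rank at most $n-k-1$. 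Taking $k=n-2$, $\psi$ preserves the Segre variety $Seg(\BP E\times \BP F)\subset \BP W$ of rank-one matrices.

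Next I would classify the projective automorphisms of $Seg(\BP E\times \BP F)$. The Segre carries exactly two rulings by maximal linear subspaces of projective dimension $n-1$, namely $\{[e]\}\times \BP F$ for $[e]\in \BP E$ and $\BP E\times \{[f]\}$ for $[f]\in \BP F$. The combinatorics of intersections---two distinct maximal subspaces from the same ruling are disjoint, whereas subspaces from opposite rulings meet in a single point---is intrinsic, so any projective automorphism either preserves or swaps the two rulings (the swap being allowed because $\tdim E = \tdim F$). In the preserving case, the induced projective automorphisms of the parameter spaces $\BP E$ and $\BP F$ are projective linear, hence come from elements $g\in GL(E)$, $h\in GL(F)$ (each well defined up to scalar), and the action on a rank-one matrix is $[e\ot f]\mapsto [g(e)\ot h(f)]$, i.e., $X\mapsto gXh$. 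The swap case yields $X\mapsto gX^Th$ after the identification $E\simeq F$.

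Since $Seg(\BP E\times \BP F)$ spans $\BP W$, a projective linear automorphism of $\BP W$ is determined by its restriction to the Segre. Thus $\psi(X) = gXh$ (respectively $gX^Th$) on all of $W = Mat_{n\times n}$. The identity $\tdet(gXh) = \tdet(g)\tdet(h)\tdet(X)$, combined with $\tdet_n\circ \psi\inv = \tdet_n$, then pins down $\tdet(g)\tdet(h)=1$. The main obstacle will be the classification of projective automorphisms of the Segre: spotting the two rulings is easy, but one needs to establish that they are the \emph{only} families of maximal linear subspaces (so every automorphism permutes them as a set), translate the combinatorial ruling structure into compatible projective automorphisms of $\BP E$ and $\BP F$, and finally lift these to $GL_n\times GL_n$ acting with the correct matrix formula.
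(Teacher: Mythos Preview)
Your argument is correct, but it proceeds along a genuinely different route from the paper's proof. The paper works with the maximal linear subspaces on the \emph{hypersurface} $\{\tdet_n=0\}$ itself: these are $(n^2-n)$-dimensional and fall into two families $\Sigma_\alpha$ (matrices with prescribed kernel line) and $\Sigma_\beta$ (matrices with prescribed image hyperplane), distinguished by the dimensions of pairwise intersections (Lemma~\ref{stdlem}). After possibly composing with transpose, the induced bijection on the parameter space $\Sigma_\alpha\simeq\BP A$ is shown to send collinear triples to collinear triples (via triple-intersection dimensions), hence lies in $PGL(A)$; conjugating by the resulting pair, one checks the remaining map fixes the Segre pointwise. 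You instead descend immediately to the \emph{bottom} of the rank stratification---the Segre of rank-one matrices, obtained as $Z_{verysing,n-2}$---and run the analogous two-rulings argument there with $(n-1)$-dimensional linear spaces rather than $(n^2-n)$-dimensional ones.

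Your path is closer in spirit to the alternative sketched in the Remark following the paper's proof (attributed to Brion), which also lands on the Segre---there via duality and the Picard group rather than via very singular loci and rulings. What you gain is directness: once $\psi$ preserves the Segre, the ruling structure and the fact that the Segre spans $\BP W$ finish the job without the somewhat delicate triple-intersection collinearity step. What the paper's argument buys is that the key Lemma~\ref{stdlem} is entirely elementary linear algebra on the hypersurface, whereas your route leans on the classification of maximal linear subspaces on the Segre and on $\operatorname{Aut}(\BP^{n-1})=PGL_n$, facts you correctly flag as needing justification. One small point worth tightening: the induced map on the parameter space $\BP E$ is a priori only a bijection of sets; you should note it is a morphism (e.g., fix $[f_0]\in\BP F$ and compose $\psi$ with the first Segre projection), so that $\operatorname{Aut}(\BP E)=PGL(E)$ actually applies.
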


\begin{corollary} Let $\mu_n$ denote the $n$-th roots of unity embedded diagonally in $SL(E)\times SL(F)$. Then 
$G_{\tdet_n}=(SL(E)\times SL(F))/\mu_n\rtimes \BZ_2$
\end{corollary}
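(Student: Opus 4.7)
The plan is to construct a surjection $\Psi\colon (SL(E) \times SL(F)) \rtimes \BZ_2 \twoheadrightarrow G_{\tdet_n}$ directly from Frobenius's Theorem \ref{frobdet} and then compute its kernel. Define $\Psi$ on $SL(E) \times SL(F)$ by $(g,h)\mapsto \phi_{g,h}$, where $\phi_{g,h}(X):=gXh$ under the identification $\BC^{n^2}\simeq \mathrm{Mat}_{n\times n}$; since $\det(g)\det(h)=1$ automatically on $SL(E)\times SL(F)$, Theorem \ref{frobdet} guarantees $\phi_{g,h}\in G_{\tdet_n}$. Extend by letting the generator $\tau$ of $\BZ_2$ act as transposition $X\mapsto X^T$, which also lies in $G_{\tdet_n}$. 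A direct computation $\tau\circ\phi_{g,h}\circ\tau(X)=(gX^Th)^T=h^T X g^T=\phi_{h^T,g^T}(X)$ exhibits the semidirect product structure: $\tau$ acts on $SL(E)\times SL(F)$ by $(g,h)\mapsto(h^T,g^T)$.

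Next, I would verify that $\Psi$ is surjective. Theorem \ref{frobdet} asserts that every $\phi\in G_{\tdet_n}$ has the form $X\mapsto gXh$ or $X\mapsto gX^Th$ with $\det(g)\det(h)=1$. In the first case, set $\lambda=\det(g)$ and pick an $n$-th root $\mu$ of $\lambda$; then $(\mu^{-1}g,\mu h)\in SL(E)\times SL(F)$ and $\phi_{\mu^{-1}g,\mu h}=\phi_{g,h}$. The transposed case is handled identically after first composing with $\tau$. For the kernel, one checks that the \emph{straight} and \emph{transpose} cosets of the image of $\Psi$ are disjoint for $n\geq 2$: evaluating a putative identity $X^T=gXh$ on elementary matrices $X=e_{ij}$ yields $g_{ki}h_{jl}=\delta_{kj}\delta_{li}$ for all $i,j,k,l$, which has no solution (pick $(i,j)=(1,1)$ with $(k,l)=(1,2)$ to force $h_{12}=0$, then $(i,j)=(2,1)$ with $(k,l)=(1,2)$ gives $g_{12}h_{12}=1$, a contradiction). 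Consequently only elements with trivial $\BZ_2$-component can lie in $\ker\Psi$, and $\phi_{g,h}=\mathrm{Id}$ forces $g=\lambda I$, $h=\lambda^{-1}I$; the $SL$ condition then gives $\lambda^n=1$, so the kernel is the diagonal $\mu_n\hookrightarrow SL(E)\times SL(F)$ via $\lambda\mapsto(\lambda I,\lambda^{-1}I)$.

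Combining these, the first isomorphism theorem yields $G_{\tdet_n}=(SL(E)\times SL(F))/\mu_n\rtimes\BZ_2$ as claimed. The essential content is entirely packaged in Frobenius's theorem, so no serious obstacle remains; the only points requiring mild care are (i) the $n$-th root bookkeeping that introduces the $\mu_n$ quotient when passing from the $GL\times GL$ constraint $\det(g)\det(h)=1$ to an $SL\times SL$ parametrization, and (ii) the disjointness of the two cosets, which is what makes the $\BZ_2$ extension nontrivial.
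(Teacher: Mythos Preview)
Your proof is correct and follows essentially the same approach as the paper's, which is extremely terse: the paper merely says ``note that the $\BC^*$ and $\mu_n$ are in the kernel of the map $\BC^*\times SL(E)\times SL(F)\ra GL(E\ot F)$'' and leaves everything else to the reader. You have carefully filled in exactly the steps the paper omits: the surjectivity via the $n$-th root trick, the semidirect product structure, and the disjointness of the two cosets. One tiny point: in your disjointness argument, before concluding $h_{12}=0$ from $g_{11}h_{12}=0$ you should first observe $g_{11}\neq 0$, which follows immediately from the same identity at $(i,j,k,l)=(1,1,1,1)$ giving $g_{11}h_{11}=1$.
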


To prove the Corollary, just note that the $\BC^*$ and $\mu_n$ are in the kernel of the map $\BC^*\times SL(E)\times SL(F)\ra GL(E\ot F)$.

\exerone{\label{n=2case} Prove the $n=2$ case of the theorem. Hint: in this case the determinant is a smooth
quadric.}

Write $\BC^{n^2}=W=A\ot B=\thom(A^*,B)$.
The following lemma is standard, its proof is left as an exercise:

\begin{lemma}\label{stdlem} Let $U\subset  W$  be a linear subspace such that 
$U \subset \{\tdet_n=0\}$.  Then $\tdim U\leq n^2-n$ and the subvariety of the Grassmannian 
$G(n^2-n,W)$ consisting  of maximal linear spaces on $\{\tdet_n=0\}$ has two components, call them
$\Sigma_{\a}$ and $\Sigma_{\b}$, where
\begin{align}
\Sigma_{\a}&=\{ X\mid \tker (X)= \hat L {\rm \ for\ some\ }L\in \BP A\}, and \\
\Sigma_{\b}&=\{ X\mid \tim (X)= \hat H {\rm \ for\ some\ }H\in \BP B^*\}.
\end{align}
Moreover, for any two distinct $X_{j}\in \Sigma_{\a}$, $j=1,2$,  and $Y_j\in \Sigma_{\b}$ we have
\begin{align}
\label{leme1}\tdim (X_1\cap X_2)=\tdim (Y_1\cap Y_2)&=n^2-2n, and \\
\label{leme2} \tdim(X_i\cap Y_j)=n^2-2n+1. \end{align}
\end{lemma}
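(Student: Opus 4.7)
The plan is to invoke the classical theorem of Dieudonn\'e on linear subspaces of singular matrices for the bound $\tdim U \leq n^2-n$ and the classification of maximal subspaces, then to verify the intersection dimensions \eqref{leme1}--\eqref{leme2} by direct linear algebra.

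For the bound and classification, I would first choose $X_0 \in U$ of maximal rank $r$. A short argument (reducing via the kernel/image decomposition to a singular-matrix problem of smaller size) shows that when $U$ is of maximal dimension one may assume $r = n-1$. Fix bases so that $X_0 = \begin{pmatrix} I_{n-1} & 0 \\ 0 & 0 \end{pmatrix}$, with kernel line $\hat L_0 \subset A$ and image hyperplane $\hat H_0 \subset B$. For any $Y \in U$ written in block form $Y = \begin{pmatrix} A_1 & b \\ c^\top & d \end{pmatrix}$, the polynomial identity $\det(X_0 + tY) \equiv 0$ in $t$ expands via the Schur complement to $d \det(I + tA_1) - t\, c^\top \operatorname{adj}(I+tA_1) b \equiv 0$, forcing $d = 0$ together with the Krylov relations $c^\top A_1^k b = 0$ for every $k \geq 0$. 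The critical step is the dichotomy: considering $Y + sY' \in U$ for arbitrary $Y' \in U$ and all $s$, a bilinearization of these constraints together with the fact that a vector space is not the union of two proper linear subspaces forces one of two global alternatives --- either every $Y \in U$ has $c = 0$ (i.e.\ $Y$ kills $\hat L_0$), so $U \subset \Sigma_\alpha$, or every $Y \in U$ has $b = 0$ (i.e.\ $\tim(Y) \subset \hat H_0$), so $U \subset \Sigma_\beta$. In either case $\tdim U \leq n(n-1)$, with equality achieved by the full space of matrices killing a fixed line in $A$ or with image in a fixed hyperplane of $B$. These two families are swapped by the transpose involution on $W$, giving distinct irreducible components in $G(n^2-n, W)$, each parametrized by $\pp{n-1}$.

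The intersection formulas then follow by direct counting. For distinct $X_1, X_2 \in \Sigma_\alpha$ with associated lines $\hat L_1 \neq \hat L_2$ in $A$, the intersection $X_1 \cap X_2$ is the space of matrices killing $\hat L_1 + \hat L_2$ (a two-dimensional subspace of $A$), of dimension $n(n-2) = n^2 - 2n$; the case $Y_1, Y_2 \in \Sigma_\beta$ is analogous with image in a codimension-two subspace of $B$. For $X_i \in \Sigma_\alpha$ and $Y_j \in \Sigma_\beta$, the intersection is canonically $\thom(A/\hat L_i, \hat H_j)$, of dimension $(n-1)^2 = n^2 - 2n + 1$.

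The main obstacle is the dichotomy step: promoting the Krylov-identity constraints on an individual $Y$ --- which do not by themselves force $b = 0$ or $c = 0$ --- to a global statement about all of $U$ requires genuinely exploiting its linearity, and is the crux of Dieudonn\'e's original argument.
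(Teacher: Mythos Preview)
Your proposal is correct. The paper itself offers no proof of this lemma: it is explicitly declared ``standard'' and left as an exercise, so there is nothing to compare against. Your argument via Dieudonn\'e's theorem is the classical route, and your computations of the intersection dimensions \eqref{leme1}--\eqref{leme2} are exactly right: matrices annihilating two independent lines form an $n(n-2)$-dimensional space, and $\thom(A/\hat L_i,\hat H_j)$ has dimension $(n-1)^2$. The one place to be careful is the dichotomy step, which you correctly flag as the crux; if you want to spell it out fully, the cleanest version is to show that the set of $Y\in U$ with $b=0$ and the set with $c=0$ are both subspaces whose union is $U$, forcing one of them to equal $U$.
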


\begin{proof}[Proof of theorem \ref{frobdet}]
Let $\Sigma=\Sigma_{\a}\cup \Sigma_{\b}$. Then   
the map on $G(n^2-n,W)$ induced by $\phi$ must preserve $\Sigma$.
By the conditions \eqref{leme1},\eqref{leme2} of Lemma \ref{stdlem}, 
in order
to preserve dimensions of intersections,
every
$X\in \Sigma_{\a}$ must map to a point of $\Sigma_{\a}$ or every 
$X\in \Sigma_{\a}$ must map to a point of  $\Sigma_{\b}$, and similarly for $\Sigma_{\b}$. 
If we are in the second case, replace $\phi$ by $\phi \circ T$, where
$T(X)=X^T$, so we may now assume $\phi$ preserves both $\Sigma_{\a}$ and $\Sigma_{\b}$.

Now $\Sigma_{\a}\simeq \BP A$, so $\phi$ induces an algebraic map $\phi_A: \BP A\ra \BP A$.
If $L_1,L_2,L_3\in \BP A$ lie on a $\pp 1$, in order for $\phi$ to preserve the dimensions of triple
intersections, the images of the $L_j$ under $\phi_A$ must also lie on a $\pp 1$.
By Exercise \ref{n=2case} we may assume $n\geq 3$ so the above condition is non-vacuous.
But then, by classical   projective geometry $\phi_A\in PGL(A)$, and
similarly, $\phi_{B}\in PGL(B)$, where $\phi_{B}: \BP B^*\ra \BP B^*$ is the corresponding
map. Write $\hat\phi_{A}\in GL(A)$ for any choice of lift and similarly for $B$. 

Consider the map $\tilde \phi \in \rho(GL(W))$ given by $\tilde\phi(X)={\hat \phi_A}\inv \phi (X){\hat \phi_B}\inv$.
The map $\tilde\phi$ sends each $X_j\in \Sigma_{\a}$ to itself as well as each $Y_j\in \Sigma_{\b}$, in particular it does the same
for all intersections. Hence it preserves $Seg(\BP A\times \BP B)\subset \BP (A\ot B)$ point-wise, so
it is up to scale the identity map.
\end{proof}

\begin{remark} For those familiar with Picard groups, M. Brion points out that there is a shorter proof 
of Theorem \ref{frobdet} as follows: In general, if a polynomial $P$ is
reduced and irreducible, then $G_{Z(P)\dual}=G_{Z(P)}=G_{[P]}$. (This follows as $(Z(P)\dual)\dual=Z(P)$.)
The dual of $Z(\det_n)$ is the Segre $Seg(\pp{n-1}\times \pp{n-1})$. Now the automorphism group of
$\pp{n-1}\times \pp{n-1}=\BP E\times \BP F$ acts on the Picard group which is $\BZ\times \BZ$ and preserves the two
generators $\cO_{\BP E\times \BP F}(1,0)$ and $\cO_{\BP E\times \BP F}(0,1)$ coming from the generators
on $\BP E,\BP F$. Thus, possibly composing with $\BZ_2$ swapping the generators (corresponding to transpose
in the ambient space), we may assume each generator is preserved. But then we must have
an element of $Aut(\BP E)\times Aut(\BP F)=PGL(E)\times PGL(F)$. Passing back to the ambient space, we obtain the result.
\end{remark}

\subsection{The coordinate ring of $GL(W)\cdot \tdet_n$}\label{detoring}
For   $\pi=(p_1\hd p_{n^2})$ with $p_1\geq \cdots \geq p_{n^2}$, recall that the multiplicity of $S_{\pi}W$ in
$\BC[GL(W)\cdot \tdet_n]$ is $\tdim (S_{\pi}W)^{G_{\tdet_n}}$,
where   $G_{\tdet_n}=(SL(E)\times SL(F))/\mu_n\rtimes \BZ_2$. Following \S\ref{glwreps}, write
the $SL(E)\times SL(F)$-decomposition 
$S_{\pi}(E\ot F)=\oplus (S_{\mu}E\ot  S_{\nu}F)^{\op k_{\pi\mu\nu}}$.
To have $SL(E)\times SL(F)$-trivial modules, we need $\mu=\nu=(\d^n)$ for some $\d\in \BZ$.
Recalling the interpretation $k_{\pi\mu\nu}=\tdim (\thom_{\FS_d}([\pi],[\mu]\ot [\nu])$, 
when $\mu=\nu$, $[\mu]\ot [\mu]=S^2[\mu]\ot \La 2[\mu]$. 
Define the {\it symmetric Kronecker coefficient} $sk^{\pi}_{\mu\mu}: = 
\dim \Hom_{\FS_d}([\pi],S^2 [\mu])$. 
It is not hard to check (see \cite{MR2861717})
that    
$$((S_{\d^n}E\ot S_{\d^n}F)^{\op k_{\pi\d^n\d^n}})^{\BZ_2}= (S_{\d^n}E\ot S_{\d^n}F)^{\op sk^{\pi}_{\d^n\d^n}}.
$$

We conclude: 

\begin{proposition}\label{peterrefx}\cite{MR2861717} Let $W=\BC^{n^2}$. 
The coordinate ring of the $GL(W)$-orbit  of $\tdet_n$ is
$$
\BC[GL(W)\cdot  \tdet_n]=
 \bigoplus_{\d\in \BZ}\bigoplus_{\pi \, \mid \, |\pi|=n\d }(S_{\pi}W^*)^{\op sk^\pi_{\d^n\d^n}}.
 $$
  \end{proposition}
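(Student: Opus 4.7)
My plan is to chain together the algebraic Peter--Weyl theorem, Frobenius's determination of $G_{\tdet_n}$, and a Schur--Weyl type computation of invariants.

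First I would apply Corollary \ref{orbitcor} to $G=GL(W)$ and $H=G_{\tdet_n}$. Since the irreducible polynomial $GL(W)$-modules appearing in $\BC[GL(W)]$ are (up to twist) of the form $S_\pi W^*$, this immediately reduces the problem to computing, for each partition $\pi$, the dimension of the invariant space $(S_\pi W)^{G_{\tdet_n}}$. Because $\BC[GL(W)\cdot \tdet_n]\subseteq \BC[GL(W)/G_{\tdet_n}]$ arises from evaluation of polynomial functions, only polynomial (not rational) weights will actually occur, which is why the sum in the statement is over honest partitions indexed by an integer $\d\geq 0$ rather than arbitrary one-dimensional twists.

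Next I would use the explicit description $G_{\tdet_n}=(SL(E)\times SL(F))/\mu_n\rtimes \BZ_2$ from Theorem \ref{frobdet}. Taking invariants in two stages, I first compute $(S_\pi W)^{SL(E)\times SL(F)}$ using the Kronecker decomposition from \S\ref{glwreps}:
\[
S_\pi(E\otimes F)=\bigoplus_{|\mu|=|\nu|=|\pi|}(S_\mu E\ot S_\nu F)^{\oplus k_{\pi\mu\nu}}.
\]
A factor $S_\mu E$ is $SL(E)$-trivial iff $\mu$ is rectangular of height $n$, i.e.\ $\mu=(\d^n)$ for some $\d$, and similarly for $\nu$. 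The common degree condition $|\mu|=|\nu|=|\pi|$ then forces $\mu=\nu=(\d^n)$ with $n\d=|\pi|$. The subgroup $\mu_n$ acts via scalars on $\det E\ot \det F$ and hence trivially on $(\det E)^{\ot\d}\otimes(\det F)^{\ot\d}$, so passing to the quotient by $\mu_n$ contributes nothing additional; the multiplicity of the $SL\times SL$-trivial module is exactly $k_{\pi,\d^n,\d^n}$.

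Finally I would impose the $\BZ_2$-invariance. Under the symmetric-group interpretation
\[
k_{\pi,\mu,\nu}=\dim \thom_{\FS_d}([\pi],[\mu]\otimes[\nu]),
\]
the generator of $\BZ_2$ (which comes from the transpose on $\thom(A^*,B)$, hence swaps the two tensor factors $E$ and $F$) acts on the multiplicity space by exchanging the two copies of $[\mu]=[\d^n]$ in $[\mu]\otimes[\mu]$. The $\BZ_2$-invariants therefore project onto $S^2[\mu]$, so the invariant multiplicity becomes $sk^\pi_{\d^n\d^n}:=\dim \thom_{\FS_d}([\pi],S^2[\d^n])$, exactly as in the statement. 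Assembling these pieces and summing over $\d$ (with $|\pi|=n\d$) yields the claimed decomposition of $\BC[GL(W)\cdot\tdet_n]$.

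The one subtle point, and the place I would pay careful attention, is verifying that the $\BZ_2$ generator indeed acts as the honest tensor-swap on the multiplicity space $\thom_{\FS_d}([\pi],[\d^n]\ot[\d^n])$ rather than twisting by a sign character; this requires being precise about how the transpose automorphism of $\thom(A^*,B)$ interchanges the two Schur--Weyl factors when $\mu=\nu=(\d^n)$ are self-conjugate-shaped in the sense needed. Everything else is bookkeeping with Peter--Weyl, Schur--Weyl, and the Frobenius theorem.
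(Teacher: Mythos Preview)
Your proposal is correct and follows essentially the same route as the paper: apply Peter--Weyl (Corollary \ref{orbitcor}) to reduce to computing $(S_\pi W)^{G_{\tdet_n}}$, invoke Frobenius's description of $G_{\tdet_n}$, use the Kronecker decomposition to isolate the $SL(E)\times SL(F)$-invariants (forcing $\mu=\nu=(\d^n)$), and then take $\BZ_2$-invariants to land on the symmetric Kronecker coefficient; the paper likewise flags the last step as the only nontrivial verification and defers it to \cite{MR2861717}.

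One small correction: your aside that ``only polynomial (not rational) weights will actually occur, which is why the sum in the statement is over honest partitions indexed by an integer $\d\geq 0$'' is not right. The statement has $\d\in\BZ$, and indeed the orbit $GL(W)\cdot\tdet_n$ is only quasi-affine, so its coordinate ring contains regular functions that do not extend to the closure and hence need not be polynomial on $S^nW$; non-polynomial dominant weights (with $\d<0$) genuinely appear. This does not affect your argument, since the computation of $(S_\pi W)^{G_{\tdet_n}}$ goes through verbatim for any dominant integral weight $\pi$.
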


Thus   partitions $\pi$ of $dn$  such that $sk^\pi_{\d^n\d^n}< \tmult(S_{\pi}W, S^d(S^nW))$ 
are representation-theoretic obstructions, and if moreover $\tmult(S_{\pi}W, S^d(S^nW))=0$, $S_{\pi}W$  is an occurrence
obstruction. C. Ikenmeyer \cite{ikenthesis} has examined the situation for $\Det_3$. He found on the order of
$3,000$ representation-theoretic obstructions, of which on the order of $100$ are occurrence obstructions in
degrees up to $d=15$. There are two such partitions with seven parts,
$(13^2,2^5)$ and $(15,5^6)$. The rest  consist of partitions with at least $8$ parts (and many with
$9$). Also of interest is that for approximately $2/3$ of the partitions
  $sk^\pi_{\d^3\d^3}<k_{\pi \d^3\d^3}$.
The lowest degree of  an occurrence obstruction is $d=10$, where $\pi=(9^2,2^6)$ has 
$sk^\pi_{10^3 10^3}=k_{\pi 10^3 10^3}=0$ but $\tmult(S_{\pi}W, S^{10}(S^3W))=1$.
In degree $11$,  $\pi=(11^2,2^5,1)$ is an occurrence obstruction where $\tmult(S_{\pi}W, S^{11}(S^3W))=k_{\pi 11^3 11^3}=1>0=sk^\pi_{11^3 11^3}$.

\subsection{The permanent}\label{permsect}
Write $\BC^{n^2}=E\ot F$. Then  it is easy to see
$(\G_n^E\times \G_n^F)\rtimes \BZ_2\subseteq G_{\tperm_n}$, where the nontrivial element of  $\BZ_2$ acts by sending a matrix
to its transpose and recall $\G^E_n=T_E\rtimes \FS_n$ . We would like to show this is the entire symmetry group. However,  it is not when $n=2$.

\exerone{What is $G_{\tperm_2}$? Hint: $\{\tperm_2=0\}$ is a smooth quadric.}

\begin{theorem}\cite{MR0137729}\label{permstabthm}  For $n\geq 3$, $G_{\tperm_n}=(\G_n^E\times \G_n^F)/\mu_n\rtimes \BZ_2$.
\end{theorem}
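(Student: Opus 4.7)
The plan is to follow the Frobenius-style strategy used for Theorem \ref{frobdet}, replacing Lemma \ref{stdlem} by a permanent-specific classification of distinguished linear subspaces on $\{\tperm_n = 0\}$. First, one verifies the easy containment $(\G_n^E \times \G_n^F)/\mu_n \rtimes \BZ_2 \subseteq G_{\tperm_n}$: independent rescaling of rows and columns by reciprocal scalars, independent permutation of rows and columns, and transposition of matrices all preserve $\tperm_n$, and $\mu_n$ embedded diagonally acts trivially.

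The central geometric input I would establish is the following analogue of Lemma \ref{stdlem}: for $n\geq 3$, the only $(n^2-n)$-dimensional linear subspaces of $E \ot F$ on which $\tperm_n$ vanishes identically are the $2n$ coordinate subspaces
$$
\hat R_i := \{M \in E \ot F \mid M_{ij} = 0 \ \forall j\}, \qquad \hat C_j := \{M \mid M_{ij} = 0 \ \forall i\}.
$$
Moreover, the intersection dimensions $\tdim(\hat R_i \cap \hat R_{i'}) = \tdim(\hat C_j \cap \hat C_{j'}) = n^2 - 2n$ and $\tdim(\hat R_i \cap \hat C_j) = n^2 - 2n + 1$ separate the two families $\Sigma_R := \{\hat R_i\}$ and $\Sigma_C := \{\hat C_j\}$, paralleling the role of $\Sigma_\a, \Sigma_\b$.

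Granting this, any $\phi \in G_{\tperm_n}$ permutes $\Sigma_R \cup \Sigma_C$ preserving intersection dimensions; hence it either preserves each family or swaps them, and in the swap case we compose with the transpose involution to reduce to the preserving case. After further composing with the appropriate element of $\FS_n \times \FS_n \subseteq \G_n^E \times \G_n^F$, we may assume $\phi$ fixes each $\hat R_i$ and each $\hat C_j$ setwise. Then $\phi$ fixes every coordinate line
$$
\BC \cdot (e_i \ot f_j) \;=\; \bigcap_{i' \neq i} \hat R_{i'} \;\cap\; \bigcap_{j' \neq j} \hat C_{j'},
$$
so $\phi$ is diagonal in the basis $\{e_i \ot f_j\}$: write $\phi(e_i \ot f_j) = \l_{ij}(e_i \ot f_j)$. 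The condition $\phi \cdot \tperm_n = \tperm_n$ evaluated on each monomial $x^1_{\s(1)} \cdots x^n_{\s(n)}$ forces $\prod_i \l_{i,\s(i)} = 1$ for every $\s \in \FS_n$, and standard character-lattice reasoning then yields $\l_{ij} = \a_i \b_j$ with $\prod_i \a_i \prod_j \b_j = 1$, placing $\phi$ in $(T_E \times T_F)/\mu_n$ as required.

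The main obstacle is the classification of $(n^2-n)$-dimensional subspaces on $\{\tperm_n = 0\}$. Unlike the determinant, whose vanishing locus has a clean projective-geometric structure via kernels and images of $n \times n$ linear maps, the permanent admits no such description, and one must argue combinatorially, exploiting positivity of its coefficients (no cancellation among the $n!$ monomials). An inductive approach --- restricting $\tperm_n|_L \equiv 0$ to rank-one perturbations inside $L$ and extracting vanishing of sub-permanents on induced subspaces of $E' \ot F'$ --- should reduce the statement to smaller $n$. The hypothesis $n \geq 3$ is essential here, since $\tperm_2$ is a smooth quadric of full rank whose orthogonal symmetry group is far larger than the conjectured answer.
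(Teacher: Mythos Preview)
Your overall architecture---reduce to a diagonal map by exploiting a family of distinguished linear subspaces preserved by $G_{\tperm_n}$, then show the diagonal scalars form a rank-one matrix---is exactly right, and your endgame (from $\prod_i \l_{i,\s(i)}=1$ for all $\s$ to $\l_{ij}=\a_i\b_j$) is correct and essentially what the paper does.  But you have chosen the hard road for the middle step.

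The paper does \emph{not} classify the $(n^2-n)$-dimensional linear subspaces of $\{\tperm_n=0\}$.  Instead it passes to the \emph{very singular locus} $Z(\tperm_n)_{verysing,n-2}$, namely the variety cut out by all size-$2$ subpermanents.  An elementary case check (the Lemma preceding the proof) shows that its irreducible components are: (1) for each pair of row indices and pair of column indices, the $3$-dimensional cone of matrices supported in that $2\times 2$ block with vanishing $2\times 2$ permanent; (2) for each $j$, the $n$-dimensional linear space $C^j$ of matrices supported in row $j$; (3) for each $j$, the $n$-dimensional linear space $C_j$ of matrices supported in column $j$.  Since $G_{\tperm_n}$ acts on this locus, it permutes these components.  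The type-(1) components are not linear, so they are preserved as a class; among the linear components one has $C^i\cap C^j=0$ for $i\neq j$ while $\tdim(C^i\cap C_j)=1$, which forces any $\phi\in G_{\tperm_n}$ either to permute the $C^j$'s among themselves (and likewise the $C_j$'s) or to swap the two families.  From here the reduction to a diagonal map is exactly as you wrote; the final step in the paper uses the type-(1) components: the image of a $2\times 2$ block under the diagonal map $(x^i_j)\mapsto(\l^i_jx^i_j)$ must still have vanishing $2\times 2$ permanent, forcing $\l^1_1\l^2_2=\l^1_2\l^2_1$ and hence $\trank(\l^i_j)=1$.

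So the genuine gap in your proposal is the unproven claim that the $\hat R_i,\hat C_j$ exhaust the $(n^2-n)$-dimensional linear subspaces on $\{\tperm_n=0\}$.  You correctly flag this as the main obstacle, but your sketch of an inductive attack is vague, and in any case this classification is substantially harder than what the paper needs.  The insight you are missing is that by descending to the deeper stratum $Z(\tperm_n)_{verysing,n-2}$ one trades a difficult maximality statement for a trivial one: classifying matrices whose $2\times 2$ subpermanents all vanish is a finite combinatorial check.
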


\begin{remark}\label{permbeauty} From Theorem \ref{permstabthm}, one can begin to appreciate the beauty of the permanent. Since
$\tdet_n$ is the only polynomial invariant under $SL(E)\times SL(F)$, to find other interesting
polynomials on spaces of matrices, we will have to be content with subgroups of this group.
But what could be a more natural subgroup than the product of the normalizer of the tori?
In fact, say we begin by asking  simply for a polynomial invariant under the action of $T_E\times T_F$.
We need to look at $S^n(E\ot F)_0$, where the $0$ denotes the $\fsl$-weight zero subspace.
This decomposes as $\oplus_{\pi} (S_{\pi}E)_0\ot (S_{\pi}F )_0$. By Corollary \ref{gaycor}(i), these spaces 
are the $\FS_n^E\times\FS_n^F$-modules $[\pi]\ot [\pi]$. Only one of these is trivial, and that corresponds
to the permanent! More generally, if we consider the diagonal $\FS_n\subset \FS_n^E\times \FS_n^F$,
then both $[\pi]$'s are modules for the same group, and since $[\pi]\simeq [\pi]^*$, there is
then a preferred vector corresponding to the identity map. These vectors are Littlewood's
{\it immanants}, of which the determinant and permanent are special cases.
\end{remark} 

Consider $Z(\tperm_n)_{sing}\subset \BP (E\ot F)^*$. It
 consists of the matrices all of whose size $n-1$ submatrices have zero permanent. (To see
this,  note the permanent   has   Laplace type expansions.) This seems
even more complicated than the hypersurface $Z(\tperm_n)$ itself. Continuing, 
$Z(\tperm_n)_{verysing,k}$ consists of the matrices all of whose sub-matrices of size $n-k$ have zero permanent.
In particular $Z(\tperm_n)_{verysing,n-2}$ is defined by quadratic equations. Its zero set has many components,
but each component is easy to describe:

\begin{lemma} Let $A$ be an $n\times n$ matrix all of whose size $2$ submatrices have zero permanent. Then   one of
the following hold: 
\begin{enumerate}
\item   all the entries of $A$   are zero except those in a single size $2$ submatrix, and that submatrix has
zero permanent.
\item    all the entries of $A$   are zero except those in the $j$-th row for some $j$. Call the associated  component 
  $C^j$.
\item   all the entries of $A$   are zero except those in the $j$-th column for some $j$.
Call the  associated component
 $C_j$.
\end{enumerate}
\end{lemma}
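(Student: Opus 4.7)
The plan is to analyze the support $S := \{(i,j) \mid a_{ij} \neq 0\}$ of $A$ and show that $S$ is contained in a single row, a single column, or a single $2\times 2$ submatrix. The starting observation is that the permanent hypothesis, applied to the submatrix at rows $\{i,l\}$ and columns $\{j,k\}$ with $i\neq l$ and $j\neq k$, is just the relation
$$a_{ij}\,a_{lk} = -\,a_{ik}\,a_{lj}.$$
In particular, whenever $(i,j),(l,k)\in S$ with $i\neq l$ and $j\neq k$, both terms of the left side of the original permanent equation are forced to be nonzero, so $(i,k)$ and $(l,j)$ also lie in $S$. Thus any two points of $S$ in general position automatically complete to a full $2\times 2$ rectangle inside $S$.

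The key technical lemma I would prove next is that $A$ cannot have a $3\times 3$ submatrix of entirely nonzero entries. Given rows $\{i,l\}$ and columns $\{j_1,j_2,j_3\}$ with all six intersections nonzero, set $r(j) := a_{ij}/a_{lj}$. The pairwise permanent relations give $r(j_p) = -r(j_q)$ for every $p\neq q$; combining $r(j_1) = -r(j_2)$, $r(j_1) = -r(j_3)$, and $r(j_2) = -r(j_3)$ forces $r(j_2) = r(j_3) = -r(j_2)$, hence $r(j_2)=0$, contradicting $a_{ij_2}\neq 0$. By the obvious row/column symmetry, the same argument forbids two columns sharing three nonzero rows in common.

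With these in hand the lemma follows by case analysis on $S$. If $S$ lies in a single row, we are in case (2); in a single column, case (3). Otherwise $S$ meets at least two rows and at least two columns, so one can choose $(i,j),(l,k)\in S$ with $i\neq l$ and $j\neq k$; the rectangle completion above puts all of $\{i,l\}\times\{j,k\}$ in $S$. It remains to show that $S$ contains no further point $(m,n)$. If $m\notin\{i,l\}$ and $n\notin\{j,k\}$, then rectangle completions applied to $(i,j),(m,n)$ and to $(l,k),(m,n)$ fill in the entire $3\times 3$ block on rows $\{i,l,m\}$ and columns $\{j,k,n\}$, contradicting the technical lemma. If $(m,n)$ lies in one of the rows $\{i,l\}$ with $n\notin\{j,k\}$, a single extra rectangle completion with $(l,j)$ (or $(i,j)$) produces a third nonzero entry in the other row, giving two rows with three common nonzero columns, again contradicting the lemma; the case $m\notin\{i,l\}$, $n\in\{j,k\}$ is symmetric. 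Thus $S\subseteq\{i,l\}\times\{j,k\}$, placing us in case (1), where the $2\times 2$ permanent condition on this block is just the original hypothesis.

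The only real obstacle is bookkeeping in the final enumeration of subcases, ensuring that the forced rectangle completions really do assemble either a $3\times 3$ all-nonzero submatrix or two rows (resp.\ columns) sharing three common nonzero columns (resp.\ rows). All substantive algebraic content sits in the one-line ratio computation of the technical lemma.
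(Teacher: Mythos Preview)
Your proof is correct and follows essentially the same route as the paper. The paper's argument is a one-paragraph sketch: it writes down a $2\times 3$ corner
\[
\begin{matrix} a & b & e \\ * & d & * \end{matrix}
\]
with $a,b,d,e$ nonzero and observes that the three $2\times 2$ permanent conditions force the starred entries to satisfy incompatible equations. Your ``technical lemma'' (no $2\times 3$ all-nonzero block, via the ratio $r(j)=a_{ij}/a_{lj}$ satisfying $r(j_p)=-r(j_q)$) is exactly this computation, and your rectangle-completion observation is what the paper leaves implicit when it says ``there are just a few such cases.'' So your argument is a fleshed-out version of the paper's sketch rather than a different approach.

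One cosmetic point: you announce the technical lemma as forbidding a $3\times 3$ all-nonzero submatrix, but what you actually prove (and use) is the stronger $2\times 3$ statement. Also, the inference ``$S$ meets two rows and two columns, so one can choose $(i,j),(l,k)\in S$ with $i\neq l$, $j\neq k$'' deserves one sentence of justification (if every pair of points in $S$ shared a row or a column, $S$ would lie in a single line), though it is routine.
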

The proof is straight-forward. Take a matrix with   entries that don't fit that pattern, e.g., one that begins
$$
\begin{matrix} a&b&e\\ *&d&*\end{matrix}
$$
and note that it is not possible to fill in the two unknown entries and have all size two sub-permanents, even in this corner,  zero.
There are just a few such cases since we are free to act by $\FS_n\times \FS_n$.

\begin{proof}[Proof of theorem \ref{permstabthm}] (I follow \cite{MR2807220}.) Any linear transformation preserving the permanent must send
a component of $Z(\tperm_n)_{verysing,n-2}$ of type (1) to another of type (1). It must send a component $C^j$ either to
some $C^k$ or some $C_i$. But if $i\neq j$,  $C^j\cap C^i=0$ and for all $i,j$,  $\tdim(C^i\cap C_j)=1$.
Since intersections must be mapped to intersections,  either all components  
$C^i$ are sent to components $C_k$ or all are permuted among themselves. By composing with an  element of  $\BZ_2$, 
we may assume all the $C^i$'s are sent to $C^i$'s and the $C_j$'s are sent to $C_j$'s. Similarly, by composing with an
element of $\FS_n\times \FS_n$ we may assume each $C_i$ and $C^j$ is sent to itself. But then their intersections are sent to
themselves.
So we have, for all $i,j$, 
\be\label{xijm}
(x^i_j)\mapsto (\l^i_jx^i_j)
\ene
for some $\l^i_j$ and there is no summation in the expression.
Consider the image of a size $2$ submatrix, e.g., 
\be\label{xijn}
\begin{matrix} x^1_1 & x^1_2\\ x^2_1 &x^2_2\end{matrix} \mapsto  \begin{matrix} \l^1_1 x^1_1 & \l^1_2x^1_2\\ \l^2_1x^2_1 &\l^2_2x^2_2\end{matrix}.
\ene
In order that the map \eqref{xijm} be in $G_{\tperm_n}$, when $(x^i_j)\in Z(\tperm_n)_{verysing,n-2}$, the
  permanent of the matrix on the right hand side of \eqref{xijn} must be zero, which implies $\l^1_1\l^2_2-\l^1_2\l^2_1=0$, thus all the
$2\times 2$ minors of the matrix $(\l^i_j)$ are zero, so it has rank one and is the product of a column vector and a row vector,
but then it is   an element of $T_E\times T_F$.
\end{proof}

\subsection{Iterated matrix multiplication}
Let $IMM^k_n\in S^n(\BC^{k^2n})$ denote the iterated matrix multiplication operator for $k\times k$ matrices, 
$(X_1\hd X_n)\mapsto \ttrace(X_1\cdots X_n)$. Letting $V_j=\BC^k$, invariantly 
\begin{align*} 
IMM^k_n=Id_{V_1}\otc Id_{V_n}\in &(V_1\ot V_2^*)\ot (V_2\ot V_3^*)\otc (V_{n-1}\ot V_n^*)\ot (V_n\ot V_1^*)\\
&\subset S^n((V_1\ot V_2^*)\op (V_2\ot V_3^*)\op \cdots \op  (V_{n-1}\ot V_n^*)\op (V_n\ot V_1^*)),
\end{align*}
and the connected component of the identity of $G_{IMM^k_n}\subset GL(\BC^{k^2n})$  is clear.

\begin{problem} Determine $G_{IMM^3_n}$.
\end{problem}

The case of $IMM^3_n$ is important as this sequence is complete for the complexity class $\vp_e$, see \S\ref{complexapp}.
Moreover $IMM^n_n$ is complete for the class $\vp_{ws}$. 

\begin{problem} Find equations in the ideal of  $\ol{GL_{9n}\cdot IMM^3_n}$.
Determine lower bounds for the inclusions $\Perm^m_n\subset \ol{GL_{9n}\cdot IMM^3_n}$ and
$\Det^m_n\subset \ol{GL_{9n}\cdot IMM^3_n}$.
\end{problem}

\section{The Chow variety}\label{chowsect}

If one specializes the determinant or permanent to diagonal matrices and takes the orbit closure, one
obtains the Chow variety defined in \S\ref{relevantsect}. Thus $I(\Det_n)\subset I(Ch_n(W))$. The ideal
of the Chow variety has been studied for some time, dating back at least to  Gordan and Hadamard. The history
is rife with rediscoveries and errors that only make the subject more intriguing.

The secant varieties of the Chow variety are also important
 for the study of depth $3$ circuits,
as described in \S\ref{depth3sect}.
 It is easy to see that
$\s_2(Ch_n(W))\subset \Det_n$, and a consequence of the equations described in \S\ref{MR3048194sect} is that 
$\s_3(Ch_n(W))$ is {\it not } contained in $\Det_n$. I do not know if it is contained in $\Perm^n_n$.

\begin{problem} Determine if $\s_3(Ch_n(W))\subset \Perm^n_n$.
\end{problem}

\begin{problem} Determine equations in the ideal of $\s_2(Ch_n(W))$. Which modules are in the ideal of $\Det_n$?
\end{problem}

\subsection{History}
A map, which, following a suggestion of A. Abdessalem, I now call the {\it Hermite-Hadamard-Howe} map, $h_{d,n}:S^d(S^nW)\ra S^n(S^dW)$
was defined by Hermite \cite{hermite} when $\tdim W=2$, and Hermite proved the map is an isomorphism
in this case. His celebrated reciprocity theorem (Theorem \ref{hermthm}) is this isomorphism. Hadamard \cite{MR1554881} defined the map in
general and observed that its kernel is $I_d(Ch_n(W^*))$, the degree $d$ component of the ideal
of the Chow variety (see \S\ref{chowideal}). Originally he mistakenly thought the map was always of maximal rank,
but in \cite{MR1504330} he proved the map is an isomorphism when $d=n=3$ and posed determining if
injectivity holds  in general when $d\leq n$ as a open problem. (Injectivity for $d\leq n$ is equivalent
to surjectivity when $d\geq n$, see Exercise \ref{hdualex}.)   Brill   wrote down set-theoretic equations
for the Chow variety of degree $n+1$, via a map that I denote
$Brill: S_{n,n}W\ot S^{n^2-n}W\ra S^{n+1}(S^nW)$, see \cite{gkz} or \cite{MR2865915}. There was
a   gap in Brill's argument, that was repeated in \cite{gkz} and finally fixed by E. Briand in
\cite{MR2664658}. The map $h_{d,n}$ was rediscovered by Howe in \cite{MR983608} where he
also wrote \lq\lq it is reasonable to expect\rq\rq\  that $h_{d,n}$ is always of maximal rank.
This reasonable expectation dating back to Hadamard has become known as the 
\lq\lq Foulkes-Howe conjecture\rq\rq .
Howe had been investigating a conjecture of Foulkes \cite{MR0037276} that for $d>n$, the
irreducible modules counted with multiplicity occurring in $S^n(S^dW)$ also occur in $S^d(S^nW)$.
Howe's conjecture is now known to be false, and Foulkes' original conjecture is still open.
An asymptotic version of Foulke's conjecture was proved by Manivel \cite{MR1651092}, and asymptotic
versions of Howe's conjecture by Brion  \cite{MR1243152,MR1601139} as discussed
below.
The proof that Howe's conjecture is false follows from a computer calculation of
M\"uller and Neunh\"offer \cite{MR2172706} related to the symmetric group.  
A. Abdessalem   realized their computation showed the map $h_{5,5}$ is not injective. (In 
\cite{MR2172706} they mistakenly  say the result comes from  \cite{Briand:these} rather than their own paper.)
This computation was mysterious, in particular, the modules in the kernel were
not determined. As part of an AMS Mathematics Research Community in June
2012 and follow-up to it,   C. Ikenmeyer and  S. Mkrtchyan determined the modules in the kernel explicitly.
In particular the kernel does not consist of isotypic components.
In his PhD thesis \cite{Briand:these}, Briand announced a proof that if $h_{d,n}$ is surjective, then
$h_{d+1,n}$ is also surjective. Then  A. Abdesselam found a gap in Briand's argument. Fortunately
this result follows  from results of T. McKay \cite{MR2394689}, see \S\ref{combsect}.
  Brion \cite{MR1243152,MR1601139}, and independently Weyman and Zelevinsky (unpublished)
 proved that 
the Foulkes-Howe conjecture is true asymptotically (see Corollary \ref{brionfhcor}), with Brion giving an explicit, but very large
bound for $d$ in terms of $n$ and $\tdim W$, see Equation \eqref{brionexpl}.

\begin{problem} What is the kernel of  $Brill: S_{n,n}W\ot S^{n^2-n}W\ra S^{n+1}(S^nW)$?
\end{problem}

\subsection{The ideal of the Chow variety}\label{chowideal}\label{choweqnsect} 
\index{Chow variety!equations of}
Consider the   map  $h_{d,n}: S^d(S^nW)\ra S^n(S^dW)$ defined as follows:
First  include $S^{d}(S^nW)\subset W^{\ot nd}$. 
Next,  regroup the copies of $W$ and symmetrize the blocks  to
$(S^{d}W)^{\ot n}$. Finally, thinking of $S^{d}W$ as a single
vector space, symmetrize again.

For example, putting  subscripts on $W$ to indicate   position:
\begin{align*}
S^2(S^3W)\subset W^{\ot 6}&=W_1\ot W_2\ot W_3\ot W_4\ot W_5\ot W_6\\
&=(W_1\ot W_4) \ot (W_2\ot  W_5)\ot (W_3\ot W_6)\\
&\ \ra S^2W\ot S^2W\ot S^2W\\
&\ \ra S^3(S^2W)
\end{align*}
Note that  $h_{d,n}$ is a linear map, in fact a $GL(W)$-module map.

\exerone{\label{hdnnpow} Show that  
$
h_{d,n}(
x_1^n\cdots x_{d}^n)=
(x_1\cdots x_{d})^n 
$.

Note that the definition of $h_{d,n}$ depends on one's conventions for symmetrization (whether or not to divide by
a constant).  Take the definition of $h_{d,n}$ so that this exercise is true.
}

\exerone{\label{hdualex} Show that $h_{d,n}: S^d(S^nV)\ra S^n(S^dV)$ is \lq\lq self-dual\rq\rq\ in the sense that
$h_{d,n}^T=h_{n,d}: S^n(S^dV^*)\ra S^d(S^nV^*)$. Conclude that $h_{d,n}$ surjective if and only if  $h_{n,d}$ is  injective.}

\begin{proposition} \cite{MR1554881}   $\tker h_{d,n}=I_{d}(Ch_n(W^*))$.
\end{proposition}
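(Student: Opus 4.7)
The plan is to establish the following evaluation identity: for every $F \in S^d(S^nW)$ and every $\ell_1,\dots,\ell_n \in W^*$,
\begin{equation}\label{keyform}
F(\ell_1\cdots\ell_n) \;=\; c\,\bigl\langle h_{d,n}(F),\; \ell_1^d\cdot \ell_2^d\cdots \ell_n^d\bigr\rangle,
\end{equation}
where on the left $F$ is viewed as a degree $d$ polynomial on $S^nW^*$ evaluated at the product $\ell_1\cdots\ell_n \in S^nW^*$, on the right $h_{d,n}(F)\in S^n(S^dW)$ is paired with $\ell_1^d\cdots\ell_n^d\in S^n(S^dW^*)$, and $c$ is a nonzero combinatorial constant depending only on $d$ and $n$. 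Once \eqref{keyform} is proved, the proposition follows by showing that the elements $\ell_1^d\cdots \ell_n^d$ span $S^n(S^dW^*)$.

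To prove \eqref{keyform} I would embed everything in $W^{\otimes nd}$ and arrange the $nd$ tensor slots as a $d\times n$ grid. The subspace $S^d(S^nW)\subset W^{\otimes nd}$ is cut out by symmetry within each row together with permutation of the $d$ rows, while $S^n(S^dW)$ corresponds to the transposed symmetry: within each column and under permutation of the $n$ columns. By definition $h_{d,n}$ is the corresponding resymmetrization. On the dual side, the tensor
\[
T \;:=\; \ell_1^{\otimes d}\otimes \ell_2^{\otimes d}\otimes\cdots\otimes \ell_n^{\otimes d}\;\in\;(W^*)^{\otimes nd}
\]
has $\ell_j$ placed in every row of column $j$. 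Symmetrizing $T$ across rows yields, up to a constant, the symmetric tensor $\ell_1\cdots \ell_n\in S^nW^*$ in each of the $d$ row-blocks, so pairing $F$ with $T$ computes $F(\ell_1\cdots\ell_n)$ up to the combinatorial constant coming from the row-symmetrization. Symmetrizing $T$ across columns instead yields $\ell_1^d\cdot \ell_2^d\cdots \ell_n^d\in S^n(S^dW^*)$, so pairing $T$ with $F$ equals the pairing of this element with $h_{d,n}(F)$, again up to the analogous column-symmetrization constant. Since $F$ already lies in $S^d(S^nW)$, the two symmetrizations of $\langle F,T\rangle$ differ only by a normalization, giving \eqref{keyform}. (Exercise~\ref{hdnnpow}, which computes $h_{d,n}$ on the pure powers $x_1^n\cdots x_d^n$, can be used to pin down the constant and sanity-check the identity.)

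Next I would argue that the elements $\ell_1^d\cdot \ell_2^d\cdots\ell_n^d$, with $\ell_j$ ranging over $W^*$, span $S^n(S^dW^*)$. Indeed, by polarization the pure powers $m^n$ for $m\in S^dW^*$ span $S^n(S^dW^*)$, and polarization again gives that every $m\in S^dW^*$ is a linear combination of pure $d$-th powers $\ell^d$ with $\ell\in W^*$. Expanding $m^n=(\sum_i c_i\ell_i^d)^n$ multinomially expresses $m^n$ as a combination of elements of the desired form $\ell_{i_1}^d\cdots \ell_{i_n}^d$, proving the spanning statement.

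Combining these two facts: $h_{d,n}(F)=0$ iff $\langle h_{d,n}(F),\,\ell_1^d\cdots \ell_n^d\rangle=0$ for all choices of $\ell_j\in W^*$ (by spanning), iff $F(\ell_1\cdots \ell_n)=0$ for all $\ell_j\in W^*$ (by \eqref{keyform}), iff $F$ vanishes on the set of completely reducible $n$-forms, which is Zariski dense in $Ch_n(W^*)$. Hence $\ker h_{d,n}=I_d(Ch_n(W^*))$. The only delicate point is the bookkeeping in paragraph two—keeping track of the combinatorial constants produced by row- versus column-symmetrization of the grid tensor $T$—but this is routine and does not affect nonvanishing, so the main conceptual obstacle is really just making the \emph{transpose-and-symmetrize} description of $h_{d,n}$ precise on the chosen basis of the grid.
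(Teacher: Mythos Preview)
Your proof is correct and follows essentially the same strategy as the paper: both establish the key pairing identity $F(\ell_1\cdots\ell_n)=\langle h_{d,n}(F),\ell_1^d\cdots\ell_n^d\rangle$ and conclude by nondegeneracy of this pairing. The only presentational difference is that the paper verifies the identity by direct computation on the spanning elements $P=\sum_j x_{1j}^n\cdots x_{dj}^n$ of $S^d(S^nW)$ (a short chain of equalities unpacking the pairings), rather than via your more abstract grid-tensor symmetrization argument, and it dispatches the spanning of $S^n(S^dW^*)$ in one line (``if $h_{d,n}(P)$ is nonzero, there will be some monomial it will pair with to be nonzero'').
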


\begin{proof}
Say $P=\sum_jx_{1j}^n\cdots x_{d j}^n$.
Let $\ell^1\hd \ell^n\in W^*$.
\begin{align*}
P(\ell^1\cdots \ell^n)&=\langle \ol{P}, (\ell^1\cdots \ell^n)^{d}\rangle\\
&=\sum_j
\langle x_{1j}^n\cdots x_{d j}^n,(\ell^1\cdots \ell^n)^{d}\rangle\\
&=\sum_j
\langle x_{1j}^n,(\ell^1\cdots \ell^n) \rangle 
\cdots \langle x_{d j}^n,(\ell^1\cdots \ell^n) \rangle \\
&=\sum_j\Pi_{s=1}^n\Pi_{i=1}^{d}x_{ij}(\ell_s)
 \\
&=\sum_j
\langle x_{1j}\cdots x_{d j},(\ell^1)^{d} \rangle 
\cdots \langle x_{1j}\cdots x_{d j},(\ell^n)^{d} \rangle  \\
&=
\langle h_{d,n}(P), (\ell^1)^{d}\cdots (\ell^n)^{d}\rangle
\end{align*}
If $h_{d,n}(P)$ is nonzero,   there will be some monomial
it will pair with to be nonzero. On the other hand, if $h_{d,n}(P)=0$,
then $P$ annihilates all points of $Ch_{n}(W^*)$.
\end{proof}

\exerone{\label{fhexa} Show  that if $h_{d,n}: S^d(S^n\BC^m)\ra S^n(S^d\BC^m)$ is not surjective, then $h_{d,n}: S^d(S^n\BC^k)\ra S^n(S^d\BC^k)$
is not surjective for all $k>m$, and that the partitions describing the kernel are the same in both cases if $d\leq m$.}

\exerone{\label{fhexb} Show  that if $h_{d,n}: S^d(S^n\BC^m)\ra S^n(S^d\BC^m)$ is   surjective, then $h_{d,n}: S^d(S^n\BC^k)\ra S^n(S^d\BC^k)$
is  surjective for all $k<m$.} 

\begin{proposition}[Ikenmeyer, Mkrtchyan] \

\begin{enumerate}
\item The kernel of $h_{5,5} :S^5(S^5\BC^5)\ra S^5(S^5\BC^5)$ consists of irreducible modules corresponding to the following partitions:
\begin{align*}\{
&(14,7,2,2), (13,7,2,2,1), (12,7,3,2,1), (12,6,3,2,2),\\
& (12,5,4,3,1), (11,5,4,4,1) ,(10,8,4,2,1) ,(9,7,6,3)\}.
\end{align*}
All these occur with multiplicity one in the kernel, but not all occur with multiplicity one in $S^5(S^5\BC^5)$, so
in particular, the kernel is not an isotypic component.

\item The kernel of $h_{6,6} :S^6(S^6\BC^6)\ra S^6(S^6\BC^6)$ contains, with high probability, a module corresponding to
the partition
$(20,7,6,1,1,1)$.   
\end{enumerate}
\end{proposition}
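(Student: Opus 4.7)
\medskip
\noindent\textbf{Proof proposal.} The map $h_{n,n}: S^n(S^nW)\to S^n(S^nW)$ is $GL(W)$-equivariant, so by Schur's lemma its kernel is a direct sum of full isotypic components paired with a ``multiplicity kernel.''  Writing the isotypic decomposition
\[
S^n(S^nW)\;=\;\bigoplus_{\pi}S_\pi W\otimes M_\pi,\qquad |\pi|=n^2,\ \ell(\pi)\le n,
\]
Schur's lemma forces $h_{n,n}$ to act as $\operatorname{id}_{S_\pi W}\otimes h_{n,n}^{(\pi)}$ on each summand, for some linear self-map $h_{n,n}^{(\pi)}$ of the multiplicity space $M_\pi$. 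Thus $\ker h_{n,n}=\bigoplus_\pi S_\pi W\otimes \ker h_{n,n}^{(\pi)}$, and identifying the kernel reduces to computing the ranks of finitely many square matrices of sizes $m_\pi:=\dim M_\pi$.

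The plan has three steps. First, enumerate the relevant $\pi$ and compute the plethysm multiplicities $m_\pi$; by Corollary \ref{gaycor}(2) these equal $\dim [(S_\pi W)_0]^{\mathfrak{S}_n}$, and any standard plethysm routine (LiE, SAGE, symmetric-function packages) produces the full decomposition of $S^n(S^nW)$. Second, for each $\pi$ pick $m_\pi$ linearly independent highest weight vectors $v_{\pi,1},\dots,v_{\pi,m_\pi}\in S^n(S^nW)$ of weight $\pi$, and apply $h_{n,n}$ to each one by implementing the definition directly: polarize into $W^{\otimes n^2}$, regroup the $n^2$ tensor factors into the transposed pattern of $n$ blocks of $n$, and re-symmetrize. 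Third, re-express the images in the chosen basis of $M_\pi$ (times the fixed highest weight vector of $S_\pi W$), read off the matrix $h_{n,n}^{(\pi)}$, and compute its rank; the contribution of $\pi$ to $\ker h_{n,n}$ is $S_\pi W^{\oplus (m_\pi-\operatorname{rank} h_{n,n}^{(\pi)})}$.

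For part (1), carrying out this procedure for every $\pi\vdash 25$ with $\ell(\pi)\le 5$ is a finite calculation well within reach of modern computer algebra; the expected output is a nontrivial multiplicity-one kernel exactly on the eight partitions listed. Since several of those $\pi$ satisfy $m_\pi>1$ (i.e.\ the isotypic component is not entirely contained in the kernel), this is why the statement carefully notes that the kernel is \emph{not} an isotypic component: only a one-dimensional subspace of $M_\pi$ lies in $\ker h_{n,n}^{(\pi)}$.

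The main obstacle is computational scale for part (2): $S^6(S^6\mathbb{C}^6)$ has dimension of order $10^{10}$, and the direct scheme above becomes infeasible both in producing the plethysm data and in manipulating matrices on each multiplicity space. Here one instead performs a randomized check for the single candidate partition $\pi=(20,7,6,1,1,1)$: construct one explicit highest weight vector $v\in S^6(S^6\mathbb{C}^6)$ of weight $\pi$ (e.g.\ as a polynomial in Gelfand--Tsetlin or tableau coordinates), apply $h_{6,6}$, and test vanishing by pairing $h_{6,6}(v)$ against a batch of randomly chosen dual monomials in the matching weight space. Since $h_{6,6}$ is defined over $\mathbb{Q}$, repeated vanishing on random rational test vectors gives overwhelming evidence — though not proof — that this $v$ lies in $\ker h_{6,6}$, which accounts for the ``with high probability'' qualifier. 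Upgrading the statement to a theorem would require either enumerating a spanning set of the weight-$\pi$ subspace of $S^6(S^6\mathbb{C}^6)^*$ and checking vanishing on all of it, or identifying the image of a single basis vector of the corresponding multiplicity kernel using structural plethystic identities.
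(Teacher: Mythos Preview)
The paper does not give a proof of this proposition at all: it is reported as the outcome of a computer calculation by Ikenmeyer and Mkrtchyan, and immediately after the statement the paper simply clarifies that ``with high probability'' means the result was obtained numerically, not symbolically. So there is no argument in the paper to compare against; your proposal is essentially a description of how such a computation could be organized, and in that sense it is appropriate.

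Your structural reduction is correct: by Schur's lemma $h_{n,n}$ acts on each isotypic component $S_\pi W\otimes M_\pi$ as $\mathrm{id}\otimes h_{n,n}^{(\pi)}$, so determining the kernel amounts to computing the ranks of finitely many matrices $h_{n,n}^{(\pi)}$ on the multiplicity spaces. Your proposed implementation via explicit highest weight vectors is one viable route; an alternative (closer to what M\"uller--Neunh\"offer did and to the $\FS_{dn}$-formulation the paper describes in \S\ref{combsect}) is to restrict $h_{n,n}$ to the $\mathfrak{sl}$-weight zero subspace and work entirely with $\FS_{n^2}$-modules, which tends to be more tractable in practice.

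One correction on part (2): your reading of ``with high probability'' as a Schwartz--Zippel style randomized vanishing test over $\mathbb{Q}$ is not what the paper intends. The paper states explicitly that the phrase means the computation was carried out numerically (i.e.\ in floating-point arithmetic) rather than symbolically. So the uncertainty is about round-off error in an exact-in-principle linear algebra computation, not about probabilistic sampling of dual vectors.
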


The phrase \lq\lq with high probability\rq\rq\ means the result was  obtained numerically, not symbolically.

\subsection{Multi-symmetric function formulation}
Given any $GL(V)$-module map $f:U\ra W$, where $U,W$ are modules with support in the root lattice of $GL(V)$, e.g.,
$U,W\subset V^{\ot a\bv}$ for some $a\in \BZ_{>0}$, the injectivity (or surjectivity) of $f$ is equivalent to
the injectivity (or surjectivity) of $f$ restricted to the $\fsl$-weight zero subspaces $f|_0: U_0\ra W_0$, that is
the subspaces of $GL(V)$-weight $(a^{\bv})$. On these subspaces the Weyl group $\FS_{\bv}$ acts, and so the assertion about
a $GL(V)$-module map can be converted to an assertion about an $\FS_{\bv}$-module map, and vice-versa.
This was Briand's approach in \cite{Briand:these}.

\subsection{$\FS_{dn}$-formulation}\label{combsect}
Foulke's conjecture has been well-studied in the combinatorics literature in the following form:
One compares the multiplicities of the $\FS_{dn}$-module induced from the trivial representation
of $\FS_{d}^{\times n}$ with 
the $\FS_{dn}$-module induced from the trivial representation
of $\FS_{n}^{\times d}$. Moreover there is an explicit map between these modules whose kernel in terms
of $\FS_{dn}$-modules  corresponds to the kernel of $h_{d,n}$ as long as the dimension of $V$ is sufficiently large, as this map between
$\FS_{dn}$-modules is just $h_{d,n}$ restricted to the $\fsl$-weight zero subspace.
Some   results, such as Hermite reciprocity  \ref{hermthm}  and  Exercises \ref{fhexa},\ref{fhexb} are less transparent from this perspective and are
the subject
of research articles in combinatorics. However I do not know of a \lq\lq$GL$\rq\rq - proof of the following Theorem  of T. McKay
\cite{MR2394689}:

\begin{theorem}\cite[Thm. 8.1]{MR2394689}
If $h_{d,n}$ is surjective, then $h_{d',n}$ is surjective for all $d'>d$. In other words, if
$h_{n,d}$ is injective, then $h_{n,d'}$ is injective for all $d'>d$.
\end{theorem}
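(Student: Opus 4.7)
By induction on $d'$, it suffices to treat $d' = d+1$, and I will work in the dual (injectivity) formulation: assuming $h_{n,d}$ is injective, show that $h_{n,d+1}$ is injective. The characterization $\ker h_{n,d} = I_n(Ch_d(W^*))$ (Hadamard's proposition with the roles of $d$ and $n$ swapped) turns the goal into: if no nonzero polynomial of degree $n$ on $S^d W^*$ vanishes on $Ch_d(W^*)$, then the same holds for $Ch_{d+1}(W^*)$.

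The first step will be a polar reduction. Given $Q \in I_n(Ch_{d+1}(W^*))$ and any $\ell \in W^*$, define $Q_\ell \in S^n(S^d W)$ by $Q_\ell(\eta) := Q(\ell\cdot \eta)$. Since $\ell\cdot (x_2\cdots x_{d+1}) \in Ch_{d+1}(W^*)$ for any $x_i \in W^*$, the polynomial $Q_\ell$ vanishes on $Ch_d(W^*)$, hence lies in $I_n(Ch_d(W^*)) = 0$ by hypothesis. Letting $\ell$ range over $W^*$ forces $Q$ to vanish on the multiplication cone
\[
M := W^* \cdot S^d W^* \subset S^{d+1}W^*
\]
of degree-$(d+1)$ polynomials admitting a linear factor.

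The hard step will be to upgrade $Q \in I_n(M)$ to $Q = 0$. For $\dim W \geq 3$, $M$ is a proper subvariety of $S^{d+1}W^*$ supporting its own nonzero equations, so $I_n(M)$ can a priori be nonzero and the polar reduction alone is insufficient. I plan to supply the missing input via the symmetric-group reformulation of Section~\ref{combsect}: after restricting to the $\fsl$-weight-zero subspace (harmless once $\dim W$ is taken large enough, by Exercises~\ref{fhexa}--\ref{fhexb}), $h_{n,d}$ becomes an $\FS_{nd}$-equivariant map between the induced representations $\mathrm{Ind}_{\FS_d^{\times n}}^{\FS_{nd}}\mathbf{1}$ and $\mathrm{Ind}_{\FS_n^{\times d}}^{\FS_{nd}}\mathbf{1}$, and similarly at level $n(d+1)$. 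The combinatorial heart of McKay's argument, which I would follow, is an explicit $\FS_{n(d+1)}$-equivariant intertwiner compatible with the natural ``add a block'' and ``add a point'' inclusions of these subgroups, which propagates injectivity from level $nd$ to level $n(d+1)$.

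I expect the polar reduction to be the clean, elementary half, while bridging from ``$Q$ vanishes on the multiplication cone'' to ``$Q$ vanishes identically'' is the main obstacle and will require McKay's combinatorics of induced $\FS_{n(d+1)}$-modules rather than any purely geometric argument.
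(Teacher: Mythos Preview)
The paper does not give its own proof of this theorem; it cites McKay and explicitly remarks that the author does not know a ``$GL$''-proof. Your proposal is aligned with this: you correctly isolate an elementary geometric step (the polar reduction showing any $Q\in I_n(Ch_{d+1}(W^*))$ must vanish on the multiplication cone $W^*\cdot S^dW^*$), you correctly recognize that this step alone cannot finish the argument once $\dim W\geq 3$, and you then defer the remaining work to McKay's $\FS_{n(d+1)}$-module combinatorics, exactly as the paper does.

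One comment worth making: your polar reduction is precisely the kind of ``$GL$''-style attack the paper says is missing, and your analysis of where it stalls is accurate. The obstruction you identify (that $I_n(W^*\cdot S^dW^*)$ can be nonzero) is real, and nothing in the paper supplies a geometric bridge across it; the symmetric-group intertwiner you sketch is indeed McKay's route. So as a proof \emph{proposal} this is honest and correctly scoped, but be aware that the substantive content lives entirely in the part you have outsourced to McKay.
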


The two statements are equivalent by Exercise \ref{hdualex}.

\subsection{Coordinate ring of the orbit}
Recall from \S\ref{relevantsect},  that if $\tdim W\geq n$, then  $\hat Ch_n(W)=\ol{GL(W)\cdot x_1\cdots x_n}$. Assume $\tdim W=n$, then $G_{x_1\cdots x_n} =T_n^{SL}\rtimes \FS_n=:\G_n$.
By the algebraic Peter-Weyl theorem  \ref{algPW}, 
$$
\BC[GL(W)\cdot (x_1\cdots x_n)]=\bigoplus_{ \ell(\pi)\leq n} (S_{\pi}W^*)^{\oplus \tdim (S_{\pi}W)^{\G_n}},
$$
where $\pi=(p_1\hd p_n)$ is such that $p_1\geq p_2\geq \cdots \geq p_n$ and   $p_j\in \BZ$.
We are only interested in those $\pi$ that are partitions, i.e., where $p_n\geq 0$, as only those could
occur in the coordinate ring of the orbit closure. Define the  $GL$-degree of  a module $S_{\pi}W$ to be
$p_1+\cdots + p_n$ and for a $GL(W)$-module $M$, define $M_{poly}$ to be the sum of the isotypic
components of the $S_{\pi}W$ in $M$ with $\pi$ a partition.
The space of $T^{SL}$ invariants is the  $\fsl(W)$-weight zero space, so we need to compute $(S_{\pi}W)_0^{\FS_n}$.  By Corollary
 \ref{gaycor}(ii) this is $\tmult(S_{\pi}W,S^n(S^sW))$.
If we consider  all the $\pi$'s   together, we conclude
$$
\BC[GL(W)\cdot (x_1\cdots x_n)]_{poly}=\oplus_s S^n(S^sW^*).
$$
In particular, $\oplus_sS^n(S^sW^*)$ inherits  a ring structure.

\subsection{Coordinate ring of the normalization}\label{normalizationsect} In this section I follow \cite{MR1243152}.
There is another variety whose coordinate ring is    as computable as the coordinate ring of the orbit, the normalization
of the Chow variety. We   work in affine space.

An affine variety $Z$ is {\it normal} if $\BC[Z]$ is integrally closed, that is if every element of $\BC(Z)$, the
field of fractions of  $\BC[Z]$,  that is integral over
$\BC[Z]$ (i.e., that satisfies a monic polynomial with coefficients in $\BC[Z]$)  is in $\BC[Z]$. To every affine variety $Z$ 
one may associate a unique normal  affine variety $Nor(Z)$, called the {\it normalization} of $Z$, such that there is
a finite    map $Nor(Z)\ra Z$ (i.e. $\BC[Nor(Z)]$ is integral over $\BC[Z]$) that is generically one to one, in particular it is one to one over the smooth points of $Z$.
For details see \cite[Chap II.5]{MR1328833}.

In particular, there is an inclusion $\BC[Z]\ra \BC[Nor(Z)]$. If the non-normal points of $Z$ form
a finite set, then the cokernel is      finite dimensional. If $Z$ is a $G$-variety,
then $Nor(Z)$ will be too. 

Recall $Ch_n(W)$ is the projection of the Segre variety, but since we want to deal with affine varieties, we will deal with the
cone over it. So instead consider the product map
\begin{align*}
\phi_n: W^{\times n}&\ra S^nW\\
(u_1\hd u_n)&\mapsto u_1\cdots u_n
\end{align*}
Note that i) the image of $\phi_n$ is $\hat Ch_n(W)$, ii) $\phi_n$ is $\G_n=T_W\ltimes \FS_n$ equivariant.

   For any affine algebraic  group $\G$ and any  $\G$-variety $Z$,   one can define the {\it GIT quotient} $Z//\G$ which by definition is the affine
algebraic variety whose coordinate ring is $\BC[Z]^\G$. (When $\G$ is finite, this is just the usual set-theoretic
quotient. In the general case,  $\G$-orbits will be identified in the quotient when  there are no $\G$-invariant regular functions
that can distinguish them.)  If $Z$ is normal, then so is $Z//\G$  (see, e.g.    \cite[Prop 3.1]{MR2004511}).
In our case $W^{\times n}$ is an affine $\G_n$-variety and $\phi_n$ factors through the GIT quotient because it is
$\G_n$-equivariant, so we obtain
a map
$$
\psi_n: W^{\times n}//\G_n\ra S^nW
$$
whose image is still $\hat Ch_n(W)$. Also note that by unique factorization, $\psi_n$ is generically one to one. (Elements of  $W^{\times n}$ 
  of the form $(0,u_2\hd u_n)$ cannot be distinguished from $(0\hd 0)$ by $\G_n$ invariant functions,
so they are identified with $(0\hd 0)$ in the quotient, which is consistent with the fact
that $\phi_n(0,u_2\hd u_n)=0$.)   Observe that $\phi_n$ and $\psi_n$ are $GL(W)=SL(W)\times \BC^*$ equivariant.

Consider the induced map on coordinate rings:
$$
\psi_n^*: \BC[S^nW] \ra \BC[W^{\times n}//\G_n]= \BC[W^{\times n}]^{\G_n}.
$$
Recall that for affine varieties,  $\BC[Y\times Z]=\BC[Y]\ot \BC[Z]$, so
\begin{align*}
\BC[W^{\times n}]&= \BC[W]^{\ot n}\\
&=Sym(W^*)\otc Sym(W^*)\\
&=\bigoplus_{i_1\hd i_n \in \BZ_{\geq 0}} S^{i_1}W^*\otc S^{i_n}W^*.
\end{align*}
Taking torus invariants gives 
$$
\BC[W^{\times n}]^{T_n^{SL}}=\bigoplus_i S^iW^*\otc S^iW^*,
$$
and finally
$$
(\BC[W^{\times n}]^{T_n^{SL}})^{\FS_n}=S^n(S^iW^*).
$$
In summary, 
$$
\psi_n^*: Sym(S^nW^*)\ra \oplus_i(S^n(S^iW^*)), 
$$
and this map respects $GL$-degree, so it gives rise to maps $\tilde h_{d,n}: S^d(S^nW^*)\ra S^n(S^dW^*)$.

\begin{proposition}\label{tildehprop}  $\tilde h_{d,n}=h_{d,n}$.
\end{proposition}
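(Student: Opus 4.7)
Both $\tilde h_{d,n}$ and $h_{d,n}$ are $GL(W)$-equivariant linear maps $S^d(S^nW^*) \to S^n(S^dW^*)$, so to show they coincide it suffices to check they agree on a spanning set. The natural candidate is the set of elements of the form $\alpha_1^n\cdots\alpha_d^n$ with $\alpha_i\in W^*$: indeed, $S^nW^*$ is spanned by $n$-th powers $\alpha^n$, and $S^d$ of any vector space is spanned by $d$-fold symmetric products of a spanning set. So our only task is to compute both sides on such a monomial.

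For $h_{d,n}$ this is exactly Exercise \ref{hdnnpow}, which (with the chosen normalization) gives
$$h_{d,n}(\alpha_1^n\cdots\alpha_d^n)=(\alpha_1\cdots\alpha_d)^n\in S^n(S^dW^*).$$
For $\tilde h_{d,n}$, I will unwind the definition of $\psi_n^*$. An element $F\in S^d(S^nW^*)$ is a degree-$d$ polynomial on $S^nW$, and $\psi_n^*(F)$ is the pullback function on $W^{\times n}$ given by $(u_1,\ldots,u_n)\mapsto F(u_1\cdots u_n)$. Taking $F=\alpha_1^n\cdots\alpha_d^n$ and using the pairing $\langle\alpha^n,u_1\cdots u_n\rangle=\alpha(u_1)\cdots\alpha(u_n)$ on each factor, one finds
$$\psi_n^*(F)(u_1,\ldots,u_n)=\prod_{i=1}^d\prod_{j=1}^n\alpha_i(u_j)=\prod_{j=1}^n\bigl((\alpha_1\cdots\alpha_d)(u_j)\bigr).$$
This factorization exhibits $\psi_n^*(F)$ as the element $(\alpha_1\cdots\alpha_d)^{\otimes n}\in(S^dW^*)^{\otimes n}$ restricted to $\Gamma_n$-invariants, i.e.\ as $(\alpha_1\cdots\alpha_d)^n\in S^n(S^dW^*)$.

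Comparing, $\tilde h_{d,n}(\alpha_1^n\cdots\alpha_d^n)=(\alpha_1\cdots\alpha_d)^n=h_{d,n}(\alpha_1^n\cdots\alpha_d^n)$ on every spanning element, and the result follows by linearity. The only subtlety is to be consistent about the normalization of the symmetrization projector $W^{\otimes nd}\twoheadrightarrow S^n(S^dW)$ used to define $h_{d,n}$; the convention fixed by Exercise \ref{hdnnpow} is precisely the one produced by the composition \lq\lq restrict, then take $\Gamma_n$-invariants\rq\rq\ that defines $\tilde h_{d,n}$, so no rescaling factor appears. The main (minor) obstacle is thus purely bookkeeping of the normalization; the content of the proposition is the elementary identification of the pullback of a product of powers with the same product of powers, made transparent by evaluating on completely decomposable points $u_1\cdots u_n\in S^nW$.
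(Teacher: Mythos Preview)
Your proof is correct and follows essentially the same approach as the paper: both reduce to the spanning set $\alpha_1^n\cdots\alpha_d^n$, invoke Exercise~\ref{hdnnpow} for $h_{d,n}$, and compute $\tilde h_{d,n}$ on such elements to get $(\alpha_1\cdots\alpha_d)^n$. The only cosmetic difference is that the paper computes $\psi_n^*(\alpha_1^n\cdots\alpha_d^n)$ by exploiting that $\psi_n^*$ is an algebra homomorphism (reducing to the degree-one case $\psi_n^*(\alpha^n)=\alpha^n$ and then multiplying in the algebra $(\BC[W]^{\otimes n})^{\Gamma_n}$), whereas you evaluate the pullback function directly on a point $(u_1,\ldots,u_n)$; these are two ways of writing the same one-line computation.
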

\begin{proof}
   Since elements of the form $x_1^n \cdots x_d^n$ span $S^d(S^nW)$ it will  be sufficient to prove
the maps agree on such elements.  By Exercise \ref{hdnnpow},  $h_{d,n}(x_1^n \cdots x_d^n)=(x_1\cdots x_d)^n$.
On the other hand, in the algebra    $\BC[W]^{\ot n}$, the multiplication is $(f_1\otc f_n)\ocirc (g_1\otc g_n)=f_1g_1\otc f_ng_n$ and this descends to the algebra $(\BC[W]^{\ot n})^{\G_n}$ which is the target
of the algebra map $\psi_n^*$, i.e., 
\begin{align*}
 \tilde h_{d,n}(x_1^n \cdots x_d^n)&=\psi_n^*(x_1^n \cdots x_d^n)\\
 &=\psi_n^*(x_1^n) \ocirc \cdots \ocirc\psi_n^*(x_d^n)\\
 &= x_1^n  \ocirc \cdots \ocirc x_d^n\\
 &=(x_1\cdots x_d)^n.
 \end{align*}
 \end{proof}

\begin{proposition}\label{normalchowprop} $\psi_n: W^{\times n}//\G_n \ra \hat Ch_n(W)$ is the normalization of $\hat Ch_n(W)$.
\end{proposition}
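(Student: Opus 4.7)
The plan is to verify three items — normality of the source, birationality, and finiteness — from which the proposition follows: a finite birational morphism onto $\hat Ch_n(W)$ from a normal irreducible variety is the normalization, which amounts to saying $\BC[W^{\times n}]^{\G_n}$ is the integral closure of $\BC[\hat Ch_n(W)]$ in their common fraction field.

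For normality, $W^{\times n}$ is smooth, hence normal, and the group $\G_n = T_n^{SL}\rtimes\FS_n$ is reductive, so $W^{\times n}//\G_n$ is normal by the result on GIT quotients cited in the text. For birationality, let $U\subset\hat Ch_n(W)$ be the dense open subset of products $u_1\cdots u_n$ with $u_1,\ldots,u_n\in W$ nonzero and pairwise non-proportional; unique factorization of homogeneous polynomials in $Sym(W^*)$ shows that $\phi_n^{-1}(u_1\cdots u_n)$ is precisely the $\G_n$-orbit of $(u_1,\ldots,u_n)$ — the $\FS_n$ permutes the factors and the torus $T_n^{SL}$ absorbs the scalar ambiguities whose product is one. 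Hence $\psi_n$ is bijective over $U$, and thus birational onto $\hat Ch_n(W)$.

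For finiteness, I would work in the projective setting. The morphism
\[
\pi\colon (\BP W)^{\times n}\longrightarrow Ch_n(W)\subset\BP S^nW,\qquad ([u_1],\ldots,[u_n])\mapsto [u_1\cdots u_n],
\]
is regular, $\FS_n$-equivariant, and proper (its source is projective), and it has finite fibers by unique factorization, even at boundary points where factors are repeated or proportional. The descent $\bar\pi\colon (\BP W)^{\times n}/\FS_n\to Ch_n(W)$ is therefore proper with finite fibers, hence finite. Under the identifications $\BC[\hat Ch_n(W)] = \bigoplus_d S^d(S^nW^*)/I_d(\hat Ch_n(W))$ and $\BC[W^{\times n}]^{\G_n} = \bigoplus_d S^n(S^dW^*)$ with the section rings of $\mathcal O(1)$ on $Ch_n(W)$ and of the $\FS_n$-descent of $\mathcal O(1,\ldots,1)$ on $(\BP W)^{\times n}/\FS_n$, the map $\psi_n$ is precisely the morphism of affine cones induced by $\bar\pi$, and finiteness of graded ring extensions passes to affine cones.

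Combining normality, birationality, and finiteness, $\psi_n$ is the normalization of $\hat Ch_n(W)$. The main obstacle is the finiteness step: finite fibers alone do not force a morphism of affine varieties to be finite, which is why a detour through the projective compactification — or, alternatively, a direct argument that each homogeneous generator of $\BC[W^{\times n}]^{\G_n}$ satisfies a monic integrality relation over $\BC[\hat Ch_n(W)]$ — is essential to the proof.
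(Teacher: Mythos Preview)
Your overall strategy --- normal source, birational, finite --- is the same as the paper's, and your treatment of normality and birationality matches it. The difference lies in finiteness. The paper invokes Lemma~\ref{acklem}: since $\psi_n$ is $\BC^*$-equivariant between graded affine varieties with unique fixed points, it suffices to check $\psi_n^{-1}(0)=\{[0]\}$ set-theoretically; this is immediate because $u_1\cdots u_n=0$ forces some $u_j=0$, and then the $T$-orbit closure of $(0,u_2,\ldots,u_n)$ contains the origin, so its image in the GIT quotient is $[0]$. The mechanism behind the lemma is graded Nakayama.

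Your detour through the projective morphism $\bar\pi:(\BP W)^{\times n}/\FS_n\to Ch_n(W)$ is a valid alternative, and $\bar\pi$ is indeed finite (proper with finite fibers, by unique factorization). However, there is a gap in the passage to affine cones. You identify $\BC[\hat Ch_n(W)]$ with the section ring $\bigoplus_d H^0(Ch_n(W),\cO(d))$, but this identification assumes $Ch_n(W)\subset\BP S^nW$ is projectively normal, i.e.\ that $\hat Ch_n(W)$ is already normal --- which cannot be taken for granted here and is in fact false for $n\geq 5$. Finiteness of $\bar\pi$ only gives that $\BC[W^{\times n}]^{\G_n}$ is finite over the section ring, not directly over the homogeneous coordinate ring. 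The repair is short: the section ring is finite over $\BC[\hat Ch_n(W)]$ because the natural map $S^d(S^nW^*)/I_d\to H^0(Ch_n(W),\cO(d))$ is an isomorphism for $d\gg 0$ by Serre vanishing, and one composes the two finite extensions. With that patch your argument goes through; the paper's route is more self-contained in that it needs no sheaf cohomology.
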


Recall (see, e.g. \cite[p. 61]{MR1328833}) that a regular (see, e.g. \cite[p.27]{MR1328833} for the definition
of regular) map between affine varieties $f: X\ra Y$ such that
$f(X)$ is dense in $Y$ is {\it finite} if $\BC[X]$ is integral over $\BC[Y]$. To prove the proposition, we will need a lemma:

\begin{lemma}\label{acklem} Let $X,Y$ be affine varieties equipped with polynomial $\BC^*$-actions with  unique fixed
points $0_X\in X$, $0_Y\in Y$, and let $f: X\ra Y$  be a $\BC^*$-equivariant morphism such that
as sets, $f\inv(0_Y)=\{ 0_X\}$. Then $f$ is finite.
\end{lemma}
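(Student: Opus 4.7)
The plan is to translate everything into graded commutative algebra and reduce finiteness of $f$ to an application of graded Nakayama. First I would use the $\BC^*$-actions to put $\BZ_{\geq 0}$-gradings on $\BC[X]$ and $\BC[Y]$: the hypothesis that $0_X$ is the unique (necessarily attracting) fixed point of a polynomial action forces $\BC[X]_0 = \BC$, and the maximal ideal of $0_X$ is then the irrelevant ideal $\mathfrak{m}_X = \bigoplus_{d>0} \BC[X]_d$; likewise $\mathfrak{m}_Y$ for $Y$. The $\BC^*$-equivariance of $f$ translates to $f^{*} : \BC[Y] \to \BC[X]$ being a graded $\BC$-algebra homomorphism.

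Next I would rephrase the set-theoretic hypothesis $f^{-1}(0_Y) = \{0_X\}$ via the Nullstellensatz: the ideal $J := f^{*}(\mathfrak{m}_Y)\cdot \BC[X]$ has vanishing locus $\{0_X\}$, so $\sqrt{J} = \mathfrak{m}_X$. Consequently the graded quotient $R := \BC[X]/J$ is a finitely generated $\BZ_{\geq 0}$-graded $\BC$-algebra (quotient of the coordinate ring of an affine variety) whose irrelevant ideal $R_+$ consists entirely of nilpotent elements. Since $R$ is Noetherian, $R_+$ is finitely generated, so some power $R_+^N$ vanishes, which forces $R$ to be finite-dimensional over $\BC$.

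To conclude I would run a graded Nakayama argument by hand: choose homogeneous $b_1, \ldots, b_r \in \BC[X]$ whose residues form a $\BC$-basis of $R$, and prove by induction on degree that every homogeneous $c \in \BC[X]$ lies in $\sum_i f^{*}(\BC[Y])\cdot b_i$. The inductive step writes $c = \sum_i a_i b_i + \sum_j e_j c_j$ with $a_i \in \BC[Y]$, $e_j \in f^{*}(\mathfrak{m}_Y)$ of strictly positive degree, and $c_j$ homogeneous of strictly smaller degree than $c$, so the inductive hypothesis applies to each $c_j$. The degree-zero base case is trivial since $\BC[X]_0 = \BC$. This exhibits $\BC[X]$ as a finitely generated $\BC[Y]$-module, which is the definition of $f$ being finite.

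The only step worth flagging is the passage from the set-theoretic fiber condition to the finite-dimensionality of $R$: without the grading there is no reason for a finitely generated $\BC$-algebra whose positive-degree part is nilpotent to be finite over $\BC$, so the positive grading enforced by an attracting $\BC^*$-action is doing essentially all the work. The rest is formal.
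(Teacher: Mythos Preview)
Your proposal is correct and follows essentially the same route as the paper: translate the $\BC^*$-actions into $\BZ_{\geq 0}$-gradings with $\BC[X]_0=\BC[Y]_0=\BC$, use the Nullstellensatz to turn $f^{-1}(0_Y)=\{0_X\}$ into finite-dimensionality of $\BC[X]/f^*(\mathfrak m_Y)\BC[X]$, and then invoke graded Nakayama. The paper states the proof in one line and defers the Nakayama step to two auxiliary lemmas; your explicit induction on degree is the same argument as the paper's contradiction on the minimal nonzero degree of $M/N$, just packaged differently.
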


\begin{proof}[Proof of Proposition \ref{normalchowprop}]
Since  $ W^{\times n}//\G_n$ is normal  
and $\psi_n$ is regular and generically one to one,   it just  remains to show $\psi_n$ is finite.

Write $[0]=[0\hd 0]$. To show finiteness, by Lemma \ref{acklem},  it is sufficient to show $\psi_n\inv(0) =[0 ]$ as a set, as $[0 ]$
is the unique $\BC^*$ fixed point in $W^{\times n}//\G_n$, and every $\BC^*$ orbit closure contains $[0 ]$.  
Now $u_1\cdots u_n=0$ if and only if some $u_j=0$, say $u_1=0$. The $T$-orbit closure of $(0,u_2\hd u_n)$ contains
the origin so $[0,u_2\hd u_n]=[0]$.
\end{proof}

\begin{proof}[Proof of Lemma \ref{acklem}]
$\BC[X],\BC[Y]$ are $\BZ_{\geq 0}$-graded, and the hypothesis $f\inv(0_Y)=\{ 0_X\}$ states that $\BC [X]/f^*(\BC[Y]_{>0})\BC[X]$
is a finite dimensional vector space. We want to show that $\BC[X]$ is integral over $\BC[Y]$.
This is a graded version of   Nakayama's Lemma (the algebraic implicit function theorem).
\end{proof}

In more detail (see, e.g. \cite[Lemmas 3.1,3.2]{MR3093509}, or \cite[p136, Ex. 4.6a]{eisenbud}):
\begin{lemma} Let $R,S$ be $\BZ_{\geq 0}$-graded, finitely generated domains over $\BC$ such that $R_0=S_0=\BC$, and
let $f^*: R\ra S$ be an injective  graded algebra homomorphism. If $f\inv (R_{>0})=\{ S_{>0}\}$ as sets, where  
 $f: Spec(S)\ra Spec(R)$ is the induced map on the associated schemes, then $S$ is a finitely generated $R$-module. In particular, it is integral
over $R$.
\end{lemma}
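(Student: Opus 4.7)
The plan is to prove this as a graded version of Nakayama's lemma. The essential content of the hypothesis $f^{-1}(R_{>0})=\{S_{>0}\}$ is that, as a subscheme of $\operatorname{Spec}(S)$, the zero locus of the ideal $I := (f^*(R_{>0}))S$ generated by the image of $R_{>0}$ equals the single closed point corresponding to the maximal ideal $S_{>0}$.

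First, I would translate this geometric condition into the algebraic statement $\sqrt{I} = S_{>0}$, which follows from Hilbert's Nullstellensatz applied to the finitely generated $\BC$-algebra $S$. Since $S$ is Noetherian and $S_{>0}$ is finitely generated by homogeneous elements of positive degree, this gives a uniform $N$ with $(S_{>0})^N \subseteq I$, and in particular $S_{\geq N} \subseteq I = R_{>0}\cdot S$. Therefore the quotient $\overline S := S/(R_{>0}\cdot S)$ is concentrated in degrees $<N$; since each graded piece $S_d$ is a finite-dimensional $\BC$-vector space (as $S$ is finitely generated and graded with $S_0=\BC$), $\overline S$ is a finite-dimensional graded $\BC$-vector space.

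Next I would choose homogeneous elements $s_1,\ldots,s_k \in S$ whose images form a $\BC$-basis of $\overline S$, and show they generate $S$ as an $R$-module. This is the graded Nakayama step: for any homogeneous $s \in S$, I argue by induction on $\deg(s)$. In degree $0$, $s \in \BC \subseteq \sum R s_i$ since some $s_i$ can be taken to be $1$. For $\deg(s)>0$, write $s = \sum c_i s_i + \sum r_j t_j$ with $c_i \in \BC$, $r_j \in R_{>0}$ and $t_j \in S$; by picking a homogeneous decomposition one may assume each $t_j$ is homogeneous of degree strictly less than $\deg(s)$, and the inductive hypothesis expresses each $t_j$ in $\sum R s_i$, hence so is $s$. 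Thus $S = \sum_{i=1}^k R s_i$ is finitely generated as an $R$-module, and standard commutative algebra (the determinant/Cayley--Hamilton trick) then yields integrality of every element of $S$ over $R$.

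The main obstacle is really just the careful bookkeeping of the graded Nakayama induction: one must ensure the homogeneous decomposition at each step is well defined and that the degrees of the $t_j$ strictly decrease, which forces one to exclude the constant parts of the $r_j$ (immediate since $r_j \in R_{>0}$). Everything else is either Nullstellensatz or elementary commutative algebra; no input specific to the Chow variety is needed, which is why this lemma is stated at the level of abstract graded domains.
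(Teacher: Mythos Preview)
Your proof is correct and follows essentially the same approach as the paper: show $\sqrt{I}=S_{>0}$, deduce $S/I$ is a finite-dimensional $\BC$-vector space, then run graded Nakayama. The paper packages the Nakayama step as a separate lemma (arguing by contradiction on the minimal nonzero degree of $M/N$), while you do the equivalent induction on degree inline.

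One small slip: from $(S_{>0})^N\subseteq I$ you cannot conclude $S_{\geq N}\subseteq I$ in general, since the containment goes the other way, $(S_{>0})^N\subseteq S_{\geq N}$, and is strict when $S$ is not generated in degree $1$. The fix is immediate---if $S$ is generated by homogeneous elements of degree at most $d$, then $S_{\geq Nd}\subseteq (S_{>0})^N\subseteq I$, or simply note that $S/I$ is a finitely generated $\BC$-algebra whose only prime ideal is the image of $S_{>0}$, hence Artinian local with residue field $\BC$, hence finite-dimensional. Everything else stands.
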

\begin{proof} The hypotheses on the sets says that $S_{>0}$ is the only maximal ideal of $S$ containing the ideal $\fm$ generated
by $f^*(R_{>0})$, so the radical of $\fm$  must equal $S_{>0}$, and in particular $S_{>0}^d$ must be contained in 
it for all $d>d_0$, for some $d_0$. So $S/\fm$ is a finite dimensional vector space, and by the next lemma,
$S$ is  a finitely generated $R$-module.
\end{proof}
\begin{lemma}\label{nextlemma}
Let $S$ be as above, and let $M$ be a $\BZ_{\geq 0}$-graded $S$-module. Assume $M/(S_{>0}\cdot M)$ is a finite dimensional
vector space over $S/S_{>0}\simeq \BC$. Then $M$ is a finitely generated $S$-module.
\end{lemma}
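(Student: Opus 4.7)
The plan is to execute the standard graded Nakayama argument: choose homogeneous lifts of a basis of $M/(S_{>0}M)$ and show by induction on degree that they generate $M$ as an $S$-module.

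First I would choose a finite set of homogeneous elements $m_1,\dots,m_k \in M$ whose images form a $\BC$-basis of $M/(S_{>0}M)$; this is possible because the quotient is finite dimensional and inherits a grading from $M$ (since $S_{>0}M$ is a graded submodule). Let $d_i = \deg m_i$, and let $N \subseteq M$ denote the graded $S$-submodule generated by $m_1,\dots,m_k$. The goal is to prove $N = M$.

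I would then argue $N_d = M_d$ by induction on $d \geq 0$. For the base case $d = 0$: since $S_{>0}M$ is concentrated in positive degrees, the map $M_0 \to (M/(S_{>0}M))_0$ is injective, so $M_0$ injects into a finite-dimensional space spanned by the images of those $m_i$ with $d_i = 0$, whence $M_0 = N_0$. For the inductive step, fix $d > 0$ and assume $N_{d'} = M_{d'}$ for all $d' < d$. Given $m \in M_d$, its class in $M/(S_{>0}M)$ lies in the degree-$d$ part, so by homogeneity it is a $\BC$-linear combination of the classes of those $m_i$ with $d_i = d$. Thus there exist $c_i \in \BC$ with
\[
m - \sum_{d_i = d} c_i m_i \in (S_{>0}M)_d.
\]
Writing this element as a finite sum $\sum_j s_j n_j$ with $s_j \in S_{>0}$ and $n_j \in M$ homogeneous, total degree $d$ forces $\deg n_j < d$ for every $j$ (because $\deg s_j \geq 1$). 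By the inductive hypothesis, each $n_j \in N$, hence $\sum_j s_j n_j \in N$, and therefore $m \in N$.

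The argument really has no obstacle; the only point requiring care is to work with the grading consistently so that one is legitimately allowed to reduce to components of strictly smaller degree at each step. Finally, since $M$ is finitely generated over $S$, and $S$ is a finitely generated $\BC$-algebra, $M$ is integral over $S$, which is the form used in the application to $\psi_n$ in the previous lemma.
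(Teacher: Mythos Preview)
Your proof is correct and takes essentially the same approach as the paper: both lift a basis of $M/(S_{>0}M)$ to homogeneous elements generating a submodule $N$, and then use the grading to force $N=M$. The paper phrases the last step as a contradiction on the minimal nonzero degree of $M/N$ (from $S_{>0}\cdot(M/N)=M/N$), while you phrase it as an induction on $d$ showing $M_d=N_d$; these are the same argument.

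One remark: your final sentence, ``$M$ is integral over $S$,'' is not meaningful for a module. Integrality is a notion for ring extensions. In the application (the lemma immediately preceding this one in the paper), one takes $M=S$ as an $R$-module, and finite generation of $S$ as an $R$-module then indeed implies $S$ is integral over $R$. But as stated for a general module $M$, the sentence should be dropped.
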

\begin{proof}
Choose a set of homogeneous generators $\{ \ol{x}_1\hd \ol{x}_n\}\subset M/(S_{>0}\cdot M)$ and let $x_j\in M$ be a
homogeneous lift of $\ol{x}_j$. Let $N\subset M$ be the graded $S$-submodule  $Sx_1+\cdots + Sx_n$. Then $M=S_{>0}M+N$, as
let $a\in M$, consider $\ol{a}\in M/(S_{>0}M)$ and lift it to some $b\in N$, so $a-b\in S_{>0}M$, and $a=(a-b)+b$. Now
quotient by $N$ to obtain
\be
\label{coneqn}
S_{>0} \cdot (M/N)= M/N.
\ene
If $M/N\neq 0$, let $d_0$ be the smallest degree such that $(M/N)^{d_0}\neq 0$. But $S_{>0} \cdot (M/N)^{\geq d_0}\subset (M/N)^{\geq d_0+1}$
so there is no way to obtain $(M/N)^{d_0}$ on the right hand side. Contradiction.
\end{proof}

\begin{theorem}\label{brionfhthm}\cite{MR1243152}  For all $n\geq 1$, $\psi_n$ induces a closed immersion
\be\label{brieqn}
(W^{\times n}//\G_n)\backslash [0] \ra S^nW\backslash 0.
\ene
\end{theorem}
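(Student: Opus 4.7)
The plan is to verify three properties of the restriction of $\psi_n$ to $(W^{\times n}//\G_n)\setminus[0]$: that it is finite, injective on closed points, and unramified. A finite, injective, unramified morphism between affine varieties is a closed immersion onto its image, so the three together yield the claim. Finiteness is immediate from Proposition~\ref{normalchowprop}, and working off $[0]$ is legitimate because $\psi_n^{-1}(0)=\{[0]\}$: any $(u_1,\ldots,u_n)$ with $u_1\cdots u_n=0$ in $S^nW$ must have some $u_i=0$ (since $Sym(W^*)$ is a domain), and then the $T_n^{SL}$-limit argument from the proof of Proposition~\ref{normalchowprop} collapses the GIT class to $[0]$. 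Injectivity on the complement is exactly unique factorization in the UFD $Sym(W^*)=\BC[W]$: if $u_1\cdots u_n = v_1\cdots v_n\neq 0$, the two decompositions into $n$ irreducible linear factors must agree up to permutation and scaling, so there exist $\sigma\in\FS_n$ and $t_i\in\BC^*$ with $\prod_i t_i=1$ satisfying $v_i = t_iu_{\sigma(i)}$, which is exactly the $\G_n=T_n^{SL}\rtimes\FS_n$-equivalence.

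The main step is unramifiedness everywhere on $(W^{\times n}//\G_n)\setminus[0]$, equivalent to showing that the scheme-theoretic fiber of $\psi_n$ over every nonzero $P=u_1\cdots u_n$ equals $\mathrm{Spec}(\BC)$. Write $P=u_1^{a_1}\cdots u_r^{a_r}$ by unique factorization, with the $u_i$ pairwise non-proportional and $\sum a_i=n$; set-theoretically the fiber of $\phi_n$ is a single $\G_n$-orbit through the basepoint $\tilde u:=(u_1,\ldots,u_1,\ldots,u_r,\ldots,u_r)$ (with each $u_i$ repeated $a_i$ times). Applying Luna's \'etale slice theorem at $\tilde u$ identifies an \'etale neighborhood of $[\tilde u]$ in $W^{\times n}//\G_n$ with a slice modulo the stabilizer $H\subset\G_n$, and shows that locally $\psi_n$ factors as a product indexed by $i$, each factor being the analogous map for $\phi_{a_i}$ at the totally degenerate basepoint $(u_i,\ldots,u_i)\in W^{\times a_i}$. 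The unramifiedness check thus reduces to the pure-power case $P=u^n$.

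The hard part is this pure-power case, where $\phi_n^{-1}(u^n)\subset W^{\times n}$ is itself non-reduced: the differential $d\phi_n|_{(u,\ldots,u)}$ sends $(v_1,\ldots,v_n)$ to $u^{n-1}(v_1+\cdots+v_n)$, whose kernel has dimension $(n-1)\dim W$, while the set-theoretic fiber has dimension only $n-1$ (the $T_n^{SL}$-orbit). The obstacle is to show that the extra first-order deformations are killed on passage to $\G_n$-invariants. For this I would use the identification $\psi_n^*=h_{d,n}$ from Proposition~\ref{tildehprop}: the unramifiedness condition at $[(u,\ldots,u)]$ becomes the statement that the map $h_{1,n}\colon S^n W^*\to S^n W^*$ (which is essentially the identity) already hits a set of generators of the cotangent space of $W^{\times n}//\G_n$ at $[(u,\ldots,u)]$, and this may be verified via a direct representation-theoretic computation on the $\mathfrak{sl}$-weight-zero subspace, in the spirit of Corollary~\ref{gaycor}. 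Once unramifiedness is established, the combination finite plus injective plus unramified delivers the desired closed immersion.
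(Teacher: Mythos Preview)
Your framework (finite $+$ injective on closed points $+$ unramified $\Rightarrow$ closed immersion) is sound, and the first two ingredients are correctly handled. The gap is precisely where you flag it: unramifiedness at the pure-power points is asserted but not established.

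The appeal to $h_{1,n}$ is misplaced. Proposition~\ref{tildehprop} identifies $\psi_n^*$ with $h_{d,n}$ on the \emph{graded} pieces of the coordinate rings; that is information concentrated at the cone point $[0]$. Unramifiedness at $[(u,\ldots,u)]$ with $u\neq 0$ asks for surjectivity onto $\mathfrak m/\mathfrak m^2$ for a maximal ideal $\mathfrak m$ that is not homogeneous, so the fact that $h_{1,n}\colon S^nW^*\to S^nW^*$ is an isomorphism does not show that linear functions on $S^nW$ already generate the cotangent space of $W^{\times n}//\G_n$ at that point. Indeed, if the degree-$1$ elements of $\bigoplus_i S^n(S^iW^*)$ generated the whole algebra, $\psi_n^*$ would be surjective in every degree and $\psi_n$ would be a closed immersion \emph{including} at $[0]$; but this is exactly what fails, since the $h_{d,n}$ are not all surjective. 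So a genuine local computation at $(u,\ldots,u)$ is still owed, and the phrase ``may be verified via a direct representation-theoretic computation'' is where the proof is missing rather than where it ends. (The Luna reduction also needs more care: $T_n^{SL}$ carries a single determinant constraint and does not split as $\prod_i T_{a_i}^{SL}$, so the claimed \'etale-local product factorization of $\psi_n$ over the distinct linear factors $u_i$ is not automatic.)

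The paper avoids the unramifiedness computation altogether by a different route. Using $\BC^*$-equivariance, it passes to the projective quotient
\[
\underline\psi_n\colon\bigl((W^{\times n}//\G_n)\setminus[0]\bigr)/\BC^* \longrightarrow \BP S^nW.
\]
Since $\G_n\times\BC^* \cong (\BC^*)^{\times n}\rtimes\FS_n$, the source is identified with the symmetric product $(\BP W)^{\times n}/\FS_n$, and $\underline\psi_n$ becomes the classical map sending an unordered $n$-tuple $([w_1],\ldots,[w_n])$ to $[w_1\cdots w_n]$. That this is a closed immersion is the standard fact that the $n$-th symmetric product of $\BP W$ embeds in $\BP S^nW$; one then lifts back to the affine cones. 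All of the delicate tangent-space behavior at pure powers is thereby absorbed into a well-known classical statement, rather than recomputed from scratch.
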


\begin{corollary}\cite{MR1243152} \label{brionfhcor} The Hermite-Hadamard-Howe map 
$$
h_{d,n}: S^d(S^nW^*)\ra S^n(S^dW^*)
$$
is surjective for $d$ sufficiently large.
\end{corollary}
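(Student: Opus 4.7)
The plan is to translate the corollary into a statement about the cokernel of a graded module map and then exploit the closed immersion in Theorem \ref{brionfhthm}. By Proposition \ref{tildehprop}, the map $h_{d,n}$ is precisely the degree $d$ piece (with respect to $GL$-degree) of the comorphism
$$
\psi_n^* : Sym(S^nW^*) \longrightarrow \BC[W^{\times n}//\G_n] = \bigoplus_{d\geq 0} S^n(S^dW^*).
$$
First I would factor $\psi_n^*$ as the composition of the surjection $Sym(S^nW^*) \twoheadrightarrow \BC[\hat Ch_n(W)]$ (coming from $\hat Ch_n(W) \subset S^nW$ being closed) followed by the pullback $\iota^* : \BC[\hat Ch_n(W)] \hookrightarrow \BC[W^{\times n}//\G_n]$ of the normalization map $\psi_n$ from Proposition \ref{normalchowprop}. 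The cokernel of $h_{d,n}$ then coincides with the degree $d$ component of the graded cokernel $M := \operatorname{coker}(\iota^*)$.

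Next I would argue that $M$ has finite total dimension as a graded vector space. Two inputs are needed. First, Proposition \ref{normalchowprop} says $\psi_n$ is finite, so $\BC[W^{\times n}//\G_n]$ is a finitely generated $\BC[\hat Ch_n(W)]$-module, hence $M$ is a finitely generated graded $\BC[\hat Ch_n(W)]$-module. Second, Theorem \ref{brionfhthm} asserts that $\psi_n$ restricts to a closed immersion off the distinguished point $[0]$; equivalently, the localization $\iota^*_\fp$ is surjective for every homogeneous prime $\fp$ strictly contained in the irrelevant ideal $\fm = \BC[\hat Ch_n(W)]_{>0}$. Thus $M$ is supported only at the origin, i.e., annihilated by some power $\fm^N$ of the irrelevant ideal.

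A finitely generated graded module over a finitely generated graded $\BC$-algebra that is annihilated by a power of the irrelevant ideal has finite length, and in particular is finite dimensional over $\BC$ (this is the same graded Nakayama-style argument used in Lemma \ref{nextlemma}: take a finite set of homogeneous generators, bound their degrees by $d_0$, and note that $M_d \subset \fm^N M = 0$ once $d > d_0 + \text{top degree of generators of } \fm^N$). Therefore $M_d = 0$ for all sufficiently large $d$, which is exactly the statement that $h_{d,n}$ is surjective for $d$ large.

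The only non-formal ingredient is the already-cited Theorem \ref{brionfhthm}; everything after it is algebraic bookkeeping. The main conceptual obstacle is thus pushed entirely into that theorem, which is where the geometry of $\hat Ch_n(W)$ and its normalization has to be genuinely used — namely the fact that the non-normal locus of $\hat Ch_n(W)$ is contained in $\{0\}$, so that the finite birational map $\psi_n$ is an isomorphism on the punctured cone.
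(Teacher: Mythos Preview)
Your proof is correct and follows exactly the same line as the paper's own argument: the paper's proof is a one-sentence sketch (\lq\lq the cokernel of $\psi_n^*$ is supported at a point and thus must vanish in large degree\rq\rq), and you have simply spelled out the details of that sketch, namely the identification $h_{d,n}=\tilde h_{d,n}$, the finite generation of the cokernel via Proposition \ref{normalchowprop}, and the support condition coming from Theorem \ref{brionfhthm}. Nothing is missing and nothing is different in spirit.
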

\begin{proof}[Proof of Corollary]
Theorem \ref{brionfhthm} implies $(\psi_n^*)_d$ is surjective for $d$ sufficiently large, because
the cokernel of $\psi_n^*$  is supported at a point and thus   must vanish in large degree.
\end{proof}

The proof of Theorem \ref{brionfhthm} will give a second proof  that the kernel of $\psi_n^*$ is
indeed the ideal of $Ch_n(W)$.

\begin{proof}[Proof of Theorem]
Since $\psi_n$ is $\BC^*$-equivariant, we can consider the quotient to projective space
$$
\ul{\psi}_n: ((W^{\times n}//\G_n)\backslash [0])/\BC^* \ra (S^nW\backslash 0)/\BC^* = \BP S^nW
$$
Note that  $((W^{\times n}//\G_n)\backslash [0])/\BC^*$ is $GL(V)$-isomorphic to $(\BP W)^{\times n}/\FS_n$,
as 
$$(W^{\times n}//\G_n)\backslash [0]=(W\backslash 0)^{\times n}/\G_n
$$ 
and $\G_n\times \BC^*= (\BC^*)^{\times n}\rtimes \FS_n$.
So we have
$$
\ul{\psi}_n: (\BP W)^{\times n}/\FS_n \ra   \BP S^nW.
$$
but the projection map   $proj_{\BP S^nW^c}|_{Seg(\BP W\ctimes \BP W)}: \BP W^{\times n}\ra \BP S^nW$  is a closed immersion, and
it   is just averaging over $\FS_n$, i.e., $[w_1\otc w_n]\mapsto [\sum_{\s\in \FS_n}w_{\s(1)}\otc w_{\s(n)}]$
so lifting and quotienting by $\FS_n$ yields \eqref{brieqn}, which is still
a closed immersion. 
\end{proof}

With  more work, in \cite[Thm 3.3]{MR1601139}, Brion obtains an explicit (but enormous) function $d_0(n,\bw)$ 
which is  
\be\label{brionexpl}
d_0(n,\bw)=(n-1)(\bw -1)( (n-1)\left\lfloor \frac{\binom{n+\bw-1}{\bw-1}}{\bw}\right\rfloor  -n)
\ene
for which the $h_{d,n}$ is
surjective for all $d>d_0$ where $\tdim W=\bw$.

\begin{problem} Improve Brion's bound to say,  a polynomial bound in $n$ when $n=\bw$.
\end{problem}

\begin{problem} Note that $\BC[Nor(Ch_n(W))]=\BC[GL(W)\cdot (x_1\cdots x_n)]_{\geq 0}$ and
that the the boundary of the orbit closure is irreducible. Is it true that whenever a $GL(W)$-orbit
closure with reductive stabilizer has an irreducible boundary, that the coordinate ring of
the normalization of the orbit closure equals the positive part of the coordinate ring of the orbit?
\end{problem}

\begin{remark}
 An early  use of geometry in the study of plethysm  was  in \cite{MR1132139} where J. Wahl used his  {\it Gaussian maps}
(local differential geometry)
  to study the decomposition of tensor products of representations of reductive groups.
Then in   \cite{MR1651092}, Manivel used these maps to determine 
  \lq\lq stable\rq\rq\  multiplicities in $S^d(S^nW)$,  where one fixes either $d$ or $n$ and
allows the other to grow.    Brion then developed   more algebraic versions of these
techniques to obtain the results  above. 
\end{remark}

\subsection{The case $\tdim W=2$}
When $\tdim W=2$, every polynomial decomposes as a product of linear factors, so the ideal of $Ch_n(\BC^2)$ is zero. We recover the following theorem of Hermite:
\begin{theorem}[Hermite reciprocity]\label{hermthm}
The map $h_{d,n}: S^d(S^n\BC^2)\ra S^n(S^d\BC^2)$ is an isomorphism for all $d,n$. In particular
  $S^d(S^n\BC^2)$ and $S^n(S^d\BC^2)$ are isomorphic $GL_2$-modules.
\end{theorem}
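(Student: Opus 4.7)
The plan is to combine the ideal-theoretic interpretation of $h_{d,n}$ established earlier with a direct dimension count, both of which become trivial in the binary case.

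First I would argue injectivity. By the proposition identifying $\ker h_{d,n}$ with $I_d(Ch_n(W^*))$, it suffices to show that $Ch_n(\BC^2) = \BP S^n\BC^2$, so that the ideal is zero. This is immediate from the fundamental theorem of algebra: every nonzero binary form of degree $n$ factors as a product of $n$ linear forms, so every point of $\BP S^n\BC^2$ lies in the image of the product map $\BC^2 \times \cdots \times \BC^2 \to S^n\BC^2$. Hence $I(Ch_n(\BC^2)) = 0$ and $h_{d,n}$ is injective for every $d,n$.

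Next I would observe that source and target have the same dimension. Since $\dim S^k\BC^m = \binom{k+m-1}{m-1}$, we have $\dim S^n\BC^2 = n+1$ and $\dim S^d\BC^2 = d+1$, so
\[
\dim S^d(S^n\BC^2) = \binom{n+d}{d} = \binom{n+d}{n} = \dim S^n(S^d\BC^2).
\]
Combined with injectivity this forces $h_{d,n}$ to be a linear isomorphism. Because $h_{d,n}$ was constructed as a $GL(W)$-module map (it is a composition of inclusions, regroupings, and symmetrizations, all of which are $GL_2$-equivariant), this isomorphism is automatically an isomorphism of $GL_2$-modules, giving the second assertion.

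There is no real obstacle; the only subtle ingredient is the identification $\ker h_{d,n} = I_d(Ch_n(W^*))$, which is already established in the excerpt. The role of $\dim W = 2$ is solely to make the Chow variety fill the ambient projective space, after which everything collapses to a binomial coefficient identity. One could alternatively deduce the statement by noting that Schur-Weyl (or the explicit description of $GL_2$-irreducibles as $S^k\BC^2 \otimes (\det)^{\otimes j}$) makes the character of either side depend symmetrically on $(d,n)$, but the dimension argument above is the shortest route.
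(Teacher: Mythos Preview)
Your argument is correct and follows essentially the same route as the paper: the paper simply observes that when $\dim W=2$ every polynomial factors into linear forms, so $Ch_n(\BC^2)=\BP S^n\BC^2$ and hence $\ker h_{d,n}=I_d(Ch_n(\BC^2))=0$, leaving the passage from injectivity to isomorphism implicit. Your explicit dimension count $\binom{n+d}{d}=\binom{n+d}{n}$ is a perfectly good way to finish; alternatively one could invoke the self-duality $h_{d,n}^T=h_{n,d}$ from the earlier exercise, so that injectivity of every $h_{n,d}$ forces surjectivity of every $h_{d,n}$.
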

Often in modern textbooks only the \lq\lq In particular\rq\rq\ is stated.

\subsection{The case $d=n=3$}\label{d3case}

\begin{theorem}[Hadamard \cite{MR1504330}] The map $h_{3,3}: S^3(S^3\BC^n)\ra S^3(S^3\BC^n)$ is an isomorphism.
\end{theorem}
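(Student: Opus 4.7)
Since the source and target of $h_{3,3}$ coincide, injectivity is equivalent to surjectivity, and by the proposition identifying $\ker h_{d,n}$ with $I_d(Ch_n(W^*))$, both are equivalent to $I_3(Ch_3(W^*))=0$: no nonzero cubic form on $S^3W^*$ vanishes on every decomposable cubic $\ell_1\ell_2\ell_3$. The kernel $\ker h_{3,3}$ is $GL(W)$-stable, hence a sum of isotypic components, and by Exercises \ref{fhexa} and \ref{fhexb} I may assume $\dim W$ is in the stable range where every irreducible constituent of the plethysm $S^3(S^3W)$ appears. Thus the task reduces to showing that for every partition $\pi\vdash 9$ with nonzero plethystic multiplicity $m_\pi$ in $S^3(S^3W)$, the restriction of $h_{3,3}$ to the $S_\pi W$-isotypic component is injective.

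The main step is to carry this out partition by partition. First I would decompose $S^3(S^3W)=\bigoplus_\pi (S_\pi W)^{\oplus m_\pi}$, a classical plethysm. Then for each relevant $\pi$, I would exhibit a highest weight vector $v_\pi$ and compute $h_{3,3}(v_\pi)$ from the regroup-and-symmetrize definition. When $m_\pi=1$, Schur's lemma forces $h_{3,3}$ to act as a scalar $\lambda_\pi$ on $S_\pi W$, and one only has to verify $\lambda_\pi\neq 0$ on a single vector; when $m_\pi>1$ one runs the same check on a basis of highest weight lines and verifies nonsingularity of the resulting $m_\pi\times m_\pi$ matrix. Two extreme cases are immediate from Exercise \ref{hdnnpow}: $(x_1^3)^3$ maps to itself, settling $\pi=(9)$, and $x_1^3x_2^3x_3^3$ maps to $(x_1x_2x_3)^3\neq 0$, settling $\pi=(3,3,3)$ when $\dim W=3$.

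The main obstacle is the bookkeeping for the intermediate partitions such as $(7,2)$, $(6,3)$, $(5,4)$, where writing down a highest weight vector requires applying a Young symmetrizer and its image under $h_{3,3}$ is a cumbersome symmetric polynomial that must be shown nonzero. A cleaner alternative is to pass to the $\fsl$-weight zero subspace of $S^3(S^3\BC^3)$, on which $h_{3,3}$ becomes $\FS_9$-equivariant; by Gay's duality (Theorem \ref{gaythm}) the $GL(W)$ multiplicity data is encoded in the $\FS_9$-module structure of the weight zero spaces, and the invertibility of $h_{3,3}$ reduces to a finite character computation on $\FS_9$---the kind of computation Hadamard carried out in 1897.
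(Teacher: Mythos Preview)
Your reduction to $I_3(Ch_3(W^*))=0$ is correct, and the module-by-module strategy would work in principle, but as written it is not a proof: you never carry out the computation for any of the intermediate partitions, nor do you execute the $\FS_9$-character computation you propose as an alternative. Saying ``the main obstacle is the bookkeeping'' and then attributing the missing step to Hadamard is not a substitute for doing it. (In fact Hadamard did nothing of the sort; his 1897 argument is geometric, not representation-theoretic.)

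The paper's proof is completely different and avoids any case analysis. It reduces to $\dim W = 3$ and argues as follows: given $P \in I_3(Ch_3(\BC^3))$, restrict $P$ to the pencil $\mu(x_1^3+x_2^3+x_3^3) - \lambda\, x_1x_2x_3$. The restriction is a cubic in $[\mu:\lambda]$ which vanishes at the four points $[0:1]$, $[1:3]$, $[1:3\omega]$, $[1:3\omega^2]$ (with $\omega$ a primitive cube root of unity) at which the pencil member is a product of three linear forms; a cubic with four zeros on $\pp 1$ is identically zero, so $P$ vanishes on the whole pencil, in particular on the Fermat $x_1^3+x_2^3+x_3^3$. By $GL_3$-invariance $P$ then vanishes on $\s_3(v_3(\pp 2))$. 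But this secant variety is a hypersurface of degree $4$ (the Aronhold invariant), and a cubic vanishing on a degree-$4$ hypersurface is identically zero.

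So your route is a brute-force plethysm decomposition left unfinished, while the paper's is a one-paragraph argument exploiting the Hesse pencil and the degree of the Aronhold hypersurface. If you want to complete your approach you must actually compute the eigenvalues $\lambda_\pi$ (or the weight-zero $\FS_9$-map) for every isotypic component, and you should be aware that this is considerably more work than the geometric argument.
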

\begin{proof}
Without loss of generality, assume $n= 3$   and $x_1,x_2,x_3$ are independent.
Say we had $P\in I_3(Ch_3(\BC^3))$. Consider $P(\mu (x_1^3+ x_2^3+x_3^3)-\l x_1x_2x_3)$ as a cubic polynomial
on $\pp 1$ with coordinates $[\mu,\l]$. Note that it vanishes at the four points $[0,1],[1, 3],[1,3\o],[1,3\o^2]$
where $\o$ is a primitive third root of unity. Thus it must vanish identically on the $\pp 1$, in particular,
at $[1,0]$, i.e.,  on $x_1^3+x_2^3+x_3^3$. Hence it must
vanish identically on $\s_3(v_3(\pp 2))$. But $\s_3(v_3(\pp 2))\subset \BP S^3\BC^3$ is a hypersurface of
degree four. A cubic polynomial vanishing on a hypersurface of degree four is  identically zero.
\end{proof}

\begin{remark}
The above proof is due to A. Abdesselam (personal communication). It is a variant of Hadamard's original proof, where
instead of $x_1^3+ x_2^3+x_3^3$ one uses an arbitrary cubic $f$, and generalizing $x_1x_2x_3$ one uses the Hessian
$H(f)$. Then the curves $f=0$ and $H(f)=0$ intersect in $9$ points (the nine flexes of $f=0$) and there are four
groups of three lines going through these points, i.e. four places where the polynomial becomes a product of linear forms.
\end{remark}

\subsection{The Chow variety and a conjecture in combinatorics}\label{chowcombin}
From Exercise \ref{snssv}, the  trivial $SL_n$-module $S_{n^n}\BC^n$ occurs in $S^n(S^n\BC^n)$ with multiplicity one when $n$ is even and zero when $n$ is odd.

\begin{conjecture}[Kumar \cite{kumarcoordring}]\label{kumarconj} Let $n$ be even, then for all $i\leq n$,  $S_{n^i}\BC^n\subset  \BC[Ch_n(\BC^n)]$.
\end{conjecture}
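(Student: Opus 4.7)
The plan is to recast Kumar's conjecture using the Hermite--Hadamard--Howe map. Setting $V=\BC^n$, we have $\BC[Ch_n(V)]_i \cong \mathrm{image}(h_{i,n})$, where $h_{i,n}\colon S^i(S^nV^*)\to S^n(S^iV^*)$ is the map whose kernel is $I_i(Ch_n(V))$. Kumar's conjecture is therefore equivalent to asserting that, for every even $n$ and every $i\leq n$, a copy of the $GL(V)$-module $S_{n^i}V$ survives in the image of $h_{i,n}$; equivalently, the $S_{n^i}V$-isotypic component of $S^i(S^nV^*)$ is not contained in $\ker h_{i,n}$.

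First I would check the existence step: that $S_{n^i}V$ appears with positive multiplicity in the target $S^n(S^iV^*)$. Applying Gay's duality (Theorem \ref{gaythm}) with $\bv=n$, $s=i$, $\mu=(n)$ identifies this multiplicity with $\dim[(S_{n^i}V)_0]^{\FS_n}$, the dimension of $\FS_n$-invariants in the $\fsl$-weight-zero subspace of $S_{n^i}V$. The zero weight space has dimension equal to the Kostka number $K_{n^i,\,i^n}$, which is positive because $(n^i)$ dominates $(i^n)$ for $i\leq n$; combined with an averaging argument over $\FS_n$ and the branching rule from $\FS_{ni}$ to $\FS_n$, this should produce a genuinely invariant vector. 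The main body of the argument is then to exhibit an explicit highest weight vector $v\in S^i(S^nV^*)$ of weight $(n^i,0^{n-i})$ and to prove $h_{i,n}(v)\neq 0$. A natural candidate is a Young-symmetrized signed combination of $i\times i$ minors of a generic $n\times i$ matrix of linear forms, or equivalently the image of $(e_1\wedge\cdots\wedge e_i)^{\otimes n}$ under the projection $V^{\otimes ni}\to S^i(S^nV)$ (dualised). Using the defining identity
\[
\langle h_{i,n}(P),\,(\ell^1)^i\cdots(\ell^n)^i\rangle \;=\; P(\ell^1\cdots\ell^n)
\]
from \S\ref{chowideal}, the assertion $h_{i,n}(v)\neq 0$ amounts to exhibiting some $\ell^1\cdots\ell^n\in \hat{Ch}_n(V)$ on which $v$ does not vanish, which in turn translates to a signed sum indexed by $i\times n$ ``generalised Latin'' arrays.

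The combinatorial nonvanishing is the main obstacle. When $i=n$ this signed sum is, up to normalisation, precisely the Alon--Tarsi invariant (the difference between the numbers of even and odd $n\times n$ Latin squares), and this matches the fact that Exercise \ref{snssv} produces a unique $SL(V)$-invariant in $S^n(S^nV^*)$ when $n$ is even, whose non-vanishing on $\hat{Ch}_n(V)$ is exactly the Alon--Tarsi conjecture. Thus the $i=n$ case of Kumar's conjecture is equivalent to Alon--Tarsi, and is genuinely open beyond results such as Drisko's $n=p\pm 1$. For $i<n$ the corresponding sum runs over $i\times n$ Latin rectangles, and the extra slack makes unconditional arguments plausible: one could try to isolate a dominant noncancelling term, exploit the strict inequality $i<n$ via an exterior-algebra flattening, or prove the $i<n$ cases inductively from a higher $i$ by a descent/polarisation analogous to McKay's theorem \cite{MR2394689}. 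The realistic outcome of such a plan is therefore an unconditional proof for all $i<n$ together with a clean reduction of the remaining $i=n$ case to the Alon--Tarsi conjecture.
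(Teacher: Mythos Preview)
The statement you are attempting to prove is labeled as a \emph{Conjecture} in the paper, not a theorem; the paper offers no proof and indeed the full statement is open. What the paper does provide is (a) the remark immediately following the conjecture that ``it is not hard to see that the $i=n$ case implies the others,'' and (b) Proposition~\ref{kconjequiv}, which shows that the $i=n$ case is equivalent to the Alon--Tarsi conjecture. So there is no ``paper's own proof'' to compare against; there is only this reduction to Alon--Tarsi.

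Your analysis is sound as far as it goes, and in particular your identification of the $i=n$ case with Alon--Tarsi is correct and matches Proposition~\ref{kconjequiv}. But your proposal does not actually prove anything: the sentences about $i<n$ (``isolate a dominant noncancelling term'', ``exterior-algebra flattening'', ``inductively from higher $i$'') are a menu of heuristics, not an argument. Note also that your stated ``realistic outcome''---an unconditional proof for $i<n$ plus a reduction of $i=n$ to Alon--Tarsi---would not be new on the reduction side (that is exactly Proposition~\ref{kconjequiv}), and on the $i<n$ side the paper already records that these cases follow from $i=n$, so they are a priori no harder than Alon--Tarsi. Whether they are \emph{strictly} easier, i.e.\ provable unconditionally, is an interesting question, but you have not settled it here. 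If you pursue this, the cleanest route is likely the one the paper hints at: first understand why $i=n$ implies $i<n$ (via the module map from the $SL$-invariant down to the $S_{n^i}$ component), and then look for a direct argument that bypasses the top case.
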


It is not hard to see that the $i=n$ case implies the others.
Adopt the notation that if $\pi=(p_1\hd p_k)$, then $m\pi=(mp_1\hd mp_k)$.
By taking Cartan products in the coordinate ring, the conjecture would imply:  

\begin{conjecture}[Kumar  \cite{kumarcoordring}] For all partitions $\pi$ with $\ell(\pi)\leq n$, the module $S_{n\pi}\BC^n$ occurs in $\BC[Ch_n(\BC^n)]$.
In particular, $S_{n\pi}\BC^{n^2}$ occurs in $\BC[\Det_n]$ and $\BC[\Perm^n_n]$.
\end{conjecture}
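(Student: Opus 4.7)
The plan is to assume the preceding Conjecture~\ref{kumarconj} and extend it via Cartan products. The essential fact I would use is that if $X$ is an irreducible variety then $\BC[X]$ is an integral domain, so the product of nonzero $B$-highest weight vectors (for any fixed Borel $B$) is again a nonzero $B$-highest weight vector, of weight equal to the sum.

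The core combinatorial identity is that for a partition $\pi=(p_1\ge\cdots\ge p_n\ge 0)$ with $\ell(\pi)\le n$,
$$
n\pi \;=\; \sum_{j=1}^{n} a_j\cdot n^j,\qquad a_j := p_j-p_{j+1}\ (j<n),\quad a_n:=p_n,
$$
where $n^j=(n,\ldots,n,0,\ldots,0)$ has $j$ entries equal to $n$ and all $a_j\in\BZ_{\ge 0}$. Applying Conjecture~\ref{kumarconj}, for each $j$ with $a_j>0$ I would pick a highest weight vector $f_j\in\BC[Ch_n(\BC^n)]$ of weight $n^j$ for the standard Borel of $GL_n$. Since $Ch_n(\BC^n)=\ol{GL_n\cdot(x_1\cdots x_n)}$ is irreducible (Example~\ref{eex2}), $\BC[Ch_n(\BC^n)]$ is a domain, so $F:=\prod_j f_j^{a_j}$ is nonzero; as a product of highest weight vectors it is itself a highest weight vector, of weight $n\pi$, and the $GL_n$-submodule it generates is a copy of $S_{n\pi}\BC^n\subset\BC[Ch_n(\BC^n)]$, as desired.

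For the ``in particular'' parts, the plan is to run the identical Cartan-product argument inside the domains $\BC[\Det_n]$ and $\BC[\Perm_n^n]$ (both irreducible orbit closures), thereby reducing to the occurrence of the generators $S_{n^i}\BC^{n^2}\subset\BC[\Det_n]$ and $S_{n^i}\BC^{n^2}\subset\BC[\Perm_n^n]$ for $i=1,\ldots,n$. The hard part will be this bootstrap from $GL_n$ to $GL_{n^2}$. My approach is to use the closed embedding $Ch_n(\BC^n)\hookrightarrow\Det_n$ coming from the inclusion of $\BC^n$ as diagonal matrices in $\BC^{n^2}$ (on which $\tdet_n$ restricts to $\tchow_n$); this furnishes a $\G_n$-equivariant surjection $\BC[\Det_n]\twoheadrightarrow\BC[Ch_n(\BC^n)]$. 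Combining this with the Peter-Weyl formula of Proposition~\ref{peterrefx}, where the multiplicity of $S_{n^i}\BC^{n^2}$ in $\BC[GL_{n^2}\cdot\tdet_n]$ equals the symmetric Kronecker coefficient $sk^{n^i}_{i^n,i^n}$, I would argue that the Kumar-conjecture invariant $f_i\in(S_{n^i}\BC^n)^{\G_n}$ forces positivity of $sk^{n^i}_{i^n,i^n}$, and that the resulting occurrence in the orbit ring survives passage to $\BC[\Det_n]$. The analogous argument, using the stabilizer $G_{\tperm_n}$ from Theorem~\ref{permstabthm}, handles $\BC[\Perm_n^n]$.
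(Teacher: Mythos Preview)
Your first paragraph is exactly the paper's argument: the paper says only ``By taking Cartan products in the coordinate ring, the conjecture would imply'' the statement, and your decomposition $n\pi=\sum_j a_j\,n^j$ together with the integral-domain argument is the standard way to spell this out.

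Your treatment of the ``in particular'' clause, however, has a real gap and is also more complicated than necessary. The problematic step is the last one: ``the resulting occurrence in the orbit ring survives passage to $\BC[\Det_n]$.'' Restriction of functions gives an \emph{injection} $\BC[\Det_n]\hookrightarrow\BC[GL_{n^2}\cdot\tdet_n]$, not a surjection, so a module appearing in the orbit ring need not appear in the orbit-closure ring. Indeed, Kumar's non-normality theorem (Theorem~\ref{kumarnorthm} in this very paper) is proved precisely by exhibiting $SL(W)$-invariants in the orbit ring that cannot lie in $\BC[\Det_n]$. So you cannot conclude occurrence in $\BC[\Det_n]$ from occurrence in $\BC[GL_{n^2}\cdot\tdet_n]$. (Your intermediate step, that the $\G_n$-invariant $f_i$ forces $sk^{\,n^i}_{i^n,i^n}>0$, is also not justified; there is no obvious mechanism linking a $\G_n$-invariant in $S_{n^i}\BC^n$ to a $G_{\tdet_n}$-invariant in $S_{n^i}\BC^{n^2}$.)

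The intended route avoids the orbit ring entirely. As noted at the opening of \S\ref{chowsect}, specializing $\tdet_n$ (resp.\ $\tperm_n$) to diagonal matrices and taking the $GL(W)$-orbit closure gives $Ch_n(W)\subset\Det_n$ (resp.\ $Ch_n(W)\subset\Perm^n_n$) with $W=\BC^{n^2}$. These are inclusions of $GL(W)$-varieties, so restriction yields $GL(W)$-equivariant \emph{surjections} $\BC[\Det_n]\twoheadrightarrow\BC[Ch_n(W)]$ and $\BC[\Perm^n_n]\twoheadrightarrow\BC[Ch_n(W)]$; hence every irreducible $GL(W)$-module in $\BC[Ch_n(W)]$ already occurs in $\BC[\Det_n]$ and $\BC[\Perm^n_n]$. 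Since $\ell(n\pi)\le n$, the occurrence of $S_{n\pi}\BC^n$ in $\BC[Ch_n(\BC^n)]$ is equivalent (by the stability in Exercises~\ref{fhexa},~\ref{fhexb}) to the occurrence of $S_{n\pi}W$ in $\BC[Ch_n(W)]$, and the ``in particular'' follows. Equivalently, you may run your Cartan-product argument directly inside the domain $\BC[\Det_n]$, feeding it the generators $S_{n^i}W\subset\BC[\Det_n]$ obtained this way.
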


Conjecture \ref{kumarconj} turns out to be related to a famous conjecture in combinatorics:
an $n\times n$ matrix such that each row and column consists of the integers $\{ 1\hd n\}$ is called a {\it Latin square}.
To each row and column one can associate an element $\s\in \FS_n$ based on the order the integers appear. 
Call the products of all the signs of these permutations the sign of the Latin square.

\begin{conjecture}[Alon-Tarsi  \cite{MR1179249}] \label{ATconj} Let $n$ be even. The number of sign $-1$ Latin squares of size $n$ is not equal
to the number of sign $+1$ Latin squares of size $n$.
\end{conjecture}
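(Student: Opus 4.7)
The plan is to attack Conjecture \ref{ATconj} via its known equivalence (due to Kumar \cite{kumarcoordring}) to a non-vanishing statement on the Chow variety. Let $V=\BC^n$ with $n$ even and let $P\in S^n(S^nV)$ be the unique (up to scale) $SL(V)$-invariant established in Exercise \ref{snssv}. Evaluating formula \eqref{polyexpr} on the specialisation $x^i_j=e_j$ (independent of $i$) and keeping track of the extra symmetrisations built into the Young symmetriser for the partition $(n^n)$, one computes that $P(x_1\cdots x_n)$ equals $\mathrm{AT}(n)$ up to an explicit nonzero rational constant. Since $GL(V)\cdot [x_1\cdots x_n]$ is dense in $Ch_n(V)$ and $P$ is $SL(V)$-invariant, Conjecture \ref{ATconj} is therefore equivalent to the non-containment $Ch_n(V)\not\subset Z(P)\subset \BP S^nV$.

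With this reformulation in hand, my proposed route is a finite-characteristic argument generalising Glynn's proof of the case $n=p-1$. The idea is to choose a prime $p$ appropriately related to $n$, reduce the identity $P(x_1\cdots x_n)=c\cdot \mathrm{AT}(n)$ modulo $p$, and compute the left-hand side by invoking the standard identity $\sum_{g\in GL_n(\mathbb{F}_p)}\det(g)^m\equiv 0\pmod p$ unless $(p-1)\mid m$, combined with a careful bookkeeping of the $SL_n(\mathbb{F}_p)$-averaging that extracts $P$. In parallel I would pursue a characteristic-zero geometric route using the normalisation $\psi_n:V^{\times n}//\G_n\to \hat{Ch}_n(V)$ of Proposition \ref{normalchowprop}: its coordinate ring is the transparent $\bigoplus_s S^n(S^sV^*)$, so tracing $P$ through $\psi_n^*=h_{n,n}$ (Proposition \ref{tildehprop}) reduces non-vanishing to a question about the cokernel of $h_{n,n}$ restricted to the isotypic component $S_{n^n}V^*$. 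A third, orthogonal attempt would exploit the local $GL(V)$-orbit structure at $[x_1\cdots x_n]$ to construct a one-parameter deformation along which $P$ can be evaluated explicitly in the limit.

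The hard part is that Conjecture \ref{ATconj} is a notoriously open problem: the unconditional cases $n=p\pm 1$ both hinge on mod-$p$ phenomena peculiar to those particular $n$, and no approach has produced a uniform proof for all even $n$. The Chow-variety reformulation is a genuine rephrasing, not a solution, so the real content must come from an algebro-geometric obstruction. In particular, any plan built on the Hermite-Hadamard-Howe map must confront the fact that Brion's asymptotic surjectivity (Corollary \ref{brionfhcor}) only guarantees that $S_{n^n}V^*$ lies in the image of $h_{d,n}$ for $d\geq d_0(n,n)$ as in \eqref{brionexpl}, an astronomically large degree, whereas we need control in the tiny degree $d=n$. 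A successful proof thus requires either a drastic sharpening of Brion's bound on the specific isotypic component $S_{n^n}V^*$, or an entirely new invariant-theoretic obstruction forcing $P$ not to vanish on $Ch_n(V)$ in even rank.
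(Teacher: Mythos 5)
The statement you were asked to prove is the Alon--Tarsi conjecture, which the paper presents precisely as a conjecture: it has no proof in the paper, and indeed it is a well-known open problem in combinatorics. So there is no "paper's own proof" for your attempt to be measured against. Your proposal is, appropriately, not a proof and does not claim to be one; it is a reformulation together with a survey of approaches and an honest acknowledgment that the problem remains open. That is the correct assessment of the state of the art.

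On the merits of what you did write: the reformulation via the Chow variety is essentially right and matches the paper's Proposition \ref{kconjequiv}. The equivalence of Alon--Tarsi for even $n$ with the non-vanishing of the unique $SL(V)$-invariant $P\in S^n(S^nV^*)$ on $Ch_n(V)$ (equivalently, $S_{n^n}\BC^n\subset \BC[Ch_n(\BC^n)]$) is exactly item (1) $\Leftrightarrow$ (2) of that proposition, and the role of formula \eqref{polyexpr} in establishing it is as you describe. One small misattribution: the paper presents this equivalence as joint work of Kumar and Landsberg, not as due to Kumar alone (the reference \cite{kumarcoordring} supplies Conjecture \ref{kumarconj} itself). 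Your observation that Brion's surjectivity result for $h_{d,n}$ (Corollary \ref{brionfhcor}, with the bound \eqref{brionexpl}) gives no leverage at the degree $d=n$ that actually matters is also the right diagnostic of why the normalization/Hermite--Hadamard--Howe route does not close the gap.

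The genuine gap, then, is the obvious one: none of the three routes you sketch (mod-$p$ generalisation of Glynn, the $h_{n,n}$-cokernel analysis, a one-parameter degeneration) contains the new idea needed. Each is a plausible place to look, but each is explicitly conditional on a breakthrough you do not supply. If you want to make progress, the sharpest concrete target suggested by the paper's framework is not a global surjectivity statement about $h_{n,n}$ but an isotypic-component statement: show that the one-dimensional subspace $S_{n^n}\BC^n\subset S^n(S^n\BC^n)$ is not in $\ker h_{n,n}$. That is an honest reformulation of Alon--Tarsi within the Foulkes--Howe machinery, and it isolates exactly one coefficient to control; but it does not yet constitute a proof, and your proposal should not be read as one.
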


In joint work, Kumar and I have shown:

\begin{proposition}\label{kconjequiv} Fix $n$ even. The following are equivalent:
\begin{enumerate}
\item The Alon-Tarsi conjecture for $n$.
\item Conjecture \ref{kumarconj} for $n$  with $i=n$.
\item   $\int_{g\in SU(n)} (\tperm_n(g))^n d\mu\neq 0$, where $d\mu$ is Haar measure.
\item Let $\BC^{n^2}$ have coordinates $x^i_j$ and the dual space coordinates $y^i_j$, then
$$
\langle (\tperm_n(y))^n,(\tdet_n(x))^n\rangle \neq 0
$$
which may be thought of as a pairing between homogeneous polynomials of degree $n^2$ and homogeneous differential operators
of order $n^2$.
\end{enumerate}
The following two statements are equivalent and  would imply the above are true:

(i)   $\int_{g\in SU(n)} \Pi_{1\leq i,j\leq n}g^i_j d\mu\neq 0$, where $d\mu$ is Haar measure.

(ii)  $\langle \Pi_{ij}y^i_j, \tdet_n(x)^n\rangle\neq 0$.
\end{proposition}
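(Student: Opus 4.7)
The plan is to prove the equivalences in three stages: Peter--Weyl gives (3) $\iff$ (4) and (i) $\iff$ (ii); formula \eqref{polyexpr} combined with a direct combinatorial identification gives (1) $\iff$ (2) $\iff$ (ii); a torus-reduction then closes the cycle by linking (2) to (4).

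For stage one, I regard $\tdet_n^n$ and $\tperm_n^n$ as elements of $S^{n^2}(E^*\ot F^*)$ with $E=F=\BC^n$. By Cauchy's identity, the $SL(E)\times SL(F)$-invariant subspace of $S^{n^2}(E^*\ot F^*)$ is one-dimensional, spanned by $\tdet_n^n$. Since $\tdet_n\equiv 1$ on $SU(n)$ we have $\int_{SU(n)}\tdet_n^n\,d\mu=1$, so bi-invariance of Haar measure shows that $\int_{SU(n)} f\,d\mu$ equals the coefficient of $\tdet_n^n$ in the $SL\times SL$-invariant projection of $f$. On the other hand, the $GL$-invariant pairing $S^{n^2}V\times S^{n^2}V^*\to \BC$ makes non-invariant isotypic components orthogonal to the invariant $\tdet_n^n$, so $\langle f,\tdet_n^n\rangle$ measures the same coefficient up to the nonzero constant $\langle \tdet_n^n,\tdet_n^n\rangle$. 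Applying this with $f=\tperm_n^n$ yields (3) $\iff$ (4), and with $f=\prod_{i,j}g^i_j$ yields (i) $\iff$ (ii).

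For stage two, since $n$ is even there is (up to scale) a unique $SL(V)$-invariant $P\in S^n(S^nV^*)$, computed by formula \eqref{polyexpr}. Evaluating $P$ at $u=x_1\cdots x_n$ by substituting $x^i_j=x_j$ (so that $z=u^n$) yields
\[
P(u)=\sum_{\sigma_1,\ldots,\sigma_n\in\FS_n}\prod_{k=1}^n\ol{\tdet_n}(x_{\sigma_1(k)},\ldots,x_{\sigma_n(k)}).
\]
The $k$-th factor vanishes unless $(\sigma_1(k),\ldots,\sigma_n(k))$ is a permutation of $[n]$, in which case it equals its sign. Encoding the $\sigma_i$ as the rows of the matrix $L_{i,k}=\sigma_i(k)$ (whose rows are then automatically permutations), the column condition says $L$ must be a Latin square, and the total sign is $\prod_k\tsgn(C_k(L))$. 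A parallel expansion of $\langle \prod_{i,j}y^i_j,\tdet_n(x)^n\rangle$, which equals the coefficient of $\prod x^i_j$ in $\tdet_n^n$, via the tuples $(\tau_1,\ldots,\tau_n)$ whose graphs partition $[n]\times[n]$, gives literally the same signed sum over Latin squares. Thus $P(x_1\cdots x_n)=\langle \prod y^i_j,\tdet_n^n\rangle$, identifying (2) $\iff$ (ii); since both equal the Alon--Tarsi signed count (up to standard sign conventions), (1) $\iff$ (2) $\iff$ (ii).

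To close the cycle, I would link (2) to (4) by restriction to the diagonal torus. Both $\tperm_n^n$ and $\tdet_n^n$ lie in the $T_E\times T_F$-weight subspace of balanced weight $(n^n;n^n)$; the diagonal embedding $V\hookrightarrow E\ot F$, $x_j\mapsto e_j\ot f_j$, sends $\tdet_n|_V=\tperm_n|_V=x_1\cdots x_n$ and is equivariant for the diagonal $\FS_n\subset\FS_n\times\FS_n$ acting by Weyl symmetries. By Theorem \ref{gaythm}, the $(\FS_n\times\FS_n)$-invariant part of the weight-zero component of $S^{n^2}(E^*\ot F^*)$ identifies, under this restriction, with the multiplicity space of $SL(V)$-invariants in $S^n(S^nV^*)$, and $\tdet_n^n$ corresponds to $P$. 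Tracking $\tperm_n^n$ through this identification, its coefficient along $\tdet_n^n$ matches $P(x_1\cdots x_n)$ up to a nonzero constant, giving (2) $\iff$ (4) and closing the chain. The main obstacle is the normalization bookkeeping in this last step: one must verify that the constants introduced by Weyl-group averaging and by Theorem \ref{gaythm} are nonzero and compatible with the natural pairings on both sides. An alternative, more elementary route is to expand $\langle\tperm_n^n,\tdet_n^n\rangle$ directly as a signed sum $\sum_M\prod_{i,j}M(i,j)!\,N_{\tperm}(M)\,N_{\tdet}(M)$ over doubly-$n$-stochastic integer matrices $M$ and argue by sign-cancellation that contributions from non-$(0,1)$ matrices $M$ combine with the Latin-square contribution to yield a nonzero multiple of the Alon--Tarsi sum.
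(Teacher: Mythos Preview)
Your stage one is correct and matches the paper's argument: the integral over $SU(n)$ projects onto the one-dimensional $SL(E)\times SL(F)$-invariant subspace spanned by $\tdet_n^n$, and the $GL(W)$-invariant pairing detects the same coefficient, giving (3)$\iff$(4) and (i)$\iff$(ii).

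Your stage-two Latin-square computations are right, and in fact your conclusion (2)$\iff$(ii) is correct and stronger than what the proposition asserts. The gap is in the passage to (1). What you actually compute for $P(x_1\cdots x_n)$ is the \emph{column-sign} sum $\sum_L\prod_k\tsgn(C_k(L))$; the Alon--Tarsi count weights each Latin square by the product of \emph{both} row and column signs. These are genuinely different numbers (already for $n=2$ they are $-2$ versus $+2$), so ``up to standard sign conventions'' does not bridge them. The equivalence of their nonvanishing is the theorem of Huang and Rota \cite{MR1271866}, which the paper invokes explicitly for (1)$\iff$(2); you need it too.

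Stage three is where the real gap lies. The paper closes the cycle via (2)$\iff$(3), appealing again to the fact that integration over the maximal compact projects onto trivial modules; your route via Gay's theorem and the diagonal torus embedding is a different idea and does not go through as stated. By Cauchy and Gay, the $(\FS_n\times\FS_n)$-invariants in the weight-zero subspace of $S^{n^2}(E^*\otimes F^*)$ have dimension $\sum_\pi\tmult(S_\pi V,S^n(S^nV))^2$, not $1$, so they cannot ``identify with the multiplicity space of $SL(V)$-invariants in $S^n(S^nV^*)$.'' Moreover, under the diagonal restriction $V\hookrightarrow E\otimes F$ both $\tdet_n$ and $\tperm_n$ restrict to $x_1\cdots x_n$, so restriction alone cannot relate the pairing $\langle\tperm_n^n,\tdet_n^n\rangle$ to $P(x_1\cdots x_n)$. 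Your alternative expansion $\sum_M(\prod_{i,j} M_{ij}!)\,N_{\tperm}(M)\,N_{\tdet}(M)$ over doubly $n$-stochastic matrices $M$ is correct as a formula, but no sign-cancellation argument reducing it to the $M=J$ term is supplied, and none is obvious.
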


Currently the Alon-Tarsi conjecture is known to be true for $n=p\pm 1$, where $p$ is a prime number
\cite{MR2646093,MR1451417}.

To see the equivalence of (1) and (2), in \cite{MR1271866} they showed that the Latin square conjecture is true
for even $n$ if and only if the \lq\lq column sign\rq\rq\ Latin square conjecture holds, where one instead computes
the   the products of the signs of the permutations of the columns. Then  expression \eqref{polyexpr}
gives the equivalence. The equivalence of (3) and (4) comes from the Peter-Weyl theorem and the equivalence
of (2) and (3) from the fact that one can restrict to a maximal compact, and integration over the group picks
out the trivial modules.

\begin{problem} Find explicit modules that either are or are not in the kernel of the Hermite-Hadamard-Howe map.
For example any module with at most two parts is clearly not in the kernel.  \end{problem}

\section{Secant varieties of the Chow variety and depth three circuits}\label{depth3sect}

Recently there has been substantial progress regarding shallow circuits.  I first
define a circuit, which is the model of computation generally used in algebraic
complexity theory, and then I   describe the varieties associated to shallow circuits as well as    recent results
and  conjectures regarding shallow circuits in geometric language.

\begin{definition}\index{arithmetic circuit}\label{arithcirdef}
An {\it arithmetic circuit} $\cC$ is a finite, acyclic, directed graph with vertices of
in-degree $0$ or $2$ and exactly one vertex of out-degree $0$.
The vertices of in-degree $0$  are labeled by elements of $\BC\cup\{ x_1\hd x_n\}$, and called {\it inputs}.
Those of in-degree $2$  are labeled
with $+$ or $*$ and  are called {\it   gates}. If the out-degree of $v$ is $0$, then $v$ is called an {\it output gate}.
 The {\it size} of $\cC$ is the number of edges.  
From a circuit $\cC$, one can construct a polynomial $p_{\cC}$ in the variables $x_1\hd x_n$.
\end{definition}

 \begin{figure}[!htb]\begin{center}
\includegraphics[scale=.4]{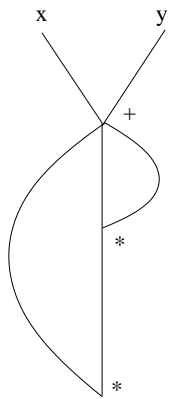}
\caption{\small{Circuit for $(x+y)^3$}}  
\end{center}
\end{figure}

\exerone{Show that if one instead uses the number of gates to define the size, the asymptotic size estimates are the same.
(Size is    sometimes defined as the number of gates.)}

\medskip 

To each vertex $v$ of a circuit $\cC$ we  associate  the  polynomial that is computed at $v$, which will be denoted $\cC_v$.
In particular the polynomial associated with the output gate is the polynomial computed by $\cC$. 
The {\it depth} of $\cC$ is the length  of (i.e., the  number of edges in)  the longest path in $\cC$  from an input to an output.
If a circuit has small depth, the polynomial it computes can be computed quickly in parallel.

The {\it formula size} of $f$ is the smallest tree circuit computing $f$. Tree circuits are  
called {\it formulas}.

 Circuits of bounded
depth (called {\it shallow circuits}) are used to study the complexity of calculations done in parallel. When one studies
circuits of bounded depth, one must allow gates to have an arbitrary number of edges coming in to them (\lq\lq unbounded fanin\rq\rq ).
For such circuits, multiplication by constants is considered \lq\lq free.\rq\rq  

There is a substantial literature dedicated to showing that given any circuit computing a polynomial, there is a \lq\lq slightly larger\rq\rq\ 
shallow circuit that computes the same polynomial. Recently there have been significant advances for circuits of depths $3$
\cite{DBLP:journals/eccc/GuptaKKS13}  and $4$ \cite{tavenas,koirand4,AgrawalVinay}
and a special class of circuits of depth $5$ \cite{DBLP:journals/eccc/GuptaKKS13}. The circuits of bounded depth that are trees
have a nice variety associated to them which I now describe. In the literature they deal with inhomogeneous circuits,
but, as I describe below (following a suggestion of K. Efremenko), this can be avoided, so we will deal exclusively
with homogeneous circuits, that is, those computing homogeneous polynomials at each step along the way.

Following \cite{MR2733384}, for varieties $X\subset \BP S^aW$ and $Y\subset \BP S^bW$, defined the {\it multiplicative
join} of $X$ and $Y$, $MJ(X,Y):=\{ [xy]\mid [x]\in X,\ [y]\in Y\}\subset \BP S^{a+b}W$, and define $MJ(X_1\hd X_k)$ similarly.
Let $\mu_k(X)= MJ(X_1\hd X_k)$ when all the $X_j=X$, which is a multiplicative analog of the secant variety.
Note that $\mu_k(\BP W)=Ch_k(W)$.   
The varieties associated to the polynomials computable by bounded depth formulas are of the form
$\s_{r_k}(\mu_{d_{k-1}}(\s_{r_{k-2}}(\cdots \mu_{d_1}(\BP W)\cdots )))$, and 
$\mu_{d_{k+1}}(\s_{r_k}(\mu_{d_{k-1}}(\s_{r_{k-2}}(\cdots \mu_{d_1}(\BP W) \cdots ))))$.

\begin{remark} For those interested in circuits, note that if the first level consists of addition gates,
this is \lq\lq free\rq\rq\ from the perspective of algebraic geometry, as since we are not choosing coordinates,
linear combinations of basis vectors are not counted. More on this below.
\end{remark}

Useful depth three circuits are always trees   where the first level consists of additions,
the second multiplications, and the third an addition that adds all the outputs of the
second level together. Such are called $\Sigma\Pi\Sigma$ circuits.
%and the associated variety is $\s_r(Ch_n(W))$. That is,  {\it secant varieties
%of Chow varieties are universal for the study of depth three circuits}.  

A circuit is {\it homogeneous} if the polynomial produced by each gate is homogeneous, and otherwise it is {\it inhomogeneous}.
The relation between secant varieties of Chow varieties and depth three circuits is as follows:

\begin{proposition}\label{chcirprop} A polynomial $P\in S^nW$   in $\s_r^0(Ch_n(W))$   is computable
by a homogeneous circuit of size $r+nr(1+\bw)$. If $P\not\in \s_r^0(Ch_n(W))$, then
$P$ cannot be computed by a homogeneous circuit of size $n(r+1)+(r+1)$.
\end{proposition}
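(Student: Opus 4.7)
The plan is to treat the two directions separately, using the depth-3 $\Sigma\Pi\Sigma$ model for homogeneous circuits that fits the discussion preceding the statement.

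For the upper bound, I would start from a decomposition $P = \sum_{i=1}^{r}\prod_{j=1}^{n}\ell_{i,j}$ witnessing $P\in \s_r^0(Ch_n(W))$, with each $\ell_{i,j}\in W$, and build an explicit depth-3 circuit: a first $\Sigma$-level of $nr$ gates producing the linear forms $\ell_{i,j}$ (each fed by the $\bw$ variable inputs via scalar-weighted edges) contributing $nr\bw$ edges, a middle $\Pi$-level of $r$ product gates (each taking the $n$ factors of its summand) contributing $nr$ edges, and a single top $\Sigma$-gate summing the $r$ products, contributing $r$ edges. Summing yields $nr\bw + nr + r = r + nr(1+\bw)$, matching the stated size.

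For the lower bound, I would argue by contrapositive: any homogeneous depth-3 circuit $\cC$ computing $P$ of size at most $n(r+1)+(r+1)$ forces $P\in \s_r^0(Ch_n(W))$. Writing $s$ for the number of middle-level $\Pi$-gates, homogeneity together with $\deg P = n$ forces each such product to receive exactly $n$ edges from the first $\Sigma$-level (contributing $ns$ edges), while the top $\Sigma$-gate receives $s$ edges; and, discarding the degenerate case $P=0$, at least one input-to-first-level edge must be present. Hence
\[
\mathrm{size}(\cC)\;\geq\;1 + ns + s\;=\;1 + s(n+1).
\]
Combined with $\mathrm{size}(\cC)\leq n(r+1)+(r+1) = (n+1)(r+1)$, this gives $s(n+1)\leq (n+1)(r+1)-1$, hence $s\leq r + 1 - \tfrac{1}{n+1}$. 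Since $s\in\BZ$ and $n\geq 1$, we conclude $s\leq r$. The output of $\cC$ is then a sum of $s\leq r$ products of $n$ linear forms in $W$, i.e.\ $P\in \s_r^0(Ch_n(W))$.

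The step requiring the most care, though not deep, is the integrality sharpening of $s\leq r+1-\tfrac{1}{n+1}$ down to $s\leq r$; this is exactly what allows the bound $n(r+1)+(r+1)$ (rather than a naive $(n+1)(r+1)+1$) to already rule out Chow rank strictly greater than $r$. The rest is straightforward edge-counting in $\Sigma\Pi\Sigma$ circuits, together with the understanding, implicit in the surrounding discussion, that "homogeneous circuit" here refers to the depth-3 $\Sigma\Pi\Sigma$ model; without this restriction, general homogeneous circuits can compute low-degree polynomials of higher Chow rank inside the stated size budget, so it would be essential to state the proposition in that model.
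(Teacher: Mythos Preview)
Your argument is correct and uses the same $\Sigma\Pi\Sigma$ edge-counting as the paper; the upper bound is identical, and for the lower bound the paper simply asserts that if $P\notin\s_r^0(Ch_n(W))$ then ``at best $P\in\s_{r+1}^0(Ch_n(W))$, in which case, even if each of the $x^i_j$'s is a basis vector (so no initial additions are needed), we still must perform $n(r+1)$ multiplications and $r+1$ additions,'' whereas you actually supply the contrapositive count (together with the integrality sharpening via the extra first-level edge) that justifies this. Your closing caveat that the statement is to be read in the homogeneous depth-three model is apt and matches the paper's implicit setting.
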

\begin{proof} In the first case, $P=\sum_{j=1}^r(x^1_j\cdots x^n_j)$ for some $x^i_j\in W$.
Expressed in terms of a fixed basis of $W$, each $x^i_j$ is a linear combination of at worst $\bw$
basis vectors, thus to create each one requires at worst $nr\bw$ additions. Then to multiply them
in groups of $n$ is $nr$ multiplications, and finally to add these together is $r$ further additions.
In the second case, at best $P$ is in $\s_{r+1}^0(Ch_n(W))$, in which case, even if each 
of the $x^i_j$'s is a basis vector (so no initial additions are needed), we still must
perform $n(r+1)$ multiplications and $r+1$ additions.
\end{proof}

I first explain why the computer science literature generally allows inhomogeneous depth three circuits, and then why one does
not need to do so.

\medskip

\subsection{Why   homogeneous depth three circuits do not appear useful at first glance}
Using the flattening  (see \S\ref{relevantsect}),
$(\tdet_n)_{\lceil \frac n2\rceil,\lfloor \frac n2\rfloor}: S^{\lceil \frac n2\rceil}W\ra
S^{\lfloor \frac n2\rfloor}W$  and writing $W=E\ot F=\BC^n\ot \BC^n$, the image of this map 
is easily seen to be $\La {\lfloor \frac n2\rfloor}E\ot \La {\lfloor \frac n2\rfloor}F$, the minors of
size $\lfloor \frac n2\rfloor$. For the permanent    one similarly gets sub-permanents. Thus
$$
\ur_S (\tdet_n)\geq {\binom n{\lfloor \frac n2\rfloor} }^2,\ \ \ur_S (\tperm_n)\geq {\binom n{\lfloor \frac n2\rfloor} }^2.
$$
Recalling  that $\binom{2m}m \sim \frac {4^m}{\sqrt{\pi m}}$,
we have $[\tdet_n],[\tperm_n]\not\in\s_{O( \frac{4^n}{ n }  )}v_n(\BP W)$. 

In  \cite{MR2842085} they showed
\be\label{chowrk}
\bold R_S(x_1\cdots x_n)=2^{n-1}.
\ene
The upper bound follows from the expression
\be\label{fischer}
 x_1\cdots x_n= \frac{1}{2^{n-1} n!} \sum_{\epsilon \in \{-1,1\}^{n-1}} 
(x_1 + \epsilon_1 x_2 + \dots + \epsilon_{n-1} x_n)^n \epsilon_1 \cdots \epsilon_{n-1} ,
\ene
a sum with $2^{n-1}$ terms.
(This expression    dates at least back to  \cite{MR1573008}.) 
In particular 
$$\s_r(Ch_n(W))\subset \s_{r2^n}(v_n(\BP W)).
$$
We conclude, for any constant $C$ and $n$ sufficiently large, that 
$$ \tdet_n\not\in \s_{C\frac{2^n}n}(Ch_n(W)),
$$
and similarly for the permanent. 
By Proposition \ref{chcirprop}, we conclude:

\begin{proposition} \cite{MR1486927} The polynomial sequences  $\tdet_n$ and $\tperm_n$ do not admit
depth three circuits of size $2^n$.
\end{proposition}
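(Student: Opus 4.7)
I would prove this by composing three ingredients already in the excerpt. Suppose toward contradiction that $\tdet_n$ admits a homogeneous $\Sigma\Pi\Sigma$ circuit of size $s \leq 2^n$. By the contrapositive of the second assertion of Proposition \ref{chcirprop}, this would force $[\tdet_n] \in \s_r^0(Ch_n(W))$ for some $r$ with $(n+1)(r+1) \geq s$, so one may take $r \leq 2^n/(n+1)$. Next I would invoke Fischer's identity \eqref{fischer}: every product of $n$ linear forms has Waring rank at most $2^{n-1}$, so every point of $\s_r^0(Ch_n(W))$ has Waring rank at most $r \cdot 2^{n-1}$. In particular,
$$\ur_S(\tdet_n) \leq \bold R_S(\tdet_n) \leq r \cdot 2^{n-1} \leq \frac{4^n}{2(n+1)}.$$

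The final step is to combine this with the flattening lower bound $\ur_S(\tdet_n) \geq \binom{n}{\lfloor n/2\rfloor}^2$, which follows from the middle-degree catalecticant $(\tdet_n)_{\lceil n/2\rceil,\lfloor n/2\rfloor}: S^{\lceil n/2\rceil}W \to S^{\lfloor n/2\rfloor}W$ whose image is the span of all $\lfloor n/2\rfloor$-minors and hence has full rank $\binom{n}{\lfloor n/2\rfloor}^2$ (subadditivity of the flattening rank on sums of $n$-th powers gives the bound on $\bold R_S$, and semicontinuity of matrix rank upgrades it to a border-rank bound). Using the Stirling asymptotic $\binom{n}{\lfloor n/2\rfloor}^2 \sim \frac{2 \cdot 4^n}{\pi n}$, compatibility with the upper bound above would require, for large $n$,
$$\frac{2 \cdot 4^n}{\pi n} \lesssim \frac{4^n}{2(n+1)}, \qquad \text{i.e.,} \qquad (\pi - 4)n \gtrsim 4,$$
which fails for every $n \geq 1$ because $\pi < 4$. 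Hence no such circuit exists once $n$ is sufficiently large. The same argument applies verbatim to $\tperm_n$: its middle-degree flattening has image the span of the size-$\lfloor n/2\rfloor$ sub-permanents, again of dimension $\binom{n}{\lfloor n/2\rfloor}^2$.

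\textbf{Main obstacle.} Because the three ingredients are already assembled in the excerpt, there is no conceptual difficulty — the argument is essentially arithmetic. The delicate point is that the chain of inequalities is tight: the factor $2/\pi$ coming from Stirling barely beats the factor $1/2$ coming from Fischer's formula, and only because $\pi < 4$. If one were to replace Fischer's identity by any weaker sum-of-powers expression for $x_1 \cdots x_n$, or to lose even a multiplicative constant in the correspondence between circuit size and Chow rank, the argument would collapse. So the main caution is to verify that Proposition \ref{chcirprop} and Fischer's identity hold with the stated constants rather than merely up to $O(\cdot)$ factors; once that is confirmed, the proof yields a circuit-size lower bound of roughly $(4/\pi)\,2^n$, which is what the statement requires.
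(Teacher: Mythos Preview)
Your argument is correct and is precisely the paper's own proof, just run as a contradiction and with the constants tracked more carefully: the paper combines the flattening bound $\ur_S(\tdet_n)\geq\binom{n}{\lfloor n/2\rfloor}^2$, Fischer's identity \eqref{fischer} giving $\s_r(Ch_n(W))\subset\s_{r2^{n-1}}(v_n(\BP W))$, and Proposition~\ref{chcirprop} in exactly the same way. Your observation that the inequality hinges on $\pi<4$ is apt---the paper's phrase ``for any constant $C$'' just above the proposition is in fact imprecise, and your sharper bookkeeping (yielding a lower bound of roughly $(4/\pi)\cdot 2^n$) is the correct reading.
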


(In \cite{MR1486927} they consider all partial derivatives
of all orders simultaneously, but the bulk of the dimension is concentrated in the middle order flattening, so
one does not gain very much this way.)
Thus homogeneous depth three circuits at first sight do not seem that powerful because a polynomial sized 
homogeneous depth $3$ circuit  cannot compute the
determinant.

To make matters worse, consider the polynomial corresponding to 
iterated matrix multiplication of three by three matrices   $IMM^3_k\in S^k(\BC^{9k})$. It  is complete for $\vp_e$, polynomials with small formula sizes
(see \S\ref{complexapp}), and 
also has an exponential  lower bound for its  Chow border rank.

\exerone{\label{immexer}Use flattenings to show $\ur_S(IMM^3_k)\geq (const.) 3^k$,   and conclude $IMM^3_k\not\in \sigma_{poly(k)}(Ch_k(W))$.}

By Exercise \ref{immexer}, homogeneous depth three circuits (na\"\i vely applied) cannot even capture
sequences of polynomials admitting small formulas.

Another benchmark in complexity theory are the elementary symmetric functions
$$
e^k_n:=\sum_{I\subset [n], |I|=k} x_{i_1}\cdots x_{i_k}.
$$
To fix ideas, set 
  $n=4k$. Let $k=2p$.  Consider the flattening:
$$
(e^k_{4k})_{p,p}: S^p\BC^{2k *}\ra S^p\BC^{2k}
$$
It has image all monomials $x_{i_1}\cdots x_{i_p}$ with the $i_j$ distinct, so its rank is
$\binom{4k}{\frac k2}$ and since 
$
\binom{4k}
{\frac k2 }
/
\binom{ k}{\frac k2 }
$
grows faster than any polynomial in $k$, we conclude even the elementary symmetric function $e^k_{4k}$
cannot be computed by a homogeneous depth three circuit of polynomial size. This last assertion
is \cite[Thm. 0]{MR1486927}, where they show more generally (by the same method) that
$e^{2d}_{n}\not\in \s_{\Omega((\frac n{4d})^d)}^0(Ch_{2d}(\BC^n))$.

 \begin{remark} Strassen \cite{MR0395147} proved a lower bound of $\Omega(n\tlog n)$ for the size of {\it any}
 arithmetic circuit computing  all the $e^{j}_n$ simultaneously. 
 \end{remark}
\subsection{Upper bounds for homogeneous depth three circuits}

The most famous homogeneous depth three circuit is probably Ryser's formula for the permanent:
\be\label{ryserformula}
  \tperm_n = 2^{-n+1} \sum_{\substack{\epsilon \in \{-1,1\}^n \\ \epsilon_1=1}}
    \prod_{1 \leq i \leq n} \sum_{1 \leq j \leq n} \epsilon_i \epsilon_j x_{i,j} ,
\ene
the outer sum is taken over $n$-tuples $\ep=(\epsilon_1=1, \epsilon_2,\dots,\epsilon_n)$.
Note that each term in the outer sum is a product of $n$ independent linear forms
and there are $2^{n-1}$ terms.
In particular  $[\tperm_n]\in \s_{2^{n-1}}^0(Ch_n(\BC^{n^2}))$,
and since 
 $Ch_n(\BC^{n^2})\subset \s_{2^{n-1}}^0(v_n(\pp{n^2-1}))$, we obtain
$\bold R_S(\tperm_n)\leq 4^{n-1}$.

%\begin{aside} Regarding border rank, 
%Flattenings also imply
%$$
%\ur_S(x_1\cdots x_n)\geq \binom n{\lfloor \frac n2\rfloor}\sim \frac{2^{n }}{\sqrt{  n}}.
%$$
%\end{aside}  

\subsection{Homogeneous depth three circuits for padded polynomials}
At first glance it seems polynomial sized depth 3 circuits are useless, as they cannot compute even simple sequences
of polynomials as we just saw. However,   if one allows padded polynomials,  the situation
changes dramatically. (As mentioned above, in \cite{DBLP:journals/eccc/GuptaKKS13} and elsewhere
  they consider inhomogeneous polynomials and circuits  instead of padding.)
The following geometric version of a result of Ben-Or (presented below as a Corollary) was suggested
by K.  Efremenko:

\begin{proposition} Let $\BC^{m+1}$ have coordinates $\ell, x_1\hd x_m$ and let
$e^k_m=e^k_m(x_1\hd x_m)$.    For all $k\leq m$, 
$\ell^{m-k}e^k_m  \in \s_{m}^0(Ch_m(\BC^{m+1}))$.
\end{proposition}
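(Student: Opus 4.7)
The plan is to use the generating-function identity
\begin{equation*}
\prod_{i=1}^{m}(\ell + t\, x_{i}) \;=\; \sum_{s=0}^{m} t^{s}\,\ell^{m-s} e^{s}_{m}(x_{1},\ldots ,x_{m}),
\end{equation*}
viewed as a polynomial of degree $m$ in the scalar parameter $t$, and then to isolate the $s=k$ coefficient by an $m$-th roots-of-unity filter. This is essentially the homogenized form of Ben--Or's classical construction.

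Let $\zeta$ be a primitive $m$-th root of unity, and fix $k$ with $1\le k\le m-1$. I claim
\begin{equation*}
\ell^{m-k}\, e^{k}_{m}(x_{1},\ldots ,x_{m}) \;=\; \frac{1}{m}\sum_{j=0}^{m-1}\zeta^{-jk}\prod_{i=1}^{m}(\ell + \zeta^{j}x_{i}).
\end{equation*}
To verify this, substitute the generating-function identity on the right and swap the order of summation:
\begin{equation*}
\frac{1}{m}\sum_{s=0}^{m}\ell^{m-s} e^{s}_{m}\sum_{j=0}^{m-1}\zeta^{j(s-k)} \;=\; \sum_{s=0}^{m}\ell^{m-s} e^{s}_{m}\cdot\bigl[\,s\equiv k\!\!\pmod m\,\bigr].
\end{equation*}
Under the hypothesis $1\le k\le m-1$, the only index $s\in\{0,1,\ldots ,m\}$ congruent to $k$ modulo $m$ is $s=k$ itself, so the sum collapses to $\ell^{m-k}e^{k}_{m}$. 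Since each factor $\ell + \zeta^{j}x_{i}$ is a linear form on $\mathbb{C}^{m+1}$, the right-hand side exhibits the padded polynomial as a sum of $m$ products of $m$ linear forms each, i.e.\ as a point of $\sigma_{m}^{0}(Ch_{m}(\mathbb{C}^{m+1}))$.

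The two boundary values of $k$ are handled separately and are trivial: $\ell^{m}e^{0}_{m}=\ell\cdot\ell\cdots\ell$ and $\ell^{0}e^{m}_{m}=x_{1}\cdots x_{m}$ already lie in $Ch_{m}(\mathbb{C}^{m+1})\subset \sigma_{m}^{0}(Ch_{m}(\mathbb{C}^{m+1}))$.

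There is no serious obstacle to this argument. The only subtlety worth isolating is why $m$ rather than $m+1$ terms suffice, since naive Lagrange interpolation of the degree-$m$ polynomial $\prod_{i}(\ell + tx_{i})$ at $m+1$ nodes would produce a decomposition with $m+1$ products. The roots-of-unity filter saves one term precisely because the character sum $\tfrac{1}{m}\sum_{j}\zeta^{j(s-k)}$ cannot separate the residue classes $s=0$ and $s=m$; this identification is harmless in exactly the range $1\le k\le m-1$, and the two endpoint cases are already handled by the trivial single-product observations above.
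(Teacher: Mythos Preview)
Your proof is correct. The paper follows the same interpolation idea: set $g_u:=\prod_i(x_i+u\ell)=\sum_k u^{m-k}\ell^{m-k}e^k_m$, evaluate at $m$ nodes, and invert. The only difference is the choice of nodes --- the paper uses the integers $u=1,\ldots,m$ and appeals to the Vandermonde inverse, whereas you use the $m$-th roots of unity and invert via the character sum.

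That choice is not merely cosmetic. At the roots of unity one has $u^m=1$, so the $s=0$ and $s=m$ contributions coalesce, and $m$ evaluations genuinely determine $\ell^{m-k}e^k_m$ for each $1\le k\le m-1$ as a sum of exactly $m$ products (with the endpoints $k=0,m$ handled as single products). At generic nodes the extra $\ell^m$ term does not cancel: for instance with $m=2$ and $u=1,2$ one finds $\ell e^1_2=g_2-g_1-3\ell^2$, a sum of three products rather than two. Indeed the Vandermonde matrix displayed in the paper has $m$ rows but $m+1$ columns. So your roots-of-unity filter both proves the proposition as stated and makes transparent exactly why the bound is $\sigma_m^0$ rather than $\sigma_{m+1}^0$ --- precisely the subtlety you isolate in your final paragraph.
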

\begin{proof}
Fix an integer $u\in \BZ$ and define
\begin{align*}
g_u(x,\ell)&=\prod_{i=1}^{m}(x_i+u\ell)\\
&=\sum_k u^{m-k}e^k_m(x)\ell^{m-k}
\end{align*}
Note $g_u(x,\ell)\in Ch_m(\BC^{m+1})$. 
Letting $u=1\hd m$, we may use the 
inverse of the Vandermonde
matrix to write each  $\ell^{m-k}e^k_m$ as a sum of $m$ points in  $Ch_m(\BC^{m+1})$ because
$$
\begin{pmatrix}
1^0&1^1&\cdots& 1^m\\
2^0&2^1&\cdots& 2^m\\
&   \vdots & &\\
m^0&m^1&\cdots &m^m
\end{pmatrix}
\begin{pmatrix} \ell^{m-1}e^1_m\\ \ell^{m-2}e^2_m \\ \vdots \\ \ell^{0}e^m_m\end{pmatrix}
=
\begin{pmatrix} g_1(x,\ell)\\ g_2(x,\ell) \\ \vdots \\ g_m(x, \ell)\end{pmatrix}.
 $$
\end{proof}

\begin{corollary}[Ben-Or]\label{benorthm} $\ell^{m-k}e^k_m$ can be computed by a homogeneous depth three circuit of size $3m^2+m$.
\end{corollary}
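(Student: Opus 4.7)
The plan is to unpack the explicit decomposition from the preceding proposition and then count the edges in the associated homogeneous $\Sigma\Pi\Sigma$ circuit, using the convention (stated in the excerpt) that in a bounded-depth circuit the gates have unbounded fan-in and scalar multiplications are free.

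First, I would recall the explicit formula produced in the proof of the previous proposition: inverting the Vandermonde matrix furnishes constants $c_{u,k}\in\mathbb{C}$ such that
\[
\ell^{m-k}e^k_m \;=\; \sum_{u=1}^{m} c_{u,k}\, g_u(x,\ell), \qquad g_u(x,\ell)=\prod_{i=1}^{m}(x_i+u\ell).
\]
This is already of $\Sigma\Pi\Sigma$ shape with $m$ summands, each summand being a product of $m$ linear forms in the $m+1$ inputs $\ell,x_1,\dots,x_m$.

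Next I would describe the circuit layer by layer and count edges. Layer 1 (inputs): the $m+1$ variables $\ell,x_1,\dots,x_m$. Layer 2 ($m^2$ addition gates, one for each pair $(u,i)$ with $1\le u,i\le m$): the gate $(u,i)$ outputs the linear form $x_i+u\ell$, fed by exactly two incoming edges, one from $x_i$ and one from $\ell$ (the scalar $u$ is absorbed as a free constant on the $\ell$-edge); this contributes $2m^2$ edges. Layer 3 ($m$ multiplication gates, one per $u$): the $u$-th gate multiplies the $m$ outputs of the gates $(u,1),\dots,(u,m)$, contributing $m^2$ edges. Layer 4 (one output addition gate): it takes the $m$ products $g_1,\dots,g_m$ weighted by the free constants $c_{1,k},\dots,c_{m,k}$, contributing $m$ edges. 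Summing, the total number of edges is $2m^2+m^2+m=3m^2+m$, and the longest input-to-output path has length $3$, so the circuit is homogeneous of depth $3$ and of the asserted size.

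The only subtlety worth flagging is the bookkeeping convention for the size: since bounded-depth circuits permit unbounded fan-in and free scalar multiplication, one must verify that the $u\ell$ and $c_{u,k}$ scalings do not add edges, and that the $+$ and $\times$ gates may be taken $m$-ary rather than binary. With that convention agreed, the argument is essentially just the edge count above, and there is no real obstacle — the main work was done in the proposition, and the corollary is a direct translation from the geometric statement $[\ell^{m-k}e^k_m]\in\sigma_m^0(Ch_m(\mathbb{C}^{m+1}))$ to a circuit-size upper bound, in the spirit of Proposition \ref{chcirprop} but with $\mathbf{w}=m+1$ replaced by the tighter count that each linear form uses only $2$ of the available inputs.
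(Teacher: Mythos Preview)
Your proof is correct and follows essentially the same approach as the paper: the paper's argument invokes the general bound $r+nr(1+\bw)$ from Proposition \ref{chcirprop} and then observes that here the first-level addition gates have fan-in two rather than $m+1$, which is exactly the edge count you have written out in detail.
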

\begin{proof} As remarked above, for any point of $\s_{r}Ch_n(\BC^{m+1})$ one   gets
a circuit of size at most  $r+nr+rn(m+1)$, but here at the first level all the addition gates have fanin two 
(i.e., there are two inputs to each addition gate) instead of the
possible $m+1$. 
\end{proof}

\begin{problem}[\cite{MR2901512} Open problem 11.1] Find an explicit sequence of polynomials $P_m\in S^m\BC^{\bw-1}$
such that $\ell^{n-m}P_m\not\in \s_r(Ch_n(W))$, whenever  $r,\bw,n$ are  polynomials in $m$ and $m$ is sufficiently large.
\end{problem} 

\begin{remark}
The best lower bound   for computing the  $e^k_n$ via a $\Sigma\Pi\Sigma$ circuit  is $\Omega(n^2)$ \cite{MR1867306},
so Corollary \ref{benorthm} is very close to (and may well be) sharp.
\end{remark}

\subsection{Depth reduction}\label{depthred}

The following theorem combines results of \cite{MR0660280,DBLP:journals/eccc/GuptaKKS13,tavenas,koirand4,AgrawalVinay}
as explained in the discussion below. (The circuit bounds stated in the theorem come from \cite{tavenas}.)
A  $\Sigma\Lambda\Sigma\Lambda\Sigma$ circuit is a depth $5$ circuit where the first level consists of additions,
the second of \lq\lq powering gates\rq\rq, where a powering gate takes  $f$ to $f^{\d}$ for some $\d$ (the size of the circuit
takes the size of $\d$ into account), the third additions, the fourth powering gates
and the fifth an addition. See \cite{DBLP:journals/eccc/GuptaKKS13} for more details. 
The  $\Sigma\Lambda\Sigma\Lambda\Sigma$ circuits are related to the variety $\s_{r_1}(v_{\frac d\d}(\s_{r_2}(v_{\d}(\BP V)))\subset \BP S^{d}V$
in the same way that the $\Sigma\Pi\Sigma$ circuits are related to $\s_r(Ch_n(V))$.

\begin{theorem} Let $d=n^{O(1)}$ and let $P\in S^d\BC^n$  be a polynomial  that can be computed by
a circuit of size $s$.

Then:
\begin{enumerate}

\item   $f$ is   computable by a homogeneous $\Sigma\Pi\Sigma\Pi$ circuit of   size $2^{O(\sqrt{d \tlog(ds)\tlog(n)})}$.

\item    $f$ is  computable by a $\Sigma\Pi\Sigma$ circuit of size $2^{O(\sqrt{d\tlog(n)\tlog(ds)})}$. In particular, 
$[\ell^{N-d}P]\in \s_r(Ch_{N}(\BC^{n+1}))$ with $rN=2^{O(\sqrt{d\tlog(n)\tlog(ds)})}$.

\item  $f$ is computable,  for some $\d\simeq \sqrt{d}$,  by a homogeneous  $\Sigma\Lambda\Sigma\Lambda\Sigma$ circuit of size 
$  2^{O(\sqrt{d \tlog(ds)\tlog(n)})} 
$.
 In particular, 
  $[P]\in \s_{r_1}(v_{\frac d \d}(\s_{r_2}(v_{\d}(\pp{n-1}))))$ with $r_1r_2 (\d+1)= 2^{O(\sqrt{d \tlog(ds)\tlog(n)})} $.
\end{enumerate}
\end{theorem}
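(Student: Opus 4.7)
The plan is to chain three reductions from the cited references. For (1), start from the Valiant-Skyum-Berkowitz-Rackoff theorem \cite{MR0660280}: any degree-$d$ polynomial computable by a size-$s$ circuit admits a homogeneous circuit of depth $O(\tlog d)$ and size $\mathrm{poly}(s,d)$ in which every intermediate gate computes a polynomial of degree at most $d$. Agrawal-Vinay \cite{AgrawalVinay} then observed that such a logarithmic-depth circuit can be collapsed to depth four ($\Sigma\Pi\Sigma\Pi$) with quasipolynomial blowup by cutting the circuit at some intermediate degree threshold $t$ and writing each \lq\lq top\rq\rq\ polynomial as a sum over balanced binary trees of products of \lq\lq bottom\rq\rq\ polynomials of degree $\simeq t$. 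Koiran \cite{koirand4} and Tavenas \cite{tavenas} optimized the threshold to $t\simeq \sqrt{d}$ and balanced the two layers so that both product fan-ins are $O(\sqrt{d})$, giving total size $2^{O(\sqrt{d\tlog(ds)\tlog n})}$, which is (1).

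For (2), starting from the $\Sigma\Pi^{[a]}\Sigma\Pi^{[b]}$ circuit of (1) with $a,b \simeq \sqrt{d}$, I would convert each top-level product $\prod_{j=1}^{a}Q_{ij}$ (each $Q_{ij}$ of degree $\leq b$) into a sum of products of linear forms after multiplying through by $\ell^{N-d}$ to absorb degree deficits. Homogenize each $Q_{ij}$ to exact degree $b$ with powers of $\ell$, then apply Fischer's identity \eqref{fischer} twice: once at the outer level to write $\prod_j Q_{ij}$ as $2^{O(\sqrt{d})}$ $a$-th powers of linear combinations of the $Q_{ij}$'s, and once inside each $Q_{ij}$ to write it as $2^{O(\sqrt{d})}$ $b$-th powers of linear forms. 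The Saxena-style duality manipulation of \cite{DBLP:journals/eccc/GuptaKKS13} then combines these two expressions into a $\Sigma\Pi\Sigma$ circuit for $\ell^{N-d}P$ of size $2^{O(\sqrt{d\tlog(ds)\tlog n})}$ without the naive distributive blowup, placing $[\ell^{N-d}P]$ in $\s_r(Ch_N(\BC^{n+1}))$ with $rN$ of the claimed order. Statement (3) is obtained by essentially the same manipulations but \emph{without} the final flattening of powers into products of linear forms: the double application of Fischer \eqref{fischer} leaves exactly a $\Sigma\Lambda\Sigma\Lambda\Sigma$ circuit with inner powering exponent $\d\simeq\sqrt{d}$ and outer exponent $d/\d$, of total size $2^{O(\sqrt{d\tlog(ds)\tlog n})}$; geometrically the inner $\Lambda\Sigma$ gives $v_\d(\pp{n-1})$, the next $\Sigma$ gives $\s_{r_2}(v_\d(\pp{n-1}))$, the outer $\Lambda$ applies $v_{d/\d}$, and the final $\Sigma$ takes an $r_1$-secant. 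No padding is needed because powering gates preserve homogeneity automatically.

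The main technical obstacle is Step (1): the VSBR/Tavenas depth reduction is by far the most delicate piece, requiring careful inductive construction of the $O(\tlog d)$-depth circuit from the original size-$s$ one (including the deep VSBR technique of bounding the degree of every intermediate polynomial) and a sharp parameter balance --- Tavenas's Cauchy-Schwarz-style step --- to extract the $\sqrt{\tlog(ds)\tlog n}$ savings in the exponent. Steps (2) and (3) are algebraic manipulations built on the single combinatorial identity \eqref{fischer}, and the passage from circuits to the varieties $\s_r(Ch_N(\BC^{n+1}))$ and $\s_{r_1}(v_{d/\d}(\s_{r_2}(v_\d(\pp{n-1}))))$ is immediate from the definitions.
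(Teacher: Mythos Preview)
Your outline is essentially correct and matches the paper's sketch: depth-four reduction via VSBR/Agrawal--Vinay/Koiran/Tavenas for (1), Fischer's identity \eqref{fischer} applied to both product layers for (3), and the GKKS duality trick to drop to depth three for (2).

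Two small points of divergence worth noting. First, the paper (following \cite{DBLP:journals/eccc/GuptaKKS13}) orders the derivation as $(1)\Rightarrow(3)\Rightarrow(2)$: one first converts both $\Pi$ layers of the depth-four circuit to $\Sigma\Lambda\Sigma$ via \eqref{fischer}, obtaining the homogeneous $\Sigma\Lambda\Sigma\Lambda\Sigma$ circuit of (3), and \emph{then} collapses to depth three. You present (2) first and (3) as ``(2) without the final flattening,'' which is logically equivalent but slightly obscures that (3) is the natural intermediate object. Second, and more substantively, the step you call ``Saxena-style duality manipulation'' is made concrete in the paper's account as \emph{Newton's identities}: one rewrites the power sums produced by the inner $\Lambda$ layer in terms of elementary symmetric polynomials, which are products of linear forms, and this is exactly what drops $\Sigma\Lambda\Sigma\Lambda\Sigma$ to $\Sigma\Pi\Sigma$ without distributive blowup. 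Naming this mechanism explicitly would tighten your argument for (2). The paper also phrases the passage to depth four as going through iterated matrix multiplication rather than threshold cutting, but both descriptions lead to the same Tavenas bound.
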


The   \lq\lq in particular\rq\rq\ of (2) follows by setting the circuit size  equal to  $r+Nr$ (the smallest, i.e., worst case size of a circuit 
for a point of $\s_r(Ch_N(\BC^{n+1}))$ that is not in a smaller variety). The 
 \lq\lq in particular\rq\rq\ of (3) 
follows similarly, as the smallest  circuit for a point of $\s_{r_1}(v_{d-\d}(\s_{r_2}(v_{\d}(\pp{n-1}))))$
not in a smaller variety is $r_1r_2 (\d+1)+\frac d{\d}r_1$.   

\begin{corollary}\cite{DBLP:journals/eccc/GuptaKKS13} $[\ell^{n-m}\tdet_m]\in \s_r(Ch_n(\BC^{m^2+1}))$ where $rn=2^{O(\sqrt{m}\tlog m)}$.
\end{corollary}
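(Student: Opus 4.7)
The plan is to invoke part (2) of the preceding depth-reduction theorem directly, with $\det_m$ as the input polynomial. The only non-immediate ingredient is that $\det_m \in S^m\BC^{m^2}$ admits an arithmetic circuit of size $s = m^{O(1)}$; this is classical (Berkowitz's algorithm, or Csanky's parallel algorithm, both give circuits polynomial in $m$ computing the determinant in characteristic zero). In particular $s = \tpoly(m)$ and $d = m$ and the number of variables is $m^2$, all within the regime $d = n^{O(1)}$ required by the theorem.

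Plugging these parameters into part (2), we obtain that $[\ell^{n-m}\tdet_m] \in \s_r(Ch_n(\BC^{m^2+1}))$ with
$$rn \;=\; 2^{O\!\left(\sqrt{d\,\tlog(n_{\mathrm{vars}})\,\tlog(ds)}\right)} \;=\; 2^{O\!\left(\sqrt{m\,\tlog(m^2)\,\tlog(m\cdot m^{O(1)})}\right)}.$$
Since $\tlog(m^2) = 2\tlog m$ and $\tlog(m \cdot m^{O(1)}) = O(\tlog m)$, the exponent simplifies to $O(\sqrt{m\cdot \tlog m \cdot \tlog m}) = O(\sqrt{m}\,\tlog m)$. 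Thus $rn = 2^{O(\sqrt{m}\tlog m)}$, which is exactly the claim. The padding dimension $m^2+1$ matches the theorem's $n_{\mathrm{vars}}+1$, and the padding exponent $n-m$ is exactly the one produced by the $\Sigma\Pi\Sigma$-to-secant-of-Chow translation described in Proposition \ref{chcirprop}, applied in its homogenized form.

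The main obstacle is not in this corollary itself — once the depth-reduction theorem is granted, the corollary is a one-line parameter substitution — but lies upstream, in the proof of part (2) of the theorem. The difficult content there (due to Koiran, Tavenas, Agrawal--Vinay, and the refinement in \cite{DBLP:journals/eccc/GuptaKKS13}) is the chunking/balancing argument that compresses an arbitrary polynomial-size circuit for a low-degree polynomial into a $\Sigma\Pi\Sigma$ circuit of subexponential size $2^{O(\sqrt{d\tlog n\tlog(ds)})}$. The role of the determinant in the corollary is therefore purely as the natural test case: it is the most interesting degree-$m$ polynomial in $m^{O(1)}$ variables known to have a polynomial-size circuit, and combining this with depth reduction yields the stated membership in a secant variety of a Chow variety. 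This is the reason Conjecture \ref{chowvnp} suffices to separate $\vnp$ from $\vp$: by this same derivation, any $P \in \vp$ of degree $m$ in $\tpoly(m)$ variables would have its padded version in $\s_r(Ch_n(\BC^{m^2+1}))$ with $rn$ in the stated range, so a super-$2^{O(\sqrt{m}\tlog m)}$ lower bound on $\ol{dc}(\tperm_m)$ with respect to $\s_r(Ch_n)$ would place $\tperm_m \notin \vp$.
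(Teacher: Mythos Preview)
Your argument is correct and essentially identical to the paper's: both observe that $\tdet_m$ has a polynomial-size circuit (the paper specifies size $m^4$, you cite Berkowitz/Csanky for $m^{O(1)}$), plug $d=m$, $n_{\mathrm{vars}}=m^2$, $s=m^{O(1)}$ into part (2) of the depth-reduction theorem, and simplify the exponent to $O(\sqrt{m}\,\tlog m)$. Your final paragraph of commentary is extraneous to the proof and slightly garbles the statement of Conjecture~\ref{chowvnp} (the relevant quantity is not $\ol{dc}$ but membership in $\s_r(Ch_n)$), but the proof itself is fine.
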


\begin{proof} The determinant admits a circuit of size $m^4$, so it admits a $\Sigma\Pi\Sigma$ circuit
of size 
$$2^{O(\sqrt{m\tlog(m)\tlog(m*m^4)})}\sim 2^{O(\sqrt{m}\tlog m)},
$$
so its padded version 
lies in $\s_r(Ch_n(\BC^{m^2+1}))$ where $rn=2^{O(\sqrt{m}\tlog m)}$.\end{proof}

\begin{corollary}\cite{DBLP:journals/eccc/GuptaKKS13} 
If for all but finitely many   $m$ and all    $r,n$ with  $rn=2^{ \sqrt{m}\tlog (m)\o(1)}$, one has 
  $[\ell^{n-m}\tperm_m]\not\in \s_r(Ch_n(\BC^{m^2+1}))$,
then there is no  circuit of polynomial size computing the permanent, i.e., $\vp\neq \vnp$.
\end{corollary}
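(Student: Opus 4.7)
The plan is a contrapositive argument: assume $\tperm_m$ admits arithmetic circuits of polynomial size, feed them into the depth reduction theorem (item (2)) to produce an inclusion $[\ell^{N-m}\tperm_m] \in \s_r(Ch_N(\BC^{m^2+1}))$ with $rN = 2^{O(\sqrt{m}\,\tlog m)}$, and finally use a monotonicity observation to place $(r,N)$ into exactly the regime forbidden by the hypothesis. Essentially all the mathematical substance is packaged in the depth reduction theorem; what remains is a parameter check and a short monotonicity argument.

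Concretely, suppose $\tperm_m$ is computable by a circuit of size $s(m) = m^{O(1)}$. Because $\tperm_m \in S^m\BC^{m^2}$ and the degree $d := m$ is polynomial in the number $m^2$ of variables, the hypothesis $d = n^{O(1)}$ of the depth reduction theorem is satisfied. Applying item (2) with degree $d = m$, number of variables $m^2$, and circuit size $s$, and using $\tlog(ds) = \tlog(m\cdot m^{O(1)}) = O(\tlog m)$, the exponent becomes
\[
\sqrt{d\,\tlog(m^2)\,\tlog(ds)} \;=\; \sqrt{m\cdot O(\tlog m)\cdot O(\tlog m)} \;=\; O(\sqrt{m}\,\tlog m).
\]
The ``in particular'' clause of item (2) therefore yields integers $r_0(m)$ and $N_0(m)$ with $r_0 N_0 = 2^{O(\sqrt{m}\,\tlog m)}$ such that $[\ell^{N_0 - m}\tperm_m] \in \s_{r_0}(Ch_{N_0}(\BC^{m^2+1}))$ for every sufficiently large $m$.

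Finally, the predicate $[\ell^{N-m}\tperm_m] \in \s_r(Ch_N(\BC^{m^2+1}))$ is non-decreasing in both $r$ and $N$: enlarging $r$ is automatic from $\s_r \subseteq \s_{r+1}$, and enlarging $N$ to any $N'\geq N$ is realised by absorbing $\ell^{N'-N}$ as an additional linear factor into each summand of a Chow-rank witness, using $\ell^{N'-m}\tperm_m = \ell^{N'-N}\cdot(\ell^{N-m}\tperm_m)$ and taking limits where appropriate for border rank. Consequently, for any function $h(m)\to\infty$, I may inflate $(r_0, N_0)$ to a pair $(r,N)$ with $rN = 2^{\sqrt{m}\,\tlog(m)\,h(m)}$ while preserving the inclusion; this is exactly a pair with $rN = 2^{\sqrt{m}\,\tlog(m)\,\o(1)}$ in the paper's notation, contradicting the hypothesised non-inclusion for all but finitely many $m$. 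Hence $\tperm_m$ admits no polynomial size circuit, i.e.\ $\vp \neq \vnp$. The only step worth flagging is the parameter check $d = n^{O(1)}$, which is immediate for the permanent; there is no genuine obstacle once the depth reduction theorem is available.
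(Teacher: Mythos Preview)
Your proof is correct and follows the same contrapositive route as the paper, which simply says ``In this case the $s$ in (2) cannot be a polynomial.'' You are more careful than the paper in one respect: the depth reduction theorem only gives $(r_0,N_0)$ with $r_0N_0 = 2^{O(\sqrt{m}\log m)}$, which is not literally of the form $2^{\sqrt{m}\log(m)\,\o(1)}$, so you supply the monotonicity argument (in $r$ trivially, and in $N$ by multiplying each Chow summand by extra factors of $\ell$) to inflate into the forbidden regime. The paper's one-line proof leaves this bridging step implicit; your version makes it explicit and is otherwise identical in substance.
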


\begin{proof} In this case the $s$ in (2) cannot be a polynomial.
\end{proof}

\begin{corollary} \cite{DBLP:journals/eccc/GuptaKKS13}  If  for all but finitely  many $m$, $\d\simeq \sqrt{m}$, 
and all   $r_1,r_2$ such that  $r_1r_2=2^{ \sqrt{m} \tlog (m)\o(1)}$, one has
 $[\tperm_m]\not\in \s_{r_1}(v_{m/\d}(\s_{r_2}(v_{\d}(\pp{m^2-1}))))$,  
then there is no  circuit of polynomial size computing the permanent, i.e., $\vp\neq \vnp$.
\end{corollary}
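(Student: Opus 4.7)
The plan is to argue by contrapositive, exactly parallel to the preceding corollary, using part (3) of the depth reduction theorem as the essential input. Suppose for contradiction that $\tperm$ admits a circuit of polynomial size $s = m^{O(1)}$; I aim to derive, for some fixed $\o(1)$ function $h$ (for instance $h(m) = \tlog\tlog m$), the existence of $r_1, r_2$ with $r_1 r_2 = 2^{\sqrt{m}\tlog(m) h(m)}$ for which $[\tperm_m] \in \s_{r_1}(v_{m/\d}(\s_{r_2}(v_\d(\pp{m^2-1}))))$, contradicting the hypothesis for infinitely many $m$.

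First I would set $P = \tperm_m \in S^m\BC^{m^2}$, so that $d=m$ and $n=m^2$ in the notation of the theorem. Applying part (3) yields, for some $\d \simeq \sqrt{m}$, a $\Sigma\Lambda\Sigma\Lambda\Sigma$ circuit of size $2^{O(\sqrt{d\tlog(n)\tlog(ds)})}$. Since both $\tlog(n) = \tlog(m^2)$ and $\tlog(ds) = \tlog(m \cdot m^{O(1)})$ are $O(\tlog m)$, the exponent collapses and the ``in particular'' clause gives $r_1 r_2 (\d+1) = 2^{O(\sqrt{m}\tlog m)}$. Dividing through by $\d+1 \simeq \sqrt{m}$ still leaves $r_1 r_2 = 2^{O(\sqrt{m}\tlog m)}$.

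To finish I would invoke the evident nesting of these secant varieties: $r_1 \leq r_1'$ and $r_2 \leq r_2'$ imply $\s_{r_1}(v_{m/\d}(\s_{r_2}(v_\d(\pp{m^2-1})))) \subseteq \s_{r_1'}(v_{m/\d}(\s_{r_2'}(v_\d(\pp{m^2-1}))))$. Picking $h(m) = \tlog\tlog m$ (or any $\o(1)$ function), and setting for example $r_2' = r_2$ and $r_1' = \lceil 2^{\sqrt{m}\tlog(m)h(m)}/r_2 \rceil$, for $m$ large enough one has $r_1' r_2' \geq r_1 r_2$, so by nesting $[\tperm_m] \in \s_{r_1'}(v_{m/\d}(\s_{r_2'}(v_\d(\pp{m^2-1}))))$ with $r_1' r_2' = 2^{\sqrt{m}\tlog(m)\o(1)}$, contradicting the hypothesis.

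The main obstacle is not really one: all the substantive work lies in the depth reduction theorem itself, and this corollary is a one-line contrapositive in the spirit of the author's proof of the preceding corollary (``In this case the $s$ in (2) cannot be a polynomial''). The only items meriting care are the collapse of $\sqrt{d\tlog(n)\tlog(ds)}$ to $\sqrt{m}\tlog m$ under the substitutions $d=m$, $n=m^2$, $s=m^{O(1)}$, and the observation that absorbing the factor $\d+1 \simeq \sqrt{m}$ into the exponential does not alter the resulting asymptotic form $2^{\sqrt{m}\tlog(m)O(1)}$.
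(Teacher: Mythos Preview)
Your proposal is correct and follows essentially the same approach as the paper: the paper's proof is the one-liner ``In this case the $s$ in (3) cannot be a polynomial,'' and you have simply unpacked this contrapositive explicitly, including the routine observation about nesting of secant varieties needed to pass from the $2^{O(\sqrt{m}\tlog m)}$ bound to the $2^{\sqrt{m}\tlog(m)\o(1)}$ form of the hypothesis.
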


\begin{proof} In this case the $s$ in (3) cannot be a polynomial.
\end{proof}
 
These Corollaries give rise to Conjectures \ref{chowvnp} and \ref{slsvnp} stated in \S\ref{valprobs}.

The results above follow from an extensive amount of research. Here is an overview:

%Any circuit of size $s$ can be converted to a formula by \lq\lq fanning out\rq\rq , that is duplicating computations
%that are used more than once, at a cost of multiplying the size by  roughly $2^s$. 

%In \cite{MR0660280} it is shown that any formula of size $s$ can be rewritten, at the price of increasing the
%size of the formula by a polynomial in $s$, to a formula of depth $O(\tlog(s))$. 
%By combining these, one obtains that if a polynomial of degree $d$ can be computed by a circuit
%of size $s$, then it can be computed by a circuit of depth $(\tlog s)(\tlog d)$ and size polynomial in 
%$s$ and $d$.

In \cite{DBLP:journals/eccc/GuptaKKS13} they prove their upper bounds for  the size of an inhomogeneous depth three
circuit computing a polynomial,  in terms of the size of an arbitrary circuit
computing the polynomial,  by first applying  the work of \cite{koirand4,AgrawalVinay}, which allows one to
reduce an arbitrary circuit of size $s$ computing a polynomial of degree $d$ in $n$ variables   to a formula of size 
$2^{O(\tlog s\tlog d)}$  and   depth $d$.
Next they reduce  to a  depth four circuit of size $s'=2^{O(\sqrt{d\tlog d\tlog s\tlog n})}$. This second passage is via
iterated matrix multiplication. From the depth four circuit, they use \eqref{fischer} to convert all multiplication
%, which can easily be made depth four by adding parentheses at the  appropriate
%points.
gates to sums of elements of the Veronese (what they call $\Sigma\Lambda\Sigma$ circuits), to have a depth
five circuit of size  $O(s')$ and of the form $\Sigma\Lambda \Sigma\Lambda \Sigma$. Finally, they use Newton's identities
to convert power sums to elementary symmetric functions which keeps the size at $O(s')$ and drops the depth to three.

\begin{remark} In \cite{DBLP:journals/eccc/GuptaKKS13}, they also show that, for a similar price, 
one can convert a depth three circuit to a $\Sigma\Lambda\Sigma \Lambda \Sigma$
 circuit by using the inverse identities without substantially increasing the size.
\end{remark}

\begin{remark} Ultimately, if one wants to separate $\vp_{ws}$  from $\vnp$, one will have to find polynomials
that separate $\tdet_n$ from $\ell^{n-m}\tperm_m$. These auxiliary varieties arising from shallow circuits should
be viewed as a guide to how to look for such equations, not as a way to avoid finding them.
\end{remark}

   \begin{remark}
 Note the expected dimension of $\s_r(Ch_d(W))$ is $rd\bw+r-1$. If we take
 $d'=d2^m$ and work instead with padded polynomials $\ell^{2^m}P$, the expected dimension of $\s_r(Ch_{d'}(W))$ is $2^mrd\bw+r-1$.
 In contrast, the expected dimension of $\s_r(v_{d-a}(\s_{\rho}(v_a(\BP W))))$ does not change when
 one increases the degree, which gives some insight as to why padding is so useful for homogeneous depth three circuits
 but not for $\Sigma\Lambda\Sigma \Lambda \Sigma$ circuits.
\end{remark}

\section{Non-normality}\label{kumarpfsect}
I follow \cite{MR3093509} in this section.
Throughout this section I make the following assumptions and adopt the following notation:
\be\label{assume}{\rm{\bold{Assumptions}}:} 
\ene 
\begin{enumerate}
\item $V$ is a $GL(W)$-module,
\item $P\in V$ is such that the $SL(W)$-orbit of $P$ is closed.
\item Let $\cP^0 := GL(W)\cdot P $ and $\cP:=\ol{GL(W)\cdot P}\subset V$ denote its orbit and orbit closure, and let $\partial \cP=\cP\backslash \cP^0$
denote its boundary, which we assume to be more than zero  (otherwise $[\cP]$ is homogeneous). 
\item Assume the stabilizer $G_P\subset GL(W)$ is reductive,
 which is equivalent   
 (by a theorem of Matsushima \cite{MR0109854}) to requiring  that  $\cP^0$ is an affine variety. 
 \end{enumerate}

This situation holds when $V=S^nW$, $\tdim W=n^2$ and $P=\tdet_n$ or $\tperm_n$ as well as when $\tdim W=rn$ and
$P=S^r_n:=\sum_{j=1}^rx_1^j\cdots x_n^j$, the \lq\lq sum-product polynomial\rq\rq , in which case $\cP=\hat \s_r(Ch_n(W))$.

\begin{lemma}\label{kumlem1}  \cite{MR3093509} Assumptions as in \eqref{assume}. Let $M\subset \BC[\cP]$ be a nonzero $GL(W)$-module,  and let $Z(M)=\{ y\in \cP \mid f(y)=0\ \forall f\in M\}$ denote its zero set.
Then   $0\subseteq Z(M)\subseteq \partial\cP$.

If moreover  $M\subset  I(\partial\cP)$, 
then as sets, $Z(M) = \partial \cP$.
\end{lemma}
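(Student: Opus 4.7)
The proof rests on two elementary observations. First, $Z(M)$ is $GL(W)$-stable because $M$ is: for $g\in GL(W)$, $y\in Z(M)$, and $f\in M$, we have $f(g\cdot y)=(g^{-1}\cdot f)(y)=0$ since $g^{-1}\cdot f\in M$. Second, the scalar subgroup $\BC^*\subset GL(W)$ acts on $V$ with positive weight (e.g., by $\lambda^n$ when $V=S^nW$), so $\BC[\cP]$ is $\BZ_{\geq 0}$-graded; moreover the trivial $GL(W)$-representation occurs only in degree $0$, since $\BC[\cP]^{GL(W)}\subseteq\BC[\cP^0]^{GL(W)}=\BC$ because $\cP^0=GL(W)/G_P$ is a single orbit. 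Thus, after decomposing $M$ into homogeneous components, we may treat each component as either the constant submodule $\BC$ or as lying in strictly positive degree.

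For the inclusion $Z(M)\subseteq \partial\cP$, suppose for contradiction some $y\in Z(M)\cap \cP^0$, say $y=g\cdot P$. The $GL(W)$-invariance of $Z(M)$ gives $\cP^0\subseteq Z(M)$, and since $Z(M)$ is Zariski closed in $\cP$ this forces $\cP=\ol{\cP^0}\subseteq Z(M)$. But then every $f\in M$ vanishes identically on $\cP$, so $M=0$ in $\BC[\cP]$, contradicting the hypothesis. For $0\in Z(M)$, first note $0\in\cP$ since the $\BC^*$-orbit closure of $P$ contains the origin. Using the grading observation above, and excluding the degenerate case $M=\BC$ (the implicit convention here, and automatic in all applications, including the hypothesis $M\subset I(\partial\cP)$ of the second assertion), every $f\in M$ lies in positive degree and therefore vanishes at $0$.

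The second assertion is then immediate: if $M\subset I(\partial \cP)$, every $f\in M$ vanishes on $\partial\cP$ by definition, so $\partial\cP\subseteq Z(M)$; combined with the first inclusion, $Z(M)=\partial\cP$ as sets. The substantive content is the $GL(W)$-invariance argument for the first inclusion, and the only real subtlety is the mild issue of whether $M$ could contain the trivial constant submodule. This is disposed of by noting that the trivial representation appears just once in $\BC[\cP]$ (as constants), via the Peter--Weyl decomposition of $\BC[\cP^0]$; in the second part it is also ruled out by $M\subset I(\partial\cP)$ directly.
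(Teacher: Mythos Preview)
Your proof is correct and follows essentially the same approach as the paper: the key step is that $Z(M)$ is $GL(W)$-stable, so meeting $\cP^0$ would force $M$ to vanish on all of $\cP$, contradicting $M\neq 0$; the second assertion is then the trivial containment $\partial\cP\subseteq Z(M)$. You are actually more careful than the paper's proof, which does not explicitly justify the inclusion $\{0\}\subseteq Z(M)$; your remark about the grading and the exclusion of the constant submodule is the right way to handle that (mild) point.
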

\begin{proof}
Since $Z(M)$ is a $GL(W)$-stable subset, if it contains a point of $\cP^0$ it must contain all of $\cP^0$ and thus $M$
vanishes identically on $\cP$, which cannot happen as $M$ is nonzero.   
Thus $Z(M)\subseteq \partial \cP$.
For the second assertion, since $ M\subset I(\partial \cP)$, we also have $Z(M)\supseteq \partial\cP$.
\end{proof}

\begin{proposition}\label{kumprop1}  \cite{MR3093509}  Assumptions as in \eqref{assume}.   The space of $SL(W)$-invariants of positive degree in the coordinate ring of $\cP$, 
$\BC[\cP]^{SL(W)}_{>0}$,  is non-empty and contained in $I(\partial \cP)$. Moreover, 
\begin{enumerate}
\item  any element of
$\BC[\cP]^{SL(W)}_{>0}$ cuts out $\partial \cP$ set-theoretically, and 
\item  the components of $\partial\cP$
all have codimension one in $\cP$.
\end{enumerate}
\end{proposition}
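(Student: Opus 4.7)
The plan is to leverage the closedness of the orbit $SL(W) \cdot P$ together with the scaling action of $\BC^* \subset GL(W)$ in order to control the behavior of $SL(W)$-invariants on the boundary. The underlying geometric picture is that the $SL(W)$-GIT quotient of $\cP$ is one-dimensional, with the open stratum $\cP^0$ mapping onto a copy of $\BC^*$ parametrized by $\BC^*$-scaling of $P$, and all of $\partial\cP$ collapsing to a single point.

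\smallskip

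First I would produce a nonzero element of $\BC[\cP]^{SL(W)}_{>0}$. Since $SL(W)\cdot P$ is closed by assumption, and $\{0\}\subset \cP$ is also closed, these are two disjoint closed $SL(W)$-orbits in the affine variety $\cP$, so by standard GIT separation there exists an $SL(W)$-invariant $f\in \BC[\cP]^{SL(W)}$ with $f(P)\neq 0$ and $f(0)=0$. The scaling $\BC^*\subset GL(W)$ commutes with $SL(W)$ and preserves $\cP$, so $\BC[\cP]$ is $\BZ_{\geq 0}$-graded and the $SL(W)$-invariants form a graded subring; decomposing $f$ into homogeneous pieces yields a nonzero $f_d\in \BC[\cP]^{SL(W)}_{>0}$.

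\smallskip

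Next, I would show that any $f\in \BC[\cP]^{SL(W)}_{>0}$ vanishes on $\partial\cP$ via a one-parameter limit argument. Fix $Q\in \partial\cP$ and choose an algebraic curve (after possibly passing to a finite cover) $\gamma$ into $\cP$ with $\gamma(0)=Q$ and $\gamma(t)=g(t)P\in \cP^0$ for $t\neq 0$, for some lift $g(t)\in GL(W)$. Factor $g(t)=\lambda(t)\cdot g_0(t)$ with $\lambda(t)\in \BC^*$ a scalar and $g_0(t)\in SL(W)$. For $f$ homogeneous of degree $d>0$, $SL(W)$-invariance and homogeneity give
\[f(\gamma(t)) = \lambda(t)^{nd}f(P),\]
where $n$ is the $\BC^*$-weight of the component of $V$ supporting $P$. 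The closedness of $SL(W)\cdot P$ is the decisive input: a nonzero finite limit $\lambda(t)\to c$ would force $g_0(t)P=\gamma(t)/\lambda(t)^n\to Q/c^n$, which by closedness of the orbit lies in $SL(W)\cdot P$, whence $Q\in \cP^0$, contradicting $Q\in \partial\cP$; and $\lambda(t)\to \infty$ would force $g_0(t)P\to 0$, impossible since $0\notin SL(W)\cdot P$. Hence $\lambda(t)\to 0$ and $f(Q)=0$.

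\smallskip

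For the final two assertions, fix a nonzero $f\in \BC[\cP]^{SL(W)}_{>0}$; the previous step already gives $\partial\cP\subseteq Z(f)$. Conversely, $Z(f)\cap \cP^0$ is $SL(W)$-stable (since $f$ is invariant) and $\BC^*$-stable (since $f$ is homogeneous), hence $GL(W)$-stable; since $\cP^0$ is a single $GL(W)$-orbit, this intersection is either empty or all of $\cP^0$, and the latter is ruled out because $f\not\equiv 0$ on the dense subset $\cP^0$. Thus $Z(f)=\partial\cP$ set-theoretically, and Krull's Hauptidealsatz applied to the nonzero regular function $f$ on the irreducible affine variety $\cP$ forces every component of $\partial\cP$ to have codimension one. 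I expect the main obstacle will be the technical setup of the curve $\gamma$ and the factorization $g(t)=\lambda(t)g_0(t)$ -- in particular, handling the $\bw$-th root ambiguity when extracting a scalar from $\det g(t)$ may require a finite cover of the parameter line -- but once these are in place, all three conclusions of the proposition follow with little additional effort.
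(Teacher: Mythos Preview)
Your proof is correct and reaches the same conclusions as the paper, but the central step---showing that every positive-degree $SL(W)$-invariant vanishes on $\partial\cP$---is organized differently.

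The paper works globally through the GIT quotient: it considers the $\BC^*$-equivariant map $\sigma:\BA^1\to\cP$, $z\mapsto\rho(z\,Id)\cdot P$, observes that (because $SL(W)\cdot P$ is closed) the induced map $\overline\sigma:\BA^1\to \cP//SL(W)$ satisfies $\overline\sigma^{-1}([0])=\{0\}$, and then invokes the finiteness criterion (Lemma~\ref{acklem}) to conclude that $\overline\sigma$ is the normalization of $\cP//SL(W)$. Surjectivity of $\overline\sigma$ then shows that the only closed $SL(W)$-orbits in $\cP$ are $\{0\}$ and the orbits $SL(W)\cdot\sigma(\lambda)\subset\cP^0$; hence $\partial\cP$ maps to $[0]$ in the quotient and every positive-degree invariant kills it. The set-theoretic equality in (1) is then read off from Lemma~\ref{kumlem1}.

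Your argument replaces this global picture by a local curve-limit: lift a curve $\gamma(t)\to Q\in\partial\cP$ to $g(t)\in GL(W)$, split off the scalar $\lambda(t)$, and use closedness of $SL(W)\cdot P$ to exclude $\lambda(t)\to c\ne 0$ and $\lambda(t)\to\infty$. This is more elementary and avoids the GIT machinery, at the cost of the bookkeeping you flag (curve selection, lifting along the principal $G_P$-bundle $GL(W)\to\cP^0$, and the $\bw$-th root to define $\lambda(t)$, each handled by a finite base change). One small point: your formula $f(\gamma(t))=\lambda(t)^{nd}f(P)$ and the claim that ``$\BC^*$-stable since $f$ is homogeneous'' implicitly use that the center of $GL(W)$ acts on $P$ by a single weight $n$ (so that the center action coincides, up to reparametrization, with scalar multiplication on $V$). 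This holds in all the intended applications, $V=S^nW$, and is also tacitly used in the paper when it applies Lemma~\ref{kumlem1} to the one-dimensional span of a single invariant.

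What each approach buys: the paper's argument simultaneously identifies $\BC[\cP]^{SL(W)}$ with a subring of $\BC[\BA^1]$, which is exactly what is exploited in Lemma~\ref{nnorlem} for the non-normality theorem; your argument is self-contained and does not rely on Lemma~\ref{acklem}. For parts (1) and (2)---the set-theoretic equality and codimension-one---your reasoning coincides with the paper's (your $GL(W)$-orbit dichotomy is the content of Lemma~\ref{kumlem1}, and Hauptidealsatz is the paper's ``zero set of a single polynomial has codimension one'').
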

\begin{proof} 
To study $\BC[\cP]^{SL(W)}$, consider the GIT quotient $\cP// SL(W)$ whose coordinate ring, by definition, is $\BC[\cP]^{SL(W)}$.
It parametrizes the closed $SL(W)$-orbits in $\cP$, so it is non-empty. Thus $\BC[\cP]^{SL(W)}$ is nontrivial.
 
We will show
  that  every $SL(W)$-orbit in $\partial P$ contains $\{ 0\}$ in its closure, i.e., that $\partial\cP$ maps to zero in the GIT quotient.
This will imply   any $SL(W)$-invariant of positive degree is in $I(\partial \cP)$ because  any non-constant function on the GIT quotient vanishes
on the inverse image of $[0]$. 
Then (1) follows from  Lemma \ref{kumlem1}. The zero set of a single polynomial,
if it is not empty, has codimension one, which  implies the components of $\partial \cP$ are all of  
codimension one, proving (2).

It remains to show $\partial\cP$ maps to zero in $\cP// SL(W)$, where $\rho: GL(W)\ra GL(V)$ is the representation. This GIT quotient  
 inherits a $\BC^*$ action via $\rho(\l Id)$, for $\l\in \BC^*$.
Its normalization is just the affine line $\BA^1=\BC$. To see this, consider
the $\BC^*$-equivariant map $\s : \BC\ra \cP$ given by $z\mapsto \rho(zId)\cdot P$, which  descends to a map $\ol{\s}: \BC\ra \cP// SL(W)$.
Since the $SL(W)$-orbit of $P$  is closed, for any $\l\in \BC^*$,  $\rho(\l Id)P$ does not map  to zero
in the GIT quotient, so  we have $\ol{\s}\inv([0])=\{0\}$ as a set. Lemma \ref{acklem} applies so 
$\ol{\s}$ is finite and gives the normalization. 
Finally,  
  were there a closed nonzero orbit in $\partial \cP$, it would have to equal $SL(W)\cdot \s(\l)$ for some
$\l\in \BC^*$ since $\ol{\s}$ is surjective. But $SL(W)\cdot \s(\l)\subset \cP^0$.
\end{proof}

\begin{remark} That each irreducible
component of  $\partial \cP$ is of   codimension one in $\cP$  is due to Matsushima \cite{MR0109854}. It is   a consequence
of his result mentioned above.
\end{remark}

The key to proving non-normality of $\hat\Det_n$ and $\hat\Perm^n_n$ is to find an $SL(W)$-invariant in the coordinate ring
of the normalization (which has a $GL(W)$-grading), which does not occur in the corresponding
graded component of  the coordinate ring of $  S^nW$, so it cannot
occur in the coordinate ring of any $GL(W)$-subvariety.

\begin{lemma}\label{nnorlem}   Assumptions as in \eqref{assume}.   Let $P\in S^nW$ be such that $SL(W)\cdot P$ is closed and $G_P$ is reductive.
Let $d$ be the smallest positive $GL(W)$-degree such that $\BC[\cP^0]^{SL(W)}_d\neq 0$.
If $n$ is even and $d<n\bw$ (resp. $n$ is odd and $d<2n\bw$) then $\cP$ is not normal.
\end{lemma}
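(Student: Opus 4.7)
The plan is to exhibit an $SL(W)$-invariant element $t \in \BC[\operatorname{Nor}(\cP)]$ of $GL(W)$-degree exactly $d$ that cannot lie in $\BC[\cP]$; this immediately yields $\BC[\cP] \subsetneq \BC[\operatorname{Nor}(\cP)]$ and hence the non-normality of $\cP$.

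First, I identify $\BC[\operatorname{Nor}(\cP)]^{SL(W)}$ by sharpening the argument in the proof of Proposition \ref{kumprop1}. The scalar-orbit map $\sigma:\BA^1\to\cP//SL(W)$, $z\mapsto[\rho(zI)\cdot P]$, is $\BC^*$-equivariant, finite by Lemma \ref{acklem}, and generically one-to-one, so $\operatorname{Nor}(\cP//SL(W))=\BA^1$. Since normalization commutes with GIT quotients by reductive groups, $\BC[\operatorname{Nor}(\cP)]^{SL(W)}=\BC[\operatorname{Nor}(\cP//SL(W))]=\BC[t]$, a polynomial ring in one generator; analogously $\BC[\cP^0]^{SL(W)}=\BC[t,t^{-1}]$, since $\cP^0$ maps to $\BA^1\setminus\{0\}$ under the quotient. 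Consequently the $GL(W)$-degree of $t$ is exactly $d$, the smallest positive degree in $\BC[\cP^0]^{SL(W)}$, giving a distinguished nonzero element $t\in\BC[\operatorname{Nor}(\cP)]^{SL(W)}_d$.

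Next, I argue $t\notin\BC[\cP]^{SL(W)}$. Because $\BC[\cP]$ is a $GL(W)$-graded quotient of $Sym(S^nW^*)$, any nonzero $SL(W)$-invariant of $GL(W)$-degree $d$ in $\BC[\cP]$ lifts to a nonzero element of $S^{d/n}(S^nW^*)^{SL(W)}$ (so in particular $n\mid d$, else there is nothing to prove). By Exercise \ref{SLexer}, $(S^nW^*)^{\otimes k}$ has no $SL(W)$-invariants for $k<\bw$, hence neither does its symmetric quotient $S^k(S^nW^*)$, settling $d<n\bw$ in either parity. For $n$ odd one further invokes Exercise \ref{snssv}, which gives $S^\bw(S^nW^*)^{SL(W)}=0$, together with a plethystic/parity refinement across the range $\bw\le d/n<2\bw$ matching the first reappearance of $SL(W)$-invariants at $d/n=2\bw$ recorded in the second half of Exercise \ref{snssv}. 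Either way $S^{d/n}(S^nW^*)^{SL(W)}=0$ under the hypothesis, so $t\notin\BC[\cP]$.

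Combining the two steps, $\BC[\operatorname{Nor}(\cP)]^{SL(W)}_d\ni t\ne 0$ while $\BC[\cP]^{SL(W)}_d=0$, so $\BC[\cP]\subsetneq\BC[\operatorname{Nor}(\cP)]$ and $\cP$ is not normal. The hard part is the last clause of Step 2: for $n$ odd and $\bw<d/n<2\bw$ the space $S^D(S^nW^*)^{SL(W)}$ can be nonzero in general (for instance the Aronhold invariant at $n=\bw=3$, $D=4$), so one must exploit the concrete form of $t$ as $(\det W^*)^k$ with $k=\mathrm{ord}(\det_W|_{G_P})$, in conjunction with Gay's theorem \ref{gaythm} and the wreath-product symmetry $\FS_n\wr\FS_{d/n}$ acting on the weight-zero part of $(W^*)^{\otimes d}$, to rule out any such module actually appearing in $\BC[\cP]$ at degree $d$.
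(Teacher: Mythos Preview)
Your approach is essentially the same as the paper's, just packaged through the normalization rather than through the definition of normality directly. The paper works with $\BC[\cP^0]^{SL(W)}=\BC[z,z^{-1}]$, picks the minimal positive-degree element $h=z^k\in\BC[\cP]^{SL(W)}$, and observes that if $\cP$ were normal then $z$ (being integral over $\BC[\cP]$ via $z^k=h$, and lying in $\BC(\cP)=\BC(\cP^0)$) would already be in $\BC[\cP]$; this forces $k=1$. Then exactly as you do, the contradiction comes from the surjection $Sym(S^nW^*)\twoheadrightarrow\BC[\cP]$ together with the vanishing of $SL(W)$-invariants in low degree. Your route via ``normalization commutes with reductive GIT quotients'' reaches the same destination but invokes a nontrivial (if true) fact you do not prove; the paper's integral-closure argument is shorter and uses only the definition.

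Your final paragraph, however, is not a proof. You are right that for $n$ odd and $\bw\le d/n<2\bw$ the space $S^{d/n}(S^nW^*)^{SL(W)}$ need not vanish (the Aronhold invariant is exactly such an example), so the bare claim ``$\BC[S^nW]^{SL(W)}_d=0$ whenever $d<2n\bw$'' is false. Your proposed remedy --- appealing to ``the concrete form of $t$ as $(\det W^*)^k$'', Gay's theorem, and wreath-product symmetry --- does not actually rule anything out: knowing that $t$ spans the one-dimensional invariant piece says nothing about whether it can be hit from $S^{d/n}(S^nW^*)^{SL(W)}$ when the latter is nonzero, and the vague invocation of $\FS_n\wr\FS_{d/n}$ is not an argument. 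In fact the paper's own proof is equally terse at this point (it cites Exercise~\ref{snssv}, which only treats $D=\bw$ and $D=2\bw$). The saving grace is that in every application in the paper (e.g.\ $\Det_n$, $\Perm_n^n$) one actually has $d<n\bw$, where Exercise~\ref{SLexer} suffices regardless of parity; the extended range $n\bw\le d<2n\bw$ for odd $n$ is never used. So: your core argument is fine and matches the paper, but you should drop the hand-waving patch and either restrict to $d<n\bw$ or leave the odd extended range as stated without claiming a complete proof.
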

\begin{proof}
Since $\cP^0\subset \cP$ is a Zariski open subset, we
 have the equality of $GL(W)$-modules $\BC(\cP)=\BC(\cP^0)$.
By restriction of functions
  $\BC[\cP]\subset \BC[\cP^0]$   and thus
$\BC[\cP]^{SL(W)}\subset \BC[\cP^0]^{SL(W)}$.  
Now $\cP^0//SL(W)=\cP^0/SL(W)\simeq \BC^*$, so $\BC[\cP^0]^{SL(W)}\simeq \oplus_{k\in \BZ} \BC\{ z^k\}$.
Under this identification, $z$ has $GL(W)$-degree $d$. 
By Proposition \ref{kumprop1}, $\BC[\cP]^{SL(W)}\neq 0$. Let $h\in \BC[\cP]^{SL(W)}$ be the smallest element in positive degree.
Then  $h=z^k$ for some $k$. Were $\cP$ normal, we would   have $k=1$.

But now we also have a surjection $\BC[S^nW]\ra \BC[\cP]$, and by Exercise \ref{snssv} the 
smallest possible  $GL(W)$-degree of an $SL(W)$-invariant in $\BC[S^nW]$ when $n$ is even (resp. odd) 
is $\bw n$ (resp. $2\bw n$)  which would occur  in $S^{\bw}(S^nW)$ (resp. $S^{2\bw}(S^nW)$). We obtain a contradiction.
\end{proof}

\begin{theorem}[Kumar \cite{MR3093509}]\label{kumarnorthm}
For all $n\geq 3$, $\Det_n$ and $\Perm^n_n$ are not normal. For all $n\geq 2m$ (the range of interest), $\Perm^m_n$ is not normal.
\end{theorem}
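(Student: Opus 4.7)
The plan is to apply Lemma~\ref{nnorlem} for the pure cases $\Det_n$ and $\Perm^n_n$, with a modified version for $\Perm^m_n$ when $n>m$. In each case the strategy is to exhibit an $SL(W)$-invariant regular function on the orbit $\cP^0=GL(W)\cdot P$ whose $GL(W)$-degree $d$ is strictly below the minimal positive-degree $SL(W)$-invariant in the ambient coordinate ring $\BC[S^nW]$, which by Exercise~\ref{snssv} is $n\bw=n^3$ when $n$ is even and $2n\bw=2n^3$ when $n$ is odd. Such an invariant cannot arise by restriction from $\BC[S^nW]$, so by the argument of Lemma~\ref{nnorlem} it forces $\cP$ to fail to be normal.

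For $P=\tdet_n$ and $P=\tperm_n$ (so $\bw=n^2$): reductivity of $G_P$ is immediate from Theorems~\ref{frobdet} and~\ref{permstabthm}, and closedness of $SL(W)\cdot P$ follows from Matsushima's theorem applied to $G_P\cap SL(W)$. The algebraic Peter--Weyl theorem (Corollary~\ref{orbitcor}) identifies the $SL(W)$-invariants in $\BC[GL(W)/G_P]$, in $GL$-degree $k\bw$, with the $GL(W)$-characters $(\det W)^k$ that are trivial on $G_P$; hence the smallest positive-degree invariant has degree $d=k_0\bw$, where $k_0$ is the order of $\det\vert_{G_P}:G_P\to\BC^*$. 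In both cases $G_P$ is, modulo a central $\mu_n$, a (semi)direct product of $SL$-type factors (contained in $SL(W)$, and so contributing trivially to $\det$) with a $\BZ_2$ acting by transpose on $W=\mathrm{Mat}_{n\times n}$ (contributing $(-1)^{\binom n 2}$); for $P=\tperm_n$ with $n$ odd the $\FS_n$-factor of $\G_n^E\times\G_n^F$ can additionally contribute a sign. In all cases $k_0\leq 2$ and $d\leq 2n^2$, which is strictly less than $n^3$ for $n\geq 3$ even and strictly less than $2n^3$ for $n\geq 3$ odd, so Lemma~\ref{nnorlem} applies and yields non-normality.

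The case $\Perm^m_n$ with $n\geq 2m$ is more delicate, and this is where the main obstacle arises: the stabilizer $G_P$ of $P=\ell^{n-m}\tperm_m$ in $GL(W)$ is \emph{not} reductive. Indeed, fixing a splitting $W=W_1\oplus C$ with $W_1=\BC\ell\oplus\BC^{m^2}$, the stabilizer contains both the full $GL(C)$ acting on the complement and the unipotent group of shearings $\{I+B:B\colon C\to W_1\}$. Consequently $\det\vert_{G_P}$ is already surjective onto $\BC^*$, so $\BC[\cP^0]^{SL(W)}$ is trivial and Lemma~\ref{nnorlem} does not apply verbatim. The plan is to pass to the closed $GL(W_1)$-slice $\ol{GL(W_1)\cdot P}\subset S^nW_1$, whose stabilizer in $GL(W_1)$ \emph{is} reductive (a reductive extension of $G_{\tperm_m}$ by a one-parameter compatibility torus coming from the constraint $\a^{n-m}c^m=1$), and to establish non-normality of this slice by a computation analogous to the pure case, yielding a bound $d'\leq 2(m^2+1)$, which is strictly less than $2n(m^2+1)$ precisely when $n\geq 2m$. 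The hardest step, which I expect to be the principal technical obstacle, is then to transfer non-normality from the slice back to the ambient orbit closure $\hat\Perm^m_n$; this should proceed either by a direct construction of a non-extending $SL(W)$-invariant rational function on $\hat\Perm^m_n$ from the slice invariant, or via a descent argument along the natural parabolic fibration of $\hat\Perm^m_n$ over the Grassmannian $G(m^2+1,W)$ of $(m^2+1)$-dimensional subspaces, exploiting the codimension-one structure of the boundary coming from Matsushima's theorem.
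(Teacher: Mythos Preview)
Your treatment of the cases $\Det_n$ and $\Perm^n_n$ is correct and is essentially the paper's argument, phrased more compactly. Observing that the $SL(W)$-invariants in $\BC[\cP^0]$ are exactly the characters $(\det_W)^k$ that are trivial on $G_P$, and hence that the minimal $d$ in Lemma~\ref{nnorlem} equals $k_0\bw$ with $k_0$ the order of $\det_W|_{G_P}$, is exactly what the paper computes via the decomposition of $(\La{n^2}W)^{\ot s}$ as an $SL(E)\times SL(F)\rtimes\BZ_2$-module; your route just skips the intermediate representation-theoretic bookkeeping. One small correction: Matsushima's theorem only gives that $SL(W)/(G_P\cap SL(W))$ is \emph{affine} when the stabilizer is reductive, not that the orbit is \emph{closed} in $S^nW$ (e.g.\ $\BC^*\subset\BC$ is affine but not closed). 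The closedness of $SL(W)\cdot\tdet_n$ and $SL(W)\cdot\tperm_n$ is true, but needs a separate argument.

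For $\Perm^m_n$ with $n>m$, you correctly diagnose why Lemma~\ref{nnorlem} fails in $GL(W)$, but your proposed slice repair does not work. The ``one-parameter compatibility torus'' $T'=\{(\alpha,\beta\,Id_{m^2}):\alpha^{n-m}\beta^m=1\}$ that you include in the stabilizer of $P=\ell^{n-m}\tperm_m$ inside $GL(W_1)$ has $\det_{W_1}(\alpha,\beta\,Id)=\alpha\beta^{m^2}$; parametrizing the identity component by $\alpha=t^{m/g},\ \beta=t^{-(n-m)/g}$ with $g=\gcd(m,n-m)$ gives $\det_{W_1}=t^{m(1-m(n-m))/g}$, and for $m\geq 2$, $n>m$ the exponent $m(1-m(n-m))$ is nonzero. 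Hence $\det_{W_1}|_{T'}$ is \emph{surjective} onto $\BC^*$, so again $\BC[GL(W_1)\cdot P]^{SL(W_1)}$ consists only of constants and there is no bound ``$d'\leq 2(m^2+1)$'' to feed into Lemma~\ref{nnorlem}. The same $1$-parameter subgroup (intersected with $SL(W_1)$) also drives $P$ to $0$, so the $SL(W_1)$-orbit is not closed either; the slice is no better behaved than the ambient problem. This is why the paper does not give an argument in this case and refers to Kumar's paper for a genuinely different method; your transfer-from-slice plan, as stated, cannot be carried out.
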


I give the proof for $\Det_n$, the case of   $\Perm^n_n$ is   an easy exercise.
Despite
the variety being much more singular, the proof for $\Perm^m_n$ with $m>n$  is more difficult,  see \cite{MR3093509}.

\begin{proof}
We will show that 
when $n$ is congruent to $0$ or $1$ mod $4$, $\BC[\Det_n^0]^{SL(W)}_{n-GL}\neq 0$ and 
when $n$ is congruent to $2$ or $3$ mod $4$, $\BC[\Det_n^0]^{SL(W)}_{2n-GL}\neq 0$. Since $n,2n<(n^2)n$
Lemma \ref{nnorlem} applies.

The $SL(W)$-trivial modules are $(\La {n^2}W)^{\ot s}=S_{s^{n^2}}W$. 
Write $W=E\ot F$.
We want to determine the lowest degree trivial $SL(W)$-module  that has a 
$G_{det_n}=(SL(E)\times SL(F) /\mu_n)\rtimes \BZ_2$ invariant.
 We have the decomposition $(\La {n^2}W)^{\ot s}=(\op_{|\pi|=n^2} S_{\pi}E\ot S_{\pi'}F)^{\ot s}$, where $\pi'$ is the conjugate partition to $\pi$.
Thus   $(\La {n^2}W)^{\ot s}$ contains the  trivial $SL(E)\times SL(F)$ module  $(\La nE)^{\ot ns}\ot (\La nF)^{\ot ns}$
with  multiplicity one. (In the language of \S\ref{glwreps}, $k_{s^{n^2},(sn)^n,(sn)^n}=1$.)   Now we consider the effect of the $\BZ_2\subset G_{\tdet_n}$ with
generator $\t\in GL(W)$.
It sends $e_i\ot f_j$ to $e_j\ot f_i$, so acting on $W$ it has $+1$ eigenspace $e_i\ot f_j+e_j\ot f_i$ for $i\leq j$ and $-1$ eigenspace $e_i\ot f_j-e_j\ot f_i$
for $1\leq i<j \leq n$. Thus it acts on the one-dimensional vector space
$(\La{n^2}W)^{\ot s}$ by  $((-1)^{\binom n2})^s$, i.e., by $-1$ if $n\equiv 2,3\tmod 4$  and $s$ is odd and by $1$ otherwise. We conclude
that there is an invariant as asserted above.
(In the language of \S\ref{detoring}, $sk^{s^{n^2}}_{(sn)^n,(sn)^n}=1$ for all $s$ when
$\binom n2$ is even, and $sk^{s^{n^2}}_{(sn)^n,(sn)^n}=1$ for even $s$ when $\binom n2$ is odd and is zero for odd $s$.)
\end{proof}

\exerone{Write out the proof of the non-normality of $\Perm^n_n$.}

\exerone{Show the same method gives another proof that $Ch_n(W)$ is not normal.}

\exerone{Show that the proof of Theorem \ref{kumarnorthm} holds for any reductive group with a nontrivial center (one gets
a $\BZ^k$-grading of modules if the center is $k$-dimensional), in particular it holds for
$G=GL(A)\times GL(B)\times GL(C)$. Use this to show that
$\s_r(Seg(\BP A\times \BP B\times \BP C))$ is not normal when $\tdim A=\tdim B=\tdim C=r>2$.}

\section{Determinantal hypersurfaces}\label{zhanglee}

Classically, there was interest in determining which smooth hypersurfaces of degree $d$ were expressible as a $d\times d$ determinant.
The result in the first
nontrivial case shows how daunting GCT might be.

\begin{theorem}[Letao Zhang and Zhiyuan Li]\label{zlthm}
The variety $\BP \{ P\in S^4\BC^4 \mid [P]\in \Det_4\}\subset \BP S^4\BC^4$  is   a hypersurface   of degree $640,224$.
\end{theorem}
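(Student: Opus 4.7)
The plan is to translate the determinantal condition into a lattice-theoretic condition on the Picard group of the associated K3 surface and then invoke the Noether--Lefschetz theory developed by Maulik and Pandharipande. First I would record the classical observation (going back to Beauville): given a $4\times 4$ matrix $M$ of linear forms on $\CC^4$, one has the resolution
\[
0 \to \cO_{\PP^3}(-1)^{\oplus 4} \xrightarrow{M} \cO_{\PP^3}^{\oplus 4} \to L \to 0,
\]
and for generic $M$ the cokernel $L$ is a line bundle on the quartic $X = Z(\det M)$. A Chern-class computation from this resolution determines $L\cdot H$ and $L^2$ on the K3 surface $X$, so that $H$ and $L$ span a specific rank $2$ sublattice $\Lambda \subset \mathrm{Pic}(X)$. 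Conversely, a smooth quartic $X$ is determinantal if and only if $\mathrm{Pic}(X)$ admits a primitive embedding of $\Lambda$ and $L$ satisfies the appropriate $h^0/h^1$ vanishings; these are open conditions once the lattice embedding exists.

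Second, I would do the dimension count to confirm the hypersurface claim independently. The parameter space of matrices of linear forms is $\tHom(\CC^4, \CC^4\ot \CC^4)$ of dimension $64$; the equivalence group $GL_4\times GL_4$ acting by left/right multiplication has dimension $32$ with one-dimensional kernel (scalars acting trivially on $[\det M]$), so the image in $\PP S^4\CC^4 = \PP^{34}$ has dimension $64 - 31 = 33$, i.e.\ codimension one.

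Third, using the fact that a generic quartic surface is a polarized K3 of degree $4$, the determinantal hypersurface (away from the singular and reducible loci) coincides, as a subscheme of $\PP^{34}$, with the Noether--Lefschetz divisor $\mathrm{NL}_\Lambda \subset \PP^{34}$ parameterizing quartics whose Picard lattice contains $\Lambda$. The Maulik--Pandharipande theorems in \cite{MaulPand} express the degrees of all Noether--Lefschetz divisors on the universal family of quartic K3s as explicit Fourier coefficients of a vector-valued modular form (of weight $21/2$ for the relevant discriminant form), producing a closed formula for $\deg \mathrm{NL}_\Lambda$. I would finish by reading off the coefficient corresponding to our specific $\Lambda$, with the outcome $\deg \mathrm{NL}_\Lambda = 640{,}224$.

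The hard part will be the last step, on two counts. First, one must single out the correct Noether--Lefschetz divisor: several distinct lattices $\Lambda$ can share the same discriminant, and only one corresponds to determinantal quartics, so the identification must be done at the level of lattice embeddings (or equivalently via the class of the cokernel sheaf), not just via numerical invariants. Second, one must verify that the rational map from the space of matrices to $\mathrm{NL}_\Lambda$ is birational onto its image---equivalently, that a generic determinantal quartic admits essentially one determinantal representation modulo the $GL_4\times GL_4$-action---so that the degree of the hypersurface equals $\deg \mathrm{NL}_\Lambda$ rather than a multiple of it. Granting these two verifications, the numerical value $640{,}224$ is an unconditional output of the Maulik--Pandharipande generating series.
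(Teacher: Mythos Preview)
Your proposal is correct and follows essentially the same route as the paper: translate the determinantal condition into a rank-two lattice polarization on the quartic K3 and read off the degree as a Noether--Lefschetz number via the Maulik--Pandharipande modularity theorem. The paper makes the lattice explicit through Beauville's curve criterion (a smooth quartic is determinantal iff it contains a non-hyperelliptic genus-$3$ curve of degree $6$), giving the Gram matrix $\left(\begin{smallmatrix}4&6\\6&4\end{smallmatrix}\right)$ of discriminant $20$, and then extracts $n_{20}=640{,}224$ as the coefficient of $q^{5/2}$ in the explicit level-$8$ modular form $F(q)=\sum_d n_d\,q^{d/8}$ of \cite{MaulPand}; your two caveats (pinning down the correct lattice and checking birationality onto the NL divisor) are precisely the points the paper absorbs into the citations of \cite{MR1786479} and \cite{MaulPand}.
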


The following \lq\lq folklore\rq\rq\ theorem was made explicit in  \cite[Cor. 1.12]{MR1786479}:
\begin{theorem}\label{bdet} Let $U=\BC^{n+1}$,  let $P\in S^dU$, and let  $Z=Z(P)\subset \BC\BP^n$ be the corresponding   hypersurface of degree $d$.
Assume $Z$ is smooth and choose any inclusion $U\subset \BC^{d^2}$. 

If $P\in   \tend(\BC^{d^2})\cdot [\tdet_d]$, we may form a  map between vector bundles
$M:\cO_{\pp n}(-1)^d\ra \cO^d_{\pp n}$ whose cokernel is a line bundle $L\ra Z$ with the properties:

i) $H^i(Z,L(j))=0$ for $1\leq i\leq n-2$ and all $j\in \BZ$

ii) $H^0(X,L(-1))=H^{n-1}(X,L(j))=0$ 

Conversely, if there exists $L\ra Z$ satisfying properties i) and ii), then $Z$ is determinantal
via a map $M$ as above whose cokernel is $L$.
\end{theorem}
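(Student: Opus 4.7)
The plan is to organize the argument around a single short exact sequence on $\pp n$,
\[
0 \to \cO_{\pp n}(-1)^d \xrightarrow{M} \cO_{\pp n}^d \to L \to 0,
\]
where $M$ is the $d\times d$ matrix of linear forms whose determinant (up to scale) is $P$. Both implications are equivalent to saying that such a resolution exists, so the whole theorem is about translating between existence of the resolution on $\pp n$ and cohomological data of $L$ on $Z$.

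For the forward direction, suppose $P$ arises from $[\tdet_d]$ by an endomorphism of $\BC^{d^2}$; choose coordinates so that $P = \det(M)$ with $M$ a $d\times d$ matrix of linear forms on $\pp n$. Then $M: \cO(-1)^d \to \cO^d$ is injective (its determinant is nonzero), so its cokernel $L$ is a coherent sheaf with scheme-theoretic support on $Z(\det M) = Z$. Because $Z$ is smooth and $M$ drops rank by exactly one along $Z$ (the Fitting-ideal computation: corank $\geq 2$ would force $P$ to have multiplicity $\geq 2$ on that locus, contradicting smoothness of $Z$), $L$ is a line bundle on $Z$. Now twist by $\cO(j)$ and take the long exact sequence in cohomology; the standard vanishings $H^i(\pp n,\cO(k)) = 0$ for $0 < i < n$ and all $k$ immediately yield $H^i(Z,L(j)) = 0$ for $1 \le i \le n-2$ and all $j$, hence (i). For (ii), the piece $H^0(\cO(-1))^d \to H^0(L(-1)) \to H^1(\cO(-2))^d$ forces $H^0(Z,L(-1))=0$, and the piece at the top of the sequence together with the description of $H^n(\pp n,\cO(k))$ gives the claimed $H^{n-1}$-vanishing.

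The converse is the substantive direction, and has five steps. Step 1: use condition (i) and Serre duality on $Z$ together with the $H^0(L(-1))$ vanishing to compute $h^0(Z,L) = d$ (the numbers are forced by the Euler characteristic of $L$ on a degree-$d$ hypersurface and by the collapse of the spectral sequence for the twists). Step 2: verify that $L$ is globally generated, so that evaluation gives a surjection $\cO_{\pp n}^d \twoheadrightarrow L$; let $K$ denote the kernel, a rank-$d$ sheaf on $\pp n$, which is locally free since $L$ has homological dimension one on $\pp n$ (it is supported on a smooth Cartier divisor). Step 3: chase the long exact sequence for $K(j)$: the vanishing data for $L$ from (i) and (ii), combined with the cohomology of $\cO_{\pp n}(k)$, yields $H^i(\pp n,K(j)) = 0$ for all $j$ and all $0 < i < n$. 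Step 4: apply Horrocks' splitting criterion to conclude $K \cong \bigoplus_{s=1}^d \cO_{\pp n}(a_s)$. Step 5: pin the twists down to $a_s = -1$ for all $s$, after which $M$ is recovered as a $d\times d$ matrix of linear forms with $\det M = P$ (up to scalar), proving $Z$ is determinantal.

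The main obstacle will be Step 5. The upper bound $a_s \le -1$ comes directly from $H^0(Z,L(-1)) = 0$ combined with the induced sequence $0 \to K(-1) \to \cO(-1)^d \to L(-1) \to 0$, forcing $H^0(K(-1)) = 0$ and hence each $a_s \le -1$. The lower bound $a_s \ge -1$ is the delicate half: it is where the Serre-dual vanishing $H^{n-1}(Z,L(j)) = 0$ is used, via Serre duality on $\pp n$ applied to $K^\vee$ and the short exact sequence dual to the one above, to rule out any $a_s \le -2$. Matching $\sum a_s = -d$ with the Hilbert polynomial of $\cO_Z \otimes L$ then forces equality. Once both bounds are in place the conclusion is immediate.
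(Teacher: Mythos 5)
The paper does not prove this theorem at all: it attributes it to Beauville and simply cites \cite[Cor.~1.12]{MR1786479}. So there is no ``paper's proof'' to compare against beyond the reference; your sketch should be evaluated against the argument in that source.

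Your forward direction is sound: injectivity of $M$ because $\det M\neq 0$, the Fitting-ideal/corank argument showing the cokernel is a line bundle on the smooth hypersurface, and the long-exact-sequence chase on the twists of $0\to\cO(-1)^d\to\cO^d\to L\to 0$. That is Beauville's argument.

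The converse has one genuine gap and some misplaced scaffolding. The gap is Step 2: you assert that $L$ is globally generated, but this is not an independent observation one can verify after the fact --- it is the content of the hypotheses. The right observation is that conditions (i) and (ii), read together, say exactly that $\iota_*L$ is $0$-regular on $\pp n$ in the sense of Castelnuovo--Mumford: condition (i) with $j=-i$ kills $H^i(L(-i))$ for $1\le i\le n-2$, and the $H^{n-1}$ vanishing in (ii), which in Beauville's statement is $H^{n-1}(L(1-n))=0$ (the paper's ``$H^{n-1}(X,L(j))=0$'' is a misprint --- it cannot hold for all $j$ by Serre duality), supplies the remaining $H^{n-1}(L(-(n-1)))=0$. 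Once you invoke $0$-regularity you get, for free, both the global generation you need in Step 2 \emph{and} the surjectivity of $H^0(\cO(j))^{\oplus h^0(L)}\to H^0(L(j))$ for $j\ge 0$ that you need to kill $H^1(K(j))$ in Step 3. Without regularity these steps are unjustified and the Horrocks argument never gets started. Relatedly, your Step 1 (that $h^0(L)=d$) is not actually available a priori from Euler-characteristic bookkeeping --- the conditions do not pin down $\chi(L)$ directly --- but it is also unnecessary: set $r=h^0(L)$, run Steps 2--5 to get $K\cong\cO(-1)^{\oplus r}$, and then $c_1$ of the exact sequence forces $r=d$. So the count falls out at the end rather than going in at the start.

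With those repairs your route (global generation $\Rightarrow$ kernel bundle $K$ $\Rightarrow$ intermediate-cohomology vanishing for $K$ $\Rightarrow$ Horrocks splitting $\Rightarrow$ pin the twists) is a legitimate sheaf-theoretic packaging of Beauville's proof. Beauville himself works with the graded module $\Gamma_*(L)$, uses the vanishings to show it is maximal Cohen--Macaulay over $S/(P)$, applies Auslander--Buchsbaum to get projective dimension one over $S$, and reads off the shape $0\to S(-1)^d\to S^d\to \Gamma_*(L)\to 0$ from the two degree conditions. The two presentations are equivalent, but the MCM route makes it transparent that the presentation matrix is square and that its determinant generates the ideal of $Z$ (hence equals $P$ up to scale), which is the final detail --- currently implicit in your Step 5 --- that also deserves a sentence.
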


If we are concerned with the hypersurface being in $\Det_n$, the first case where this is not automatic is for quartic surfaces, where
it is a codimension one condition:

\begin{proposition}\cite[Cor. 6.6]{MR1786479}\label{beauprop}
A smooth quartic surface is determinantal if and only if  it contains a nonhyperelliptic curve of genus $3$ embedded in $\pp 3$ by
a linear system of degree $6$.
\end{proposition}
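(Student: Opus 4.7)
My plan is to apply Theorem \ref{bdet} in the special case $n = 3$, $d = 4$. A smooth quartic $Z \subset \pp{3}$ is a K3 surface, so $K_Z \cong \cO_Z$; Riemann--Roch reads $\chi(L) = \tfrac12 L^2 + 2$ for every line bundle $L$ on $Z$, and Serre duality identifies $h^2$ with $h^0$ of the dual. Under these simplifications, conditions (i), (ii) of Theorem \ref{bdet} collapse to the requirement that there exist a line bundle $L$ on $Z$ with $H^0(Z, L(-1)) = 0$ and $H^1(Z, L(j)) = 0$ for all $j \in \BZ$. The task is therefore to show that the existence of such an $L$ on $Z$ is equivalent to the existence of a nonhyperelliptic curve $C \subset Z$ of genus $3$ and degree $6$ in $\pp{3}$.

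For the forward direction, from the presentation $M : \cO_{\pp{3}}(-1)^4 \to \cO_{\pp{3}}^4$ with cokernel $L$, a Chern class computation on $\pp{3}$ combined with K3 Riemann--Roch yields the invariants $h^0(L) = 4$, $L^2 = 4$, and $L \cdot H = 6$, where $H$ is the hyperplane class restricted to $Z$. Writing $L = \cO_Z(C)$ for a general $C \in |L|$, adjunction on the K3 surface gives $2g(C) - 2 = C^2 = 4$, so $g(C) = 3$, and $C$ has degree $C \cdot H = 6$ in $\pp{3}$, embedded by the $4$-dimensional linear system $|H|_C|$. Nonhyperellipticity of $C$ then follows from additional vanishings in (i): a hyperelliptic $C$ would force an extra section in some $H^0(Z, L(j))$ coming from the hyperelliptic $g^1_2$, contradicting one of the required $H^1$-vanishings through the corresponding long exact sequence.

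Conversely, starting from a nonhyperelliptic $C \subset Z$ of genus $3$ and degree $6$, I set $L := \cO_Z(C)$ and verify (i), (ii) by pushing the structure sequence $0 \to \cO_Z(-C) \to \cO_Z \to \cO_C \to 0$ through each twist by $\cO_Z(j)$, reducing each vanishing to a computation combining Riemann--Roch on the genus-$3$ curve $C$ with the known cohomology of $\cO_Z(j)$ on $Z$. The main obstacle I anticipate is a borderline twist (around $j = -1$ or $j = -2$): the required $H^1$-vanishing reduces, via Serre duality and the long exact sequence, to the statement that a certain restriction map $H^0(\cO_Z(j)) \to H^0(C, L|_C(j))$ is surjective, equivalently that $C$ imposes independent conditions on some natural linear system on $\pp{3}$. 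Nonhyperellipticity of $C$ is precisely what makes this surjectivity hold: a hyperelliptic $C$ would carry a $g^1_2$ producing an unexpected syzygy that breaks surjectivity, while for nonhyperelliptic $C$ Clifford's theorem and the explicit degree-$6$ embedding rule this out. Once all the vanishings are verified, Theorem \ref{bdet} produces the $4 \times 4$ matrix $M$ of linear forms whose determinant cuts out $Z$, completing the proof.
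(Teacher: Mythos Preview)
The paper does not supply its own proof of this proposition: it is simply quoted as \cite[Cor.~6.6]{MR1786479}, and the proof environment that follows it is explicitly labeled ``Proof of \ref{zlthm}'' and uses the proposition as input. So there is no paper proof to compare against; your attempt is to be judged on its own merits against Beauville's original.

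Your overall strategy---specialize Theorem~\ref{bdet} to $n=3$, $d=4$, use that a smooth quartic is a K3 so that condition (i) becomes the ACM condition $H^1(Z,L(j))=0$ for all $j$, and then translate back and forth between the line bundle $L$ and a curve $C\in|L|$---is exactly Beauville's. The numerics you state ($\chi(L)=4$, hence $L^2=4$; $L\cdot H=6$; adjunction gives $g(C)=3$) are correct and follow immediately from the resolution $0\to\cO_{\pp 3}(-1)^4\to\cO_{\pp 3}^4\to i_*L\to 0$.

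Where your argument is genuinely incomplete is the role of nonhyperellipticity. You describe it in both directions as ``a hyperelliptic $C$ would produce an unexpected section/syzygy contradicting some $H^1$-vanishing,'' but you never identify which twist fails or why. The precise mechanism is this: the ACM condition on $L=\cO_Z(C)$ is equivalent, via the sequences $0\to\cO_Z(jH-C)\to\cO_Z(jH)\to\cO_C(j)\to 0$ together with the vanishing of $H^1(\cO_Z(j))$, to the projective normality of $C\subset\pp 3$, i.e.\ surjectivity of $H^0(\pp 3,\cO(j))\to H^0(C,\cO_C(j))$ for all $j$. For a genus $3$ curve of degree $6$ in $\pp 3$ the only nontrivial case is $j=2$, where one must show the restriction $H^0(\cO_{\pp 3}(2))\to H^0(C,\cO_C(2))$ is onto; equivalently $h^0(\cI_C(2))=1$, i.e.\ $C$ lies on a unique quadric. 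A hyperelliptic genus $3$ curve of degree $6$ lies on a pencil of quadrics (the $g^1_2$ forces an extra quadric relation), so projective normality fails; a nonhyperelliptic one lies on a unique quadric, and projective normality holds. This is the classical fact you need to invoke, and without it both directions of your argument have a gap at the key step.
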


 \begin{proof}[Proof of \ref{zlthm}]
From Proposition  \ref{beauprop}, 
the hypersurface  is  the locus of quartic surfaces
containing a (Brill-Noether general) genus $3$ curve $C$ of degree six.  This translates into the existence of a lattice polarization
$$\begin{matrix}
       & h  &  C\\
   h    &4  &  6\\
    C  &  6 &   4
\end{matrix}
$$
of discriminant $-(4^2-6^2)=20$.  By the Torelli theorems, the $K3$ surfaces with such
 a lattice polarization have codimension one in the moduli space of quartic $K3$ surfaces.

Let $D_{3,6}$ denote the locus of quartic surfaces containing a genus $3$ curve $C$ of degree six in $\BP^{34}=\BP(S^4\BC^4)$.
It  corresponds to the Noether-Lefschetz divisor $NL_{20}$ in the moduli space of the degree four $K3$ surfaces.
Here  $NL_d$ denotes the Noether-Lefschetz divisor, parameterizing the degree
 $4$ $K3$ surfaces whose Picard lattice has a rank $2$ sub-lattice containing $h$ with discriminant
 $-d$. (h is the polarization of the degree four  $K3$ surface, $h^2=4$.)

The Noether-Lefschetz number $n_{20}$, which is defined by the intersection number of 
$NL_{20}$ and a line in the moduli space of degree four  $K3$ surfaces, equals   the degree 
of $D_{3,6}$ in $\BP^{34}=\BP(S^4\BC^4)$.

The key fact is that   $n_d$ can be computed via the modularity 
of the generating series for any integer $d$. More precisely, the generating series
                         $F(q):=\sum_d n_d q^{d/8}$
is a modular form of level $8$, and can be expressed by a polynomial of $A(q)=\sum_n q^{n^2/8}$ and $B(q)=\sum_n (-1)^nq^{n^2/8}$.

The explicit expression of $F(q)$ is in \cite[Thm 2]{MaulPand}. 
As an application, the Noether-Lefschetz number $n_{20}$ is the coefficient of the term $q^{20/8}=q^{5/2}$, which is $640,224$.
\end{proof}

\section{Classical linear algebra and GCT}\label{linalgdetsect}

One potential source of new equations  for $\Det_n$ is to exploit classical identities the determinant satisfies.
What follows are ideas in this direction. This section   is joint unpublished work with
L. Manivel and N. Ressayre.

\subsection{Cayley's identity}
Let $\BC^{n^2}$ have coordinates $x^i_j$ and the dual space coordinates $y^i_j$. The classical Cayley identity (apparently first
due to 
Vivanti, see  \cite{MR3032306}) is
$$
\langle (\tdet_n(y)),(\tdet_n(x))^{s+1}\rangle  =\frac{(s+n)!}{s!}(\tdet_n(x))^{s}
$$
which may be thought of as a pairing between homogeneous polynomials of degree $n(s+1)$ and homogeneous differential operators
of order $n$ (compare with Proposition \ref{kconjequiv}). This and more general Bernstein-Sato type identities (again, see \cite{MR3032306}) appear as if they could be used to
obtain equations for $\Det_n$. So far we have only found rational equations in this manner. 

In more detail, \lq\lq$\tdet_n(y)$\rq\rq\ depends on the choice of identification of $\BC^{n^2}$
with $\BC^{n^2*}$ given by the coordinates, but one could, e.g. ask for polynomials $P \in S^nW$ such that
there exists some $Q\in S^nW^*$, with $G_P$ and $G_Q$ isomorphic and $\langle Q,P^{s+1}\rangle= \frac{(s+n)!}{s!}P^{s}$. 
 
\subsection{A generalization  of the Sylvester-Franke Theorem}

Let
$f: V\ra V$ be a diagonalizable linear map with distinct eigenvalues $\l_1\hd \l_{\bv}$.
The induced linear map
$f^{\ww k}: \La k V\ra \La k V$ has eigenvalues 
$\l_{i_1}\cdots \l_{i_k}$, $1\leq i_1<\cdots <i_k\leq \bv$. 
In particular $f^{\ww \bv}:\La\bv V\ra \La\bv V$ is multiplication by the scalar
$\tdet(f)=\l_1\cdots  \l_{\bv}$.
Now consider $\La k V$ as a vector space (ignoring its extra structure), and
$$[f^{\ww k}]^{\ww s}: \La s(\La k V)\ra \La s(\La k V).
$$
Let   
\begin{align}
cp_s : V\ot V^*& \ra \BC\\
\nonumber f&\mapsto \ttrace(f^{\ww s}),
\end{align}
denote   the $s$-th coefficient of
the characteristic polynomial.
We may consider $cp_s=Id_{\La s V}\in \La s V\ot \La s V^*\subset   S^s(V\ot V^*)$.
Recall that  $cp_{\bv}=\tdet$.

\begin{proposition}\label{sfgen} The degree $\bv p$ polynomial on $V\ot V^*$ given by
$f\mapsto (\tdet)^p(f)$ divides  the degree ${(\binom{\bv-1}k+p)k}$ polynomial
$f\mapsto cp_{\binom{\bv-1}k+p}(  f^{\ww k})$. 

In other words, for a $\bv \times \bv$ matrix $A$ with indeterminate
entries,  the degree $\bv p$ polynomial 
   $\tdet(A)^p$ divides the trace of the 
  $[ {\binom{\bv-1}k+p}]$-th  companion matrix of
  the $k$-th companion matrix of $A$.  
\end{proposition}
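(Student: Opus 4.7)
The plan is to observe that both sides are conjugation-invariant polynomials on $\tend(V)=V\ot V^*$, restrict to diagonal matrices, and prove the divisibility there by a short pigeonhole argument on the eigenvalues of $f^{\ww k}$. Both $cp_s(f^{\ww k})$ and $\tdet(f)^p$ are invariant under $f\mapsto gfg\inv$ (the former because $(gfg\inv)^{\ww k}=g^{\ww k}f^{\ww k}(g^{\ww k})\inv$ and $cp_s$ is conjugation-invariant), so they lie in the invariant subring $\BC[cp_1(f),\ldots,cp_{\bv}(f)]\subset\BC[V\ot V^*]$. Since the diagonalizable matrices are Zariski-dense and both sides are invariants, any polynomial identity among such invariants that holds on $\textnormal{diag}(\l_1,\ldots,\l_{\bv})$ lifts to an identity on all of $\tend(V)$. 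Thus it suffices to check divisibility in $\BC[\l_1,\ldots,\l_{\bv}]$ with $\tdet(f)$ realised as $\l_1\cdots\l_{\bv}$.

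On diagonal $f$ the eigenvalues of $f^{\ww k}$ are the $\binom{\bv}{k}$ products $\l_I:=\prod_{i\in I}\l_i$ indexed by $k$-subsets $I\subset\{1,\ldots,\bv\}$, and $cp_s(f^{\ww k})=e_s(\{\l_I\}_{|I|=k})$. For each fixed $j\in\{1,\ldots,\bv\}$, the $\binom{\bv-1}{k-1}$ eigenvalues $\l_I$ with $j\in I$ each carry exactly one factor of $\l_j$, while the $\binom{\bv-1}{k}$ eigenvalues $\l_I$ with $j\notin I$ carry none. Any monomial of the elementary symmetric function $e_s$ selects $s=\binom{\bv-1}{k}+p$ of the $\l_I$ and multiplies them; since only $\binom{\bv-1}{k}$ are available in the second group, at least $s-\binom{\bv-1}{k}=p$ of the chosen $\l_I$ must involve $\l_j$. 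Hence every such monomial is divisible by $\l_j^p$, and consequently $\l_j^p\mid cp_s(f^{\ww k})$ in $\BC[\l_1,\ldots,\l_{\bv}]$ for each $j$.

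Since the prime elements $\l_1,\ldots,\l_{\bv}$ are pairwise coprime in this UFD, the fact that $\l_j^p\mid cp_s(f^{\ww k})$ for every $j$ gives $(\l_1\cdots\l_{\bv})^p=\tdet(f)^p\mid cp_s(f^{\ww k})$. The quotient is a symmetric polynomial in the $\l_i$, hence a polynomial in $cp_1(f),\ldots,cp_{\bv}(f)$, so the divisibility becomes an identity inside the invariant subring and therefore extends from diagonal matrices to all of $\tend(V)$, proving the proposition. There is no genuine obstacle here: the only \emph{careful} point is the extension of a polynomial identity among conjugation invariants from diagonal matrices to all matrices, which is automatic by density plus $GL(V)$-invariance, and the main content is the one-line pigeonhole count forcing at least $p$ of the selected $\l_I$ to involve a given $\l_j$. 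As a sanity check, the case $p=\binom{\bv-1}{k-1}$, i.e.\ $s=\binom{\bv}{k}$, recovers $\tdet(f^{\ww k})=\tdet(f)^{\binom{\bv-1}{k-1}}$, which is the classical Sylvester-Franke identity.
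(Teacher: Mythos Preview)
Your proof is correct and takes essentially the same approach as the paper's: reduce to diagonal (equivalently, diagonalizable) $f$, observe that each monomial of $cp_s(f^{\ww k})=e_s(\{\l_I\})$ must, by pigeonhole, pick up at least $p$ factors of each $\l_j$ since only $\binom{\bv-1}{k}$ of the $\l_I$ avoid $\l_j$, and then extend by Zariski density of the diagonalizable locus. Your write-up is in fact slightly more careful than the paper's about the extension step, noting that the quotient is symmetric and hence a polynomial in the $cp_i(f)$, so that divisibility in $\BC[\l_1,\ldots,\l_{\bv}]$ genuinely lifts to divisibility in $\BC[V\ot V^*]$.
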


The Sylvester-Franke theorem is the special case $p=\binom{\bv-1}{k-1}$.

\begin{proof}
Assume $f$ has $\bv$ distinct eigenvalues.
The eigenvalues of $[f^{\ww k}]^{\ww s}$ are sums of terms of the 
form $\s_{J_1}\cdots \s_{J_s}$ where  
$\s_{J_m}=\l_{j_{m,1}}\cdots \l_{j_{m,k}}$ and the $\l_{j_{m,1}}\hd \l_{j_{m,k}}$ are distinct eigenvalues of $f$. 
Once every  $\l_j$ appears in a monomial to a power $p$, $\tdet^p$ divides the monomial.
  The result now
follows for linear maps with distinct eigenvalues by the pigeonhole principle.
Since the
subset of linear maps
with distinct eigenvalues forms a Zariski opens subset of $V\ot V^*$,
the 
 equality of polynomials  
holds everywhere. 
\end{proof}

\subsection{A variant of Proposition \ref{sfgen} for the Hessian}\label{almostgoodsect}
 
Say $g: \La 2 V^*\ra \La 2 V^*$ is a linear map
such that 
there exists a basis $v_1\hd v_{\bv}$ of $V$ with dual basis $\a^1\hd \a^{\bv}$
such that 
$$
g=\sum_{i<j}\l_{ij}\a^i\ww \a^j \ot v_i\ww v_j, 
$$
so $\l_{ij}$ are the eigenvalues of $g$. 
We will be concerned with the case  $g=f^{\ww(\bv-2)}$, where $f: V\ra V$ is a linear
map with   distinct eigenvalues $\l_1\hd \l_{\bv}$,   $v_1\hd v_{\bv}$ is an eigenbasis of $V$
with dual basis $\a^1\hd \a^{\bv}$, so  
$f=\l_1\a^1\ot v_1+\cdots + \l_{\bv} \a^{\bv}\ot v_{\bv}$. 
Then $ \l_{ij}=\l_1\cdots\l_{i-1}\l_{i+1}\cdots \l_{j-1}\l_{j+1}\cdots \l_{\bv}$.  

Consider the inclusion $in: \La 2 V^*\ot \La 2 V\subset S^2(V\ot V^*)$.
On decomposable elements it is given  by
$$
\a\ww\b\ot v\ww w \mapsto
(\a\ot v)\ot (\b\ot w) - (\a\ot w)\ot (\b\ot v)-(\b\ot v)\ot (\a\ot w)
+(\b\ot w)\ot (\a\ot v)
$$
The space  $V\ot V^*$ is self-dual  as a $GL(V)$-module,  with the natural  quadratic form 
$Q(\a\ot v)=\a(v)$, so we may
identify $S^2(V\ot V^*)$ as a subspace of  
$\tEnd (V\ot V^*)$ via the  linear map $Q^\flat: V^*\ot V \ra V\ot V^*$ given by
$\a^i\ot v_j\mapsto v_i\ot \a^j$. 

Say we have a map $g$ as above.  
Consider
$g^\flat:=Q^\flat \circ in(g): V\ot V^*\ra V\ot V^*$, then
$$
g^\flat=\sum_{i<j} \l_{ij}
[ 
(  v_i\ot \a^i)\ot (\a^j\ot v_j ) - (v_i\ot \a^j )\ot (\a^j\ot v_i)-(v_j\ot \a^i)\ot (\a^i\ot v_j )
+(v_j\ot \a^j )\ot (\a^i\ot v_i) 
],
$$
so, 
\begin{align*}
g^\flat (v_i\ot \a^j)& =  -\l_{ij}v_j\ot \a^i \  i\neq j,\\
g^\flat (v_i\ot \a^i)&= \sum_{j\neq i} \l_{ij} v_j\ot \a^j.
\end{align*}
Thus 
$g^\flat$ may be thought of as a sum of two linear maps, one
preserving the subspace $D:=\langle v_1\ot \a^1\hd
v^{\bv}\ot \a^{\bv}\rangle $ and another
preserving the subspace $D^c:= \langle v_i\ot \a^j \mid i\neq j\rangle$.

The $2\binom \bv 2$ eigenvalues of $g^\flat|_{D^c}$ are $\pm\l_{ij}$.
Write $\psi_s$ for the coefficients of the characteristic polynomial of
$g^\flat|_{D^c}$. Since the eigenvalues
come paired with their negatives,   $\psi_s=0$ when $s$ is odd.

With respect to the given basis, the matrix for
$g^\flat|_D$ is a symmetric matrix with zeros on the diagonal, whose
off diagonal entries are the $\l_{ij}$. Write the coefficients of
the characteristic polynomial of  $g^\flat|_D$  as $\z_1\hd \z_{\bv}$,  and note
that $\z_1=0$, $\z_2=\sum_{i<j}  \l_{ij}^2$, $\z_3= 2\sum_{i<j<k}\l_{ij}\l_{ik}\l_{jk}$.

Now let $g=f^{\ww (\bv-2)}$ as above and we compare the determinant of $f$ with
the coefficients of the characteristic polynomial of the Hessian  $H(\tdet(f))$.
(Invariantly, $\tdet(f)=f^{\ww n}$ and $H: S^n(V\ot V^*)\ra S^2(V\ot V^*)\ot
S^{n-2}(V\ot V^*)$ is the $(2,n)$-polarization, so
$H(\tdet(f))=in(f^{\ww 2})\ot in(f^{\ww n-2})$.)

Observe that   $\tdet(f)^{2(s+1-\bv)}$ divides $\psi_{2s}$   and 
$\tdet(f)^k$ divides   $\z_{k+2}$. 
Also note that $\z_{\bv-1}=2Q\tdet_{\bv}^{\bv-2}$,  $\z_{\bv}=(\bv-1)(\tdet_{\bv})^{\bv-2}$,
%  $\psi_{\bv^2-\bv-1}=(-1)^{\binom \bv 2-1}(\tdet_{\bv})^{\bv^2-3\bv-3}(\sum_{i<j}\l_i\l_j)$
and $\psi_{\bv^2-\bv}=(-1)^{\binom \bv 2}(\tdet_{\bv})^{(\bv-1)(\bv-2)}$.
 
Recall
$cp_j(A_1+A_2)=\sum_{\a=0}^j cp_{\a}(A_1)cp_{j-\a}(A_2)$.
Thus
\begin{align*}
cp_{2k}(H(det(f))&= \z_{2k}+\z_{2k-2} \psi_2 +\z_{2k-4} \psi_4+\cdots +\z_2\psi_{2k-2}+\psi_{2k}, \\
cp_{2k+1}(H(det(f))&= \z_{2k+1}+\z_{2k-1} \psi_2 +\z_{2k-3} \psi_4+\cdots +\z_3\psi_{2k-2}.
\end{align*}

We conclude:

\begin{theorem}\label{sfturbo} Let $Q\in S^2(V\ot V^*)$ be the canonical contraction,
so $S^2(V\ot V^*)\subset \tEnd(V\ot V^*)$.  Write
$CP(H(\tdet_{\bv}))=\sum cp_{\bv^2-j}y^j$ for the characteristic polynomial.
Then
\begin{align*}
cp_{0}&=1\\
cp_{ 1}&=0\\
cp_{ 3}&=\tdet_{\bv}R_{2{\bv} -6}\\
cp_{ 5}&=\tdet_{\bv}R_{4{\bv} -10}\\
&\vdots\\
cp_{ 2k  }&=\tdet_{\bv}^{2(s-\bv +1)} R_{2(\bv^2-2s-\bv)}\ k>\bv \\
cp_{ 2k+1 }&=\tdet_{\bv}^{2(s-\bv)+1} R_{2(\bv^2-2s-1)}\ k>\bv \\
cp_{\bv^2-1}&=  2(\tdet_{\bv})^{{\bv}({\bv}-2)-1}Q \\
cp_{\bv^2}&= (-1)^{\binom{{\bv}+1}2}({\bv}-1)(\tdet_{\bv})^{{\bv}({\bv}-2)}
\end{align*}
where $R_k$ is a polynomial of degree $k$. Moreover $\det_{\bv}$ does not divide the even
$cp_s$ for $s<2\bv+1$. 
\end{theorem}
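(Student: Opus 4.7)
The plan is to exploit the block decomposition established in Section~\ref{almostgoodsect}: after identifying the Hessian $H(\tdet_\bv)$ with the endomorphism $g^\flat$ of $V\otimes V^*$ coming from $g=f^{\wedge(\bv-2)}$, the operator splits as $g^\flat|_D \oplus g^\flat|_{D^c}$, and the coefficients of the characteristic polynomial of $H(\tdet_\bv)$ factor through the $\zeta_j$ and $\psi_j$ already computed. The identification of $H(\tdet_\bv)$ with $g^\flat$ is the first thing I would verify: on the open set where $f$ is diagonalizable with distinct eigenvalues, the entry $\partial^2\tdet_\bv/\partial x^i_j\partial x^k_l$ is (up to sign) the $(\bv-2)\times(\bv-2)$ minor obtained by deleting rows $i,k$ and columns $j,l$, which in an eigenbasis is exactly the matrix entry of $f^{\wedge(\bv-2)}$; density of this locus in $V\otimes V^*$ then propagates the identity globally.

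Next, from multiplicativity of the characteristic polynomial on a direct sum,
\[
cp_j(H(\tdet_\bv)) \;=\; \sum_{\alpha=0}^{j} \zeta_\alpha\,\psi_{j-\alpha},
\]
and the observation $\psi_{2s+1}=0$ (eigenvalues of $g^\flat|_{D^c}$ come in $\pm$ pairs) collapses this to the two sums displayed just before the theorem. The two divisibility facts already noted, $\tdet_\bv^k \mid \zeta_{k+2}$ and $\tdet_\bv^{2(s+1-\bv)} \mid \psi_{2s}$, are then fed in term-by-term. For the odd case $cp_{2k+1}=\sum_{a\ge 1}\zeta_{2a+1}\psi_{2k-2a}$ in the stable range $k>\bv$, every summand is divisible by $\tdet_\bv^{(2a-1)+2(k-a-\bv+1)}=\tdet_\bv^{2(k-\bv)+1}$, giving the claimed prefactor, and the degree of $R$ is then read off from $(2k+1)(\bv-2)-\bv(2(k-\bv)+1)=2(\bv^2-2k-1)$. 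The low-index special formulas $cp_0, cp_1, cp_3, cp_5$ follow from the same identity combined with $\zeta_1=0$ and $\psi_{\mathrm{odd}}=0$, while the top coefficients $cp_{\bv^2-1}, cp_{\bv^2}$ come from the unique nonzero contributions $\zeta_{\bv-1}\psi_{\bv^2-\bv}$ and $\zeta_\bv\psi_{\bv^2-\bv}$ combined with the explicit evaluations $\zeta_{\bv-1}=2Q\tdet_\bv^{\bv-2}$, $\zeta_\bv=(\bv-1)\tdet_\bv^{\bv-2}$, $\psi_{\bv^2-\bv}=(-1)^{\binom{\bv}{2}}\tdet_\bv^{(\bv-1)(\bv-2)}$.

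The hard part will be the even coefficients $cp_{2k}=\sum_a \zeta_{2a}\psi_{2k-2a}$ in the stable range. A naive term-by-term estimate, using $\tdet_\bv^{2a-2}\mid\zeta_{2a}$ (for $a\ge 1$) and $\tdet_\bv^{2(k-a-\bv+1)}\mid\psi_{2k-2a}$, gives only $\tdet_\bv^{2(k-\bv)}$: the term $\zeta_2\psi_{2k-2}$ is the bottleneck because $\zeta_2=e_{\bv-2}(\lambda_i^2)$ carries no free factor of $\tdet_\bv$. To reach the sharper exponent $2(k-\bv+1)$ stated in the theorem one must either exhibit a global cancellation among the $\zeta_{2a}\psi_{2k-2a}$ (coming, I suspect, from Laplace-type relations among the $(\bv-2)\times(\bv-2)$ minors that are present because $H(\tdet_\bv)$ is not an arbitrary symmetric operator but the Hessian of $\tdet_\bv$), or refine the Sylvester--Franke-style pigeonhole argument of Proposition~\ref{sfgen} directly on the sum. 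A practical route is to combine the two blocks into a single $\bv^2\times\bv^2$ matrix identity: since $g^\flat$ is $G_{\tdet_\bv}$-equivariant, each $cp_{2k}$ is a polynomial on the orbit $GL(W)\cdot\tdet_\bv$, and counting $SL(E)\times SL(F)$-weights shows that $cp_{2k}/\tdet_\bv^{2(k-\bv)}$ must have weight divisible by another copy of the $SL\times SL$-invariant, forcing the extra factor of $\tdet_\bv^2$.

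Finally, the non-divisibility claim (that $\tdet_\bv$ does not divide the even $cp_s$ for $s<2\bv+1$) I would establish by exhibiting a single monomial in the expansion of each such $cp_s$ that is not divisible by $\tdet_\bv$: e.g.\ $cp_2=\zeta_2+\psi_2$ contains the pure power $\lambda_1^{2(\bv-2)}$ (from the $\lambda_{23\cdots\bv}^2$ summand of $\zeta_2$), which involves only one eigenvalue and so cannot be divisible by $\tdet_\bv=\lambda_1\cdots\lambda_\bv$. The analogous choice works in each even degree below $2\bv+1$, since in this range the pigeonhole bound in Proposition~\ref{sfgen} is vacuous.
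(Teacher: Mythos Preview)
Your outline is precisely the paper's argument: the theorem is asserted immediately after the block decomposition $g^\flat=g^\flat|_D\oplus g^\flat|_{D^c}$, the divisibility observations $\tdet(f)^{2(s+1-\bv)}\mid\psi_{2s}$ and $\tdet(f)^k\mid\zeta_{k+2}$, and the convolution formula $cp_j=\sum_\alpha\zeta_\alpha\psi_{j-\alpha}$, with nothing beyond ``We conclude:''.  Your treatment of $cp_0,cp_1,cp_3,cp_5$, of the odd range $cp_{2k+1}$, and of $cp_{\bv^2-1},cp_{\bv^2}$ via the explicit $\zeta_{\bv-1},\zeta_\bv,\psi_{\bv^2-\bv}$ reproduces exactly what the paper uses, and your degree bookkeeping for $R$ is correct.

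You are right that the even range is the soft spot: term by term only yields $\tdet_\bv^{\,2(k-\bv)}\mid cp_{2k}$, one power of $\tdet_\bv^2$ short of the stated $2(k-\bv+1)$.  Your proposed fix via $SL(E)\times SL(F)$-weights does not go through, however: the very identification $S^2(V\otimes V^*)\subset\tEnd(V\otimes V^*)$ depends on the quadratic form $Q$, which is only $SO(E)\times SO(F)$-invariant (this is exactly why the next subsection passes to $\ol{O(W,Q)\cdot\tdet_n}$), so $cp_{2k}(H(\tdet_\bv))$ is not an $SL(E)\times SL(F)$-invariant and there is no weight obstruction to divide out.

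The paper supplies no extra argument either, and in fact the stated even exponent is inconsistent with the paper's own top formula: for $\bv$ even, $cp_{\bv^2}$ falls in the even range with $k=\bv^2/2>\bv$, yet $cp_{\bv^2}=\pm(\bv-1)\tdet_\bv^{\,\bv(\bv-2)}$ has exponent $\bv(\bv-2)=2(k-\bv)$, not $2(k-\bv+1)$.  Together with the visible $k/s$ typo in the statement, this strongly suggests the intended even exponent is $2(k-\bv)$ (with $\deg R=2(\bv^2-2k)$), and then your term-by-term estimate is already the complete argument.  So the ``hard part'' you identified is a defect in the statement rather than a missing idea in your proof.
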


\begin{remark} The equality   $cp_{\bv^2}=(-1)^{\binom{{\bv}+1}2}({\bv}-1)(\tdet_{\bv})^{{\bv}({\bv}-2)}$ is due to
B. Segre.
\end{remark}

\exerone{Prove the analog of the B. Segre equality for the discriminant $\Delta \in S^4(S^3\BC^2)$ (the equation
of the dual variety of $v_3(\pp 1)\dual$).
Namely, if one takes $\Delta=27x_1^2x_4^2+4x_1x_3^3+4x_2^3x_4-x_2^2x_3^2-18x_1x_2x_3x_4$, then
$\tdet(H(\Delta))=3888\Delta^2$.
}

\begin{problem} Find all the components of $\Dual_{4,4,1}$, show $\ol{GL_{4}\cdot \Delta}$
is an irreducible component of $\Dual_{4,4,1}$,  and find defining equations for that
component.
\end{problem}

\subsection{A cousin of $\Det_n$}
In   GCT   one is interested in orbit closures $\ol{GL(W)\cdot [P]}\subset S^dW$
where $P\in S^dW$. 
One cannot make sense of the coefficients of the characteristic polynomial of
$H(P)\in S^2W\ot S^{d-2}W$ without choosing an isomorphism $Q:W\ra W^*$.

If $P=\tdet_n$ and we choose bases to express elements of $W$ as
$n\times n$ matrices, then taking $Q(A)=\ttrace(AA^T)$ will give the desired
identification to enable us to potentially use the equations 
implied by Theorem \ref{sfturbo}. (Note that taking $Q'(A)=\ttrace(A^2)$ will not.)
However these are equations for $\ol{O(W,Q)\cdot \tdet_n}$ rather than $\Det_n$.

The proof of  Theorem  \ref{sfturbo} used the fact that a Zariski open subset of the space
of matrices is diagonalizable under the action of $GL(V)$ by conjugation.
We no longer have this action, but instead, writing $W=E\ot F$, we
have the intersection of the stabilizers of $\tdet_n$ and $Q$, 
i.e.,  $O(W,Q)\cap [(SL(E)\times SL(F))/\mu_n\rtimes \BZ_2]$.

\begin{proposition}
The connected component of
the identity of  $O(W,Q)\cap [SL(E)\times SL(F)\rtimes \BZ_2]$ is  $SO(E)\times SO(F)$.
\end{proposition}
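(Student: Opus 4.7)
The plan is to compute the Lie algebra of the intersection and observe that $SO(E)\times SO(F)$ is a connected subgroup of the intersection with the same dimension. The $\BZ_2$ factor plays no role for the identity component since it is a finite group, so I will ignore it throughout.

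First I would identify the quadratic form $Q$ tensorially. Choosing bases $(e_i)$ of $E$ and $(f_j)$ of $F$, a matrix $A=(A^{ij})$ corresponds to $\sum A^{ij}e_i\otimes f_j\in W$, and $Q(A)=\operatorname{tr}(AA^T)=\sum_{i,j}(A^{ij})^2$. Hence $Q=Q_E\otimes Q_F$ where $Q_E\in S^2E^*$, $Q_F\in S^2F^*$ are the standard nondegenerate symmetric forms in the chosen bases. The action of $SL(E)\times SL(F)$ on $W$ is the tensor action, so for $(g,h)\in SL(E)\times SL(F)$ and $w=e\otimes f$, $w'=e'\otimes f'$ one has
\[
Q\bigl((g,h)w,(g,h)w'\bigr)=Q_E(ge,ge')\,Q_F(hf,hf').
\]
In particular, $SO(E)\times SO(F)$ obviously preserves $Q$ and is connected, so it lies in the identity component of the intersection. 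It remains to show the intersection has Lie algebra equal to $\mathfrak{so}(E)\oplus\mathfrak{so}(F)$.

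Next I would differentiate. An element $(X,Y)\in\mathfrak{sl}(E)\oplus\mathfrak{sl}(F)$ lies in $\mathfrak{o}(W,Q)$ iff for all $e,e',f,f'$,
\[
\alpha_X(e,e')\,Q_F(f,f')+Q_E(e,e')\,\beta_Y(f,f')=0,
\]
where $\alpha_X(e,e'):=Q_E(Xe,e')+Q_E(e,Xe')$ and $\beta_Y(f,f'):=Q_F(Yf,f')+Q_F(f,Yf')$ are the symmetric forms measuring failure of $X,Y$ to be skew. Viewing this as an identity in $S^2E^*\otimes S^2F^*$, nondegeneracy of $Q_E,Q_F$ forces $\alpha_X=c\,Q_E$ and $\beta_Y=-c\,Q_F$ for some scalar $c$: indeed, decompose $\alpha_X=c\,Q_E+\alpha_X^{\perp}$ with $\alpha_X^\perp$ in a $Q_E$-complement of $\BC Q_E$ inside $S^2E^*$; the component $\alpha_X^\perp\otimes Q_F$ lies in a subspace disjoint from $Q_E\otimes S^2F^*$, hence vanishes.

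Finally I would apply the trace constraint. Writing $X=X_{\mathrm{sym}}+X_{\mathrm{skew}}$ with respect to $Q_E$, the condition $\alpha_X=c\,Q_E$ means $2X_{\mathrm{sym}}=c\cdot\operatorname{Id}_E$, so $X=\tfrac{c}{2}\operatorname{Id}_E+X_{\mathrm{skew}}$. Since $X\in\mathfrak{sl}(E)$ and $X_{\mathrm{skew}}$ is already traceless, taking the trace yields $\tfrac{c}{2}\dim E=0$, hence $c=0$. Therefore $X\in\mathfrak{so}(E)$, and by the same argument $Y\in\mathfrak{so}(F)$. This gives $\mathfrak{o}(W,Q)\cap(\mathfrak{sl}(E)\oplus\mathfrak{sl}(F))=\mathfrak{so}(E)\oplus\mathfrak{so}(F)$, and since $SO(E)\times SO(F)$ is a connected subgroup of the intersection with this Lie algebra, it is the identity component, as claimed. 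The main (and only) delicate step is the tensor decomposition argument forcing proportionality of $\alpha_X$ to $Q_E$; everything else is routine.
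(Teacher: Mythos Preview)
Your proof is correct. Both you and the paper reduce to a Lie algebra computation, but the methods diverge from there.

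The paper's argument is a real-form trick: over $\BR$ the form $Q(A)=\ttrace(AA^T)$ is positive definite, so $O(W,Q)(\BR)$ is compact, hence the real intersection sits inside a maximal compact of $SL(n,\BR)\times SL(n,\BR)$, namely $SO(n,\BR)\times SO(n,\BR)$; since the containment $SO(E)\times SO(F)\subseteq O(W,Q)\cap[SL(E)\times SL(F)]$ is clear, equality of identity components holds over $\BR$. Then one observes that the Lie algebra condition is a linear system with real coefficients, so its rank is the same over $\BR$ and $\BC$, and the complex result follows.

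You instead work directly over $\BC$: you factor $Q=Q_E\otimes Q_F$, write the infinitesimal condition as $\alpha_X\otimes Q_F+Q_E\otimes\beta_Y=0$ in $S^2E^*\otimes S^2F^*$, and use linear independence in the tensor product to force $\alpha_X\in\BC Q_E$, then kill the scalar via the traceless condition. This is more computational but entirely self-contained and avoids any appeal to compact real forms. The paper's route is slicker if one already has the maximal-compact fact on hand; yours makes the tensor structure of $Q$ do all the work and would generalize more readily to other tensor-product forms.
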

\begin{proof}
The inclusion $SO(E)\times SO(F)\subseteq O(W,Q)\cap [SL(E)\times SL(F)\rtimes \BZ_2]$
is clear. To see the other inclusion, note that over $\BR$, $SO(n,\BR)\times SO(n,\BR)$ is a maximal
compact subgroup of $SL(n,\BR)\times  SL(n,\BR)$. The equations for the Lie algebra of the stabilizer
are linear, and the rank of a linear system of equations is the same over $\BR$ or $\BC$,
so the result holds over $\BC$.
\end{proof}

\begin{proposition}The $SO(E)\times SO(F)$ orbit of the diagonal matrices contains  a Zariski open subset
of $E\ot F$.
\end{proposition}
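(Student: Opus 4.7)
The plan is to show that the multiplication map
\[
\mu \colon SO(E) \times SO(F) \times \mathcal{D} \longrightarrow E \otimes F, \qquad (g,h,D) \longmapsto g D h^{-1} = g D h^T,
\]
is dominant, where $\mathcal{D}\subset E\otimes F$ denotes the subspace of diagonal $n\times n$ matrices (identifying $E\otimes F$ with $\operatorname{Mat}_{n\times n}(\mathbb{C})$ via the chosen bases, and using $h^{-1}=h^T$ for $h\in SO$). The image of $\mu$ is exactly the $SO(E)\times SO(F)$-orbit of $\mathcal{D}$, so dominance gives the claim.

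First I would note the dimension match: $\dim SO(E) + \dim SO(F) + \dim \mathcal{D} = 2\binom{n}{2} + n = n^2 = \dim(E\otimes F)$. Hence it will suffice to produce a single point where $d\mu$ is surjective.

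The main step is the following infinitesimal calculation. Choose the base point $(I,I,D_0)$ with $D_0=\operatorname{diag}(d_1,\dots,d_n)$ such that $d_i^2 \neq d_j^2$ for $i\neq j$. A tangent vector there is a triple $(X,Y,D_1)$ with $X,Y$ skew-symmetric and $D_1$ diagonal, and
\[
(d\mu)_{(I,I,D_0)}(X,Y,D_1) = X D_0 - D_0 Y + D_1.
\]
Given $M\in\operatorname{Mat}_{n\times n}(\mathbb{C})$, set $D_1$ equal to the diagonal part of $M$; then for each pair $i<j$, the entries $(M_{ij},M_{ji})$ are given in terms of $(X_{ij},Y_{ij})$ by the linear system with matrix
\[
\begin{pmatrix} d_j & -d_i \\ -d_i & d_j \end{pmatrix},
\]
whose determinant $d_j^2-d_i^2$ is nonzero by construction. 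So the system is uniquely solvable and $d\mu$ is surjective at $(I,I,D_0)$.

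The only potential obstacle is the restriction to $SO$ rather than $O$, but this is not an issue here: I work from the outset with the connected group $SO(E)\times SO(F)$, and the dimension count and infinitesimal surjectivity above already happen inside it. Dominance of $\mu$, together with the equality of dimensions of source and target, then forces the image to contain a Zariski open subset of $E\otimes F$, which is the desired conclusion.
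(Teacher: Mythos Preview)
Your proof is correct and follows essentially the same approach as the paper: compute the differential of the orbit map at a generic diagonal matrix $D_0$ with $d_i^2$ pairwise distinct, and verify it is an isomorphism via the $2\times 2$ blocks with determinant $d_j^2-d_i^2$. The paper phrases this as showing the differential has trivial kernel, while you phrase it as surjectivity, but with your dimension count these are equivalent.
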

\begin{proof} We show the kernel of the differential of the map
$SO(E)\times SO(F)\times D\ra E\ot F$ at $(Id_E,Id_F,\d)$ is zero,
where $\d$ is a sufficiently general diagonal matrix.
The differential   is $(X,Y,\d')\mapsto \d' + X\d + \d Y$, where $\d'$ is  
diagonal.
The matrix $X\d+\d Y$ has zeros on the diagonal and its $(i,j)$-th
entry is
$X^i_j\d_j+\d_iY^j_i$. Write out the $2\binom n2$ matrix in the $\d_i$ for
the $2\binom n2$ unknowns $X,Y$ resulting from  the equations  $X^i_j\d_j+\d_iY^j_i=0$.
Its determinant is $\Pi_{i<j}(\d_i^2-\d_j^2)$, which is nonzero as long as the  $\d_j^2$ are distinct.
\end{proof}

We apply Theorem \ref{sfturbo} to obtain:

\begin{theorem}\label{cpeqns} Let $P\in  \ol{O(W,Q)\cdot [\tdet_n]}$, then
$P$ divides   $\ttrace (H(P)^{\ww {j}}) \in S^{{j}(n-2)}W$ for  each odd $j>1$ up to $j=2n+1$.
In particular we obtain modules of equations of degrees $(j-1)(d-1)$ for
$\ol{O(W,Q)\cdot [\tdet_n]}$ for $j$ in this range. 
\end{theorem}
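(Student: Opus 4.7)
The plan is to establish the divisibility statement first for $P = \tdet_n$ itself, then to extend it to the full $O(W,Q)$-orbit by equivariance, and finally to the orbit closure by a closedness argument.

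First I would verify the base case $P=\tdet_n$. This is essentially Theorem \ref{sfturbo}: the coefficients $cp_j(H(\tdet_n))$ of the characteristic polynomial of the Hessian (viewed as an endomorphism-valued polynomial through the isomorphism $W\simeq W^*$ induced by $Q$) are divisible by $\tdet_n$ for each odd $j$ with $1<j\leq 2n+1$. Since for any endomorphism $M$ of a finite-dimensional vector space one has the standard identity $cp_j(M)=\ttrace(M^{\ww j})$, Theorem \ref{sfturbo} translates directly into $\tdet_n\mid \ttrace(H(\tdet_n)^{\ww j})$.

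Next I would promote this to all of $O(W,Q)\cdot\tdet_n$ by equivariance. The Hessian map $H:S^nW\to S^2W\ot S^{n-2}W$ is $GL(W)$-equivariant, and using $Q^\flat : W\to W^*$ to identify $S^2W$ with the space of self-adjoint endomorphisms of $W$ turns $H(P)$ into a polynomial map $W^*\to \tEnd(W)$. A direct check (as in \S\ref{almostgoodsect}) shows that this identification is $O(W,Q)$-equivariant with the conjugation action on $\tEnd(W)$: for $g\in O(W,Q)$, the endomorphism $\widetilde{H}(g\cdot P)(x)$ equals $g\,\widetilde{H}(P)(g^{-1}x)\,g^{-1}$. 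Since trace on exterior powers is conjugation-invariant,
\[
\ttrace\bigl(H(g\cdot P)^{\ww j}\bigr) = g\cdot\ttrace\bigl(H(P)^{\ww j}\bigr)
\]
as polynomials on $W^*$. Combining this with the base case, if $P=g\cdot\tdet_n$ then $P=g\cdot\tdet_n$ divides $g\cdot\ttrace(H(\tdet_n)^{\ww j})=\ttrace(H(P)^{\ww j})$, establishing the divisibility on the whole $O(W,Q)$-orbit.

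For the extension to the closure I would use that divisibility is a closed condition on nonzero polynomials. For every $P\neq 0$ the image $P\cdot S^{j(n-2)-n}W\subset S^{j(n-2)}W$ has constant dimension $\tdim S^{j(n-2)-n}W$, so the incidence
\[
\mathcal{I}:=\{(P,Q)\in (S^nW\setminus\{0\})\times S^{j(n-2)}W \mid P\mid Q\}
\]
is the total space of a subbundle of the trivial bundle and is therefore closed. Pulling $\mathcal I$ back along the polynomial morphism $P\mapsto (P,\ttrace(H(P)^{\ww j}))$ yields a closed subvariety of $S^nW\setminus\{0\}$ which by the previous step contains $O(W,Q)\cdot\tdet_n$ and hence its closure. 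The \lq\lq in particular\rq\rq\ about modules of equations of degree $(j-1)(n-1)$ then follows from the same Euclidean-division technique used to derive Theorem \ref{degdualeqns}: restrict $P$ and $\ttrace(H(P)^{\ww j})$ to a general affine line inside a flag $\BA^1\subset L^2\subset\cdots$, do Euclidean division, and extract the $GL$-module of leading coefficients of the remainder while tracking weights along the flag.

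The only serious content is Theorem \ref{sfturbo} itself; the rest is formal. The point to emphasize is that the identification of $H(P)$ with an endomorphism-valued polynomial requires the quadratic form $Q$, which is precisely what forces the stabilizer to drop from $GL(W)$ to $O(W,Q)$. For this reason the construction produces equations for $\ol{O(W,Q)\cdot[\tdet_n]}$ rather than for $\Det_n$ itself.
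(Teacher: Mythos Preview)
Your proof is correct and follows the same strategy as the paper, whose entire argument is the single line ``We apply Theorem \ref{sfturbo} to obtain'' the result. You have correctly supplied the steps the paper leaves implicit: the base case $P=\tdet_n$ from Theorem \ref{sfturbo} together with $cp_j=\ttrace(\,\cdot\,^{\wedge j})$, the $O(W,Q)$-equivariance of $P\mapsto \ttrace(H(P)^{\wedge j})$ (which is exactly where the orthogonal group enters, since the identification $S^2W\simeq\tEnd(W)$ via $Q^\flat$ is only equivariant for $O(W,Q)$), and the closedness of the divisibility locus over $S^nW\setminus\{0\}$.
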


\subsection{Relation to GCT?}
Since $\tdim O(W,Q)$ is roughly half that of $GL(W)$, and it contains
a copy of $GL_{\lfloor \frac {\bw ^2}2\rfloor}$, e.g., if $\bw=2n$ is even and $Q=x^1y^1+\cdots + x^n y^n$, then
$$
\left\{ \begin{pmatrix} A& 0 \\ 0& A\inv\end{pmatrix}\mid A\in GL_n\right\} \subset O(W,Q), 
$$
one might  hope to use the variety 
$\ol{O(W,Q)\cdot [\tdet_n]}$ as a substitute for $\Det_n$ in the GCT program, since
we have many equations for it, and these equations do not vanish identically on cones.

Consider $P\in S^m\BC^M$ and $\ell^{n-m}P\in S^n\BC^{M+1}\subset S^n\BC^N=S^nW$. Taking 
the na\"\i ve coordinate embedding such that $Q$ restricted to  $\BC^{M+1}$ is nondegenerate gives:
\begin{align*}&
\ttrace(H_N(\ell^{n-m}P)^{\ww 3})=\\
&
\ell^{3(n-m)}\ttrace(H_M(P)^{\ww 3})+ \ell^{3(n-m)-2}[P\ttrace(H_M(P)^{\ww 2})
+ \sum_{i<j}(2P_iP_jP_{ij}-P_i^2P_{jj}-P_j^2P_{ii})]
\end{align*}
where $P_i=\frac{\partial P}{\partial x_i}$ etc...
When does $\ell^{n-m}P$ divide this expression?
We need that $P$ divides $\ttrace(H_M(P)^{\ww 3})$ and
$\sum_{i<j}(2P_iP_jP_{ij}-P_i^2P_{jj}-P_j^2P_{ii})$.  But these conditions are independent of $n,N$ so there is 
no hope of getting this condition asymptotically.
However, taking a more complicated inclusion might erase this problem.

%$\sslash$

\section{Appendix: Complexity theory}\label{complexapp}
  In a letter to von Neumann (see \cite[Appendix]{Sipser})  G\"odel 
  tried to quantify what we mean by \lq\lq intuition\rq\rq, or more precisely  the apparent difference between intuition and systematic problem solving.
At the same time, researchers in the Soviet Union were trying to determine if \lq\lq brute force search\rq\rq\ was avoidable in
solving problems such as the   traveling salesman problem,  where there seems to be no fast way to find a solution,
but a proposed solution can be easily checked. (If I say I have found a way to visit twenty cities by traveling less than
a thousand miles, you just need to look at my plan and check the distances.)   These discussions eventually gave rise to the complexity
classes  $\p$, which models problems admitting a fast algorithm to produce a solution, and $\np$ which models problems admitting a fast algorithm
to verify a proposed solution.

The \lq\lq problems\rq\rq\   relevant to us are sequences of polynomials or multi-linear maps (i.e. tensors), and the goal  is to
find lower bounds on the complexity of evaluating them, or otherwise  to find efficient algorithms to do so. Geometry
has so far been more useful in determining lower bounds.

\subsection{Arithmetic circuits and complexity classes}
Recall the definitions regarding circuits from Definition \ref{arithcirdef}.
%In order to compare the complexity  of polynomials, one needs a model for complexity. I  will use the model of {\it circuits}.

\begin{definition} A circuit $\cC$ is {\it weakly skew} if for each multiplication gate $v$, receiving the outputs
of gates $u,w$, one of $\cC_u$, $\cC_w$ is disjoint from the rest of $\cC$. (I.e., the only output of, say $\cC_u$, 
is the edge entering $v$.) A circuit is {\it multiplicatively disjoint} if, for every multiplication gate $v$
receiving the outputs
of gates $u,w$, the subcircuits $\cC_u$, $\cC_w$ do not intersect.
\end{definition}

 \begin{figure}[!htb]\begin{center}
\includegraphics[scale=.7]{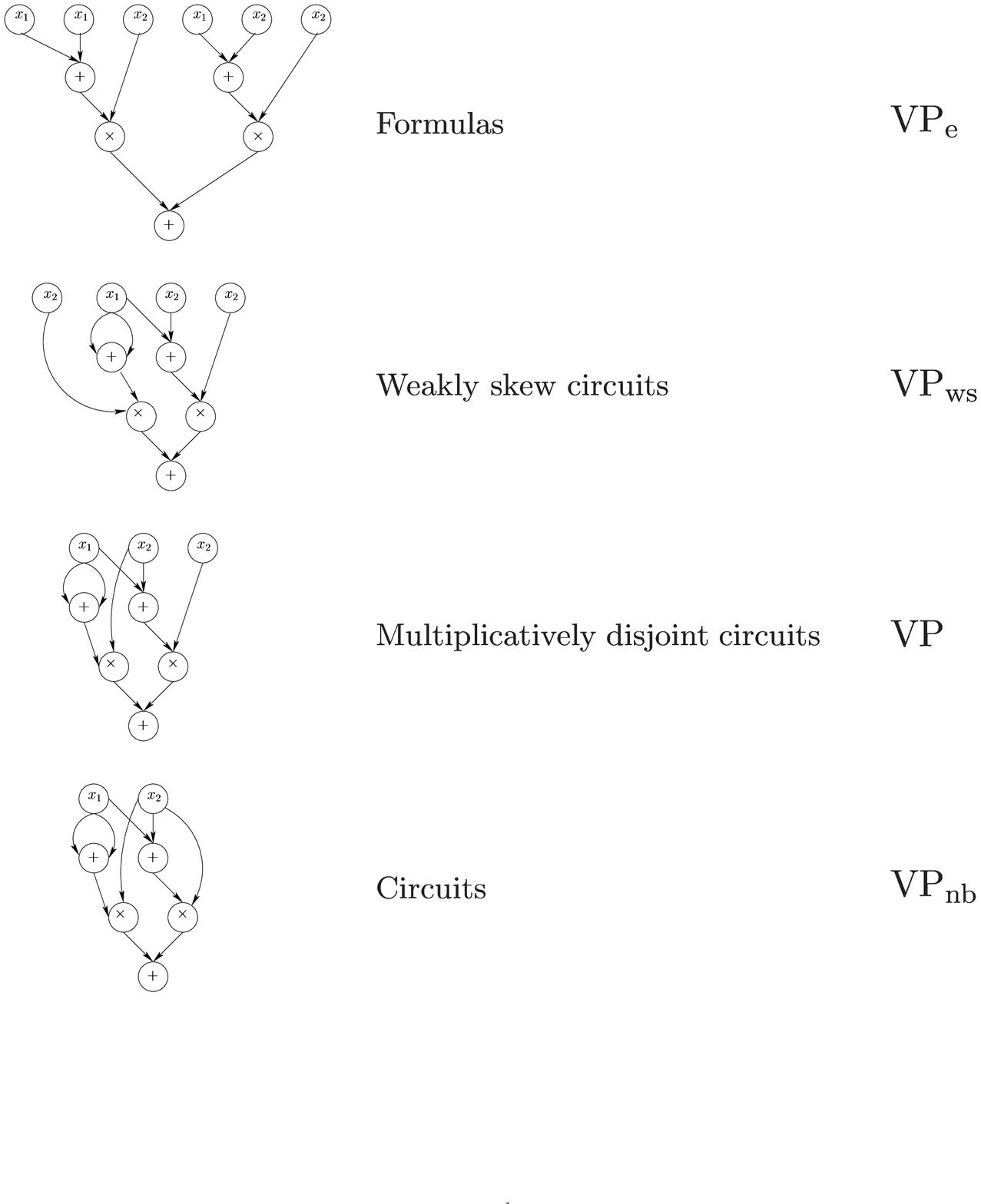}
%\caption{\small{  $ }}  
\end{center}
\end{figure}

\begin{definition} Let $(f_n)$ be a sequence of polynomials. We say
\begin{itemize}
\item $(f_n)\in \vp_e$  if there exists a sequence of formulas  $\cC_n$ of polynomial size calculating $f_n$.

\item $(f_n)\in \vp_{ws}$  if there exists a sequence of weakly skew  circuits $\cC_n$ of polynomial size calculating $f_n$.

\item $(f_n)\in \vp$  if there exists a sequence of multiplicatively disjoint  circuits $\cC_n$ of polynomial size calculating $f_n$.

\item $(f_n)\in \vp_{nb}$  if there exists a sequence of    circuits $\cC_n$ of polynomial size calculating $f_n$.
\end{itemize}
\end{definition}

These definitions agree with the standard ones, see \cite{mapo:04}. In particular, for the first three, 
they require $\tdeg(f_n)$ to grow like a polynomial in $n$.  
The class $\vnp$ has a more complicated definition: $(f_n)$ is defined to be in $\vnp$ if  
there exists a polynomial $p$ and a sequence $(g_n)\in \vp$ such that 
$$
f_n(x)=\sum_{\ep\in \{0,1\}^{p(|x|)}}g_n(x,\ep).
$$

Valiant's conjectures are:

\begin{conjecture}[Valiant]\cite{vali:79-3} $\vp\neq \vnp$, that is, there does not exist a polynomial size
circuit computing the permanent.
\end{conjecture}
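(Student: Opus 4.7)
The plan is to attack Valiant's conjecture through the GCT framework developed in the paper, reducing the problem to a question about orbit closures and then to one about representation-theoretic obstructions. First, I would invoke the depth reduction theorem: if $\tperm_m$ were computable by a circuit of polynomial size $s = m^{O(1)}$, then by the results of \cite{DBLP:journals/eccc/GuptaKKS13,tavenas,koirand4,AgrawalVinay}, $[\ell^{n-m}\tperm_m]$ would lie in $\s_r(Ch_n(\BC^{m^2+1}))$ with $rn = 2^{O(\sqrt{m}\tlog m)}$. Thus it suffices to prove Conjecture \ref{chowvnp}: for all but finitely many $m$, and all $r,n$ with $rn = 2^{\sqrt{m}\tlog(m)\o(1)}$,
\[
[\ell^{n-m}\tperm_m] \not\in \s_r(Ch_n(\BC^{m^2+1})).
\]
This replaces a lower bound on an amorphous combinatorial object (a circuit) by the geometric task of separating two explicit projective varieties, both defined by symmetries.

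Second, I would pursue this separation through representation theory, seeking a module $S_{\pi}W$ that is an $(n,m)$-GCT useful occurrence obstruction for $\s_r(Ch_n(W))$ containing $[\ell^{n-m}\tperm_m]$. By Proposition \ref{peterrefx} and its analogue for $\Perm^m_n$ computed via the symmetry group described in \S\ref{permsect}, one reads off the multiplicity of $S_{\pi}W$ in $\BC[\Perm^m_n]$ in terms of symmetric Kronecker coefficients restricted under $(\G_m^E \times \G_m^F)/\mu_m \rtimes \BZ_2$ combined with the padding constraints of \S\ref{valprobs}. On the other side, the coordinate ring of the normalization of $\s_r(Ch_n(W))$ is accessible via the sum-product polynomial $S^n_r$ of \S\ref{sumprodex}, whose stabilizer is essentially $(\G_n)^{\times r} \rtimes \FS_r$, giving a $\G_n^{\times r}\rtimes \FS_r$-invariant description of multiplicities analogous to the Chow case treated via $\psi_n^*$ in \S\ref{normalizationsect}. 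The plan is to produce a partition $\pi$ of $nd$ with $\ell(\pi) \leq m+1$ and $p_1 \geq d(n-m)$ such that $S_{\pi}W$ occurs in $S^d(S^nW^*)$ with positive multiplicity but vanishes in $\BC[\s_r(Ch_n(W))]$ for all $r,n$ in the target range, the vanishing being forced by the restricted size of $(\G_n)^{\times r} \rtimes \FS_r$ invariants inside $S_{\pi}W$.

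Third, I would hunt for such $\pi$ asymptotically. The key technical step is a sharp upper bound on $\tdim(S_\pi W)^{(\G_n)^{\times r}\rtimes \FS_r}$ as $\pi$ varies over partitions with the prescribed shape. By Corollary \ref{gaycor} and Gay's duality (Theorem \ref{gaythm}), the weight-zero $\FS_{rn}$-module structure converts this into an explicit plethysm-type count inside $S^r(S^n W)$, which one can try to bound using Brion's stability results \cite{MR1243152,MR1601139} together with McKay's monotonicity theorem \cite{MR2394689}. In parallel, one must lower-bound $\tmult(S_\pi W, S^d(S^nW))$ at padded-permanent-friendly partitions, for which a promising route is to use the symmetric Kronecker positivity data already explored by Ikenmeyer \cite{ikenthesis}, suitably padded via the Pieri rule \eqref{pieri} applied to the $(n-m)$-fold multiplication by $\ell$.

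The hard part, and the reason Valiant's conjecture is open, lies in step three. Both sides of the comparison are governed by plethysm and Kronecker coefficients, which are notoriously opaque: we lack combinatorial formulas, and Ikenmeyer's conjecture that every partition occurring in $S^d(S^nW)$ with $n,d$ even already occurs in $\BC[\Det_n]$ warns that occurrence obstructions may be scarcer than one hopes. Indeed, the main obstacle I expect is to prove positivity of the relevant invariant spaces for $\s_r(Ch_n(W))$ do \emph{not} catch up to the plethysm growth on the ambient $S^d(S^nW)$ — a quantitative gap of the form $2^{\sqrt{m}\tlog m\,\o(1)}$ must be maintained uniformly, and every known plethysm stabilization (Brion's bound \eqref{brionexpl}, Manivel's stability) is qualitative or carries super-polynomial error terms. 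Any such uniform separation would already be a landmark in the representation theory of the symmetric group, and would yield the Clay-level consequence $\vp \neq \vnp$.
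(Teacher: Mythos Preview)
The statement you are attempting to prove is a \emph{conjecture}, not a theorem. The paper does not provide a proof of it; it is stated as Valiant's conjecture precisely because it remains open. Your task was to compare your attempt with the paper's own proof, but there is no proof in the paper to compare against.

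What you have written is not a proof but a research program, and you acknowledge as much: you write that ``the hard part, and the reason Valiant's conjecture is open, lies in step three,'' and you correctly identify that producing the required occurrence or representation-theoretic obstructions would require quantitative control over plethysm and Kronecker coefficients that is far beyond current knowledge. Your outline is a reasonable summary of the GCT approach described in the paper --- reduce via depth reduction to Conjecture~\ref{chowvnp}, then seek modules separating $\Perm^m_n$ from the relevant secant variety --- but nowhere in your proposal do you actually exhibit the partitions $\pi$ with the required multiplicity gap, nor do you prove the bounds on $\tdim(S_\pi W)^{(\G_n)^{\times r}\rtimes \FS_r}$ that your plan calls for. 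A plan whose central step is admittedly unknown is not a proof; it is a restatement of why the problem is hard.
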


\begin{conjecture}[Valiant]\cite{vali:79-3} $\vp_{ws}\neq\vnp$, that is $dc(\tperm_m)$ grows faster than
any polynomial.
\end{conjecture}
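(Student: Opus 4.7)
The statement to be proved is Valiant's conjecture that $\vp_{ws}\neq\vnp$, one of the central open problems of complexity theory; any realistic proof plan can only outline an attack, not a complete strategy. My plan would be to route through the depth-reduction results of \S\ref{depthred} and prove the shallow-circuit lower bound formulated as Conjecture \ref{chowvnp}, namely that for $n=m^{O(1)}$ and $rn=2^{\sqrt m \log m\,\o(1)}$, one has $[\ell^{n-m}\tperm_m]\notin\s_r(Ch_n(\BC^{m^2+1}))$. Since a polynomial-size weakly skew circuit of polynomial degree $d$ yields, by the theorem of \S\ref{depthred}, a homogeneous $\Sigma\Pi\Sigma$ circuit of size $2^{O(\sqrt{d\log n\log(ds)})}$ (and hence a point of such a secant variety with the claimed parameters), this lower bound would separate $\vp_{ws}$ from $\vnp$.

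The first step would be to produce a usable family of equations for $\s_r(Ch_n(W))$, a problem raised explicitly as a Problem in \S\ref{valprobs}. My approach would be to combine two ingredients. On one hand, the Young flattenings $P_{k,n-k[\ell]}$ introduced in \S\ref{firsteqnssect} give minors-of-matrices equations whose rank on $Ch_n(W)$ is controlled by $\binom nk$; the challenge is to enlarge this construction (for instance by applying a Schur functor on both sides, or by exploiting the symmetric Kronecker structure of \S\ref{detoring}) so the rank jump between $Ch_n(W)$ and an element of maximal complexity is large enough to feed into the $2^{\sqrt m\log m}$ threshold. On the other hand, I would attempt to generalize Brill's equations $Brill:S_{n,n}W\otimes S^{n^2-n}W\to S^{n+1}(S^nW)$ to the secant setting by taking Cartan products and cutting down to the isotypic components that survive the Hermite--Hadamard--Howe map $h_{d,n}$, using Proposition \ref{tildehprop} and Corollary \ref{brionfhcor} to organize the combinatorics.

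The second step would be to evaluate these equations on $\ell^{n-m}\tperm_m$. Here I would use Theorem \ref{permstabthm} heavily: the stabilizer $G_{\tperm_m}=(\G_m^E\times \G_m^F)/\mu_m\rtimes\BZ_2$ characterizes $\tperm_m$, so the restriction of any $GL_{m^2+1}$-module of equations to the line $[\ell^{n-m}\tperm_m]$ factors through the (low-dimensional) space of $G_{\tperm_m}$-invariants in that module, much as the computation of Theorem \ref{smooth} does at $[\tdet_n]$. This reduces the non-vanishing problem to a combinatorial computation involving symmetric Kronecker coefficients $sk^{\pi}_{\mu\mu}$, plethysm multiplicities, and the padding/subspace bookkeeping of \S\ref{bndrysect} (i.e.\ checking the $(n,m)$-GCT useful conditions $\ell(\pi)\le m+1$ and $p_1\ge d(n-m)$). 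Finally, one would let $m$ grow and verify that the asymptotic dimension counts beat $rn=2^{\sqrt m\log m\,\o(1)}$.

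The main obstacle, of course, is the first step: producing equations that are simultaneously (a) explicit enough to evaluate on padded polynomials, (b) of low enough degree to be checkable against the $(n,m)$-GCT useful constraints, and (c) strong enough to separate $\tperm_m$ from a secant variety of exponential rank. The work of Ikenmeyer described after Proposition \ref{peterrefx}, showing that occurrence obstructions for $\Det_3$ only begin to appear in degree $10$ and seem rare, and Brion's doubly exponential bound \eqref{brionexpl}, both suggest that finding such equations directly is extraordinarily delicate; a successful attack would likely need to replace \emph{ad hoc} module-hunting with a structural source of equations, perhaps coming from the non-normality results of Theorem \ref{kumarnorthm} (which produce $SL$-invariants on the normalization not visible in $\BC[S^nW]$) or from a geometric understanding of the components of $\partial\Det_n$ beyond those in Proposition \ref{Lambda}. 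Absent such a structural breakthrough, the plan above at best refines the $\frac{m^2}{2}$ lower bound of \S\ref{MRsect}; bridging the gap to a super-polynomial bound is precisely the content of the conjecture and should be regarded as the fundamental difficulty.
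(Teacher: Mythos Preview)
The statement you were asked to prove is a \emph{conjecture} in the paper, not a theorem: the paper offers no proof, and indeed this is one of the flagship open problems of algebraic complexity theory. There is therefore no ``paper's own proof'' to compare your proposal against.

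You correctly recognize this and present not a proof but a research program. That program is coherent and faithfully reflects the paper's own suggestions: the route through depth reduction (\S\ref{depthred}) and Conjecture~\ref{chowvnp} is exactly the reduction the paper advertises, and the two sub-steps you outline---find equations for $\s_r(Ch_n(W))$, then evaluate them on $\ell^{n-m}\tperm_m$ using the stabilizer $G_{\tperm_m}$---are precisely the Problems posed in \S\ref{valprobs} and \S\ref{chowsect}. Your closing paragraph is honest: the decisive missing ingredient is a source of equations strong enough to rule out membership in a secant variety of rank $2^{\sqrt m\log m\,\o(1)}$, and nothing in the paper (flattenings, Young flattenings, Brill's map, the non-normality invariants of \S\ref{kumarpfsect}) currently reaches anywhere near that threshold. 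So your proposal is not a proof, and you say as much; it is a reasonable sketch of where a proof might eventually come from, consistent with the paper's own outlook.

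One small correction of emphasis: you phrase the depth-reduction step as specific to weakly skew circuits, but the theorem in \S\ref{depthred} applies to arbitrary circuits of polynomial size; hence Conjecture~\ref{chowvnp} would in fact give the stronger separation $\vp\neq\vnp$, from which $\vp_{ws}\neq\vnp$ follows a fortiori since $\vp_{ws}\subseteq\vp$.
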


\subsection{Complete problems}
The reason complexity theorists love the permanent so much is that it counts the number of
perfect matchings of a bipartite graph, a central counting problem in combinatorics. It is {\it complete} for the class $\vnp$. (A sequence
is complete for a class if it belongs to the class and any other sequence in the class
can be reduced to it at the price of a polynomial increase in size.)

The sequence of polynomials
given by iterated matrix multiplication
of $3\times 3$ matrices,    $IMM^n_3\in S^n(\BC^{9 n})$ 
where $IMM^n_3(X_1\hd X_n)=\ttrace(X_1\cdots X_n)$ is complete for $\vp_e$, see 
\cite{clevebenor}. 

The complexity class $\vp_{ws}$ is not natural from the perspective of complexity theory. It exists only because
the sequence $(\tdet_n)$ is $\vp_{ws}$-complete, however, there exists a more natural (from the
perspective of complexity theory) class, called $\vqp$ (see, e.g, \cite[\S 21.5]{BCS}) for which it is also complete.
  
\bibliographystyle{amsplain}
 
\bibliography{Lmatrix}

\end{document}